\newlist{paragraphlist}{enumerate}{1}
\setlist[paragraphlist,1]{leftmargin=*,label={\bfseries \arabic*}}
\title{New invariants of Gromov-Hausdorff limits of Riemannian surfaces with curvature bounded below}
\date{}
\author{Semyon Alesker\footnote{Partially supported by the US-Israel BSF grant 2018115 and the ISF grant 743/22.}\\
{ \normalsize Department of Mathematics, Tel Aviv University, Ramat Aviv}\\
{ \normalsize 69978 Tel Aviv, Israel}\\
{ \normalsize Email: alesker.semyon75@gmail.com}
\and
Mikhail G. Katz\footnote{Partially supported by the BSF grant Number 2020124 and the ISF grant 743/22.}
\\  {\normalsize Department of Mathematics, Bar Ilan University}
\\  {\normalsize Ramat Gan 5290002, Israel}
\\   {\normalsize Email address: katzmik@math.biu.ac.il}
\and
Roman Prosanov\footnote{This research was funded in part by the Swiss National Science Foundation grant $200021_-179133$ and was funded in part by the Austrian Science Fund (FWF) ESPRIT grant ESP-12-N.}
\\   {\normalsize University of Vienna, Faculty of Mathematics}
\\   {\normalsize Oskar-Morgenstern-Platz 1, A-1090 Vienna, Austria}
\\     {\normalsize E-mail: roman.prosanov@univie.ac.at}}
\def\RR{\mathbb{R}}
\def\CC{\mathbb{C}}
\def\NN{\mathbb{N}}
\def\ZZ{\mathbb{Z}}
\def\HH{\mathbb{H}}
\def\PP{\mathbb{P}}
\def\FF{\mathbb{F}}
\def\SS{\mathbb{S}}
\def\BB{\mathbb{B}}
\def\TT{\mathbb{T}}
\def\KK{\mathbb{K}}
\def\One{{1\hskip-2.5pt{\rm l}}}
\def\eps{\varepsilon}
\def\alp{\alpha}
\def\lam{\lambda}
\def\to{\longrightarrow}
\def\qed { Q.E.D. }
\def\inj{\hookrightarrow}
\newcommand{\e}{\varepsilon}
\newtheorem{theorem}{Theorem}[section]
\newtheorem{corollary}[theorem]{Corollary}
\newtheorem{lemma}[theorem]{Lemma}
\newtheorem{proposition}[theorem]{Proposition}
\newtheorem{claim}[theorem]{Claim}
\theoremstyle{definition}
\newtheorem{definition}[theorem]{Definition}
\newtheorem{remark}[theorem]{Remark}
\theoremstyle{conjecture}
\theoremstyle{principle}
\theoremstyle{question}
\newtheorem{question}[theorem]{Question}
\def\ca{{\cal A}} \def\cb{{\cal B}} \def\cc{{\cal C}}
\def\cg{{\cal G}} \def\ch{{\cal H}} \def\ci{{\cal I}}
  \def\cl{{\cal L}}
 \def\cn{{\cal N}} \def\co{{\cal O}}
  \def\cu{{\cal U}}
  \def\cx{{\cal X}}
\def\pt{\partial}
\def\diagram{\m@th\leftwidth=\z@ \rightwidth=\z@ \topheight=\z@
\botheight=\z@ \setbox\@picbox\hbox\bgroup}
\def\enddiagram{\egroup\wd\@picbox\rightwidth\unitlength
\ht\@picbox\topheight\unitlength \dp\@picbox\botheight\unitlength
\hskip\leftwidth\unitlength\box\@picbox}
\def\bfig{\begin{diagram}}
\def\efig{\end{diagram}}
\def\ratchet#1#2{\ifnum#1<#2 \global #1=#2 \fi}
\def\putbox(#1,#2)#3{%
\horsize{\wideness}{#3} \divide\wideness by 2
{\advance\wideness by #1 \ratchet{\rightwidth}{\wideness}}
{\advance\wideness by -#1 \ratchet{\leftwidth}{\wideness}}
\vertsize{\highness}{#3} \divide\highness by 2
{\advance\highness by #2 \ratchet{\topheight}{\highness}}
{\advance\highness by -#2 \ratchet{\botheight}{\highness}}
\put(#1,#2){\makebox(0,0){$#3$}}}
\def\putlbox(#1,#2)#3{%
\horsize{\wideness}{#3}
{\advance\wideness by #1 \ratchet{\rightwidth}{\wideness}}
{\ratchet{\leftwidth}{-#1}}
\vertsize{\highness}{#3} \divide\highness by 2
{\advance\highness by #2 \ratchet{\topheight}{\highness}}
{\advance\highness by -#2 \ratchet{\botheight}{\highness}}
\put(#1,#2){\makebox(0,0)[l]{$#3$}}}
\def\putrbox(#1,#2)#3{%
\horsize{\wideness}{#3}
{\ratchet{\rightwidth}{#1}}
{\advance\wideness by -#1 \ratchet{\leftwidth}{\wideness}}
\vertsize{\highness}{#3} \divide\highness by 2
{\advance\highness by #2 \ratchet{\topheight}{\highness}}
{\advance\highness by -#2 \ratchet{\botheight}{\highness}}
\put(#1,#2){\makebox(0,0)[r]{$#3$}}}
\def\adjust[#1]{} 
\newdimen\tempdimen
\newdimen\xlen
\newdimen\ylen
\newsavebox{\tempboxa}%
\newsavebox{\tempboxb}%
\newsavebox{\tempboxc}%
\newdimen\w@dth
\def\setw@dth#1#2{\setbox\z@\hbox{\m@th$#1$}\w@dth=\wd\z@
\setbox\@ne\hbox{\m@th$#2$}\ifnum\w@dth<\wd\@ne \w@dth=\wd\@ne \fi
\advance\w@dth by 1.2em}
\def\t@^#1_#2{\allowbreak\def\n@one{#1}\def\n@two{#2}\mathrel
{\setw@dth{#1}{#2}
\mathop{\hbox to \w@dth{\rightarrowfill}}\limits
\ifx\n@one\empty\else ^{\box\z@}\fi
\ifx\n@two\empty\else _{\box\@ne}\fi}}
\def\t@@^#1{\@ifnextchar_{\t@^{#1}}{\t@^{#1}_{}}}
\def\to{\@ifnextchar^{\t@@}{\t@@^{}}}
\def\t@left^#1_#2{\def\n@one{#1}\def\n@two{#2}\mathrel{\setw@dth{#1}{#2}
\mathop{\hbox to \w@dth{\leftarrowfill}}\limits
\ifx\n@one\empty\else ^{\box\z@}\fi
\ifx\n@two\empty\else _{\box\@ne}\fi}}
\def\t@@left^#1{\@ifnextchar_{\t@left^{#1}}{\t@left^{#1}_{}}}
\def\toleft{\@ifnextchar^{\t@@left}{\t@@left^{}}}
\def\two@^#1_#2{\allowbreak
\def\n@one{#1}\def\n@two{#2}\mathrel{\setw@dth{#1}{#2}
\mathop{\vcenter{\lineskip\z@\baselineskip\z@
                 \hbox to \w@dth{\rightarrowfill}%
                 \hbox to \w@dth{\rightarrowfill}}%
       }\limits
\ifx\n@one\empty\else ^{\box\z@}\fi
\ifx\n@two\empty\else _{\box\@ne}\fi}}
\def\tw@@^#1{\@ifnextchar _{\two@^{#1}}{\two@^{#1}_{}}}
\def\two{\@ifnextchar ^{\tw@@}{\tw@@^{}}}
\def\tofr@^#1_#2{\def\n@one{#1}\def\n@two{#2}\mathrel{\setw@dth{#1}{#2}
\mathop{\vcenter{\hbox to \w@dth{\rightarrowfill}\kern-1.7ex
                 \hbox to \w@dth{\leftarrowfill}}%
       }\limits
\ifx\n@one\empty\else ^{\box\z@}\fi
\ifx\n@two\empty\else _{\box\@ne}\fi}}
\def\t@fr@^#1{\@ifnextchar_ {\tofr@^{#1}}{\tofr@^{#1}_{}}}
\def\tofro{\@ifnextchar^ {\t@fr@}{\t@fr@^{}}}
\def\mon{\mathop{\m@th\hbox to
      14.6\P@{\lasyb\char'51\hskip-2.1\P@$\arrext$\hss
$\mathord\rightarrow$}}\limits} 
\def\leftmono{\mathrel{\m@th\hbox to
14.6\P@{$\mathord\leftarrow$\hss$\arrext$\hskip-2.1\P@\lasyb\char'50%
}}\limits} 
\mathchardef\arrext="0200       
\def\settypes(#1,#2,#3){\arrowtypea#1 \arrowtypeb#2 \arrowtypec#3}
\def\settoheight#1#2{\setbox\@tempboxa\hbox{#2}#1\ht\@tempboxa\relax}%
\def\settodepth#1#2{\setbox\@tempboxa\hbox{#2}#1\dp\@tempboxa\relax}%
\def\settokens`#1`#2`#3`#4`{%
     \def\tokena{#1}\def\tokenb{#2}\def\tokenc{#3}\def\tokend{#4}}
\def\setsqparms[#1`#2`#3`#4;#5`#6]{%
\arrowtypea #1
\arrowtypeb #2
\arrowtypec #3
\arrowtyped #4
\width #5
\height #6
}
\def\setpos(#1,#2){\xpos=#1 \ypos#2}
\def\settriparms[#1`#2`#3;#4]{\settripairparms[#1`#2`#3`1`1;#4]}%
\def\settripairparms[#1`#2`#3`#4`#5;#6]{%
\arrowtypea #1
\arrowtypeb #2
\arrowtypec #3
\arrowtyped #4
\arrowtypee #5
\width #6
\height #6
}
\def\resetparms{\settripairparms[1`1`1`1`1;500]\width 500}
\def\mvector(#1,#2)#3{
\put(0,0){\vector(#1,#2){#3}}%
\put(0,0){\vector(#1,#2){26}}%
}
\def\evector(#1,#2)#3{{
\arrowlength #3
\put(0,0){\vector(#1,#2){\arrowlength}}%
\advance \arrowlength by-30
\put(0,0){\vector(#1,#2){\arrowlength}}%
}}
\def\horsize#1#2{%
\settowidth{\tempdimen}{$#2$}%
#1=\tempdimen
\divide #1 by\unitlength
}
\def\vertsize#1#2{%
\settoheight{\tempdimen}{$#2$}%
#1=\tempdimen
\settodepth{\tempdimen}{$#2$}%
\advance #1 by\tempdimen
\divide #1 by\unitlength
}
\def\putvector(#1,#2)(#3,#4)#5#6{{%
\ifnum3<\arrowtype
\putdashvector(#1,#2)(#3,#4)#5\arrowtype
\else
\ifnum\arrowtype<-3
\putdashvector(#1,#2)(#3,#4)#5\arrowtype
\else
\xpos=#1
\ypos=#2
\run=#3
\rise=#4
\arrowlength=#5
\ifnum \arrowtype<0
    \ifnum \run=0
        \advance \ypos by-\arrowlength
    \else
        \tempcounta \arrowlength
        \multiply \tempcounta by\rise
        \divide \tempcounta by\run
        \ifnum\run>0
            \advance \xpos by\arrowlength
            \advance \ypos by\tempcounta
        \else
            \advance \xpos by-\arrowlength
            \advance \ypos by-\tempcounta
        \fi
    \fi
    \multiply \arrowtype by-1
    \multiply \rise by-1
    \multiply \run by-1
\fi
\ifcase \arrowtype
\or \put(\xpos,\ypos){\vector(\run,\rise){\arrowlength}}%
\or \put(\xpos,\ypos){\mvector(\run,\rise)\arrowlength}%
\or \put(\xpos,\ypos){\evector(\run,\rise){\arrowlength}}%
\fi\fi\fi
}}
\def\putsplitvector(#1,#2)#3#4{
\xpos #1
\ypos #2
\arrowtype #4
\halflength #3
\arrowlength #3
\gap 140
\advance \halflength by-\gap
\divide \halflength by2
\ifnum\arrowtype>0
   \ifcase \arrowtype
   \or \put(\xpos,\ypos){\line(0,-1){\halflength}}%
       \advance\ypos by-\halflength
       \advance\ypos by-\gap
       \put(\xpos,\ypos){\vector(0,-1){\halflength}}%
   \or \put(\xpos,\ypos){\line(0,-1)\halflength}%
       \put(\xpos,\ypos){\vector(0,-1)3}%
       \advance\ypos by-\halflength
       \advance\ypos by-\gap
       \put(\xpos,\ypos){\vector(0,-1){\halflength}}%
   \or \put(\xpos,\ypos){\line(0,-1)\halflength}%
       \advance\ypos by-\halflength
       \advance\ypos by-\gap
       \put(\xpos,\ypos){\evector(0,-1){\halflength}}%
   \fi
\else \arrowtype=-\arrowtype
   \ifcase\arrowtype
   \or \advance \ypos by-\arrowlength
       \put(\xpos,\ypos){\line(0,1){\halflength}}%
       \advance\ypos by\halflength
       \advance\ypos by\gap
       \put(\xpos,\ypos){\vector(0,1){\halflength}}%
   \or \advance \ypos by-\arrowlength
       \put(\xpos,\ypos){\line(0,1)\halflength}%
       \put(\xpos,\ypos){\vector(0,1)3}%
       \advance\ypos by\halflength
       \advance\ypos by\gap
       \put(\xpos,\ypos){\vector(0,1){\halflength}}%
   \or \advance \ypos by-\arrowlength
       \put(\xpos,\ypos){\line(0,1)\halflength}%
       \advance\ypos by\halflength
       \advance\ypos by\gap
       \put(\xpos,\ypos){\evector(0,1){\halflength}}%
   \fi
\fi
}
\def\putmorphism(#1)(#2,#3)[#4`#5`#6]#7#8#9{{%
\run #2
\rise #3
\ifnum\rise=0
  \puthmorphism(#1)[#4`#5`#6]{#7}{#8}#9%
\else\ifnum\run=0
  \putvmorphism(#1)[#4`#5`#6]{#7}{#8}#9%
\else
\setpos(#1)%
\arrowlength #7
\arrowtype #8
\ifnum\run=0
\else\ifnum\rise=0
\else
\ifnum\run>0
    \coefa=1
\else
   \coefa=-1
\fi
\ifnum\arrowtype>0
   \coefb=0
   \coefc=-1
\else
   \coefb=\coefa
   \coefc=1
   \arrowtype=-\arrowtype
\fi
\width=2
\multiply \width by\run
\divide \width by\rise
\ifnum \width<0  \width=-\width\fi
\advance\width by60
\if l#9 \width=-\width\fi
\putbox(\xpos,\ypos){#4}
{\multiply \coefa by\arrowlength
\advance\xpos by\coefa
\multiply \coefa by\rise
\divide \coefa by\run
\advance \ypos by\coefa
\putbox(\xpos,\ypos){#5} }%
{\multiply \coefa by\arrowlength
\divide \coefa by2
\advance \xpos by\coefa
\advance \xpos by\width
\multiply \coefa by\rise
\divide \coefa by\run
\advance \ypos by\coefa
\if l#9%
   \putrbox(\xpos,\ypos){#6}%
\else\if r#9%
   \putlbox(\xpos,\ypos){#6}%
\fi\fi }%
{\multiply \rise by-\coefc
\multiply \run by-\coefc
\multiply \coefb by\arrowlength
\advance \xpos by\coefb
\multiply \coefb by\rise
\divide \coefb by\run
\advance \ypos by\coefb
\multiply \coefc by70
\advance \ypos by\coefc
\multiply \coefc by\run
\divide \coefc by\rise
\advance \xpos by\coefc
\multiply \coefa by140
\multiply \coefa by\run
\divide \coefa by\rise
\advance \arrowlength by\coefa
\ifcase\arrowtype
\or \put(\xpos,\ypos){\vector(\run,\rise){\arrowlength}}%
\or \put(\xpos,\ypos){\mvector(\run,\rise){\arrowlength}}%
\or \put(\xpos,\ypos){\evector(\run,\rise){\arrowlength}}%
\fi}\fi\fi\fi\fi}}
\def\howmanydashes{
\numbdashes=\arrowlength \lengthdash=40
\divide\numbdashes by \lengthdash
\lengthdash=\arrowlength
\divide\lengthdash by \numbdashes
\increment=\lengthdash
\multiply\lengthdash by 3
\divide\lengthdash by 5
}
\def\putdashvector(#1)(#2,#3)#4#5{%
\ifnum#3=0 \putdashhvector(#1){#4}#5
\else
\ifnum#2=0
\putdashvvector(#1){#4}#5\fi\fi}
\def\putdashhvector(#1,#2)#3#4{{%
\arrowlength=#3 \howmanydashes
\multiput(#1,#2)(\increment,0){\numbdashes}%
{\vrule height .4pt width \lengthdash\unitlength}
\arrowtype=#4 \xpos=#1
\ifnum\arrowtype<0 \advance\arrowtype by 7 \fi
\ifcase\arrowtype
\or \advance\xpos by 10
    \put(\xpos,#2){\vector(-1,0){\lengthdash}}
    \advance\xpos by 40
    \put(\xpos,#2){\vector(-1,0){\lengthdash}}
\or \advance \xpos by 10
    \put(\xpos,#2){\vector(-1,0){\lengthdash}}
    \advance\xpos by  \arrowlength
    \advance\xpos by  -50
    \put(\xpos,#2){\vector(-1,0){\lengthdash}}
\or \advance\xpos by 10
    \put(\xpos,#2){\vector(-1,0){\lengthdash}}
\or \advance\xpos by \arrowlength
    \advance\xpos by -\lengthdash
    \put(\xpos,#2){\vector(1,0){\lengthdash}}
\or {\advance\xpos by 10
    \put(\xpos,#2){\vector(1,0){\lengthdash}}}
    \advance\xpos by \arrowlength
    \advance\xpos by -\lengthdash
    \put(\xpos,#2){\vector(1,0){\lengthdash}}
\or \advance\xpos by \arrowlength
    \advance\xpos by -\lengthdash
    \put(\xpos,#2){\vector(1,0){\lengthdash}}
    \advance\xpos by -40
    \put(\xpos,#2){\vector(1,0){\lengthdash}}
   \fi
}}
\def\putdashvvector(#1,#2)#3#4{{%
\arrowlength=#3 \howmanydashes
\ypos=#2 \advance\ypos by -\arrowlength
\multiput(#1,#2)(0,\increment){\numbdashes}%
    {\vrule width .4pt height \lengthdash\unitlength}
\arrowtype=#4 \ypos=#2
\ifnum\arrowtype<0 \advance\arrowtype by 7 \fi
\ifcase\arrowtype
\or \advance\ypos by \arrowlength \advance\ypos by -40
    \put(#1,\ypos){\vector(0,1){\lengthdash}}
    \advance\ypos by -40
    \put(#1,\ypos){\vector(0,1){\lengthdash}}
\or \advance\ypos by 10
    \put(#1,\ypos){\vector(0,1){\lengthdash}}
    \advance\ypos by \arrowlength \advance\ypos by -40
    \put(#1,\ypos){\vector(0,1){\lengthdash}}
\or \advance\ypos by \arrowlength \advance\ypos by -40
    \put(#1,\ypos){\vector(0,1){\lengthdash}}
\or \advance\ypos by 10
    \put(#1,\ypos){\vector(0,-1){\lengthdash}}
\or \advance\ypos by 10
    \put(#1,\ypos){\vector(0,-1){\lengthdash}}
    \advance\ypos by \arrowlength \advance\ypos by -40
    \put(#1,\ypos){\vector(0,-1){\lengthdash}}
\or \advance\ypos by 10
    \put(#1,\ypos){\vector(0,-1){\lengthdash}}
    \advance\ypos by 40
    \put(#1,\ypos){\vector(0,-1){\lengthdash}}
\fi
}}
\def\puthmorphism(#1,#2)[#3`#4`#5]#6#7#8{{%
\xpos #1
\ypos #2
\width #6
\arrowlength #6
\arrowtype=#7
\putbox(\xpos,\ypos){#3\vphantom{#4}}%
{\advance \xpos by\arrowlength
\putbox(\xpos,\ypos){\vphantom{#3}#4}}%
\horsize{\tempcounta}{#3}%
\horsize{\tempcountb}{#4}%
\divide \tempcounta by2
\divide \tempcountb by2
\advance \tempcounta by30
\advance \tempcountb by30
\advance \xpos by\tempcounta
\advance \arrowlength by-\tempcounta
\advance \arrowlength by-\tempcountb
\putvector(\xpos,\ypos)(1,0)\arrowlength\arrowtype
\divide \arrowlength by2
\advance \xpos by\arrowlength
\vertsize{\tempcounta}{#5}%
\divide\tempcounta by2
\advance \tempcounta by20
\if a#8 %
   \advance \ypos by\tempcounta
   \putbox(\xpos,\ypos){#5}%
\else
   \advance \ypos by-\tempcounta
   \putbox(\xpos,\ypos){#5}%
\fi}}
\def\putvmorphism(#1,#2)[#3`#4`#5]#6#7#8{{%
\xpos #1
\ypos #2
\arrowlength #6
\arrowtype #7
\settowidth{\xlen}{$#5$}%
\putbox(\xpos,\ypos){#3}%
{\advance \ypos by-\arrowlength
\putbox(\xpos,\ypos){#4}}%
{\advance\arrowlength by-140
\advance \ypos by-70
\ifdim\xlen>0pt
   \if m#8%
      \putsplitvector(\xpos,\ypos)\arrowlength\arrowtype
   \else
   \putvector(\xpos,\ypos)(0,-1)\arrowlength\arrowtype
   \fi
\else
   \putvector(\xpos,\ypos)(0,-1)\arrowlength\arrowtype
\fi}%
\ifdim\xlen>0pt
   \divide \arrowlength by2
   \advance\ypos by-\arrowlength
   \if l#8%
      \advance \xpos by-40
      \putrbox(\xpos,\ypos){#5}%
   \else\if r#8%
      \advance \xpos by40
      \putlbox(\xpos,\ypos){#5}%
   \else
      \putbox(\xpos,\ypos){#5}%
   \fi\fi
\fi
}}
\def\putsquarep<#1>(#2)[#3;#4`#5`#6`#7]{{%
\setsqparms[#1]%
\setpos(#2)%
\settokens`#3`%
\puthmorphism(\xpos,\ypos)[\tokenc`\tokend`{#7}]{\width}{\arrowtyped}b%
\advance\ypos by \height
\puthmorphism(\xpos,\ypos)[\tokena`\tokenb`{#4}]{\width}{\arrowtypea}a%
\putvmorphism(\xpos,\ypos)[``{#5}]{\height}{\arrowtypeb}l%
\advance\xpos by \width
\putvmorphism(\xpos,\ypos)[``{#6}]{\height}{\arrowtypec}r%
}}
\def\putsquare{\@ifnextchar <{\putsquarep}{\putsquarep%
   <\arrowtypea`\arrowtypeb`\arrowtypec`\arrowtyped;\width`\height>}}
\def\square{\@ifnextchar< {\squarep}{\squarep
   <\arrowtypea`\arrowtypeb`\arrowtypec`\arrowtyped;\width`\height>}}
\def\squarep<#1>[#2`#3`#4`#5;#6`#7`#8`#9]{{
\setsqparms[#1]
\diagram
\putsquarep<\arrowtypea`\arrowtypeb`\arrowtypec`
\arrowtyped;\width`\height>
(0,0)[#2`#3`#4`{#5};#6`#7`#8`{#9}]
\enddiagram
}}                                                 
\def\putptrianglep<#1>(#2,#3)[#4`#5`#6;#7`#8`#9]{{%
\settriparms[#1]%
\xpos=#2 \ypos=#3
\advance\ypos by \height
\puthmorphism(\xpos,\ypos)[#4`#5`{#7}]{\height}{\arrowtypea}a%
\putvmorphism(\xpos,\ypos)[`#6`{#8}]{\height}{\arrowtypeb}l%
\advance\xpos by\height
\putmorphism(\xpos,\ypos)(-1,-1)[``{#9}]{\height}{\arrowtypec}r%
}}
\def\putptriangle{\@ifnextchar <{\putptrianglep}{\putptrianglep
   <\arrowtypea`\arrowtypeb`\arrowtypec;\height>}}
\def\ptriangle{\@ifnextchar <{\ptrianglep}{\ptrianglep
   <\arrowtypea`\arrowtypeb`\arrowtypec;\height>}}
\def\ptrianglep<#1>[#2`#3`#4;#5`#6`#7]{{
\settriparms[#1]
\diagram
\putptrianglep<\arrowtypea`\arrowtypeb`
\arrowtypec;\height>
(0,0)[#2`#3`#4;#5`#6`{#7}]
\enddiagram
}}                                            
\def\putqtrianglep<#1>(#2,#3)[#4`#5`#6;#7`#8`#9]{{%
\settriparms[#1]%
\xpos=#2 \ypos=#3
\advance\ypos by\height
\puthmorphism(\xpos,\ypos)[#4`#5`{#7}]{\height}{\arrowtypea}a%
\putmorphism(\xpos,\ypos)(1,-1)[``{#8}]{\height}{\arrowtypeb}l%
\advance\xpos by\height
\putvmorphism(\xpos,\ypos)[`#6`{#9}]{\height}{\arrowtypec}r%
}}
\def\putqtriangle{\@ifnextchar <{\putqtrianglep}{\putqtrianglep
   <\arrowtypea`\arrowtypeb`\arrowtypec;\height>}}
\def\qtriangle{\@ifnextchar <{\qtrianglep}{\qtrianglep
   <\arrowtypea`\arrowtypeb`\arrowtypec;\height>}}
\def\qtrianglep<#1>[#2`#3`#4;#5`#6`#7]{{
\settriparms[#1]
\width=\height                                
\diagram
\putqtrianglep<\arrowtypea`\arrowtypeb`
\arrowtypec;\height>
(0,0)[#2`#3`#4;#5`#6`{#7}]
\enddiagram
}}
\def\putdtrianglep<#1>(#2,#3)[#4`#5`#6;#7`#8`#9]{{%
\settriparms[#1]%
\xpos=#2 \ypos=#3
\puthmorphism(\xpos,\ypos)[#5`#6`{#9}]{\height}{\arrowtypec}b%
\advance\xpos by \height \advance\ypos by\height
\putmorphism(\xpos,\ypos)(-1,-1)[``{#7}]{\height}{\arrowtypea}l%
\putvmorphism(\xpos,\ypos)[#4``{#8}]{\height}{\arrowtypeb}r%
}}
\def\putdtriangle{\@ifnextchar <{\putdtrianglep}{\putdtrianglep
   <\arrowtypea`\arrowtypeb`\arrowtypec;\height>}}
\def\dtriangle{\@ifnextchar <{\dtrianglep}{\dtrianglep
   <\arrowtypea`\arrowtypeb`\arrowtypec;\height>}}
\def\dtrianglep<#1>[#2`#3`#4;#5`#6`#7]{{
\settriparms[#1]
\width=\height                                
\diagram
\putdtrianglep<\arrowtypea`\arrowtypeb`
\arrowtypec;\height>
(0,0)[#2`#3`#4;#5`#6`{#7}]
\enddiagram
}}
\def\putbtrianglep<#1>(#2,#3)[#4`#5`#6;#7`#8`#9]{{%
\settriparms[#1]%
\xpos=#2 \ypos=#3
\puthmorphism(\xpos,\ypos)[#5`#6`{#9}]{\height}{\arrowtypec}b%
\advance\ypos by\height
\putmorphism(\xpos,\ypos)(1,-1)[``{#8}]{\height}{\arrowtypeb}r%
\putvmorphism(\xpos,\ypos)[#4``{#7}]{\height}{\arrowtypea}l%
}}
\def\putbtriangle{\@ifnextchar <{\putbtrianglep}{\putbtrianglep
   <\arrowtypea`\arrowtypeb`\arrowtypec;\height>}}
\def\btriangle{\@ifnextchar <{\btrianglep}{\btrianglep
   <\arrowtypea`\arrowtypeb`\arrowtypec;\height>}}
\def\btrianglep<#1>[#2`#3`#4;#5`#6`#7]{{
\settriparms[#1]
\width=\height                               
\diagram
\putbtrianglep<\arrowtypea`\arrowtypeb`
\arrowtypec;\height>
(0,0)[#2`#3`#4;#5`#6`{#7}]
\enddiagram
}}
\def\putAtrianglep<#1>(#2,#3)[#4`#5`#6;#7`#8`#9]{{%
\settriparms[#1]%
\xpos=#2 \ypos=#3
{\multiply \height by2
\puthmorphism(\xpos,\ypos)[#5`#6`{#9}]{\height}{\arrowtypec}b}%
\advance\xpos by\height \advance\ypos by\height
\putmorphism(\xpos,\ypos)(-1,-1)[#4``{#7}]{\height}{\arrowtypea}l%
\putmorphism(\xpos,\ypos)(1,-1)[``{#8}]{\height}{\arrowtypeb}r%
}}
\def\putAtriangle{\@ifnextchar <{\putAtrianglep}{\putAtrianglep
   <\arrowtypea`\arrowtypeb`\arrowtypec;\height>}}
\def\Atriangle{\@ifnextchar <{\Atrianglep}{\Atrianglep
   <\arrowtypea`\arrowtypeb`\arrowtypec;\height>}}
\def\Atrianglep<#1>[#2`#3`#4;#5`#6`#7]{{
\settriparms[#1]
\width=\height                                     
\diagram
\putAtrianglep<\arrowtypea`\arrowtypeb`
\arrowtypec;\height>
(0,0)[#2`#3`#4;#5`#6`{#7}]
\enddiagram
}}
\def\putAtrianglepairp<#1>(#2)[#3;#4`#5`#6`#7`#8]{{%
\settripairparms[#1]%
\setpos(#2)%
\settokens`#3`%
\puthmorphism(\xpos,\ypos)[\tokenb`\tokenc`{#7}]{\height}{\arrowtyped}b%
\advance\xpos by\height
\puthmorphism(\xpos,\ypos)[\phantom{\tokenc}`\tokend`{#8}]%
{\height}{\arrowtypee}b%
\advance\ypos by\height
\putmorphism(\xpos,\ypos)(-1,-1)[\tokena``{#4}]{\height}{\arrowtypea}l%
\putvmorphism(\xpos,\ypos)[``{#5}]{\height}{\arrowtypeb}m%
\putmorphism(\xpos,\ypos)(1,-1)[``{#6}]{\height}{\arrowtypec}r%
}}
\def\putAtrianglepair{\@ifnextchar <{\putAtrianglepairp}{\putAtrianglepairp%
   <\arrowtypea`\arrowtypeb`\arrowtypec`\arrowtyped`\arrowtypee;\height>}}
\def\Atrianglepair{\@ifnextchar <{\Atrianglepairp}{\Atrianglepairp%
   <\arrowtypea`\arrowtypeb`\arrowtypec`\arrowtyped`\arrowtypee;\height>}}
\def\Atrianglepairp<#1>[#2;#3`#4`#5`#6`#7]{{
\settripairparms[#1]
\settokens`#2`
\width=\height                                
\diagram
\putAtrianglepairp                            
<\arrowtypea`\arrowtypeb`\arrowtypec`
\arrowtyped`\arrowtypee;\height>
(0,0)[{#2};#3`#4`#5`#6`{#7}]
\enddiagram
}}
\def\putVtrianglep<#1>(#2,#3)[#4`#5`#6;#7`#8`#9]{{%
\settriparms[#1]%
\xpos=#2 \ypos=#3
\advance\ypos by\height
{\multiply\height by2
\puthmorphism(\xpos,\ypos)[#4`#5`{#7}]{\height}{\arrowtypea}a}%
\putmorphism(\xpos,\ypos)(1,-1)[`#6`{#8}]{\height}{\arrowtypeb}l%
\advance\xpos by\height
\advance\xpos by\height
\putmorphism(\xpos,\ypos)(-1,-1)[``{#9}]{\height}{\arrowtypec}r%
}}
\def\putVtriangle{\@ifnextchar <{\putVtrianglep}{\putVtrianglep
   <\arrowtypea`\arrowtypeb`\arrowtypec;\height>}}
\def\Vtriangle{\@ifnextchar <{\Vtrianglep}{\Vtrianglep
   <\arrowtypea`\arrowtypeb`\arrowtypec;\height>}}
\def\Vtrianglep<#1>[#2`#3`#4;#5`#6`#7]{{
\settriparms[#1]
\width=\height                                 
\diagram
\putVtrianglep<\arrowtypea`\arrowtypeb`
\arrowtypec;\height>
(0,0)[#2`#3`#4;#5`#6`{#7}]
\enddiagram
}}
\def\putVtrianglepairp<#1>(#2)[#3;#4`#5`#6`#7`#8]{{
\settripairparms[#1]%
\setpos(#2)%
\settokens`#3`%
\advance\ypos by\height
\putmorphism(\xpos,\ypos)(1,-1)[`\tokend`{#6}]{\height}{\arrowtypec}l%
\puthmorphism(\xpos,\ypos)[\tokena`\tokenb`{#4}]{\height}{\arrowtypea}a%
\advance\xpos by\height
\puthmorphism(\xpos,\ypos)[\phantom{\tokenb}`\tokenc`{#5}]%
{\height}{\arrowtypeb}a%
\putvmorphism(\xpos,\ypos)[``{#7}]{\height}{\arrowtyped}m%
\advance\xpos by\height
\putmorphism(\xpos,\ypos)(-1,-1)[``{#8}]{\height}{\arrowtypee}r%
}}
\def\putVtrianglepair{\@ifnextchar <{\putVtrianglepairp}{\putVtrianglepairp%
    <\arrowtypea`\arrowtypeb`\arrowtypec`\arrowtyped`\arrowtypee;\height>}}
\def\Vtrianglepair{\@ifnextchar <{\Vtrianglepairp}{\Vtrianglepairp%
    <\arrowtypea`\arrowtypeb`\arrowtypec`\arrowtyped`\arrowtypee;\height>}}
\def\Vtrianglepairp<#1>[#2;#3`#4`#5`#6`#7]{{
\settripairparms[#1]
\settokens`#2`
\diagram
\putVtrianglepairp                             
<\arrowtypea`\arrowtypeb`\arrowtypec`
\arrowtyped`\arrowtypee;\height>
(0,0)[{#2};#3`#4`#5`#6`{#7}]
\enddiagram
}}
\def\putCtrianglep<#1>(#2,#3)[#4`#5`#6;#7`#8`#9]{{%
\settriparms[#1]%
\xpos=#2 \ypos=#3
\advance\ypos by\height
\putmorphism(\xpos,\ypos)(1,-1)[``{#9}]{\height}{\arrowtypec}l%
\advance\xpos by\height
\advance\ypos by\height
\putmorphism(\xpos,\ypos)(-1,-1)[#4`#5`{#7}]{\height}{\arrowtypea}l%
{\multiply\height by 2
\putvmorphism(\xpos,\ypos)[`#6`{#8}]{\height}{\arrowtypeb}r}%
}}
\def\putCtriangle{\@ifnextchar <{\putCtrianglep}{\putCtrianglep
    <\arrowtypea`\arrowtypeb`\arrowtypec;\height>}}
\def\Ctriangle{\@ifnextchar <{\Ctrianglep}{\Ctrianglep
    <\arrowtypea`\arrowtypeb`\arrowtypec;\height>}}
\def\Ctrianglep<#1>[#2`#3`#4;#5`#6`#7]{{
\settriparms[#1]
\width=\height                               
\diagram
\putCtrianglep<\arrowtypea`\arrowtypeb`
\arrowtypec;\height>
(0,0)[#2`#3`#4;#5`#6`{#7}]
\enddiagram
}}                                           
\def\putDtrianglep<#1>(#2,#3)[#4`#5`#6;#7`#8`#9]{{%
\settriparms[#1]%
\xpos=#2 \ypos=#3
\advance\xpos by\height \advance\ypos by\height
\putmorphism(\xpos,\ypos)(-1,-1)[``{#9}]{\height}{\arrowtypec}r%
\advance\xpos by-\height \advance\ypos by\height
\putmorphism(\xpos,\ypos)(1,-1)[`#5`{#8}]{\height}{\arrowtypeb}r%
{\multiply\height by 2
\putvmorphism(\xpos,\ypos)[#4`#6`{#7}]{\height}{\arrowtypea}l}%
}}
\def\putDtriangle{\@ifnextchar <{\putDtrianglep}{\putDtrianglep
    <\arrowtypea`\arrowtypeb`\arrowtypec;\height>}}
\def\Dtriangle{\@ifnextchar <{\Dtrianglep}{\Dtrianglep
   <\arrowtypea`\arrowtypeb`\arrowtypec;\height>}}
\def\Dtrianglep<#1>[#2`#3`#4;#5`#6`#7]{{
\settriparms[#1]
\width=\height                              
\diagram
\putDtrianglep<\arrowtypea`\arrowtypeb`
\arrowtypec;\height>
(0,0)[#2`#3`#4;#5`#6`{#7}]
\enddiagram
}}                                          
\def\setrecparms[#1`#2]{\width=#1 \height=#2}%
\def\recursep<#1`#2>[#3;#4`#5`#6`#7`#8]{{\m@th
\width=#1 \height=#2
\settokens`#3`
\settowidth{\tempdimen}{$\tokena$}
\ifdim\tempdimen=0pt
  \savebox{\tempboxa}{\hbox{$\tokenb$}}%
  \savebox{\tempboxb}{\hbox{$\tokend$}}%
  \savebox{\tempboxc}{\hbox{$#6$}}%
\else
  \savebox{\tempboxa}{\hbox{$\hbox{$\tokena$}\times\hbox{$\tokenb$}$}}%
  \savebox{\tempboxb}{\hbox{$\hbox{$\tokena$}\times\hbox{$\tokend$}$}}%
  \savebox{\tempboxc}{\hbox{$\hbox{$\tokena$}\times\hbox{$#6$}$}}%
\fi
\ypos=\height
\divide\ypos by 2
\xpos=\ypos
\advance\xpos by \width
\bfig
\putCtrianglep<-1`1`1;\ypos>(0,0)[`\tokenc`;#5`#6`{#7}]%
\puthmorphism(\ypos,0)[\tokend`\usebox{\tempboxb}`{#8}]{\width}{-1}b%
\puthmorphism(\ypos,\height)[\tokenb`\usebox{\tempboxa}`{#4}]{\width}{-1}a%
\advance\ypos by \width
\putvmorphism(\ypos,\height)[``\usebox{\tempboxc}]{\height}1r%
\efig
}}
\def\recurse{\@ifnextchar <{\recursep}{\recursep<\width`\height>}}
\def\puttwohmorphisms(#1,#2)[#3`#4;#5`#6]#7#8#9{{%
%
\puthmorphism(#1,#2)[#3`#4`]{#7}0a
\ypos=#2
\advance\ypos by 20
\puthmorphism(#1,\ypos)[\phantom{#3}`\phantom{#4}`#5]{#7}{#8}a
\advance\ypos by -40
\puthmorphism(#1,\ypos)[\phantom{#3}`\phantom{#4}`#6]{#7}{#9}b
}}
\def\puttwovmorphisms(#1,#2)[#3`#4;#5`#6]#7#8#9{{%
%
%
\putvmorphism(#1,#2)[#3`#4`]{#7}0a
\xpos=#1
\advance\xpos by -20
\putvmorphism(\xpos,#2)[\phantom{#3}`\phantom{#4}`#5]{#7}{#8}l
\advance\xpos by 40
\putvmorphism(\xpos,#2)[\phantom{#3}`\phantom{#4}`#6]{#7}{#9}r
}}
\def\puthcoequalizer(#1)[#2`#3`#4;#5`#6`#7]#8#9{{%
%
\setpos(#1)%
\puttwohmorphisms(\xpos,\ypos)[#2`#3;#5`#6]{#8}11%
\advance\xpos by #8
\puthmorphism(\xpos,\ypos)[\phantom{#3}`#4`#7]{#8}1{#9}
}}
\def\putvcoequalizer(#1)[#2`#3`#4;#5`#6`#7]#8#9{{%
%
%
\setpos(#1)%
\puttwovmorphisms(\xpos,\ypos)[#2`#3;#5`#6]{#8}11%
\advance\ypos by -#8
\putvmorphism(\xpos,\ypos)[\phantom{#3}`#4`#7]{#8}1{#9}
}}
\def\putthreehmorphisms(#1)[#2`#3;#4`#5`#6]#7(#8)#9{{%
\setpos(#1) \settypes(#8)
\if a#9 %
     \vertsize{\tempcounta}{#5}%
     \vertsize{\tempcountb}{#6}%
     \ifnum \tempcounta<\tempcountb \tempcounta=\tempcountb \fi
\else
     \vertsize{\tempcounta}{#4}%
     \vertsize{\tempcountb}{#5}%
     \ifnum \tempcounta<\tempcountb \tempcounta=\tempcountb \fi
\fi
\advance \tempcounta by 60
\puthmorphism(\xpos,\ypos)[#2`#3`#5]{#7}{\arrowtypeb}{#9}
\advance\ypos by \tempcounta
\puthmorphism(\xpos,\ypos)[\phantom{#2}`\phantom{#3}`#4]{#7}{\arrowtypea}{#9}
\advance\ypos by -\tempcounta \advance\ypos by -\tempcounta
\puthmorphism(\xpos,\ypos)[\phantom{#2}`\phantom{#3}`#6]{#7}{\arrowtypec}{#9}
}}
\def\setarrowtoks[#1`#2`#3`#4`#5`#6]{%
\def\toka{#1}
\def\tokb{#2}
\def\tokc{#3}
\def\tokd{#4}
\def\toke{#5}
\def\tokf{#6}
}
\def\hex{\@ifnextchar <{\hexp}{\hexp<1000`400>}}
\def\hexp<#1`#2>[#3`#4`#5`#6`#7`#8;#9]{%
\setarrowtoks[#9]
\yext=#2 \advance \yext by #2
\xext=#1 \advance\xext by \yext
\bfig
\putCtriangle<-1`0`1;#2>(0,0)[`#5`;\tokb``\tokd]
\xext=#1 \yext=#2 \advance \yext by #2
\putsquare<1`0`0`1;\xext`\yext>(#2,0)[#3`#4`#7`#8;\toka```\tokf]
\advance \xext by #2
\putDtriangle<0`1`-1;#2>(\xext,0)[`#6`;`\tokc`\toke]
\efig
}
\numberwithin{equation}{subsection}
\begin{document}

\maketitle

\begin{abstract}
Let $\{X_i\}$ be a sequence of compact $n$-dimensional Alexandrov spaces (e.g. Riemannian manifolds) with curvature uniformly bounded below
which converges in the Gromov-Hausdorff sense to a compact Alexandrov space $X$. The paper \cite{alesker-conjectures} outlined (without a proof) a construction
of an integer-valued function on $X$; this function carries additional geometric information on the sequence such as the limit of intrinsic volumes of $X_i$'s.
In this paper we consider sequences of closed 2-surfaces and
(1) prove the existence of such a function in this situation; and (2)
classify the functions which may arise from the construction.
\end{abstract}

\tableofcontents

\section{Introduction}\label{S:Introduction}
\subsection{Background and general overview of main results.}
\begin{paragraphlist}
\item Much work has been done on the behavior of Riemannian manifolds and, more generally, Alexandrov spaces with respect to the Gromov-Hausdorff (GH)
convergence when the sectional curvature is uniformly bounded below, see e.g. \cite{alexander-kapovitch-petrunin}, \cite{burago-burago-ivanov}, \cite{burago-gromov-perelman} and references therein.
The first author formulated in \cite{alesker-conjectures} a few conjectures on the behavior of the intrinsic volumes (or, in equivalent terminology, Lipschitz-Killing curvatures)
on such spaces. One of the new ingredients in terms of which
the conjectures were formulated and which is central for the current paper concerns the so-called constructible functions on the limiting space.
Let $\{X_i^n\}$ be  a sequence of such spaces of dimension $n$ which converges in the GH-sense to a compact Alexandrov space $X$.
Then after choosing a subsequence one can define an integer-valued function $F\colon X\to \ZZ$ (see below). In some sense the function $F$ is unique up to
the natural action of the group of isometries of $X$, see below for the precise statement.
In the no collapse case, i.e. $\dim X=n$, $F$ is equal to 1: first Petrunin \cite{petrunin-personal} mentioned that he possessed a proof, then V. Kapovitch supplied us with a proof of this fact, see Theorem \ref{T:no-collapse}. The function $F$ is constant on the strata of the Perelman-Petrunin stratification \cite{fujioka}.\footnote{Before Fujioka's work \cite{fujioka}, Petrunin \cite{petrunin-personal}
mentioned to the first author that he has a proof, although did not publish it.}
The latter notion was developed in \cite{perelman-petrunin}.

The function $F$ carries extra geometric information for collapsing sequences and was used in the formulation of conjectures in \cite{alesker-conjectures}; a very special case of this connection
is discussed in Subsection \ref{Ss:euler-charact} below.
The construction of $F$ was motivated by the construction of a nearby cycle known in algebraic geometry
(see e.g. \cite{sga7}; for its version in real analytic geometry see \cite{fu-mccrory}, Theorem 3.7) although the current technical set up is very different.

\item The goals of this paper are
\newline
(1) to give a rigorous construction of $F$ in the case of sequences of closed 2-surfaces
with metrics with curvature uniformly bounded below in the sense of Alexandrov (in particular for closed smooth surfaces with Gaussian curvature uniformly bounded from below). Although the general case was very recently
treated in \cite{fujioka}, our treatment of the 2-dimensional case is independent and more elementary.
\newline
(2) we classify the functions $F$ arising on the limit
space in the above situation. Actually we do that in a more precise form discussed below.

As a direct consequence, we verify one of the general conjectured properties of $F$ in the above situation of sequences of closed 2-surfaces.

\item Before we state the main results, let us describe a version of the nearby cycle construction.\footnote{This construction was first suggested as a conjecture by the first author. 
Then Petrunin (see the discussion in \cite{alesker-conjectures}) mentioned that he has a proof. Very recently Fujioka \cite{fujioka} posted a proof on the arxiv.} Let $\{X_i^n\}$ be a sequence of compact $n$-dimensional
Alexandrov spaces  with curvature uniformly bounded from below. Let it GH-converge to a compact Alexandrov space $X$.
 Let $\{d_i\}$ be any metrics on $X_i\coprod X$
extending the original metrics on $X_i$ and $X$ and such that the Hausdorff distance $d_{i,H}(X_i,X)\to 0$ as $i\to \infty$. Let $x\in X$. For $\eps>0$ denote
\begin{eqnarray}\label{Def:cb}
\cb_{i,x}(\eps)=\{y\in X_i|\,\, d_i(y,x)<\eps\}.
\end{eqnarray}
Then there exists a subsequence, denoted in the same way,
with the following properties.
There exists $\eps_0>0$ such that for all $0<\delta_1<\delta_2<\eps_0$ there exists $i_0\in \NN$ (depending on $x,d_i,\eps_0,\delta_1,\delta_2$, and the subsequence)
such that for any $i>i_0$, any $a\in \ZZ_{\geq 0}$,
and any field $\FF$ the image of the natural map in the $a$th cohomology
$$H^a(\cb_{i,x}(\delta_2);\FF)\to H^a(\cb_{i,x}(\delta_1);\FF)$$
has dimension independent of $i,\delta_1,\delta_2$. Let us denote this dimension by $h^a(x)$. The function $x\mapsto h^a(x)$ is unique up to isometries in the following sense.
Let a similar function $\tilde h^a$ be constructed using different metrics $\tilde d_i$ on $X_i\coprod X$ and using a subsequence of the subsequence leading to $h^a$. Then there exists an
isometry $\alp$ of $X$ such that $\tilde h^a=h^a\circ \alp$.

The function in question $F\colon X\to \ZZ$ is defined by
$F(x):=\sum_{a}(-1)^ah^a(x).$

The functions $h^a$ are the main focus of the present paper when $\{X_i\}$ are closed 2-surfaces with Alexandrov metrics with curvature uniformly bounded below.
We show, in particular, that the described construction of $h^a$ is well defined in this case of surfaces.
Furthermore we classify in all cases the $h^a$'s as functions on the limiting space.

\item Before describing the main results, let us remind a few well known facts on the topology and geometry of 2-surfaces. Any topological closed 2-surface is homeomorphic
to exactly one surface from the following list:
\newline
(1) the 2-sphere;
\newline
(2) the connected sum of $g$ copies of the 2-torus where $g\geq 1$.
\newline
(3) the connected sum of $k$ copies of the real projective plane where $k\geq 1$.

The surfaces of types (1) and (2) are orientable, while of type (3) are not orientable. It follows that a surface is defined uniquely up to a homeomorphism
by its Euler characteristic and whether it is orientable or not.

\item Consider now surfaces with metrics with curvature bounded below in the sense of Alexandrov (see Definition \ref{cbbdef} below). Such metrics include smooth Riemannian metrics
with a lower bound on the Gaussian curvature. It is well known that given a lower bound $\kappa$ on the curvature (or on the Gaussian curvature in the smooth case) and an upper bound $D$ on the diameter,
there exist only finitely many homeomorphism types of surfaces admitting
such metrics.
Hence if a sequence of closed surfaces $\{X_i\}$ with curvature uniformly bounded from below GH-converges
to a compact space $X$ then in the sequence there are only finitely many homeomorphism types. Thus after a choice of subsequence one can assume that all $X_i$ have a fixed homeomorphism type.
This will be assumed throughout the rest of text.
\end{paragraphlist}

\subsection{The main results.}

\begin{paragraphlist}
\item Let us state the main results of the paper. Consider a sequence $\{X_i\}$ of closed 2-surfaces of given homeomorphism type with curvature uniformly bounded below. Assume it GH-converges to a compact metric space $X$.
By Burago-Gromov-Perelman \cite{burago-gromov-perelman} (see also \cite{burago-burago-ivanov}) $X$ is an Alexandrov space of (integer) dimension at most 2. If $\dim X<2$ one says that a collapse occurs.
It is well known (see e.g. Proposition \ref{P:surfaces-may-collapse} below) that a collapse may happen precisely for sequences of surfaces of non-negative Euler characteristic,
i.e. for spheres, real projective planes, tori, and Klein bottles.

Theorem \ref{T:no-collapse} below says that in the no collapse case $h^0\equiv 1$ and $h^a\equiv 0$ for $a\ne 0$, in particular $F\equiv 1$; this is not only for surfaces but for any non-collapsing sequences of
compact $n$-dimensional Alexandrov spaces with curvature uniformly bounded from below.
This theorem is an easy consequence of the more general Theorem \ref{alex-conv-balls} below due to V. Kapovitch \cite{kapovitch-private-commun}.

Let us assume now that $\dim X=0$, i.e. $X$ is a point. This case is trivial: by the definition of $h^a$ one has $h^a=\dim H^a(X_i;\FF)$. Note  that all four homeomorphism types of
closed 2-surfaces admitting a collapse (i.e. sphere, torus, real projective plane, and Klein bottle) may actually collapse to a point while the curvature is uniformly bounded from below.

Let us consider the case $\dim X=1$ which is the main one for this paper. Then it is well known that $X$ is isometric either to a circle or a closed segment.
As we have mentioned, by Proposition \ref{P:surfaces-may-collapse} $X_i$ must have non-negative Euler characteristic. Let us consider them case by case.

(1) Let $\{X_i\}$ be homeomorphic to the 2-sphere. Then we show (see Theorem \ref{T:surface-t-circle}(2)) that necessarily $X$ is a segment rather than a circle.\footnote{It is likely that this result is folklore.}
Furthermore Theorem \ref{T:spheres-to-segment} says that $h^0\equiv 1$, $h^a\equiv 0$ for $a\ne 0,1$, and
\begin{eqnarray*}
h^1(x)=\left\{\begin{array}{ccc}
             1&\mbox{if}& x\in int(X),\\
             0&\mbox{if}& x\in \pt X.
            \end{array}\right.
\end{eqnarray*}

(2) Let $\{X_i\}$ be homeomorphic to the real projective plane. By Theorem \ref{T:surface-t-circle}(2) $X$ is a segment but not a circle. By Theorem \ref{T:rp2-segment}
$h^a\equiv 0$ for $a\ne 0,1$, $h^0\equiv 1$. Furthermore $h^1(x)=0$ if $x\in int(X)$, and $h^1(x)=0$ for one of the boundary points of $X$, and $h^1(x)=1$ for another boundary point.

(3) Let $\{X_i\}$ be homeomorphic to the torus. Katz \cite{katz-torii} and independently Zamora \cite{zamora-torii} have shown that $X$ must be a circle but not a segment.
Theorem \ref{T:surface-t-circle} says that in this case $h^a\equiv 0$ for $a\ne 0,1$, and $h^0=h^1\equiv 1$.

(4) Let $\{X_i\}$ be homeomorphic to the Klein bottle. In this case both options of segment and circle for $X$ are possible. In either case $h^a\equiv 0$ for $a\ne 0,1$, and $h^0=h^1\equiv 1$ by Theorem \ref{P:klein-bootles-prop}.

\item As an application of this computation of $h^a$, we verify the general conjectural property of the function $F:=\sum_a(-1)^ah^a$: its integral over the limiting space $X$ with respect to the
Euler characteristic is equal to the Euler characteristic of the $X_i$'s (which are equal to each other after a choice of subsequence), see Proposition \ref{R:euler-characteristic-integral}.

\begin{remark}\label{R:fujioka-result}
This application was proven in a much greater generality in the recent preprint by Fujioka \cite{fujioka}.
\end{remark}

\item The main tools in this paper are the Yamaguchi fibration theorem \cite{yamaguchi-1991} and Alexandrov's realization theorems \cite{Ale1}, \cite{Ale2}.
While the former works for any dimension of $\{X_i\}$, the latter is specific to dimension 2.

\item The motivation of this paper comes from conjectures \cite{alesker-conjectures} on the behavior of the intrinsic volumes (also known as the Lipschitz-Killing curvatures) on Riemannian and Alexandrov spaces under the GH-convergence.

\end{paragraphlist}

\subsection{Organization of the paper.}

 The paper is organized as follows. In Section \ref{S:convexity-hyperbolic} we review convexity in the hyperbolic space. Most of the results of this section are either well known or folklore;
we provide references whenever possible and
provide proofs otherwise. The material of this section will be used in Section \ref{S:collapse-surfaces} in the study of the collapse of 2-spheres.

In Section \ref{Ss:convergence-n-n} we study the functions $h^a$ when the GH-convergence is replaced by the Hausdorff convergence of convex bodies in the  hyperbolic space $\HH^n$.
This will be used in Section \ref{S:collapse-surfaces} in the study of the collapse of 2-spheres and real projective planes via Alexandrov's realization theorem.

In Section \ref{S:yamaguchi-map} we discuss mostly known or folklore facts on Riemannian submersions and the Yamaguchi map between smooth Riemannian manifolds.
This will be used in Section \ref{S:collapse-surfaces} in the study of the collapse of tori and Klein bottles.

In Section \ref{S:convergence-functions-actions} we introduce the notion of $G$-equivariant GH-convergence when $G$ is a finite group and prove a $G$-equivariant version of Gromov's compactness theorem.
Then a version of $G$-equivariant Yamagichi map is constructed in a very special situation sufficient for the purposes of this paper; this is done essentially by repeating Yamaguchi's construction \cite{yamaguchi-1991}.
We will use this material in Section \ref{S:collapse-surfaces} in the study of the collapse of real projective planes and Klein bottles; only the group $G=\ZZ_2$ will be needed.

In Section \ref{S:collapse-surfaces} we prove the main results of this paper.

\subsection{Relation to the work of Shioya and Yamaguchi.}

Shioya and Yamaguchi \cite{shioya-yamaguchi} studied the properties of the Yamaguchi fibration and collapse in the 3-dimensional case. In particular, they studied the homeomorphism types of the metric balls inside collapsing 3-dimensional manifolds. Their techniques could be applied also in the easier 2-dimensional case, and could be used to give an alternative proof of the results of our paper. Note, however, that the methods of our paper are more elementary. Thus their approach relies on deep (even in dimension 2) facts from the Alexandrov geometry such as the generalized soul theorem and the parameterized stability theorem. The former is not used in our paper, the latter is used only in the no-collapse case.
In our study of collapse to the segment we exploit the Alexandrov realization theorem, a totally different tool from those used in \cite{shioya-yamaguchi}.

\hfill

{\bf Acknowledgements.} We express our gratitude to V. Kapovitch who supplied the proof of Theorem \ref{alex-conv-balls}.
The first author is very grateful to A. Petrunin for numerous useful discussions.
The third author is also grateful to his postdoctoral mentor M. Eichmair for his support.
Part of this research was done while the third author was visiting the Tel Aviv University, and the third author is very grateful to the institution for its hospitality.

\section{Convexity in hyperbolic space.}\label{S:convexity-hyperbolic}
In this section we review a few facts on the geometry of convex subsets of the hyperbolic space, CBB($-1$) spaces, and Alexandrov's results on isometric realization of
CBB($-1$) metrics on the 2-sphere as boundaries of convex subsets in hyperbolic 3-space. The principal results of this section are Proposition~\ref{P:convex-hypersurf-convegence} and Theorem \ref{realiz}, but they seem to be folklore.

\subsection{Busemann-Feller lemma.} \label{Ss:buseman-feller-lemma}
The following result due to Milka \cite{milka} is a hyperbolic version of the Busemann-Feller lemma in Euclidean space.
\begin{theorem}\label{L:busemann-feller}
Let $K\subset \HH^n$ be a closed convex subset of the hyperbolic space. Then
\newline
(1) for any point $x\in \HH^n$ there exists a unique nearest point from $K$;
\newline
(2) the map $\HH^n\to K$ sending a point from $\HH^n$ to its nearest point from $K$ is a 1-Lipschitz map.
\end{theorem}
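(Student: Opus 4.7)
The plan is to prove the three assertions (existence, uniqueness, and the $1$-Lipschitz property of the projection) by exploiting the fact that $\HH^n$ is a CAT($-1$) space (and, in particular, CAT($0$)) together with completeness. None of the steps is technically heavy, and each has a standard template; only the last requires genuine use of negative curvature via the hyperbolic law of cosines (or, equivalently, the CAT($0$) comparison).

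\emph{Existence.} The function $f(y)=d(x,y)$ is $1$-Lipschitz on $K$. Since $\HH^n$ is proper (closed balls are compact), the intersection of $K$ with any closed ball around $x$ is compact, so $f$ attains its minimum on $K\cap \bar B(x,R)$ for any $R>\inf_{K}f$. This gives a nearest point.

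\emph{Uniqueness.} Suppose $p_1,p_2\in K$ both realize $r:=d(x,K)$ and let $m$ be the midpoint of the unique geodesic segment $[p_1,p_2]$; by convexity, $m\in K$. In a CAT($-1$) space one has the strict midpoint inequality
$$
d(x,m)^2\;\le\;\tfrac{1}{2}d(x,p_1)^2+\tfrac{1}{2}d(x,p_2)^2-\tfrac{1}{4}d(p_1,p_2)^2\;=\;r^2-\tfrac{1}{4}d(p_1,p_2)^2,
$$
which, together with $d(x,m)\ge r$, forces $p_1=p_2$.

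\emph{The $1$-Lipschitz property.} Let $x_1,x_2\in \HH^n$ and write $p_i$ for the (now well-defined) nearest point of $x_i$ in $K$. A first-variation argument gives the angle characterization: for every $q\in K$ the angle $\angle_{p_i}(x_i,q)\ge \pi/2$, since otherwise moving along a short initial arc of $[p_i,q]\subset K$ would strictly decrease the distance to $x_i$. Apply the hyperbolic law of cosines in the triangle with vertices $x_i,p_i,p_{3-i}$; the obtuse angle at $p_i$ makes the $\cos$-term nonpositive, yielding
$$
\cosh d(x_i,p_{3-i})\;\ge\;\cosh d(x_i,p_i)\,\cosh d(p_i,p_{3-i}).
$$
Combining the two resulting inequalities (for $i=1$ and $i=2$) with one further application of the hyperbolic law of cosines to the triangle obtained by splitting the quadrilateral $x_1p_1p_2x_2$ along a diagonal gives $d(x_1,x_2)\ge d(p_1,p_2)$.

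The main obstacle is the last step: although the angle characterization is routine, turning the two angle-obtuse inequalities into the global estimate $d(x_1,x_2)\ge d(p_1,p_2)$ requires a careful quadrilateral/Pythagorean-type comparison in negative curvature. The cleanest way to carry this out is to invoke the CAT($0$) framework directly (the projection onto a closed convex subset of any CAT($0$) space is $1$-Lipschitz, cf.\ Bridson--Haefliger), so that the hyperbolic law of cosines is needed only to verify that $\HH^n$ is CAT($-1$), a standard computation. If instead one insists on a direct proof inside $\HH^n$, one can parametrize the geodesic from $x_1$ to $x_2$, project each point to $K$, and check that the projected curve has speed at most $1$ almost everywhere by a second-variation computation using $\cosh$-concavity of $d(\,\cdot\,,K)$ along geodesics.
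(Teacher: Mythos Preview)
The paper does not prove this result; it simply attributes it to Milka and moves on. So there is no ``paper's proof'' to compare against, and your outline already goes well beyond what the authors supply.

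Your existence and uniqueness arguments are correct and standard.

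For the $1$-Lipschitz part, the direct quadrilateral step as you wrote it does not close: after splitting $x_1p_1p_2x_2$ along, say, the diagonal $x_1p_2$, the angle $\angle_{p_2}(x_1,x_2)$ in the triangle $x_1p_2x_2$ is \emph{not} controlled by the projection property (only $\angle_{p_2}(x_2,p_1)$ is), so ``one further application of the law of cosines'' gives no useful inequality. You recognize this yourself and defer to the general CAT($0$) projection lemma in Bridson--Haefliger, which is a perfectly valid fix and is the standard reference. If you prefer a self-contained hyperbolic argument in the same spirit as your two $\cosh$-inequalities, the clean route is: project $x_1,x_2$ once more onto the geodesic line $\ell$ through $p_1,p_2$, obtaining feet $q_1,q_2$. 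The first-variation angle condition $\angle_{p_i}(x_i,p_{3-i})\ge\pi/2$, together with strict convexity of $s\mapsto d(x_i,\ell(s))$, forces $q_i$ to lie on the far side of $p_i$ from $p_{3-i}$, hence $[p_1,p_2]\subset[q_1,q_2]$. The nearest-point map onto a geodesic line in $\HH^n$ is $1$-Lipschitz (immediate from Fermi coordinates $ds^2=dr^2+\cosh^2 r\,dt^2+\cdots$, since the $t$-component of any unit vector has length $\le 1$), so $d(x_1,x_2)\ge d(q_1,q_2)\ge d(p_1,p_2)$.
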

The following corollary should be well known but we have no reference.
\begin{corollary}\label{Cor-monotonicity-length}
Let $K_1\subset K_2\subset \HH^2$ be 2-dimensional convex compact subsets of the hyperbolic plane. Then
$$length(\pt K_1)\leq length(\pt K_2).$$
\end{corollary}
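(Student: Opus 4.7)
The plan is to use the nearest-point projection $\pi\colon\HH^2\to K_1$, which by Theorem~\ref{L:busemann-feller} is $1$-Lipschitz, and to show that its restriction to $\partial K_2$ is a continuous surjection onto $\partial K_1$; monotonicity of length will then follow from a covering-space argument applied to this surjection.

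First I would verify $\pi(\partial K_2)\subset \partial K_1$. For $y\in\partial K_2\setminus K_1$, the point $\pi(y)$ must lie in $\partial K_1$ because the geodesic segment from $\pi(y)$ toward $y$ exits $K_1$ at its initial endpoint; for $y\in K_1\cap\partial K_2$ one has $\pi(y)=y$, and the existence of points arbitrarily close to $y$ lying outside $K_2\supset K_1$ forces $y\in\partial K_1$.

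Next I would establish surjectivity. Given $x\in\partial K_1$, pick a supporting geodesic $\ell$ to $K_1$ at $x$ (which exists for any $2$-dimensional closed convex subset of $\HH^2$ by standard convex separation), and move from $x$ along the perpendicular to $\ell$ on the side opposite to $K_1$ until first hitting $\partial K_2$ at a point $y$; this first-hit point exists because $K_2$ is compact. Then $\pi(y)=x$: every $z\in K_1$ lies in the closed half-plane of $\ell$ not containing $y$, so the standard hyperbolic foot-of-perpendicular inequality gives $d(y,z)\ge d(y,\ell)=d(y,x)$.

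Finally, parametrize $\partial K_2$ by arc length as a closed loop $\gamma\colon[0,L_2]\to\partial K_2$ with $L_2=\text{length}(\partial K_2)$. Because $\pi$ is $1$-Lipschitz, $\text{length}(\pi\circ\gamma)\le L_2$. On the other hand $\pi\circ\gamma$ is a continuous closed curve in the Jordan curve $\partial K_1$ that surjects onto it; lifting $\pi\circ\gamma$ to the universal cover $\RR\to\partial K_1\cong\RR/L_1\ZZ$, the connected image of the lift must meet every coset of $L_1\ZZ$ and therefore has diameter at least $L_1$, which forces $\text{length}(\pi\circ\gamma)\ge L_1$. Combining the two inequalities yields $L_1\le L_2$. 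The main obstacle is this last step: extracting a lower bound on length from the mere existence of a $1$-Lipschitz surjection. The universal-cover/lifting argument handles it cleanly, but it is the only nontrivial piece of the proof; the supporting-geodesic and foot-of-perpendicular steps are classical and require only isolation rather than new ideas.
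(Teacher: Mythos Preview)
Your proposal is correct and follows essentially the same approach as the paper: restrict the $1$-Lipschitz nearest-point map to $\partial K_2$, observe it surjects onto $\partial K_1$, and conclude the length inequality. The paper's proof is far terser---it simply declares surjectivity ``easy to see'' and says ``the result follows''---so your argument supplies precisely the details the paper omits; your covering-space step is one legitimate way to justify the final inequality (an alternative one-liner is that a $1$-Lipschitz surjection cannot increase $1$-dimensional Hausdorff measure, which for rectifiable Jordan curves coincides with length).
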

{\bf Proof.} Let $r\colon \pt K_2\to \pt K_1$ be the restriction to $\pt K_2$ of the nearest point map to $K_1$. By the Busemann-Feller
lemma it is 1-Lipschitz. It is easy to see that $r$ is onto. The result follows. \qed

\subsection{Klein model of hyperbolic space.}\label{Ss:model-hyperbolic} The Klein model of the $n$-dimensional hyperbolic space $\HH^n$ is the open unit Euclidean ball
$\BB^n=\{x_1^2+\dots+ x_n^2<1\}$ equipped with the Riemannian metric
$$ds^2=\frac{\sum_{i=1}^n dx_i^2}{1-|x|^2}+\frac{\left(\sum_{i=1}^n x_idx_i\right)^2}{(1-|x|^2)^2},$$
where $|x|^2=\sum_{i=1}^n x_i^2$ is the square of the Euclidean norm.

The relevant property of this metric to be used later is that its geodesic lines are precisely the intersections of affine lines in $\RR^n$ with the open ball $\BB^n$.

\subsection{CBB($-1$) metrics}

Let us recall a few definitions. Let $S$ be a topological 2-dimensional manifold, i.e. a surface. Let $d$ be a complete intrinsic metric on $S$ inducing the given topology on it.
By intrinsic we mean that the distance between each pair of points is equal to the infimum of lengths of all rectifiable paths connecting them. Then this infimum is achieved: there exists a shortest path between any two points. This is a corollary of the Arzela--Ascoli theorem, see \cite{burago-burago-ivanov}, Theorem 2.5.23.
 Let $\psi$, $\chi$ be two shortest paths in $(S, d)$ issuing from a common point $x$. Let $y \in \psi$ be the point at distance $a$ from $x$ and $z \in \chi$ be the point at distance $b$ from $x$. Consider the hyperbolic triangle with side lengths $a$, $b$ and $ d(y,z)$ and let $\lambda(a,b)$ be the angle opposite to the side of length $ d(y,z)$. The following definition was introduced in \cite{burago-gromov-perelman}, Section 2.7, in the multi-dimensional situation.

\begin{definition}
\label{cbbdef}
We say that $d$ is a \emph{CBB($-1$) metric} on $S$ if $d$ is complete, intrinsic and for each $x \in S$ there exists a neighborhood $U \ni x$ such that the function $\lambda(a,b)$ is a nonincreasing function of $a$ and $b$ for every $\psi$, $\chi$ issuing from $x$, in the range $a \in [0; a_0]$, $b \in [0; b_0]$ where the respective points $y, z$ belong to $U$.
\end{definition}

Important examples of CBB($-1$) surfaces are convex surfaces in the hyperbolic space $\HH^3$ endowed with the induced intrinsic metric; in case they are $C^2$-smooth the latter condition is equivalent to have the Gaussian curvature at least $-1$.

Another example of a CBB($-1$) surface is the double cover of a planar convex set (\cite{Ale2}, Ch. I). Let $K\subset \HH^2$ be a convex compact set with non-empty interior.
Let us consider the surface $DK$ equal to  the union of two copies of $K$ such that corresponding points of their respective boundaries are identified. The surface $DK$ is homeomorphic to the 2-sphere $\SS^2$.
It is equipped with the unique intrinsic metric such that the two copies of $K$ inside $DK$ are isometrically embedded. Then $DK$ is CBB($-1$).

\subsection{From the Hausdorff convergence to the Gromov--Hausdorff convergence}

We recall
\begin{definition}\label{D:uniform-convergence}
Let $X$ be a set. One says that a sequence of metrics $\{d_i\}$ on $X$ converges to a metric $d$ uniformly if
$$\sup_{x,y\in X}|d_i(x,y)-d(x,y)|\to 0 \mbox{ as } i\to \infty.$$
\end{definition}

The following result is well-known to experts, but we did not find an appropriate reference, so we are giving our proof.
Some parts of the argument are inspired by the classical treatment of the 3-dimensional Euclidean case by Alexandrov in~\cite{Ale2}, although Alexandrov does not consider the Gromov--Hausdorff convergence.

\begin{proposition}\label{P:convex-hypersurf-convegence}
Let $\{K_i\}\subset \HH^n$ be a sequence of $n$-dimensional convex compact sets converging in the Hausdorff sense to a convex compact set $K$.
Consider the sequence of their boundaries $\{\pt K_i\}$ equipped with the induced intrinsic metrics.
\newline
(1) If $\dim K=n$, then $\pt K_i\overset{GH}{\to}\pt K$ where $\pt K$ is equipped with the intrinsic metric.
\newline
(2) If $\dim K=n-1$, then $\pt K_i$ GH-converges to the double cover $DK$ of $K$.
\newline
(3) If $\dim K\leq n-2$, then $\partial K_i\overset{GH}{\to} K$ where $K$ is equipped with the metric induced from $\HH^n$ which is automatically intrinsic.
\newline
(4) Let us denote by $D^{in}_i$ the intrinsic distance on $\pt K_i$, and by $dist$ the distance in $\HH^n$. If $\dim K\leq n-2$, then
$$\lim_{i\to\infty}\sup_{x,y\in \pt K_i}|D^{in}_i(x,y)-dist(x,y)|= 0.$$
\end{proposition}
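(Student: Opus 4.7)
The plan is to treat the three cases of $\dim K$ separately, proving parts (3) and (4) together since (3) is a direct consequence of (4); note that any convex subset of $\HH^n$ lies in a totally geodesic subspace, and so the restriction of the ambient metric to it is automatically intrinsic. The main tool throughout is the Busemann--Feller nearest-point projection (Theorem~\ref{L:busemann-feller}), which is $1$-Lipschitz, applied both to $K$ and to the $K_i$.

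For parts (3) and (4) with $\dim K \leq n-2$: Hausdorff convergence $K_i \to K$ gives $K_i \subset N_{\eps_i}(K)$ for some $\eps_i \to 0$, hence $\sup_{x \in \pt K_i} dist(x,\pi_K(x)) \to 0$, which already supplies half of part (3). The inequality $dist(x,y) \leq D^{in}_i(x,y)$ is trivial, so the heart of part (4) is the reverse bound $D^{in}_i(x,y) \leq dist(x,y) + o(1)$ uniformly in $x,y \in \pt K_i$. I would establish this by constructing an explicit short path on $\pt K_i$: the hyperbolic geodesic $[x,y]$ lies in $K_i$ by convexity, and since $K$ has codimension at least $2$, one can select a continuous unit vector field $v(t)$ along $[x,y]$ that is transverse to the affine span of $K$, and form a nearby curve on $\pt K_i$ by following the geodesic ray from $\gamma(t)$ in direction $v(t)$ until the first exit. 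The resulting curve lies within ambient distance at most $2\eps_i$ of $\gamma$, has length $dist(x,y) + O(\eps_i)$, and its endpoints are within ambient distance $O(\eps_i)$ of $x$ and $y$ and can be joined to them by short closing arcs on $\pt K_i$ of comparable length. Part (3) then follows at once.

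For part (1) with $\dim K = n$: $K$ has nonempty interior, so standard convex geometry yields Hausdorff convergence of boundaries $\pt K_i \to \pt K$ in $\HH^n$. The restrictions $\pi_K|_{\pt K_i}$ and $\pi_{K_i}|_{\pt K}$ are $1$-Lipschitz, land on the respective boundaries for large $i$, and have compositions $o(1)$-close to the identity. This gives GH-convergence once one upgrades the $1$-Lipschitz ambient bound to a $(1+o(1))$-bi-Lipschitz bound on intrinsic metrics: the upper bound on projected intrinsic distance follows from $1$-Lipschitzness applied to a shortest path, while the reverse follows by lifting a near-geodesic and using that each boundary is locally well-approximated by its supporting hyperplane. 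For part (2) with $\dim K = n-1$: $K$ lies in a totally geodesic hyperplane $H \subset \HH^n$ and $K_i$ becomes a thin convex body straddling $H$ as $i \to \infty$. Split $\pt K_i = \pt K_i^+ \cup \pt K_i^-$ using the two closed half-spaces bounded by $H$; nearest-point projection to $H$ sends each half onto a set $\eps_i$-close to $K$ and becomes an $\eps_i$-isometry onto $K$, while the common equator $\pt K_i^+ \cap \pt K_i^-$ converges to $\pt K$. Gluing the two halves along their equators produces the double cover $DK$ as the GH limit.

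The main obstacle I anticipate is in part (4), namely the rigorous continuity and length control of the transverse-projection path construction on the possibly non-smooth boundary $\pt K_i$, together with the uniform $o(1)$ estimate in $x,y$. The cleanest workaround is to first replace $K_i$ by its $\delta_i$-parallel body $\{x \in \HH^n : dist(x, K_i) \leq \delta_i\}$ for some $\delta_i \to 0$; this convex body has $C^{1,1}$ boundary and is $\delta_i$-close to $K_i$ in Hausdorff distance, so by Corollary~\ref{Cor-monotonicity-length}-type monotonicity its induced boundary metric differs from that of $\pt K_i$ by $o(1)$, while the transverse path construction becomes manifestly regular.
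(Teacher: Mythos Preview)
Your overall organization is reasonable, but there is a genuine gap in your argument for part (4), and the workaround you propose does not close it.

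The transverse-projection curve $p(t)$ you build does lie on $\partial K_i$ and stays within $O(\eps_i)$ of the chord $\gamma = [x,y]$; if one takes $v$ constant in $(y-x)^\perp \cap L^\perp$ the exit-time function is concave and one can indeed bound $\mathrm{length}(p)$ by $dist(x,y) + O(\eps_i)$. The problem is the endpoints: $p(0) = x + s(0)v$ and $p(1) = y + s(1)v$ need not equal $x$ and $y$ unless $v$ happens to be an outer direction at both. You assert these can be joined to $x,y$ by ``short closing arcs on $\partial K_i$ of comparable length'', but that is precisely the statement you are trying to prove --- two boundary points at ambient distance $O(\eps_i)$ need not be at intrinsic distance $O(\eps_i)$, and invoking your construction recursively does not terminate. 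The parallel-body smoothing does not help: Busemann--Feller gives $D^{in}_i \le D^{in}_{(K_i)_{\delta_i}}$ in the useful direction, but on the smoothed body you face exactly the same endpoint-matching obstruction; $C^{1,1}$ regularity does nothing for this.

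The paper's route for (4) sidesteps this by first isolating a planar lemma (convex sets in $\HH^2$ collapsing to a segment, with the hypothesis that outer normals at the two marked boundary points lie in the same half-plane). That lemma is proved by following outer normals \emph{at the endpoints themselves} out to $\partial(I_{\eps_i})$, so that Busemann--Feller toward $K_i$ sends those points back exactly to $x_i,y_i$ --- no closing arcs needed --- and then reading off the intrinsic distance on the explicit stadium boundary. For the $n$-dimensional statement the paper chooses the slicing $2$-plane through $x_i,y_i$ so that the outer normal at $x_i$ projects onto the line $x_iy_i$; this forces $x_i$ to become an endpoint of the limiting slice segment, which is what makes the planar lemma applicable. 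This plane-selection step, tying $v$ to the normal cone at $x_i$, is the idea your construction lacks.

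For part (1), the sketch ``lift a near-geodesic and use local approximation by a supporting hyperplane'' is too loose to yield a uniform two-sided bound on intrinsic metrics, and it is not clear how you arrange that the Busemann--Feller projections land on the boundaries rather than in the interiors. The paper instead fixes $o \in int(K)$, transfers all intrinsic metrics to the unit sphere in $T_o\HH^n$ via the radial map, and sandwiches $\partial K$ between convex surfaces $\partial K'_i,\partial K''_i$ obtained from $\partial K_i$ by the convexity-preserving hyperbolic scaling $\tanh r \mapsto e^{\pm t_i}\tanh r$; Busemann--Feller between these nested surfaces gives one inequality and a direct hyperbolic sine-law estimate on comparison triangles gives the other. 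Your outline for (2) has the right picture but inherits the same unproven intrinsic-length comparison.
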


First let us prove a lemma.
\begin{lemma}\label{L:hausdorff-boundary-small-dim}
Let $\{K_i\}\subset \HH^n$ be a sequence of $n$-dimensional convex compact sets converging in the Hausdorff sense to a convex compact set $K$. If $\dim K\leq n-1$ then $\{\pt K_i\}\to K$ in the
Hausdorff sense when the boundaries $\{\pt K_i\}$ are equipped with the induced extrinsic metrics from $\HH^n$.
\end{lemma}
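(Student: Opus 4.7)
The proof splits into two one-sided Hausdorff estimates between $\pt K_i$ and $K$ (measured in $\HH^n$).

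\textbf{Easy direction ($\pt K_i$ is close to $K$).} Since $\pt K_i\subset K_i$ and $K_i\to K$ in the Hausdorff sense in $\HH^n$, one gets immediately that for any $\eps>0$ and $i$ sufficiently large, $\pt K_i\subset N_\eps(K)$, so $\sup_{p\in\pt K_i} dist(p,K)<\eps$.

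\textbf{Hard direction ($K$ is close to $\pt K_i$).} Fix $\eps>0$. The key geometric input is that $\dim K\leq n-1$, so $K$ has empty interior in $\HH^n$ (its affine hull in the Klein model is a proper affine subspace of $\RR^n$ intersected with $\BB^n$). Hence, for every $x\in K$, one can pick a point $y=y(x)\in \HH^n\setminus K$ with $dist(x,y)<\eps/3$, and since $\HH^n\setminus K$ is open there exists $\delta(x)>0$ with $B(y(x),\delta(x))\cap K=\emptyset$. The balls $B(x,\eps/3)$ cover $K$; by compactness select a finite subcover corresponding to points $x_1,\dots,x_N\in K$, and set $\delta:=\tfrac{1}{2}\min_j\delta(x_j)$.

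For $i$ large enough that $d_H(K_i,K)<\min(\eps/3,\delta)$, the point $y(x_j)$ lies outside $K_i$ for each $j$, since $K_i\subset N_\delta(K)$ while $B(y(x_j),\delta(x_j))\cap K=\emptyset$. Given any $x\in K$, pick $j$ with $dist(x,x_j)<\eps/3$ and pick $x_i'\in K_i$ with $dist(x_i',x_j)<\eps/3$. The hyperbolic geodesic segment joining $x_i'\in K_i$ to $y(x_j)\notin K_i$ must meet $\pt K_i$ at some point $p_i$, and a short triangle-inequality bookkeeping shows $dist(p_i,x)<\eps$. This gives $\sup_{x\in K}dist(x,\pt K_i)<\eps$.

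\textbf{Main obstacle.} Both inclusions are conceptually simple; the only subtlety is making the witness construction in the second step uniform in $x\in K$, which is precisely where compactness of $K$ together with the quantitative openness of $\HH^n\setminus K$ at each $y(x)$ is used. I expect nothing deeper than this to be required; in particular, one does not need the full strength of $K$ lying in a totally geodesic hyperplane, only that $\dim K\le n-1$ forces $K$ to have empty interior, which furnishes the outward push used in the segment-crossing argument.
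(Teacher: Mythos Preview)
Your argument is correct up to a minor constant: with your choices the triangle inequality yields $dist(p_i,x)\le dist(p_i,x_i')+dist(x_i',x_j)+dist(x_j,x)<2\eps/3+\eps/3+\eps/3=4\eps/3$, not $<\eps$; replacing $\eps/3$ by $\eps/4$ throughout fixes this.

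Both you and the paper use a segment-crossing argument, but the executions differ. The paper fixes once and for all a hyperplane $\ch\supset K$ and, for each $x\in K_i$, takes the geodesic $l$ through $x$ perpendicular to $\ch$. Since $K_i\subset \ch_{\eps/2}$ for large $i$, the segment $l\cap K_i$ has length at most $\eps$ and its endpoints lie in $\partial K_i$; this proves the slightly stronger inclusion $K_i\subset(\partial K_i)_\eps$ with a single global direction and no compactness step. Your approach instead manufactures local escape points $y(x)$ and uses a finite subcover to make the choice uniform in $x\in K$. The paper's perpendicular trick is shorter and gives a uniform displacement direction; your route makes explicit that only the empty-interior condition $\dim K\le n-1$ is used, not the stronger fact that $K$ sits inside a totally geodesic hyperplane.
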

{\bf Proof.} Let $\eps>0$. It suffices to show that for large $i$
\begin{eqnarray}\label{E:000inclusion}
K_i\subset (\pt K_i)_\eps.
\end{eqnarray}

Let $\ch$ be a hyperplane containing $K$. For large $i$ one has $K_i\subset \ch_{\eps/2}$. Let $x\in K_i$. Let $y\in \ch$ be the point from $\ch$ nearest to $x$ and $l$ be the geodesic line passing through $x$, $y$ and orthogonal to $\ch$ (so $l$ is unique when $x \neq y$). Then the intersection $l\cap K_i$ is non-empty segment or a point, it is contained in $\ch_{\eps/2}$, and has length at most $\eps$. The end points of it belong to $\pt K_i$.
But each of them is within distance $\eps$ from $x$. Thus (\ref{E:000inclusion}) follows. \qed

\begin{proof}

(1) For $x, y \in \pt K$ by $d(x,y)$ we denote the intrinsic distance of $\pt K$ between them. Let $o \in int(K)$. Then for all sufficiently large $i$ we have $o \in int(K_i)$.
Pull back $D^{in}_i$ and $d$ to the unit sphere $S \subset T_o\HH^n$ via the radial map, and, abusing the notation, continue to denote the obtained metrics by $D^{in}_i$ and $d$. We show the following claim which implies case (1).
\begin{claim}\label{Cl:uniform-radial}
$D^{in}_i$ converge uniformly to $d$.
\end{claim}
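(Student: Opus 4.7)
The plan is to work through three stages: establish uniform bi-Lipschitz control of the radial parametrizations $\phi_i\colon S\to \partial K_i$ (and its limit $\phi\colon S\to \partial K$), prove pointwise convergence $D^{in}_i(v,w)\to d(v,w)$ for each pair $v,w\in S$, and finally upgrade pointwise to uniform convergence using equicontinuity of the family $\{D^{in}_i\}$ on the compact space $S$.

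For the first stage, since $o\in \mathrm{int}(K)$ and $K_i\to K$ in the Hausdorff sense, I would fix $0<\rho<R$ such that $B_o(\rho)\subset K_i,K\subset B_o(R)$ for all large $i$. Let $r_i,r\colon S\to (0,\infty)$ be the radial distance functions, so $\phi_i(v)=\exp_o(r_i(v)v)$ and $\rho\le r_i,r\le R$. A standard estimate in elementary hyperbolic convex geometry (reducing to a $2$-plane section through $o$) shows that the radial distance function of a convex body sandwiched between $B_o(\rho)$ and $B_o(R)$ is Lipschitz on $S$ with a constant $C(\rho,R)$ depending only on $\rho$ and $R$. Together with the sandwich, this yields uniform bi-Lipschitz bounds for $\phi_i$ and $\phi$ with respect to the round metric on $S$ and either the extrinsic or the intrinsic metric on $\partial K_i$. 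Pointwise convergence $r_i(v)\to r(v)$ follows from Hausdorff convergence plus convexity, and together with equi-Lipschitz regularity it upgrades to $r_i\to r$ uniformly on $S$.

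For pointwise convergence of $D^{in}_i(v,w)$: for the $\limsup$ direction, I would take a shortest curve $\gamma\colon [0,L]\to \partial K$ from $\phi(v)$ to $\phi(w)$ of length $L=d(v,w)$ and push it to $\partial K_i$ via the radial transport $\phi_i\circ \phi^{-1}$. Because $\|r_i-r\|_\infty\to 0$ and the maps involved are uniformly bi-Lipschitz, the length of the transported curve in $(\partial K_i,D^{in}_i)$ is at most $L+o(1)$, yielding $\limsup_i D^{in}_i(v,w)\le d(v,w)$. For the $\liminf$, I would take shortest curves $\sigma_i$ on $\partial K_i$ realizing $D^{in}_i(v,w)$; parametrized by arclength they have uniformly bounded length and are $1$-Lipschitz maps into $\HH^n$. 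Arzel\`a--Ascoli extracts a subsequential limit $\sigma$ in the closed ball $\overline{B_o(R)}$; the Hausdorff convergence $\partial K_i\to \partial K$ (valid since $\dim K=n$) places $\sigma$ inside $\partial K$, while lower semicontinuity of length gives $d(v,w)\le \liminf_i L(\sigma_i)$.

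Finally, the uniform bi-Lipschitz bounds from the first stage make $\{D^{in}_i\}$ equi-Lipschitz on $(S\times S,g_S\times g_S)$, which combined with pointwise convergence to $d$ yields uniform convergence by a standard Arzel\`a--Ascoli argument on the compact set $S\times S$. I expect the main obstacle to be the length estimate in the $\limsup$ direction: one needs the bi-Lipschitz distortion of $\phi_i\circ \phi^{-1}$ between curves to tend to $1$ in a quantitative way as $\|r_i-r\|_\infty\to 0$, rather than merely to remain bounded. This relies delicately on the uniform Lipschitz bound on $r_i$ and on hyperbolic trigonometry inside the sandwich $B_o(\rho)\subset K_i\subset B_o(R)$, both of which let one compare infinitesimal length elements on $\partial K_i$ and $\partial K$ in the same radial direction and control the error by $\|r_i-r\|_\infty$.
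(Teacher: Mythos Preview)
Your overall architecture (bi-Lipschitz control, pointwise convergence, then equicontinuity upgrade) is reasonable and quite different from the paper's, but the $\limsup$ step has a genuine gap that you yourself flag and do not resolve. The length of the radially transported curve $\tilde\gamma_i=\phi_i\circ\phi^{-1}\circ\gamma$ is an integral of
\[
|\tilde\gamma_i'(t)|^2=\big((r_i\circ u)'(t)\big)^2+\sinh^2\!\big(r_i(u(t))\big)\,|u'(t)|^2,
\]
and comparing this with $|\gamma'(t)|^2$ requires control of $(r_i\circ u)'-(r\circ u)'$, i.e.\ of \emph{derivatives} of $r_i$. A uniform Lipschitz bound on $r_i$ only gives $|(r_i\circ u)'|\le C$; it does not force $(r_i\circ u)'\to(r\circ u)'$, so it cannot by itself make the bi-Lipschitz distortion of $\phi_i\circ\phi^{-1}$ tend to $1$. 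Uniform convergence $r_i\to r$ simply does not control first derivatives in general (the staircase phenomenon). Convexity does help---subgradients of convex functions converge at points of differentiability of the limit---so a salvage via dominated convergence along $u$ is conceivable, but it is delicate (the geodesic $\gamma$ could run along a nonsmooth edge of $\partial K$) and is not what you proposed.

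The paper avoids this derivative issue entirely. Instead of radial transport between $\partial K$ and $\partial K_i$, it manufactures \emph{nested} auxiliary convex bodies $K'_i\subset K\subset K''_i$ by the hyperbolic dilation $\tanh h\mapsto e^{\pm t_i}\tanh h$ of $K_i$, and then uses two different tools: the Busemann--Feller nearest-point projection, which is \emph{exactly} $1$-Lipschitz between nested convex boundaries (no derivative comparison needed), to relate $d$ with $d'_i,d''_i$; and a direct hyperbolic sine-law computation showing that the dilation has chord-length distortion $\xi_t\to 0$ as $t\to 0$, to relate $d'_i,d''_i$ with $D^{in}_i$ via inscribed polygons. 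The combination yields two-sided bounds $|D^{in}_i-d|\to 0$ uniformly. If you want to keep your curve-pushing strategy, the clean fix is to push via nearest-point projection onto a nested auxiliary surface rather than via radial transport; but that is precisely the paper's idea.
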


For $x \in S$ we denote by $h_i(x)$, $h(x)$ the length of the geodesic from $o$ to $\pt K_i$, $\pt K$ respectively in the direction of $x$. As $\pt K_i$ converge to $\pt K$ in the Hausdorff sense, $h_i$ converge uniformly to $h$. We can choose two sequences $t'_i$ and $t''_i$ of positive real numbers, converging to zero, such that for the functions $h'_i$, $h''_i$ on $S$, defined by the equations
$$\tanh h'_i(x):=e^{-t'_i}\tanh h_i(x),$$
$$\tanh h''_i(x):=e^{t''_i}\tanh h_i(x),$$
we have $h'_i \leq h$ and $h \leq h''_i$ everywhere on $S$. Then the surfaces $\partial K'_i$, $\partial K''_i$, defined by these radius functions, converge to $\partial K$ in the Hausdorff sense.

We show that $\partial K'_i$, $\partial K''_i$ are convex surfaces. Fix $t\in\RR$ and consider a homeomorphism $H:\HH^n \rightarrow \HH^n$ sending a point $p \in \HH^n$ to the point $H(p)$ so that $H(p)$ belongs to the ray $op$ and
$$\tanh(dist(o, H(p)))=e^{-t}\tanh(dist(o, p)).$$
Let $L$ be a line in $\HH^n$ not passing through $o$ and $p$ be the closest point at $L$ to $o$. From the sine law it is easy to see that the line passing through $H(p)$ orthogonal to the ray $op$, is the image of $L$ via $H$. Hence, $H$ sends lines to lines, and, in particular, preserves convexity. Thus, $\partial K'_{i}$, $\partial K''_i$ are convex.

Define
$$\zeta'_i:=\sup_{x \in S} \big(h(x)- h'_i(x)\big),$$
$$\zeta''_i:=\sup_{x \in S} \big(h''_i(x)- h(x)\big).$$
By $d'_i$, $d''_i$ denote the intrinsic metric of $\partial K'_i$, $\partial K''_i$ transferred to $S$. Due to the Busemann--Feller lemma and the triangle inequality
\begin{eqnarray}\label{E:ppp1}
d''_i\leq d+2\zeta''_i, \,\,\, d\leq d'_i+2\zeta'_i.
\end{eqnarray}

Fix some positive real numbers $m< M$. Consider the space $\mathcal T$ of triangles (up to isometry) $oxy$ in $\HH^2$ such that the distance from the line $xy$ to $o$ is at least $m$ and the lengths $ox, oy$ are at most $M$. This space can be parametrized by three numbers: the distance $s_1$ from $o$ to the line $xy$, the length $s_2$ of the largest side among $ox$ and $oy$, and the length $s_3$ of $xy$. The advantage of this parametrization is that all the angles of $oxy$ extend continuously to the closure of $\mathcal T$ in the parameter space, obtained by adding degenerate triangles with $s_3=0$.

Let $oxy$ be a triangle from $\mathcal T$. Denote by $a$ the length $xy$, by $b$ the length $oy$, and by $\beta$ the angle $oxy$. Fix $t \in \mathbb{R}$ and let $ox'y'$ be the triangle with the same angle between the sides $ox'$, $oy'$, and with
$$\tanh(ox')=\tanh(ox)e^{-t},$$
$$\tanh(oy')=\tanh(oy)e^{-t}.$$
Denote by $a'$, $b'$ and $\beta'$ the respective components of $ox'y'$. Comparing the sine laws for the triangles $oxy$ and $ox'y'$ (note that they have a common angle) we get
$$\frac{\sinh a}{\sinh a'}=\frac{\sin\beta'\sinh b}{\sin\beta\sinh b'}=\frac{\sin\beta'\cosh b\tanh b}{\sin\beta\cosh b'\tanh b'}=\frac{\sin\beta'\cosh b}{\sin\beta\cosh b'}e^{t}.$$
Note that for fixed $t$ and for $oxy$ varying in $\mathcal T$, the angles $\beta, \beta'$ belong to a compact set bounded away from 0 and $\pi$. Also the lengths $b$ and $b'$ are bounded from above. Define
$$\xi_t:=\sup_{\mathcal T} \left(\frac{a}{a'}-1\right),$$
(hence, $\xi_t$ also depends on $m, M$, but we ignore it).
Then $\xi_t \rightarrow 0$ as $t \rightarrow 0$.

Take $m$ such that the ball $B$ of radius $m$ centred at $o$ belongs to the interior of $K_i$ for all large $i$, and $M$ such that all $h_i<M$. Any rectifiable curve on $\partial K_i$ can be approximated by an inscribed polygonal curve $\gamma$ that does not intersect $B$. Consider all its vertices, and map them to $\partial K'_i$ via the radial projection from $o$. Let $\gamma'$ be the polygonal curve passing through these points, so it is inscribed in $\partial K'_i$. Then
$$length_{\HH}(\gamma)\leq length_{\HH}(\gamma')(1+\xi_{t'_i}),$$
where $\xi_{t'_i}$ is defined as above. By passing to the limit, we also get $D^{in}_i\leq d'_i(1+\xi_{t'_i})$. From what we have just proved, $\xi_{t'_i}$ tend to zero as $i$ grows. Similarly, $d''_i \leq D^{in}_i(1+\xi_{t''_i})$ and $\xi_{t''_i}$ also tend to zero as $i$ grows. This and (\ref{E:ppp1}) imply that
$D^{in}_i$ converge uniformly to $d$. This finishes the proof of case (1).

For the next cases we need the following lemma:

\begin{lemma}
\label{plane}
Let $\{K_i\} \subset \HH^2$ be a sequence of 2-dimensional convex compact sets converging in the Hausdorff sense to a subset $I$ which is either a compact segment or a point.
Let $x_i, y_i \in \partial K_i$ be sequences of points converging to points $x, y \in I$. If both $x, y \in relint(I)$, we additionally assume that for all sufficiently large $i$, there are outer normal rays to $K_i$ at $x_i, y_i$, which belong to the same half-space with respect to the line containing $I$. Then $D^{in}_i(x_i, y_i) \rightarrow dist(x, y)$.
\end{lemma}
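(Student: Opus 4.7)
The inequality $\liminf_i D^{in}_i(x_i, y_i) \geq dist(x,y)$ is immediate from $D^{in}_i \geq dist$ plus continuity of $dist$, so the work lies in the matching upper bound. Let $\ell \supset I$ denote the geodesic line containing $I$ (when $I$ is a segment), $\pi \colon \HH^2 \to \ell$ the nearest-point projection, and $\epsilon_i$ the Hausdorff distance from $K_i$ to $I$ in $\HH^2$, so $\epsilon_i \to 0$.

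The main case is when $I$ is a segment and the outer normals at $x_i, y_i$ both lie in a common open half-space $H^+$ with respect to $\ell$; by assumption this holds whenever $x, y \in \mathrm{relint}(I)$. Let $\partial^+ K_i$ denote the set of $p \in \partial K_i$ at which $K_i$ admits an outer normal in $H^+$, set $\sigma_i := [x_i, y_i]$ (hyperbolic geodesic segment), $d_i := dist(x_i, y_i)$, and let $\alpha_i$ be the subarc of $\partial^+ K_i$ from $x_i$ to $y_i$. Since each perpendicular to $\ell$ meets $K_i$ in a subsegment whose upper endpoint lies in $\partial^+ K_i$, the set $\partial^+ K_i$ is a monotone graph over its $\pi$-image, and hence $\pi(\alpha_i) = [\pi(x_i), \pi(y_i)]$. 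The convex hull $R_i := \mathrm{conv}(\alpha_i)$ has boundary $\alpha_i \cup \sigma_i$, so $\mathrm{perim}(R_i) = \mathrm{length}(\alpha_i) + d_i$. The $\epsilon_i$-neighborhood of $\ell$ is convex in $\HH^2$ (as a sublevel set of the convex function $dist(\cdot,\ell)$), so both $K_i$ and $\sigma_i$ stay within $\epsilon_i$ of $\ell$; combined with the matching projections this forces every point of $R_i$ to lie within hyperbolic distance $2\epsilon_i$ of $\sigma_i$. Let $Q_i$ be the intersection of the $2\epsilon_i$-tubular neighborhood of $\sigma_i$ with the $H^+$-side of the line through $\sigma_i$. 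Then $Q_i$ is convex, $R_i \subset Q_i$, and a direct computation using the hyperbolic arc-length factor $\cosh(2\epsilon_i)$ for equidistant curves and circumference $2\pi\sinh(2\epsilon_i)$ for distance circles gives
\begin{equation*}
\mathrm{perim}(Q_i) \;=\; d_i \;+\; d_i\cosh(2\epsilon_i) \;+\; \pi\sinh(2\epsilon_i).
\end{equation*}
Corollary~\ref{Cor-monotonicity-length} applied to $R_i \subset Q_i$ yields $\mathrm{length}(\alpha_i) \leq d_i\cosh(2\epsilon_i) + \pi\sinh(2\epsilon_i) \to dist(x,y)$, hence $D^{in}_i(x_i,y_i) \leq \mathrm{length}(\alpha_i) \to dist(x,y)$.

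When $I$ is a single point, $K_i$ is contained in a hyperbolic ball of radius $\mathrm{diam}(K_i) \to 0$; by Corollary~\ref{Cor-monotonicity-length}, $\mathrm{perim}(\partial K_i) \leq 2\pi\sinh(\mathrm{diam}\,K_i) \to 0$, so $D^{in}_i(x_i,y_i) \to 0 = dist(x,y)$. For the remaining case where $I$ is a segment, $x$ (say) is an endpoint of $I$, and the normals at $x_i, y_i$ lie on opposite sides of $\ell$, I insert a short detour: let $x'_i \in \partial K_i$ be the point with $\pi(x'_i) = \pi(x_i)$ lying on the side of $\ell$ that matches $y_i$'s normal. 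Then $x'_i \to x$, and the arc of $\partial K_i$ from $x_i$ through the leftmost point of $K_i$ (which also tends to $x$) to $x'_i$ is eventually contained in a hyperbolic ball $B(x, \rho_i)$ with $\rho_i \to 0$, so Corollary~\ref{Cor-monotonicity-length} bounds its length by $2\pi\sinh(\rho_i) \to 0$; hence $D^{in}_i(x_i, x'_i) \to 0$. The main-case argument applied to $(x'_i, y_i)$, together with the triangle inequality, closes the estimate; an analogous detour is inserted at $y_i$ if $y$ is also an endpoint with a misaligned normal.

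The main technical difficulty is verifying the inclusion $R_i \subset Q_i$. It rests on two observations: that $\partial^+ K_i$ is a monotone graph over $\ell$ (so $\pi(\alpha_i) = [\pi(x_i), \pi(y_i)]$), and that convexity of the distance function to $\ell$ confines $\sigma_i$ to the $\epsilon_i$-neighborhood of $\ell$; the explicit hyperbolic formulas for $\mathrm{perim}(Q_i)$ then yield the sharp rate.
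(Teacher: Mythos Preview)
Your proof is correct and takes a genuinely different route from the paper's. The paper pushes $x_i, y_i$ outward along their outer normal rays until they hit $\partial(I_{\epsilon_i})$ (with $\epsilon_i$ twice the Hausdorff distance) at points $x'_i, y'_i$; since the nearest-point map onto $K_i$ is $1$-Lipschitz and sends $x'_i \mapsto x_i$, $y'_i \mapsto y_i$, one gets $D^{in}_i(x_i,y_i) \le d'_i(x'_i,y'_i)$ for the intrinsic metric $d'_i$ on the ``stadium'' curve $\partial(I_{\epsilon_i})$, whose four arcs (two equidistants, two half-circles) are then estimated directly. That argument treats interior and endpoint cases in one stroke. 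You instead exhibit the explicit upper-boundary arc $\alpha_i$ and bound its length via perimeter monotonicity for $\mathrm{conv}(\alpha_i) \subset Q_i$, at the cost of a separate detour when an endpoint has a normal on the wrong side. Both proofs ultimately rest on Busemann--Feller; the paper invokes it as a $1$-Lipschitz retraction $\partial(I_{\epsilon_i}) \to \partial K_i$, you invoke it through Corollary~\ref{Cor-monotonicity-length}.

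One point you assert but do not justify: for $R_i \subset Q_i$ you need not only the $2\epsilon_i$-distance bound (which your two observations do give) but also that $R_i$ lies on the $H^+$-side of the line through $\sigma_i$. This is true --- since every outer normal along $\alpha_i$ lies in $H^+$, the arc cannot bulge toward $H^-$ relative to its chord (at the extremal point on the $H^-$-side the supporting line would separate $\sigma_i \subset K_i$ from that side, a contradiction) --- but it deserves a sentence. Alternatively, just take $Q_i$ to be the full tube: then $\mathrm{length}(\alpha_i) \le 2d_i\cosh(2\epsilon_i) - d_i + 2\pi\sinh(2\epsilon_i)$, which still tends to $dist(x,y)$.
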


Here by an outer normal ray to $K_i$ at $x_i \in \partial K_i$ we mean a ray starting at $x_i$ and making both angles at least $\pi/2$ with any chord of $K_i$ at $x_i$.

{\bf Proof.}
From the Busemann--Feller lemma one easily gets
$$\liminf D^{in}_i(x_i, y_i) \geq dist(x, y).$$
Now we need a converse inequality. Let $\e_i$ be twice the Hausdorff distance between $K_i$ and $I$, so $K_i \subset I_{\e_i}$, where $I_{\e_i}$ is the $\e_i$-neighborhood of $I$.
Draw outer normal rays to $K_i$ at $x_i$ and $y_i$. If $x, y \in relint(I)$, then we assume that $i$ is sufficiently large and the rays belong to the same halfspace with respect to the line containing $I$. Let these rays intersect $\partial (I_{\e_i})$ in points $x'_i$, $y'_i$. By $d'_i(x'_i, y'_i)$ denote the intrinsic distance between $x'_i$ and $y'_i$ at $\partial (I_{\e_i})$. The nearest-point map to $K_i$ sends $x'_i$, $y'_i$ to $x$, $y$ respectively. Hence, $d'_i(x'_i, y'_i) \geq D^{in}_i(x_i, y_i)$, due to the Busemann-Feller lemma. As $\e_i \rightarrow 0$, $x'_i$ and $y'_i$ converge to $x$ and $y$ respectively. We divide the boundary of $I_{\e_i}$ into four closed arcs: arcs (i) and (iii) are half-circles centred at endpoints of $I$; arcs (ii) and (iv) are constant curvature arcs connecting arcs (i) and (iii). The length of arcs (i) and (iii) tends to zero as $\e_i \rightarrow 0$, and the lengths of arcs (ii) and (iv) tends to the length of $I$. If $x, y \in relint(I)$, then the condition on inner normals imply that $x'_i$, $y'_i$ can not belong to two different arcs among the arcs (ii) and (iv). Then $d'_i(x'_i, y'_i) \rightarrow dist(x, y)$. This implies
$$\limsup D^{in}_i(x_i, y_i) \leq dist(x, y).$$
\end{proof}

Now we return to the proof of Proposition~\ref{P:convex-hypersurf-convegence}.

(2) Let $\Pi$ be the hyperplane containing $K$. Let $K'_i$ be the orthogonal projection of $K_i$ to $\Pi$. It is evident that $K'_i$ converge to $K$ in the Hausdorff sense and $K_i'$ are convex compact sets.

Let $o \in relint(K)$. Then for all sufficiently large $i$ we have $o \in relint (K'_i)$. Let $x' \in K'_i$, $z'$ be the intersection point of the ray $ox'$ with $\partial K'_i$ and $z$ be the intersection point of this ray with $\partial K$. Consider a map $f_i: K'_i \rightarrow K$ sending $x'$ to the point $x$ at the ray $ox'$ such that
$$\frac{dist(o,x)}{dist(o, x')}=\frac{dist(o, z)}{dist(o, z')}.$$
The right side converges uniformly (for $z \in \partial K$) to 1, hence also the left side. As $K'_i$ are uniformly bounded, for every $\e>0$ and sufficiently large $i$ we get that $$dist(x,x')=|dist(o, x)-dist(o, x')|<\e.$$

Take $x', y' \in K'_i$, denote their $f_i$-images by $x,y$. Then we have
$$dist(x,y)\leq dist(x',y')+dist(x,x')+dist(y,y'),$$
$$dist(x',y')\leq dist(x,y)+dist(x,x')+dist(y,y').$$
Hence,
$$|dist(x,y)-dist(x',y')|\leq dist(x,x')+dist(y,y').$$
Thus, for every $\e>0$ and all sufficiently large $i$ we get $|dist(x,y)-dist(x',y')|<\e$, so $f_i$ is an $\e$-isometry. Clearly $f_i(\pt K_i')= \pt K$. Then $f_i$ extends to a $2\e$-isometry sending the double cover of $K'_i$ to the double cover of $K$. By \cite{burago-burago-ivanov}, Corollary 7.3.28, the Gromov--Hausdorff distance between $DK_i'$ and $DK$ is at most $4\e$.

By $DK'_i$ we denote the double cover of $K'_i$ and by $d'_i$ denote its intrinsic metric. Let $f'_i: \partial K_i \rightarrow DK'_i$ be the map coming from the orthogonal projection of $K_i$ to $K'_i$. More exactly, we call one copy of $K'_i$ in $DK'_i$ \emph{upper} and the second copy \emph{lower}. Orient $\Pi$ arbitrarily and for $x' \in relint(K'_i)$ consider the line $L$ orthogonal to $\Pi$ oriented positively with respect to the orientation of $\Pi$. Then $L$ intersects $\partial K'_i$ in two points. We map the first point (with respect to the orientation of $L$) to the lower copy of $x'$, and the second point to the upper copy of $x'$. All points of $\partial K_i$ projecting to $\partial K'_i$ are mapped naturally to their images in $DK'_i$.

By $\partial K^+_i$, $\partial K^-_i$ denote the $f'_i$-preimages of the upper and the lower copies of $relint(K'_i)$. Define the \emph{distortion} of $f'_i$:
$$s_i:=\{\sup |D^{in}_i(x,y)-d'_i(x',y')|: x,y \in \partial K_i, x'=f'_i(x), y'=f'_i(y)\}.$$

For every $\e>0$ and sufficiently large $i$, the Hausdorff distance between $K_i$ and $K'_i$ is less than $\e$. Define
$$r^+_i:=\sup_{x, y \in \pt K^+_i} \big( D^{in}_i(x, y) - dist(x,y) \big),$$
$$r^-_i:=\sup_{x, y \in \pt K^-_i} \big( D^{in}_i(x, y) - dist(x,y) \big).$$
Note that the quantities at the right are clearly non-negative for all $x, y$. Suppose that $r^+_i$ does not converge to zero. Then, up to passing to a subsequence, there exist sequences of points $x_i, y_i \in \partial K_i$, $x_i \neq y_i$, converging to $x, y \in K$ such that $D^{in}_i(x_i, y_i) \geq dist(x, y)+\e$ for some $\e>0$ (observe that $dist(x_i, y_i)$ converges to $dist(x, y)$).

By $P_i$ denote the 2-plane passing through $x_i$ and $y_i$ and orthogonal to $\Pi$, by $x'_i$, $y'_i$ denote the orthogonal projections of $x_i$, $y_i$ to $\Pi$. Up to passing to a subsequence, $P_i$ converge to a 2-plane $P$ orthogonal to $\Pi$. The intersection $P_i \cap K_i$ is a convex 2-dimensional set, and $P_i \cap K_i$ converge to a subset $I$ of $P \cap K$ in the Hausdorff sense. We have $$\dim(I)\leq\dim(P \cap K)\leq 1.$$
Map isometrically all $P_i$ to $\HH^2$ so that all $x'_i$ are mapped to the same point $\tilde x'$ and rays $x'_iy'_i$ are mapped to the same ray $\tilde \Pi^+$ originating in $\tilde x'$.  Denote the images of $P_i\cap K_i$, $x_i$, $y_i$ and $y'_i$ by $\tilde K_i$, $\tilde x_i$, $\tilde y_i$ and $\tilde y'_i$ respectively. By $\tilde D^{in}_i(\tilde x_i, \tilde y_i)$ denote the intrinsic distance of $\partial \tilde K_i$ between $\tilde x_i$ and $\tilde y_i$. Clearly, $\tilde D^{in}_i(\tilde x_i, \tilde y_i)\geq D^{in}_i(x_i, y_i)$.

Since $y'_i$ converge to $y$, points $\tilde y'_i$ converge to a point $\tilde y$. Since $P_i \cap K_i$ converge in the Hausdorff sense to $I$, $\tilde K_i$ converge in the Hausdorff sense to a (possibly degenerate) segment $\tilde I$, which is an isometric embedding of $I$, passing through $\tilde x$ and $\tilde y$. As $x_i, y_i \in \partial K^+_i$, outer normals to $\tilde K_i$ at $\tilde x_i$, $\tilde y_i$ are in the same halfspace with respect to the line $\tilde \Pi$ containing the ray $\tilde \Pi^+$. Then they satisfy the conditions of Lemma~\ref{plane} and we get $\tilde D^{in}_i(\tilde x_i, \tilde y_i) \rightarrow dist(\tilde x, \tilde y)=dist(x,y)$, which is a contradiction.

We obtained that for any $\e>0$ and all sufficiently large $i$, $r^+_i < \e$. The same holds for $r^-_i$. Finally, for all sufficiently large $i$ we have $K_i \subset \Pi_{\frac{\e}{2}}$, where $\Pi_{\frac{\e}{2}}$ is the $\frac{\e}{2}$-neighborhood of $\Pi$. This implies that if $x, y \in \pt K_i$ are such that $f'_i(x)=f'_i(y) \in \pt K'_i$, then $D^{in}(x,y)=dist(x,y)<\e$. Altogether this gives $s_i<3\e$. Then the Gromov--Hausdorff distance between $\partial K_i$ and $DK'_i$ is less than $6\e$, see \cite{burago-burago-ivanov}, Theorem 7.3.25. As $DK'_i$ converges to $DK$ in the Gromov--Hausdorff sense, the same holds for $\partial K_i$.

(3) Let  us show that (3) follows from (4). Indeed (4) implies that the uniform distance and hence the GH-distance between $(\pt K_i,D^{in}_i)$ and $(\pt K_i,dist)$ tends to 0. By Lemma \ref{L:hausdorff-boundary-small-dim}
$(\pt K_i,dist)\overset{GH}{\to} (K,dist)$. This implies (4).

It remains to prove (4).
We proceed similarly to the second half of the proof of case (2). Define
$$r_i:=\sup_{x, y \in \pt K_i} \big( D^{in}_i(x, y) - dist(x,y) \big).$$
We show that $r_i \rightarrow 0$.

Indeed, otherwise, up to passing to a subsequence, we have two sequences of points $x_i, y_i \in \partial K_i$, $x_i \neq y_i$, converging to points $x, y \in K$ such that $D^{in}_i(x_i, y_i)\geq dist(x, y)+\e$ for some $\e>0$. We note that it means that for sufficiently large $i$ the segments $x_iy_i$ do not entirely belong to $\partial K_i$. We first suppose that $x \neq y$. 

Let $n_i$ be an outer normal ray to $K_i$ at $x_i$. Note that as $x_iy_i$ does not entirely belong to $\partial K$, $n_i$ is not orthogonal to $x_iy_i$. By $Q_i \subset \HH^n$ denote a geodesic subspace of dimension $n-2$ passing through $x_i$ and orthogonal to the line $x_iy_i$ and $n_i$. (Generically it is unique, except the case when $x_iy_i$ and $n_i$ are collinear.) Up to passing to a subsequence, $Q_i$ converge to a geodesic subspace $Q$ of dimension $n-2$, containing $x$ and orthogonal to $xy$. Since $\dim(K) \leq n-2$, $Q$ contains a direction $m$ orthogonal to the span of $K$. Consider a sequence of directions $m_i \in Q_i$ converging to $m$. By $P_i$ denote the 2-plane spanned by $m_i$ and the line $x_iy_i$. Up to passing to a subsequence, they converge to the 2-plane $P$ containing $m$ and $xy$.

The intersection $P_i \cap K_i$ is a convex 2-dimensional set, and $P_i \cap K_i$ converge to a subset $I$ of $P \cap K$ in the Hausdorff sense. We have $$1\leq\dim(I)\leq\dim(P \cap K)\leq 1.$$
Embed isometrically all $P_i \cap K_i$ to $\HH^2$ as convex sets $\tilde K_i$ such that all $x_i$ are mapped the same point $\tilde x \in \HH^2$ and rays $x_iy_i$ are mapped to the same ray originating at $\tilde x$. By $\tilde y_i$ we denote the image of $y_i$ and by $\tilde D^{in}_i(\tilde x, \tilde y_i)$ denote the intrinsic distance between $\tilde x$ and $\tilde y_i$ in $\partial \tilde K_i$. Clearly, $\tilde D^{in}_i(\tilde x, \tilde y_i) \geq D^{in}_i(x_i, y_i)$.

Since $y_i$ converge to $y$, $\tilde y_i$ converge to a limit point $\tilde y$. Since $P_i \cap K_i$ converge in the Hausdorff sense to $I$, $\tilde K_i$ converge in the Hausdorff sense to a segment $\tilde I$, which is the isometric embedding of $I$, passing through $\tilde x$ and $\tilde y$. By the choice of $P_i$, the orthogonal projection of $n_i$ to $P_i$ is nonzero, is collinear to $x_iy_i$ and is an outer normal ray to $P_i \cap K_i$. This implies that $\tilde x$ is an endpoint of $\tilde I$. Hence, $\tilde x$ and $\tilde y_i$ satisfy the conditions of Lemma~\ref{plane}. Then $\tilde D^{in}_i(\tilde x, \tilde y_i) \rightarrow dist(\tilde x, \tilde y)=dist(x,y)$, which is a contradiction.

If $x=y$, then we just choose $P$ to be a 2-plane orthogonal to the span of $K$ at $x$, and as $P_i$ we choose any sequence of 2-planes containing $x_iy_i$ and converging to $P$. Then doing everything as above, we see that $I$ and $\tilde I$ are just points, and Lemma~\ref{plane} shows that $\tilde D^{in}_i(\tilde x, \tilde y_i) \rightarrow 0=dist(x,y)$, so we get a contradiction again. \qed

\subsection{Isometric imbedding of 2-sphere with antipodal involution.}\label{Ss:sphere-with-involution}

The following result is the Alexandrov imbedding theorem.
\begin{theorem}[Alexandrov \cite{Ale2}, Ch. XII, \S 2]\label{T:alexandrov-isometric}
Given the 2-sphere $\SS^2$ with CBB($-1$) metric $d$. Then there exists a compact convex set $K \subset \HH^3$ of dimension either 3 or 2 such that $(\SS^2, d)$ is isometric to $\partial K$ in the former case, or it is isometric to the double cover $DK$ in the latter case.
\end{theorem}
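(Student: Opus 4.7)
The plan is to approximate $(\SS^2,d)$ by polyhedral CBB($-1$) metrics, realize each polyhedral metric via the discrete version of the theorem, and then pass to the Hausdorff limit of the realizations using Proposition~\ref{P:convex-hypersurf-convegence}.

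First I would construct a sequence of hyperbolic polyhedral CBB($-1$) metrics $d_n$ on $\SS^2$ converging uniformly to $d$ in the sense of Definition~\ref{D:uniform-convergence}. Here a \emph{hyperbolic polyhedral metric} is one obtained by isometric gluing of finitely many hyperbolic triangles along their edges, and such a metric is CBB($-1$) precisely when the total angle at every vertex is at most $2\pi$. Concretely, given an $\frac{1}{n}$-dense finite set $V_n\subset\SS^2$, one forms a geodesic triangulation of $(\SS^2,d)$ with vertex set $V_n$ and replaces each geodesic triangle by the hyperbolic comparison triangle with the same side lengths. Toponogov comparison applied to the CBB($-1$) assumption on $d$ guarantees that the angular sum at every vertex of $V_n$ is at most $2\pi$, and the uniform convergence $d_n\to d$ follows from standard bounds comparing geodesic and hyperbolic triangles of small diameter.

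Second, I would apply the polyhedral version of the realization theorem --- the hyperbolic analogue of the classical Alexandrov theorem for convex polyhedra, see \cite{Ale2}, Ch.~IV --- to each $d_n$. This produces a compact convex body $K_n\subset\HH^3$ which is either $3$-dimensional with $\partial K_n$ isometric to $(\SS^2,d_n)$, or $2$-dimensional with $DK_n$ isometric to $(\SS^2,d_n)$. The proof is a mapping-degree argument on the configuration space of convex polyhedra with prescribed combinatorics, parallel to the Euclidean case. I would then pass to the limit: the intrinsic diameters of $(\SS^2,d_n)$ are uniformly bounded by $\operatorname{diam}(d)+o(1)$, and since the $\HH^3$-distance on $\partial K_n$ is dominated by the intrinsic one, the extrinsic diameters of the $K_n$ are uniformly bounded as well. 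Translating $K_n$ by isometries of $\HH^3$ to keep them in a fixed compact set and passing to a subsequence, the Blaschke selection theorem yields a Hausdorff limit $K_n\to K$ for some compact convex $K\subset\HH^3$. By Proposition~\ref{P:convex-hypersurf-convegence} the intrinsic boundary metrics of $K_n$ GH-converge to $\partial K$, $DK$, or $K$ itself according as $\dim K=3$, $2$, or $\leq 1$. On the other hand, the uniform convergence $d_n\to d$ gives $(\SS^2,d_n)\overset{GH}{\to}(\SS^2,d)$, and uniqueness of GH-limits forces the two to coincide. The case $\dim K\leq 1$ is ruled out by a positive lower bound on the area of $(\SS^2,d_n)$ (inherited from $\operatorname{area}(\SS^2,d)>0$ under uniform convergence of CBB($-1$) metrics), since a convex set of dimension at most $1$ in $\HH^3$ yields a GH-limit of dimension at most $1$, which has zero area.

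The main obstacle I expect is the first step: producing polyhedral CBB($-1$) approximations that genuinely preserve the lower curvature bound is delicate, because an ill-chosen triangulation can create a vertex with total angle exceeding $2\pi$, violating CBB($-1$). This is handled by a careful, essentially Delaunay, choice of $V_n$ together with a detailed Toponogov-type comparison, as worked out in Alexandrov's monograph. Given this polyhedral approximation, the remaining steps --- the discrete realization and the limit argument --- are essentially soft consequences of that theorem together with Proposition~\ref{P:convex-hypersurf-convegence}.
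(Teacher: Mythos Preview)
The paper does not give its own proof of Theorem~\ref{T:alexandrov-isometric}; it is quoted as Alexandrov's classical result from \cite{Ale2}. Your outline is nonetheless correct and is precisely the strategy the paper carries out for the equivariant variant, Theorem~\ref{realiz}: polyhedral approximation of $d$ by convex hyperbolic cone-metrics (the paper packages this as Theorem~\ref{triang} together with the argument from \cite[Ch.~VII]{Ale2} and \cite[Annex~A]{Ric}), discrete realization via Theorem~\ref{realp}, Blaschke selection (Theorem~\ref{blas}), and passage to the limit via Proposition~\ref{P:convex-hypersurf-convegence}.

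Two small points of comparison. First, to rule out $\dim K\le 1$ the paper uses Proposition~\ref{P:convex-hypersurf-convegence}(3) directly: if $\dim K\le 1$ then $\partial K_i\overset{GH}{\to}K$, so $(\SS^2,d)$ would be isometric to a segment or a point, which is absurd. Your area argument also works but is a detour. Second, for the final identification of the limiting intrinsic metric with $d$, the paper appeals to Theorem~4 in \cite{Ale3}; your route via uniqueness of GH-limits and Proposition~\ref{P:convex-hypersurf-convegence}(1)--(2) is an equally valid alternative. Your acknowledged ``main obstacle'' (existence of a geodesic triangulation fine enough that replacing each triangle by its hyperbolic comparison triangle yields a convex cone-metric) is exactly what the paper isolates as Theorem~\ref{triang} and the subsequent angle-sum estimate; the references there are the precise places to point to.
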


In this section we are going to prove the following its variation:

\begin{theorem}
\label{realiz}
Let $d$ be a CBB($-1$) metric on the 2-sphere $\SS^2$ invariant with respect to the antipodal involution $\iota: \SS^2 \rightarrow \SS^2$.
Then there exists a compact convex set $K \subset \HH^3$ of dimension either 3 or 2 such that $(\SS^2, d)$ is isometric to its boundary (in the former case),
or it is isometric to the double cover $DK$ (in the latter case) and $K$ is symmetric with respect to a point so that this symmetry $I: \HH^3 \rightarrow \HH^3$ induces $\iota$.
\end{theorem}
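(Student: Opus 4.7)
The strategy is to use Theorem~\ref{T:alexandrov-isometric} to obtain an initial realization of $(\SS^2, d)$ as a convex body $K_0 \subset \HH^3$, then to extend the intrinsic involution $\iota$ to an ambient isometric involution $I$ of $\HH^3$ preserving $K_0$ via Alexandrov--Pogorelov rigidity for convex surfaces in $\HH^3$, and finally to show that $I$ must be a central symmetry by analyzing its fixed-point set.

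First I would apply Theorem~\ref{T:alexandrov-isometric} to produce a compact convex $K_0\subset \HH^3$ together with an isometry $\phi\colon (\SS^2, d) \to \partial K_0$ when $\dim K_0 = 3$, or $\phi\colon (\SS^2, d) \to DK_0$ when $\dim K_0 = 2$. Pushing $\iota$ forward defines an intrinsic isometric involution $\iota_0 := \phi \circ \iota \circ \phi^{-1}$ of $\partial K_0$ (resp.\ $DK_0$). By the Alexandrov rigidity theorem---two compact convex bodies in $\HH^3$ whose boundaries, resp.\ doubles, are intrinsically isometric differ by an ambient hyperbolic isometry---the involution $\iota_0$ extends to an isometry $I\colon \HH^3 \to \HH^3$ with $I(K_0) = K_0$ inducing $\iota_0$ on the boundary (resp.\ double). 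The extension can be chosen so that $I^2 = \mathrm{id}$, since $\iota_0^2 = \mathrm{id}$ and the group of ambient isometries of $\HH^3$ fixing $K_0$ setwise and $\partial K_0$ (resp.\ each sheet of $DK_0$) is small enough that the choice is unique up to a controllable ambiguity.

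Next I would classify $\mathrm{Fix}(I) \subset \HH^3$. The Cartan fixed-point theorem for CAT($0$) spaces forces $\mathrm{Fix}(I) \neq \emptyset$; linearizing $I$ at any fixed point shows that $\mathrm{Fix}(I)$ is a totally geodesic submanifold of dimension $0$, $1$, or $2$. Moreover $\mathrm{Fix}(I) \cap K_0 \neq \emptyset$: for each $x \in K_0$ the midpoint of the geodesic from $x$ to $I(x)$ lies in $K_0$ by convexity and in $\mathrm{Fix}(I)$ by construction.

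The key step, and the main technical obstacle beyond citing rigidity, is to rule out the plane and line cases using that $\iota$ has no fixed points on $\SS^2$. If $\dim K_0 = 3$, a fixed plane $\Pi$ necessarily meets $\partial K_0$ (since $\Pi\cap K_0\neq\emptyset$ and $K_0$ is $3$-dimensional), and a fixed line $\ell$ either crosses $\partial K_0$ in two points or touches it tangentially; in either subcase $\iota_0$ would acquire fixed points on $\partial K_0$, a contradiction. If $\dim K_0 = 2$ with $K_0 \subset \Pi_0$, then $I(\Pi_0) = \Pi_0$; reflection of $\HH^3$ in $\Pi_0$ induces the sheet-swap on $DK_0$ that fixes $\partial K_0$ pointwise, a line-fixed $I$ with axis contained in $\Pi_0$ gives fixed points on $\partial K_0$, and a line-fixed $I$ with axis orthogonal to $\Pi_0$ at a point of $K_0$ produces fixed points in each sheet of $DK_0$. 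All of these are excluded by fixed-point freeness of $\iota$, so $\mathrm{Fix}(I) = \{o\}$ is a single point and $I$ is central symmetry at $o$; in the $2$-dimensional case necessarily $o \in \Pi_0 \cap \mathrm{int}(K_0)$ and $o$ is a center of symmetry of $K_0$. Setting $K := K_0$ completes the proof. The most delicate point is the rigidity-based ambient extension in the second paragraph, which for general CBB($-1$) metrics can be obtained from Alexandrov's classical works on convex polyhedra together with a polyhedral approximation of $d$.
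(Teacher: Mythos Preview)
Your approach is correct and is in fact the route the paper itself first mentions: combine Alexandrov's existence theorem with Pogorelov's rigidity theorem for general convex surfaces in $\HH^3$ to extend $\iota$ to an ambient isometry, then classify that isometry via its fixed-point set. The paper, however, deliberately takes a different and more elementary route, precisely because Pogorelov's rigidity for arbitrary (non-polyhedral) convex surfaces in $\HH^3$ is notoriously hard (Pogorelov gave only an outline reducing it to the Euclidean case; Milka later completed it). Instead of invoking this heavy result, the paper approximates $d$ by $\iota$-invariant convex hyperbolic cone-metrics $d_i$ via an invariant triangulation (Theorem~\ref{triang}), realizes each $d_i$ as a convex polyhedron $K_i$ by Alexandrov's polyhedral realization theorem (Theorem~\ref{realp}), applies the far more accessible \emph{polyhedral} rigidity theorem (Theorem~\ref{rigp}) to conclude that $\iota$ is induced by a central symmetry at this polyhedral stage, normalizes so all $K_i$ share the same center $o$, and then passes to the limit using Blaschke selection and Proposition~\ref{P:convex-hypersurf-convegence}. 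Your closing sentence gestures toward this polyhedral-approximation strategy but does not carry it out; if you follow it, the fixed-point classification you describe is done at the polyhedral level where rigidity is clean, and the symmetry survives in the limit automatically.

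Two places in your argument are looser than they should be. First, in the $2$-dimensional case the rigidity statement (even Pogorelov's; cf.\ Theorem~\ref{rigp}(2)) does not directly hand you an ambient isometry of $\HH^3$: it gives an isometry $F\colon K_0\to K_0$ inside the plane $\Pi_0$, together with the information of whether $\iota_0$ preserves or swaps the sheets of $DK_0$. You must then argue that $F$ is a rotation by $\pi$ about an interior point (ruling out $F=\mathrm{id}$ and reflections using fixed-point freeness of $\iota_0$ on $DK_0$) and choose the extension to $\HH^3$ that swaps the sides of $\Pi_0$; your case analysis instead assumes the ambient $I$ already exists and reasons about $\mathrm{Fix}(I)$. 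Second, the claim $I^2=\mathrm{id}$ requires the \emph{uniqueness} half of rigidity (so that the ambient extension of $\iota_0^2=\mathrm{id}$ is forced to be the identity); this is immediate when $\dim K_0=3$ but again needs care when $\dim K_0=2$, where the extension to $\HH^3$ is not unique. Neither point is fatal, but both would need to be spelled out for the argument to be complete.
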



This result follows from the combination of Theorem~\ref{T:alexandrov-isometric} and Pogorelov's rigidity theorem in $\HH^3$. The latter says that the realization is unique in a strong sense. For 3-dimensional convex bodies in $\HH^3$ this means that every isometry of the boundaries is induced by an isometry of $\HH^3$. We do not put further restrictions on the boundaries, so they may be neither smooth, nor polyhedral. One needs to be more careful to include also the 2-dimensional cases, compare with the polyhedral version below, Theorem~\ref{rigp}. However, proofs of Pogorelov's rigidity theorem are notoriously difficult even in the Euclidean space, see~\cite{Pog}. For the hyperbolic case Pogorelov provided in~\cite{Pog} an intricate outline how to reduce it to the Euclidean case, this outline was completed by Milka in~\cite{Mil2}.

We would like to point out that this seems to be slightly excessive for Theorem~\ref{realiz}. Its proof can be obtained from the classic approach of Alexandrov to Theorem~\ref{T:alexandrov-isometric}, see~\cite[Chapter VII]{Ale2}, combined with the rigidity of convex polyhedra in $\HH^3$, which seem to us much more accessible rather than the rigidity of general convex surfaces in $\HH^3$ (and the proofs do not differ much between the Euclidean and hyperbolic cases). Hence, here we sketch a proof of Theorem~\ref{realiz} using these tools. Some steps of Alexandrov's approach to the approximation of CBB metrics by cone-metrics were also verified in CBB($-1$) case by Richard in~\cite[Annex A]{Ric}.

Let us have a few preparations.
\begin{definition}
A \emph{hyperbolic cone-metric} $d$ on $\SS^2$ is locally isometric to the metric of hyperbolic plane except finitely many points called \emph{conical points}. At a conical point $v$ the metric $d$ is locally isometric to the metric of a hyperbolic cone with angle $\lambda_v\neq 2\pi$. A hyperbolic cone-metric is called \emph{convex} if for every conical point $v$ we have $\lambda_v < 2\pi$.
\end{definition}

We will use the following results.

\begin{theorem}[Alexandrov's realization theorem]
\label{realp}
Let $d$ be a convex hyperbolic cone-metric on the 2-sphere $\SS^2$. Then there exists a closed convex polyhedron $K \subset \HH^3$ of dimension either 3 or 2 such that $(\SS^2, d)$ is isometric to its boundary (in the former case), or it is isometric to the double cover $DK$ (in the latter case).
\end{theorem}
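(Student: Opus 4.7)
The plan is to use Alexandrov's classical continuity (invariance of domain) method, adapted to the hyperbolic setting. Fix an integer $n\geq 3$ and restrict attention to cone-metrics with exactly $n$ labelled cone points. Let $\mathcal{P}_n$ denote the moduli space of marked closed convex polyhedra in $\HH^3$ with $n$ labelled vertices, taken modulo orientation-preserving isometries of $\HH^3$; two-dimensional convex polygons with $n$ boundary vertices are included as degenerate elements and identified with their double covers. Let $\mathcal{M}_n$ denote the moduli space of convex hyperbolic cone-metrics on $\SS^2$ with $n$ labelled conical points, modulo marked isometries. Define the realization map $\Phi\colon\mathcal{P}_n\to\mathcal{M}_n$ by sending a polyhedron to the induced intrinsic metric on its boundary (doubled in the two-dimensional case).

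The proof proceeds by verifying four properties. First, both $\mathcal{P}_n$ and $\mathcal{M}_n$ are connected topological manifolds of the same dimension; a standard parameter count gives $3n-6$ on each side, with the two-dimensional stratum appearing as the boundary of the three-dimensional one on both sides. Second, $\Phi$ is continuous, which is essentially tautological from the definitions. Third, $\Phi$ is proper: if a sequence $K_i\in\mathcal{P}_n$ escapes every compact set, so that vertices either collide, diverge to infinity, or the polyhedron degenerates uncontrollably, then either the cone structure on the limit is destroyed or the area blows up, and the hyperbolic Gauss--Bonnet identity $\sum_v(2\pi-\lambda_v)=4\pi+\operatorname{area}$ prevents the images from converging in $\mathcal{M}_n$. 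Fourth, $\Phi$ is locally injective.

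Granting these four properties, invariance of domain forces $\Phi$ to be an open map, so $\Phi(\mathcal{P}_n)\subset\mathcal{M}_n$ is open; properness makes it closed, and connectedness of $\mathcal{M}_n$ then gives surjectivity. To seed the continuity one exhibits a single explicit realization, for example a small regular hyperbolic tetrahedron, which shows $\Phi(\mathcal{P}_4)$ is non-empty; induction on $n$ via vertex splitting — introducing a new vertex with cone angle close to $2\pi$ — then bridges different values of $n$. A cone-metric realized by a polyhedron with fewer than $n$ true singularities is recovered by letting an angle tend to $2\pi$, so it lies in the closure of the image, which equals the image by properness.

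The main obstacle will be local injectivity, which is the rigidity statement that two combinatorially equivalent convex polyhedra in $\HH^3$ with identical induced intrinsic metrics must be congruent. The plan is to execute the sign-counting argument of Cauchy in its hyperbolic form. Given two polyhedra $K,K'$ with the same intrinsic metric and combinatorial type, label each edge of the 1-skeleton by $+$, $-$, or $0$ according to whether the dihedral angle increases, decreases, or is unchanged from $K$ to $K'$. The hyperbolic Cauchy arm lemma (proved by the same monotonicity argument as in the Euclidean case, with the hyperbolic law of cosines replacing the Euclidean one) bounds the number of sign changes around each vertex of any non-trivially labelled polyhedron by at most four; a standard Euler-characteristic inequality then contradicts this bound unless all edges are labelled $0$, whence $K=K'$. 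The two-dimensional flat case is handled separately by the elementary rigidity of convex hyperbolic polygons given their side lengths. Finally, the reduction from arbitrary polyhedra with $n$ vertices to a fixed combinatorial type is achieved by choosing compatible triangulations, exactly as in the Euclidean case.
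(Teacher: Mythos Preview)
The paper does not give its own proof of this statement; it simply records that the Euclidean version is the Theorem in \cite[Section~4.3]{Ale1} and that the argument carries over verbatim to $\HH^3$, as noted in \cite[Section~5.3]{Ale1}. Your outline is exactly Alexandrov's continuity (invariance-of-domain) method from that reference, so you are following the same route the paper invokes.

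One correction: in the Cauchy rigidity step you have the inequality reversed. The arm lemma forces \emph{at least} four sign changes around every vertex incident to a nonzero-labelled edge; the global Euler-characteristic count then gives an upper bound strictly smaller than what this would require, producing the contradiction. Apart from this slip the sketch is faithful to Alexandrov's argument, though in a full write-up the manifold-with-boundary structure of $\mathcal{P}_n$ and $\mathcal{M}_n$ near the flat (two-dimensional) stratum, and the matching of these boundary strata under $\Phi$, require more care than your one-line treatment suggests.
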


In the Euclidean case this is Theorem in~\cite[Section 4.3]{Ale1}. The proof works just the same in the hyperbolic 3-space, as it is noted in~\cite[Section 5.3]{Ale1}.

\begin{theorem}[Alexandrov's rigidity theorem]
\label{rigp}
1) Let $K_1$, $K_2$ be two 3-dimensional compact convex polyhedra in $\HH^3$ and $f: \partial K_1 \rightarrow \partial K_2$ be an isometry. Then there exists an isometry $F: \HH^3 \rightarrow \HH^3$ inducing $f$.

2) Let $K_1$, $K_2$ be two 2-dimensional compact convex polyhedra in $\HH^3$ and $f: DK_1 \rightarrow DK_2$ be an isometry. By $h_1: DK_1 \rightarrow K_1$ and $h_2: DK_2 \rightarrow K_2$ we denote the natural projections of $DK_i$ to $K_i$. Then there exists an isometry $F: K_1 \rightarrow K_2$ such that $F\circ h_1=h_2\circ f$, hence in particular each copy of $K_1$ in $DK_1$ is mapped isometrically onto a copy $K_2$ in $DK_2$.
The isometry $f$ is uniquely determined by $F$ and the image of (for example) the first copy of $K_1$ in $DK_1$.

3) Let $K_1, K_2$ be two compact convex polyhedra in $\HH^3$ such that $K_1$ is 3-dimensional and $K_2$ is 2-dimensional. Then there exists no isometry between $\partial K_1$ and $DK_2$.
\end{theorem}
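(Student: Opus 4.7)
My plan is to adapt Alexandrov's Euclidean approach in \cite[Ch.~VII]{Ale2} to $\HH^3$: parts (1)--(2) are proved by a Cauchy--Alexandrov rigidity argument, and part (3) is deduced from (1).

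For part (1), I would first use the intrinsic characterization of the polyhedral structure to make $f$ combinatorial: the vertices of $K_j$ are exactly the conical points of the piecewise-hyperbolic metric on $\partial K_j$, and the edges are the unique shortest geodesic arcs between adjacent conical points through the otherwise locally-flat regions. After refining to a common subdivision, $f$ carries vertices to vertices, edges to edges, and faces to congruent faces. Attach a sign in $\{+,-,0\}$ to every edge of $\partial K_1$ according to whether the dihedral angle in $K_2$ is greater, smaller, or equal to that in $K_1$. The hyperbolic version of the Cauchy arm lemma, applied in the link of each vertex (a convex spherical polygon of perimeter $<2\pi$), shows that if any edge incident to a vertex carries a nonzero sign, then the cyclic sign sequence around that vertex changes sign at least four times. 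Counting sign changes first around vertices and then around faces (each face of degree $k$ contributes at most $2\lfloor k/2\rfloor$ changes) yields the classical Cauchy inequality $4V^*\le 2E-F$; together with Euler's relation $V-E+F=2$ and $2E\ge 3F$ this forces $V^*=0$, so all dihedral angles match. Combined with the matching face geometries, the polyhedra are congruent, yielding $F\in\mathrm{Isom}(\HH^3)$.

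For part (2), the conical set of $DK_i$ coincides intrinsically with the vertex set $V(K_i)$ (the metric is locally isometric to $\HH^2$ elsewhere), so $f$ restricts to a bijection $V(K_1)\to V(K_2)$ preserving cone angles. The fold locus $\partial K_i\subset DK_i$ is characterized intrinsically as the fixed-point set of an isometric involution $\sigma_i$ of $DK_i$ whose fixed locus passes through \emph{every} conical point; this isolates the canonical ``swap'' involution, since any other $\ZZ/2$-symmetry of $DK_i$ (induced by a symmetry of $K_i$ itself) has fixed set through at most two conical points. Hence $f$ intertwines $\sigma_1$ with $\sigma_2$ and descends to a bijection $F\colon K_1\to K_2$ between the underlying planar polygons, with $F\circ h_1=h_2\circ f$ by construction. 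The planar rigidity needed---that two convex hyperbolic polygons with isometrically matched boundaries are congruent---then follows by induction on the number of vertices via SAS.

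For part (3), assume for contradiction that $f\colon\partial K_1\to DK_2$ is an isometry with $\dim K_1=3$ and $\dim K_2=2$. Pull back the intrinsic swap involution $\sigma_2$ of $DK_2$ to get a nontrivial isometric involution $\tilde\sigma$ of $\partial K_1$ with fixed locus $\gamma=f^{-1}(\partial K_2)$, a simple closed curve passing through every vertex of $K_1$. By part (1) applied to $K_1=K_2$, $\tilde\sigma$ is induced by an isometry $I$ of $\HH^3$; since $I^2=\mathrm{id}$ and its fixed locus on $\partial K_1$ is $1$-dimensional, $I$ must be the reflection in a hyperplane $\Pi\subset\HH^3$, and every vertex of $K_1$ lies in $\Pi$. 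Convexity then forces $K_1=\mathrm{conv}(V(K_1))\subset\Pi$, contradicting $\dim K_1=3$. The main obstacle is part (1), where the hyperbolic Cauchy--Alexandrov sign-counting argument requires a careful formulation of the arm lemma for the convex spherical polygons appearing as vertex links, and verification of its strict monotonicity; a secondary subtlety arises in part (2), where the intrinsic characterization of the fold line must accommodate the extra isometries present when $K_i$ has a non-trivial symmetry group.
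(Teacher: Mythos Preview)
The paper does not give a proof of this theorem at all; it simply records that the Euclidean case is the Theorem in \cite[Section 3.3.2]{Ale1} and that the proof carries over to $\HH^3$ (\cite[Section 3.6.4]{Ale1}). Your proposal therefore supplies much more than the paper does, and your outline for part (1) is precisely Alexandrov's Cauchy-type argument, so in spirit you are in full agreement with the cited source.

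A few technical points deserve correction. In part (1), edges are \emph{not} intrinsically characterized as ``the unique shortest geodesic arcs between adjacent conical points'' --- adjacency is exactly what is not yet known, and an edge need not be globally shortest. The common-subdivision remark is the real mechanism and should be stated as such: pull back the edge graph of $K_2$ via $f$, superimpose it with that of $K_1$, and run the sign-counting on the refinement. Your counting inequality ``$4V^*\le 2E-F$'' is also misstated; the correct bound is $N\le \sum_f(2\deg f-4)=4E-4F$, which together with $N\ge 4V$ contradicts Euler's formula directly. In part (2), your justification that $\sigma_i$ is the unique nontrivial involution fixing all conical points is circular as written (you assume any other involution is ``induced by a symmetry of $K_i$'', which is what must be shown). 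A clean argument: for adjacent vertices $v,w$ the edge $vw\subset\partial K_i$ is the unique shortest geodesic in $DK_i$ between them (convexity of $K_i$), so any isometry fixing all vertices fixes $\partial K_i$ pointwise, hence permutes the two components of $DK_i\setminus\partial K_i$, hence equals $\mathrm{id}$ or $\sigma_i$.

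Your argument for part (3) is correct and rather elegant: pulling back $\sigma_2$ and invoking part (1) to produce an ambient involution $I$ is exactly the right move. You should make explicit why $I$ cannot be a rotation by $\pi$ about a geodesic line $\ell$ (the other involutive isometry of $\HH^3$ with nontrivial fixed set): such an $\ell$ meets the convex surface $\partial K_1$ in at most two points, whereas $\gamma=f^{-1}(\partial K_2)$ is a closed curve. With that remark the reduction to ``all vertices of $K_1$ lie in a hyperplane'' is watertight.
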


Again, in the Euclidean case this is the Theorem in~\cite[Section 3.3.2]{Ale1}. Similarly, the proof holds for the hyperbolic 3-space without changes, see~\cite[Section 3.6.4]{Ale1}.

\begin{theorem}
\label{triang}
Let $d$ be a CBB($-1$) metric on $\SS^2$ invariant with respect to $\iota$.
Then it admits a triangulation that is invariant with respect to $\iota$ and consists of finitely many arbitrarily small convex geodesic triangles.
\end{theorem}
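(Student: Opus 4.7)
The plan is to push the problem down to the antipodal quotient, apply a non-equivariant triangulation procedure there, and lift back. Since $\iota$ is a free isometric involution of $(\SS^2, d)$, the quotient $Q := \SS^2/\iota$ is a compact surface homeomorphic to $\RR P^2$, and the projection $\pi \colon \SS^2 \to Q$ is a two-fold cover and a local isometry. Setting $\rho := \tfrac{1}{2}\min_{x\in\SS^2} d(x,\iota(x))$, which is positive by compactness and freeness, $\pi$ restricts to an isometry from any ball of radius less than $\rho$ in $\SS^2$ onto its image in $Q$. Since the CBB($-1$) condition is local, the induced metric $d_Q$ on $Q$ is itself CBB($-1$).

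Next I would apply Alexandrov's classical triangulation procedure to $(Q,d_Q)$ to produce, for any prescribed $\varepsilon<\rho$, a triangulation $\tau$ of $Q$ by convex geodesic triangles of diameter less than $\varepsilon$. The construction is local: choose a scale $\delta < \varepsilon$ at which balls in $Q$ are geodesically convex and any two points in such a ball are joined by a unique minimizing geodesic (such $\delta$ exists by the CBB($-1$) condition together with compactness of $Q$), take a sufficiently fine net $V \subset Q$, and assemble the corresponding geodesic Delaunay complex. The verification in the CBB($-1$) setting is carried out in \cite[Annex A]{Ric}; the Euclidean prototype is treated in \cite{Ale2}.

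To lift, observe that each triangle of $\tau$ has diameter smaller than $\rho$, hence lies inside a ball on which $\pi$ is an isometric trivialization of the cover. Its preimage in $\SS^2$ therefore splits as a disjoint union of two convex geodesic triangles, interchanged by $\iota$. Taking preimages of all cells of $\tau$ yields a cell complex $\pi^{-1}(\tau)$ on $\SS^2$ whose 2-cells are convex geodesic triangles of diameter less than $\varepsilon$; since edges and vertices also lie in small balls on which $\pi$ is an isometric cover, the lifts fit together combinatorially, so $\pi^{-1}(\tau)$ is a genuine triangulation of $\SS^2$, and it is $\iota$-invariant by construction.

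The main obstacle is the second step: the existence of arbitrarily fine triangulations by convex geodesic triangles on a compact CBB($-1$) surface. The construction is well known and essentially local in Alexandrov geometry, but writing it out in detail requires a careful use of CBB($-1$) angle comparisons to guarantee that the cells produced from a fine net have convex, geodesic boundaries and assemble combinatorially into a genuine triangulation. This is precisely the kind of verification performed in \cite[Annex A]{Ric}, on which this step relies.
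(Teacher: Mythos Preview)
Your proposal is correct and follows the same route as the paper: pass to the quotient $\iota\backslash\SS^2\cong\RR\PP^2$, invoke the non-equivariant triangulation result there (the paper cites \cite[Chapter II.6]{Ale2} and \cite[Lemma A.1.2]{Ric}, you cite essentially the same sources), and lift back via the double cover. The paper's proof is a two-line reference to exactly this reduction, and you have simply fleshed out the lifting step; one minor caution is that uniqueness of minimizing geodesics in small balls is not automatic from CBB($-1$) alone, but since you explicitly defer the triangulation construction to \cite[Annex A]{Ric} this does not affect the argument.
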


This follows from~\cite[Chapter II.6]{Ale2} applied to the induced metric on the projective plane $\iota\backslash \SS^2$. Alternatively, one can also use~\cite[Lemma A.1.2]{Ric}.

\begin{theorem}[Blaschke Selection Theorem]
\label{blas}
A sequence of convex compact sets $K_i$ in $\HH^3$ of uniformly bounded diameters passing through the same point contains a subsequence converging to a compact convex set $K$ in the Hausdorff sense.
\end{theorem}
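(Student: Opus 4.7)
The plan is to reduce Blaschke's theorem in $\HH^3$ to two classical ingredients: compactness of the hyperspace of closed subsets of a compact metric space, and preservation of convexity under Hausdorff limits in a uniquely geodesic space. First, let $p\in\HH^3$ be a point contained in every $K_i$ and let $D$ be a uniform upper bound for their diameters. Then $K_i\subset\overline{B(p,D)}$, and closed metric balls in $\HH^3$ are compact by the Hopf--Rinow theorem, so all $K_i$ lie inside a fixed compact subset $B\subset\HH^3$.

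Next I would invoke the standard fact that the set of nonempty closed subsets of a compact metric space is itself compact in the Hausdorff metric. The usual proof is a diagonal argument over finite $2^{-n}$-nets $N_n\subset B$: since $N_n$ has only finitely many subsets, a diagonal extraction yields a subsequence along which $K_i\cap N_n$ is eventually constant for every $n$, and the candidate limit set assembled from this data is readily shown to be the Hausdorff limit of the subsequence. Applied to our situation, this produces a subsequence (still denoted $\{K_i\}$) converging in the Hausdorff metric to a nonempty closed set $K\subset B$; nonemptiness is automatic since $p\in K$.

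It remains to check that $K$ is convex. Given $x,y\in K$, choose $x_i,y_i\in K_i$ with $x_i\to x$ and $y_i\to y$. By convexity, the unique geodesic segment $[x_i,y_i]$ lies entirely in $K_i$. In $\HH^3$ geodesic segments depend continuously on their endpoints in the Hausdorff metric, so $[x_i,y_i]\to[x,y]$. Combining this with $K_i\to K$ yields $[x,y]\subset K$, whence $K$ is convex. The only step that is not essentially formal is the compactness of the hyperspace; in the hyperbolic setting no new obstacle arises beyond the Euclidean argument, which is presumably why the theorem is cited rather than reproven here.
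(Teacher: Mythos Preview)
Your proof is correct and follows essentially the same route the paper indicates: the paper does not give its own argument but simply remarks that the result follows from \cite{burago-burago-ivanov}, Theorem 7.3.8, which is precisely the compactness of the hyperspace of closed subsets of a compact metric space that you invoke. Your additional verification that the Hausdorff limit is convex (via continuity of geodesic segments in their endpoints) fills in the only remaining detail.
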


This version follows easily from \cite{burago-burago-ivanov}, Theorem 7.3.8.

{\bf Proof of Theorem~\ref{realiz}.}
From Theorem~\ref{triang} we consider a $\iota$-invariant triangulation $\mathcal T_i$ of $(\SS^2, d)$ sufficiently fine so that each angle of each triangle is not less than the angle of the respective hyperbolic triangle. One can do this due to the compactness of $\SS^2$. The sum of the angles of all triangles at each vertex is at most $2\pi$. See~\cite[Lemma A.2.1]{Ric}, which adapts the proof of Theorem 2 in~\cite[Chapter VII.4]{Ale2} from the CBB(0) case. Thus, if we replace each triangle of $\mathcal T_i$ by the respective hyperbolic comparison triangle, we obtain a $\iota$-invariant convex hyperbolic cone-metric $d_i$.

The metrics $d_i$ converge to $d$ uniformly. In the Euclidean case this is shown in~\cite[Chapter VII.6]{Ale2}. In the CBB($-1$) case this is done in~\cite[Annex A.2]{Ric}. In particular, the diameters of $d_i$ are bounded.

Due to Theorem~\ref{realp}, there exists a polyhedron $K'_i \subset \HH^3$ such that $(\SS^2, d_i)$ is isometric to its boundary (if the dimension of $K'_i$ is 3) or to $DK'_i$ (if the dimension is 2). In the former case, due to Theorem~\ref{rigp}, the action by $\iota$ is induced by an isometry $I_i: \HH^3 \rightarrow \HH^3$ of order two fixing $\partial K'_i$ as a set, but having no fixed points in it. Due to the classification of isometries of $\HH^3$, $I_i$ is a symmetry of $\HH^3$ around a point. In the latter case, Theorem~\ref{rigp} implies that $K'_i$ is a centrally-symmetric polygon. Then by $I_i$ we consider the central symmetry of $\HH^3$ around the center of the polygon. We compose each $K'_i$ with another isometry to obtain compact convex polyhedra $K_i$ symmetric with respect to the same isometry $I$, which is the central symmetry around a point $o \in \HH^3$.

As the diameters of $d_i$ are uniformly bounded, the diameters of $K_i$ in $\HH^3$ are also uniformly bounded. Thus, by Theorem~\ref{blas} there exists a subsequence converging in the Hausdorff metric to a compact convex set $K \subset \HH^3$, which is $I$-invariant.

Due to Proposition~\ref{P:convex-hypersurf-convegence}(3), $K$ is not a segment or a point. It remains to say that Theorem 4 in~\cite{Ale3} shows that if $dim(K)=3$, then the induced metric on $\partial K$ is $d$. If $dim(K)=2$, then it says that the induced metric on $DK$ is isometric to $d$.
\qed

\section{Collapse of $n$-dimensional convex bodies in hyperbolic $n$-space.}\label{Ss:convergence-n-n}
The main results of this subsection are Propositions \ref{P:isomorphisms-sphere} and \ref{P:isomorph-sphere-boundary}. 

Let a sequence $\{K_i\}\subset \HH^n$ of $n$-dimensional convex compact sets converge to a convex compact set $K$ with $1\leq \dim K \leq n-2$.
We note that for our applications we need only the case $n=3$ and $K$ be a non-degenerate segment, but parts of the proof remain the same in a greater generality, so we consider it as an additional support for the validity of our construction.
By $L$ we denote the geodesic subspace of $\HH^n$ spanned by $K$, i.e., having the same dimension and containing $K$, by $q: \HH^n \rightarrow L$ we denote the nearest-point map to $L$.

Let us fix a point $x\in K$. We denote by $dist$ the distance on $\HH^n$. Set
$$\cb_{i,x}(\delta)=\{z\in\pt K_i|\,\, dist(z,x)<\delta\}.$$
Denote
$$\ca_{i,x}(\delta)=\{z\in \pt K_i|\, \, dist(q(z),x)<\delta\}.$$

\begin{lemma}\label{L:lemma-9}
Let $0<\delta_1<\delta_2$. Then there exists $i_0\in \NN$ such that for any $i>i_0$ and for any $x\in K$ one has
$$\ca_{i,x}(\frac{\delta_1}{2})\subset \cb_{i,x}(\delta_1)\subset \ca_{i,x}(\frac{\delta_1+\delta_2}{2})\subset \cb_{i,x}(\delta_2)\subset \ca_{i,x}(2\delta_2).$$
\end{lemma}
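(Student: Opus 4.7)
The plan is to split the four-term chain into two inclusions that hold for every $i$ and two that require $i$ to be sufficiently large. The inclusions $\cb_{i,x}(\delta_1)\subset\ca_{i,x}(\frac{\delta_1+\delta_2}{2})$ and $\cb_{i,x}(\delta_2)\subset\ca_{i,x}(2\delta_2)$ are trivial and hold in the stronger form $\cb_{i,x}(\delta)\subset\ca_{i,x}(\delta)$ for every $\delta>0$ and every $i$. Indeed, since $x\in K\subset L$ we have $q(x)=x$, and since $q$ is $1$-Lipschitz by Milka's theorem (Theorem~\ref{L:busemann-feller}), for every $z\in\HH^n$
$$dist(q(z),x)=dist(q(z),q(x))\leq dist(z,x).$$

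For the remaining two inclusions $\ca_{i,x}(\delta_1/2)\subset\cb_{i,x}(\delta_1)$ and $\ca_{i,x}(\frac{\delta_1+\delta_2}{2})\subset\cb_{i,x}(\delta_2)$, I would combine the triangle inequality $dist(z,x)\leq dist(z,q(z))+dist(q(z),x)$ with a uniform upper bound on the gap $dist(z,q(z))=dist(z,L)$ valid on all of $\pt K_i$. The key observation is that since $K_i\to K$ in the Hausdorff metric and $K\subset L$, for every $\eta>0$ there exists $i_0\in\NN$ such that $K_i$ lies in the $\eta$-neighborhood of $L$ for all $i>i_0$, whence $dist(z,q(z))<\eta$ for every $z\in\pt K_i$. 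Choosing
$$\eta:=\min\bigl(\tfrac{\delta_1}{2},\,\tfrac{\delta_2-\delta_1}{2}\bigr)$$
and applying the triangle inequality immediately yields both remaining inclusions: if $dist(q(z),x)<\delta_1/2$ then $dist(z,x)<\eta+\delta_1/2\leq\delta_1$, and if $dist(q(z),x)<\frac{\delta_1+\delta_2}{2}$ then $dist(z,x)<\eta+\frac{\delta_1+\delta_2}{2}\leq\delta_2$.

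There is essentially no technical obstacle; the proof is an exercise in combining $1$-Lipschitzness of the nearest-point projection with Hausdorff convergence. A pleasant byproduct of this approach is that the threshold $i_0$ depends only on $\delta_1$, $\delta_2$, and the sequence $\{K_i\}$, and not on the base point $x\in K$, which may be useful later when the construction is to be patched over the limit space.
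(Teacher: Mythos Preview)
Your proof is correct and follows essentially the same two-step approach as the paper: the inclusions $\cb\subset\ca$ come from the contracting property of the nearest-point projection (you invoke $1$-Lipschitzness via Milka's theorem, the paper uses the right angle at $q(z)$ directly), and the inclusions $\ca\subset\cb$ come from bounding $dist(z,L)$ uniformly via Hausdorff convergence. Your use of the triangle inequality with the explicit $\eta=\min(\delta_1/2,(\delta_2-\delta_1)/2)$ is in fact a bit more transparent than the paper's choice of $\eps$ as the distance from $L$ to $\partial A(x,\delta)\cap\partial B(x,\delta')$, but the content is the same.
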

{\bf Proof.}
If the points $z$, $q(z)$ and $x$ are pairwise distinct, the triangle formed by them has right angle at $q(z)$. Thus, $dist(q(z),x) \leq dist(z,x)$ (this is also clearly true if some of these points coincide) and for any $0<\delta<\delta'$ we have $\cb_{i, x}(\delta) \subset \ca_{i, x}(\delta')$. Next, for every $\e>0$ there exists $i_0 \in \NN$ such that for all $i>i_0$ we have $K_i \subset L_{\e}$, where $L_{\e}$ is the $\e$-neighborhood of $L$ in $\HH^n$. For $\delta>0$ by $B(x, \delta)$ we denote the $\delta$-ball in $\HH^n$ centered at $x$, by $A(x, \delta)$ we denote the set of points $z \in \HH^n$ such that $dist(q(z), x)<\delta$. For every $0<\delta<\delta'$ there exists $\e>0$ such that $$\big(A(x, \delta)\cap L_{\e} \big) \subset B(x,\delta').$$
Indeed, one can take $\e$ to be the distance between $L$ and any point from $\big(\partial A(x, \delta)\big)\cap \big(\partial B(x, \delta')\big)$. Hence, it follows that for any $0< \delta<\delta'$ there exist $\e>0$ and $i_0 \in \NN$ such that for all $i>i_0$ we have
$$\ca_{i,x}(\delta)=\big(A(x,\delta)\cap \partial K_i\big) = \big(A(x, \delta)\cap L_{\e} \cap \partial K_i \big) \subset \big(B(x, \delta')\cap \partial K_i\big)= \cb_{i,x}(\delta').$$
 \qed

\begin{lemma}\label{Cor:hyperb-fibration}
Let $K \subset \HH^n$ be an n-dimensional convex compact subset of the hyperbolic n-space, $L \subset \HH^n$ be a totally geodesic subset of dimension $k$, $1\leq k \leq n-2$, $q: \HH^n \rightarrow L$ be the nearest-point map, and $A:=int(q(K))$. Then the restriction of $q$ to $\partial K$ is a topological fibre bundle over $A$ with fiber homeomorphic to the sphere $\SS^{n-k}$.
\end{lemma}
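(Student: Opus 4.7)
The plan is to treat $q$ as a submersion with totally geodesic fibers, pick continuously-varying interior basepoints in each slice of $K$ over $A$, and radially parametrize $\partial K$ from them. Using the normal exponential map at $L$, identify $\HH^n$ diffeomorphically with $L \times \RR^{n-k}$, sending $L$ to $L \times \{0\}$ and each fiber $F_p := q^{-1}(p)$ to $\{p\} \times \RR^{n-k}$ (an isometric copy of $\HH^{n-k}$, totally geodesic in $\HH^n$), and making $q$ the projection to the first factor. In these coordinates $K = \bigcup_{p \in q(K)} (K \cap F_p)$, with each slice $K \cap F_p$ a compact convex subset of $F_p$.

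The crucial step is: for every $p \in A$, the slice $K \cap F_p$ has dimension $n-k$ in $F_p$, equivalently $\mathrm{int}(K) \cap F_p \neq \emptyset$. In the case $k=1$ needed for this paper, $A$ is an open subinterval of the line $L$ and this is elementary. Since $q$ is an open map (a submersion in the normal exponential chart), $q(\mathrm{int}(K))$ is open; and since $K = \overline{\mathrm{int}(K)}$, one has $q(K) = \overline{q(\mathrm{int}(K))}$, so $q(\mathrm{int}(K))$ is dense in $q(K)$. For $p \in A$, pick $y_0 \in \mathrm{int}(K)$ with $q(y_0) \in A$ on one side of $p$ and $y_1 \in K$ with $q(y_1) \in A$ on the opposite side. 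The geodesic $\gamma \subset K$ from $y_0$ to $y_1$ has $\gamma(t) \in \mathrm{int}(K)$ for $t \in [0,1)$ by convexity, and $q\circ\gamma$ is a continuous path on the line $L$ running from $q(y_0)$ past $p$ to $q(y_1)$, so it hits $p$ at some $t^* \in (0,1)$; then $\gamma(t^*) \in \mathrm{int}(K) \cap F_p$.

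Given such an interior basepoint $y_p^\circ \in \mathrm{int}(K) \cap F_p$, the radial parametrization of $\partial K$ from $y_p^\circ$ combined with the totally geodesic property of $F_p$ yields
\[
\partial K \cap F_p \;=\; \partial_{F_p}(K \cap F_p),
\]
because a geodesic issued from $y_p^\circ$ stays in $F_p$ iff its initial direction lies in $T_{y_p^\circ} F_p$. The right-hand side is the topological boundary of a compact convex body of dimension $n-k$ in $F_p \cong \HH^{n-k}$, hence a topological sphere of dimension $n-k-1$ (the exponent $\SS^{n-k}$ in the statement seems to be off by one).

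For local triviality around $p_0 \in A$, take $y_0 := y_{p_0}^\circ$ with product coordinates $(p_0, v_0)$ and set $y_p := (p, v_0) \in F_p$; by continuity, $y_p \in \mathrm{int}(K)$ for $p$ in some open neighborhood $U \subset A$ of $p_0$. Using the natural identification $T_{y_p} F_p \cong \RR^{n-k}$ inherited from the product structure, define
\[
\Phi \colon U \times \SS^{n-k-1} \longrightarrow q^{-1}(U) \cap \partial K, \qquad \Phi(p,u) := \exp_{y_p}\bigl(r_p(u)\, u\bigr),
\]
where $r_p(u)$ is the radial function of $K \cap F_p$ from $y_p$ in direction $u$. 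The continuity of $K$ together with $y_p$ staying uniformly in $\mathrm{int}(K)$ yields joint continuity of $r_p(u)$ in $(p,u)$, so $\Phi$ is a continuous bijection satisfying $q \circ \Phi = \mathrm{pr}_1$, and its inverse is continuous by a standard fiberwise compactness argument, giving the required local trivialization. The main obstacle is the full-dimensionality of slices; for general $k \geq 2$ one would additionally need a convex-combination argument showing $q(\mathrm{int}(K)) = \mathrm{int}(q(K))$, but the case $k=1$ is all this paper requires.
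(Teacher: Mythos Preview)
Your argument is correct, and you are right that the fiber should be $\SS^{n-k-1}$; the paper's ``$\SS^{n-k}$'' is a typo (propagated into the next proposition), harmless downstream since only $n=3$, $k=1$ is ever used and there the fiber is $\SS^1$.

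The paper takes a different route: it radially projects $\partial K$ onto the tube $\partial L_R$ along the line $l_x$ joining $x\in\partial K$ to its foot $q(x)\in L$, asserting that because $q(x)\in A$ the line $l_x$ cannot lie in a supporting hyperplane of $K$ and therefore meets $\mathrm{int}(K)$. That assertion is not true in general: with $L$ the $z$-axis and $K=\mathrm{conv}\{(1,0,1),(1,0,-1),(0,1,0),(0,-1,0)\}$, the vertex $x=(0,1,0)$ has $q(x)=0\in A=(-1,1)$, yet $l_x$ is the $y$-axis, contained in the supporting plane $\{x=0\}$. Your approach sidesteps this by projecting each slice from a point $y_p\in\mathrm{int}(K)\cap F_p$ chosen continuously in $p$, which is precisely the right repair. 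One simplification you might make: the full-dimensionality of slices holds for every $k$ without the intermediate-value detour, since in the Klein model $q$ is an open linear map, so $q(\mathrm{int}(K))$ is open, convex, and has closure $q(K)$, hence coincides with $A=\mathrm{int}(q(K))$.
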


\begin{proof}
This could be easier to perceive in the Klein model. We note that if in the Klein model $L$ passes through the origin, then orthogonality to $L$ is the same both in hyperbolic and in Euclidean metrics.


Let $R:=\max_{x \in K}dist(x, L)$ and $L_R$ be the closed $R$-neighborhood of $L$ in $\HH^n$, hence $K \subset L_R$. Then $\partial L_R$ is homeomorphic to $L\times \SS^{n-k}$ so that the restriction of $q$ to $\partial L_R$ is the projection to the first component. For $x \in \partial K \cap q^{-1}(A)$ by $l_x$ denote the line passing through $x$ orthogonal to $L$. Note that as $l_x\cap L=q(x) \in A$, $l_x$ does not belong to any supporting hyperplane to $K$, hence $l_x$ intersects $int(K)$, thus it intersects $\partial K$ exactly in two points contained in the segment $l_x \cap L_R$. The ray belonging to $l_x$ starting at $x$ outwards $K$ intersects $\partial L_R$ in a unique point, which we denote $f(x)$. The map $f: \partial K \cap q^{-1}(A) \rightarrow \partial L_\e$ is  injective and continuous. Therefore, it is a homeomorphism onto the image, which is $q^{-1}(A)\cong A\times \SS^{n-k}$. By construction, the composition of $f$ with the projection to the first component equals $q$. Now the lemma follows.
\end{proof}

\begin{proposition}\label{P:isomorphisms-sphere}
Let $\{K_i\}\subset \HH^n$ be a sequence of $n$-dimensional convex compact sets converging in the Hausdorff metric to a convex compact set $K$ of dimension $k$, $1 \leq k \leq n-2$.
Let $\kappa>0$ be a constant. Let $0<\delta_1<\delta_2<\frac{\kappa}{2}$.
Then there exists $i_0\in \NN$ such that for any $i>i_0$, for any $x\in int(K)$ with $dist(x,\pt K)>\kappa$ and for an arbitrary commutative group $A$ one has
$$ Im(H^a(\cb_{i,x}(\delta_2);A)\to H^a(\cb_{i,x}(\delta_1);A))\simeq \left\{\begin{array}{ccc}
                                                                       A&\mbox{if}& a=0,n-k\\
                                                                       0&\mbox{otherwise} &
                                                                       \end{array}.\right.$$
\end{proposition}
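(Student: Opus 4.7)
The plan is to sandwich each ball $\cb_{i,x}(\delta)$ between the cylindrical sets $\ca_{i,x}(\cdot)$, whose homotopy type is controlled by Lemma~\ref{Cor:hyperb-fibration}, and then identify the image of the relevant cohomology map via a factorization argument.

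By Lemma~\ref{L:lemma-9}, for all sufficiently large $i$ we have
\[
\ca_{i,x}\!\left(\tfrac{\delta_1}{2}\right) \subset \cb_{i,x}(\delta_1) \subset \ca_{i,x}\!\left(\tfrac{\delta_1+\delta_2}{2}\right) \subset \cb_{i,x}(\delta_2) \subset \ca_{i,x}(2\delta_2).
\]
Since $dist(x,\pt K) > \kappa > 2\delta_2$, the compact ball $\overline{B_L(x,2\delta_2)}$ is contained in $int(K)$; by Hausdorff convergence of the convex sets $K_i \to K$, it also lies in $int(q(K_i))$ for all large $i$. Then for every such $\delta$ the set $\ca_{i,x}(\delta) = (q|_{\pt K_i})^{-1}(B_L(x,\delta))$ is, by Lemma~\ref{Cor:hyperb-fibration}, the total space of a topological fibre bundle with fibre $\SS^{n-k}$ over the contractible base $B_L(x,\delta)$. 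Consequently $\ca_{i,x}(\delta) \simeq \SS^{n-k}$, and for $\delta < \delta'$ the inclusion $\ca_{i,x}(\delta) \hookrightarrow \ca_{i,x}(\delta')$ is a homotopy equivalence: it lifts the inclusion of contractible base balls through compatible trivializations.

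Applying cohomology with coefficients in $A$ to the chain of inclusions yields
\[
H^a(\ca_{i,x}(2\delta_2)) \xrightarrow{\alpha} H^a(\cb_{i,x}(\delta_2)) \xrightarrow{\beta} H^a(\ca_{i,x}(\tfrac{\delta_1+\delta_2}{2})) \xrightarrow{\gamma} H^a(\cb_{i,x}(\delta_1)) \xrightarrow{\sigma} H^a(\ca_{i,x}(\tfrac{\delta_1}{2})),
\]
and both composites $\beta\circ\alpha$ and $\sigma\circ\gamma$ are induced by inclusions of $\ca$-sets, hence are isomorphisms by the previous paragraph. Therefore $\beta$ is surjective, $\gamma$ is injective, and the map appearing in the proposition is $f = \gamma\circ\beta$, so
\[
Im(f) \;=\; \gamma\!\left(H^a(\ca_{i,x}(\tfrac{\delta_1+\delta_2}{2}))\right) \;\cong\; H^a(\ca_{i,x}(\tfrac{\delta_1+\delta_2}{2})),
\]
which equals $A$ for $a=0, n-k$ and is zero otherwise. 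The main subtlety is justifying that the bundle inclusion $\ca_{i,x}(\delta) \hookrightarrow \ca_{i,x}(\delta')$ is a genuine homotopy equivalence, rather than just a map between two spaces both abstractly equivalent to $\SS^{n-k}$; this can be done either by choosing a single trivialization of the bundle over $B_L(x,2\delta_2)$ and restricting it to the smaller base balls, or more abstractly by applying the five lemma to the long exact homotopy sequences of the two fibrations with common fibre.
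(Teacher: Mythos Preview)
Your proof is correct and follows essentially the same approach as the paper: sandwich the $\cb$-sets between $\ca$-sets using Lemma~\ref{L:lemma-9}, use Lemma~\ref{Cor:hyperb-fibration} to identify each $\ca_{i,x}(\delta)$ with an $\SS^{n-k}$-bundle over a contractible base, and then run the five-term diagram chase. The only cosmetic difference is that the paper compares each $\ca_{i,x}(\delta)$ with the single fibre $q^{-1}(x)\cap\pt K_i$ (whose inclusion is a homotopy equivalence for the same bundle reason), whereas you compare the $\ca$-sets to one another directly; both routes yield the required isomorphisms $\beta\alpha$ and $\sigma\gamma$.
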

{\bf Proof.} Consider the natural maps in cohomology induced by inclusions in Lemma \ref{L:lemma-9}
\begin{eqnarray}\label{E:diagram-aaa}
H^*(\ca_{i,x}(\frac{\delta_1}{2}))\overset{a_1}{\leftarrow} H^*(\cb_{i,x}(\delta_1))\overset{a_2}{\leftarrow} H^*(\ca_{i,x}(\frac{\delta_1+\delta_2}{2}))\overset{a_3}{\leftarrow} H^*(\cb_{i,x}(\delta_2))\overset{a_4}{\leftarrow} H^*(\ca_{i,x}(2\delta_2)).
\end{eqnarray}

\hfill

\underline{Step 1.} Let us show that the maps $a_1\circ a_2$ and $a_3\circ a_4$ are isomorphisms. For it suffices to show that for any $0<\delta<\kappa$ the natural map
$$H^*(q^{-1}(x)\cap \pt K_i)\leftarrow H^*(\ca_{i,x}(\delta))$$
is an isomorphism for large $i$. But for large $i$ the set $\{x\in K|\, dist(x,\pt K)>\kappa\}$ belongs to the interior of the image $q(K_i)$. Thus, for large $i$ the restriction $q|_{\partial K_i}$ is a topological fiber bundle over this set with fiber homeomorphic to the sphere $\SS^{n-k}$ by Lemma \ref{Cor:hyperb-fibration}.

\hfill

\underline{Step 2.} It follows from Step 1 and the diagram (\ref{E:diagram-aaa}) that the image
$Im(H^a(\cb_{i,x}(\delta_2))\to H^a(\cb_{i,x}(\delta_1)))$ is isomorphic to $H^a(\ca_{i,x}(\frac{\delta_1}{2}))\simeq H^a(q^{-1}(x)\cap \pt K_i)=H^a(\SS^{n-k})$. The result follows. \qed

\hfill

Now we need to study points at $\partial K$. Here for simplicity we consider only the case when $K$ is a non-degenerate segment $I$. Let $x \in \pt I$ be a point and $0<\delta < length(I)$ be a real number.
The boundary of the set $\ca_{i,x}(\delta)$ are two totally geodesic hyperplanes at distance $\delta$ from $x$ orthogonal to the line $L$. We denote them by $\ch_\delta$ and $\cg_\delta$ such that $\ch_\delta$ intersects $I$.

\begin{lemma}\label{L:topology-boundary}
Let $x\in \pt I$ and $0<\delta<length(I)$. Then for large $i$ the set $\ca_{i,x}(\delta)$ is homeomorphic to a disk. In particular for any commutative group $A$ one has
$$H^a(\ca_{i,x}(\delta); A)\simeq\left\{\begin{array}{ccc}
                                                                       A&\mbox{if}& a=0\\
                                                                       0&\mbox{otherwise} &
                                                                       \end{array}\right.$$
\end{lemma}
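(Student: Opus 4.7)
The plan is to reduce $\ca_{i,x}(\delta)$ to the intersection of $\pt K_i$ with one open half-space, and then identify this intersection as an open topological $(n-1)$-disk.

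First I would note that $\{z \in \HH^n : dist(q(z),x) < \delta\}$ is precisely the open slab between $\ch_\delta$ and $\cg_\delta$. Since $\cg_\delta$ is at distance $\delta > 0$ from the segment $I$ and $K_i \to I$ in the Hausdorff sense, for all large $i$ the body $K_i$ lies entirely in the open half-space bounded by $\cg_\delta$ and containing $I$. Consequently, for all large $i$,
$$\ca_{i,x}(\delta) \;=\; \pt K_i \cap \Pi,$$
where $\Pi$ denotes the open half-space bounded by $\ch_\delta$ and containing $x$.

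Next I would verify that $\ch_\delta$ cuts through the interior of $K_i$ for large $i$. Convexity and Hausdorff convergence imply that $q(K_i)$ is a closed subsegment of $L$ converging to $I$, and since $y_\delta \in \mathrm{relint}(I)$, for large $i$ the point $y_\delta$ lies strictly between the two endpoints of $q(K_i)$. Picking $p, p' \in \mathrm{int}(K_i)$ with $q(p)$ and $q(p')$ on opposite sides of $y_\delta$, the open segment $(p,p')$ lies in $\mathrm{int}(K_i)$ by convexity and must cross $\ch_\delta$ by continuity of $q$, producing a point $o' \in \mathrm{int}(K_i) \cap \ch_\delta$.

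Finally, radial projection from $o'$ defines a homeomorphism $\pi\colon \pt K_i \to \SS^{n-1} \subset T_{o'}\HH^n$. Since $o' \in \ch_\delta$ and $\ch_\delta$ is totally geodesic, a geodesic ray starting at $o'$ stays in $\ch_\delta$ precisely when its initial velocity lies in the tangent hyperplane $\widetilde{\ch} \subset T_{o'}\HH^n$ of $\ch_\delta$ at $o'$. Hence $\pi(\pt K_i \cap \ch_\delta)$ is the equator $\SS^{n-1} \cap \widetilde{\ch}$, and $\pi(\pt K_i \cap \Pi)$ is the corresponding open hemisphere, an open $(n-1)$-disk. The cohomology assertion is then immediate from contractibility. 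The main technical subtlety is in this last step: the centre of radial projection must be chosen on $\ch_\delta$ itself, so that the half-space decomposition of $\HH^n$ by $\ch_\delta$ is carried exactly to the equatorial decomposition of $\SS^{n-1}$.
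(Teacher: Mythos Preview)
Your argument is correct and follows essentially the same route as the paper: both first observe that for large $i$ the hyperplane $\cg_\delta$ misses $K_i$ while $\ch_\delta$ meets $\mathrm{int}(K_i)$, reducing $\ca_{i,x}(\delta)$ to the open cap $\pt K_i \cap \Pi$. The only difference is that the paper invokes the Klein model (where $\ch_\delta$ becomes a Euclidean hyperplane and the Euclidean cap fact applies directly), whereas you supply the cap-is-a-disk step explicitly via radial projection from a point of $\mathrm{int}(K_i)\cap\ch_\delta$; this is a harmless variation, and your choice of centre on $\ch_\delta$ is exactly what is needed.
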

{\bf Proof.} In the Klein model $\ch_\delta$ is the usual Euclidean hyperplane intersected with the unit Euclidean ball. For large $i$ the hyperplane $\ch_\delta$ intersects the interior of $K_i$ since both open halfspaces bounded by $\ch_\delta$ must contain points of $K_i$. Also for large $i$ the hyperplane $\cg_\delta$ does not intersect $K_i$. Let $\ch_\delta^+$ be the open halfspace bounded by $\ch_\delta$ and containing $x$.
Then for large $i$ we have $\ca_{i,x}(\delta)=\ch_\delta^+\cap \pt K_i$, which is an open cap homeomorphic to a disc. \qed

\begin{proposition}\label{P:isomorph-sphere-boundary}
Let $\{K_i\}\subset \HH^n$ be a sequence of $n$-dimensional convex bodies converging in the Hausdorff metric to a (non-degenerate) segment $I$.
Let $0<\delta_1<\delta_2<\frac{length(I)}{2}$. Let $x\in \pt I$. Then there exists $i_0$ such that for any $i>i_0$ and any commutative group $A$ one has
$$ Im(H^a(\cb_{i,x}(\delta_2);A)\to H^a(\cb_{i,x}(\delta_1);A))\simeq\left\{\begin{array}{ccc}
                                                                       A&\mbox{if}& a=0\\
                                                                       0&\mbox{otherwise} &
                                                                       \end{array}.\right.$$
\end{proposition}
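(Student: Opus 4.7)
The plan is to follow the pattern of Proposition \ref{P:isomorphisms-sphere}, with Lemma \ref{L:topology-boundary} playing the role that the fiber-bundle argument of Step 1 played there. First I would apply Lemma \ref{L:lemma-9} to the given $\delta_1, \delta_2$ to obtain, for all sufficiently large $i$, the chain of inclusions
$$\ca_{i,x}(\tfrac{\delta_1}{2})\subset \cb_{i,x}(\delta_1)\subset \ca_{i,x}(\tfrac{\delta_1+\delta_2}{2})\subset \cb_{i,x}(\delta_2)\subset \ca_{i,x}(2\delta_2),$$
inducing the cohomology diagram
$$H^a(\ca_{i,x}(\tfrac{\delta_1}{2}))\xleftarrow{a_1} H^a(\cb_{i,x}(\delta_1))\xleftarrow{a_2} H^a(\ca_{i,x}(\tfrac{\delta_1+\delta_2}{2}))\xleftarrow{a_3} H^a(\cb_{i,x}(\delta_2))\xleftarrow{a_4} H^a(\ca_{i,x}(2\delta_2)).$$
The hypothesis $\delta_2 < length(I)/2$ ensures $2\delta_2 < length(I)$, so Lemma \ref{L:topology-boundary} applies to all three $\ca$-sets appearing above, showing that each is homeomorphic to a disk and hence has cohomology $A$ in degree $0$ and trivial cohomology in positive degrees.

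Next I would observe that the two compositions $a_1\circ a_2$ and $a_3\circ a_4$ are isomorphisms. Indeed, both are induced by inclusions between connected contractible sets: in positive degrees both source and target vanish, while in degree zero each cohomology group is canonically $A$, generated by the class of the constant function $1$, which is preserved by any restriction map between nonempty connected spaces. From this it follows that $a_2$ is injective and $a_3$ is surjective.

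Finally, the inclusion $\cb_{i,x}(\delta_1)\subset \cb_{i,x}(\delta_2)$ factors through $\ca_{i,x}(\tfrac{\delta_1+\delta_2}{2})$, so the natural map $H^a(\cb_{i,x}(\delta_2))\to H^a(\cb_{i,x}(\delta_1))$ equals $a_2\circ a_3$. Since $a_3$ is surjective, $Im(a_2\circ a_3)=Im(a_2)$, and since $a_2$ is injective, this image is isomorphic to $H^a(\ca_{i,x}(\tfrac{\delta_1+\delta_2}{2}))$, which is $A$ for $a=0$ and $0$ otherwise, as claimed. I do not anticipate any serious obstacle: the topological content is contained in Lemmas \ref{L:lemma-9} and \ref{L:topology-boundary}, and what remains is a purely formal diagram chase that mirrors Step 2 of Proposition \ref{P:isomorphisms-sphere}.
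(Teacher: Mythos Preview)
Your proposal is correct and follows essentially the same approach as the paper: apply Lemma~\ref{L:lemma-9} to get the five-term chain of inclusions, invoke Lemma~\ref{L:topology-boundary} to see that the three $\ca$-sets are disks so that $a_1\circ a_2$ and $a_3\circ a_4$ are isomorphisms, and then chase the diagram. Your write-up is in fact slightly more explicit than the paper's, spelling out why $2\delta_2<length(I)$ puts all three $\ca$-sets in the range of Lemma~\ref{L:topology-boundary} and why the image of $a_2\circ a_3$ is identified with the middle cohomology group.
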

{\bf Proof.}
By Lemma~\ref{L:lemma-9} for large $i$ the inclusions hold
\begin{eqnarray}\label{E:inclusions-again}
\ca_{i,a}(\frac{\delta_1}{2})\subset \cb_{i,a}(\delta_1)\subset \ca_{i,a}(\frac{\delta_1+\delta_2}{2})\subset \cb_{i,a}(\delta_2)\subset \ca_{i,a}(2\delta_2).
\end{eqnarray}

Let us consider the sequence in cohomology induced by inclusions (\ref{E:inclusions-again})
$$H^*(\ca_{i,x}(\frac{\delta_1}{2}))\overset{a_1}{\leftarrow} H^*(\cb_{i,x}(\delta_1))\overset{a_2}{\leftarrow} H^*(\ca_{i,x}(\frac{\delta_1+\delta_2}{2}))\overset{a_3}{\leftarrow} H^*(\cb_{i,x}(\delta_2))\overset{a_4}{\leftarrow} H^*(\ca_{i,x}(2\delta_2)).$$

It follows from Lemma \ref{L:topology-boundary} that $a_1\circ a_2$ and $a_3\circ a_4$ are isomorphisms. Then it follows from the above diagram that
$$Im(H^*(\cb_{i,x}(\delta_2))\to H^*(\cb_{i,x}(\delta_1)))$$
is isomorphic to, say, $H^*(\ca_{i,x}(\frac{\delta_1}{2}))$. Then proposition follows. \qed

\section{Riemannian submersions and the Yamaguchi map.}\label{S:yamaguchi-map}

\subsection{Riemannian submersions}\label{Ss:riem-submersions}
Let us review a few facts on Riemannian submersions. Let $M,N$ be smooth complete Riemannian manifolds. Let $\pi\colon M\to N$ be a Riemannian submersion.
At any point of $M$ the tangent space of it splits into the direct sum of the vertical subspace, i.e. the tangent space to the fiber of $\pi$, and the horizontal subspace, i.e.
the orthogonal complement to the vertical subspace.

Let $C\colon [a,b]\to N$ be a smooth curve. Fix $c\in [a,b]$ and $\tilde c\in \pi^{-1}(c)$. Then there exists a unique smooth curve $\tilde C\colon[a,b]\to M$, called a horizontal lift of $C$, such that
$$C=\pi\circ \tilde C,$$
 $\frac{d}{dt}\tilde C(t)$ is horizontal for any $t\in [a,b]$, and $\tilde C(c)=\tilde c$.

\begin{lemma}[\cite{cheeger-ebin}, Proposition 3.31]\label{L:geod-horizontal-lift}
In the above notation $C$ is a geodesic if and only if its horizontal lift $\tilde C$ is.
\end{lemma}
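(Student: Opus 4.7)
My plan is to prove both directions at once by showing the stronger statement that, along the horizontal lift $\tilde C$, the acceleration vector $\nabla^M_{\dot{\tilde C}}\dot{\tilde C}$ is horizontal and satisfies $d\pi\bigl(\nabla^M_{\dot{\tilde C}}\dot{\tilde C}\bigr)=\nabla^N_{\dot C}\dot C$. Since $d\pi$ restricted to the horizontal distribution is a linear isometry onto $T_{\pi(\tilde c)}N$, a horizontal vector vanishes iff its image under $d\pi$ vanishes, which will give the equivalence immediately. I may assume $\dot C(c)\neq 0$ at some point, otherwise $C$ is constant, $\tilde C$ is constant by uniqueness of the horizontal lift, and the statement is trivial.

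To make the computation of $\nabla^M_{\dot{\tilde C}}\dot{\tilde C}$ meaningful as a covariant derivative of a vector field, I would first extend $\dot C$ locally near $C(c)$ to a smooth vector field $X'$ on a neighborhood in $N$ (possible since $\dot C$ is nonvanishing), and then take the unique horizontal lift $X$ of $X'$ to a neighborhood of $\tilde c$ in $M$. A standard property of horizontal lifts is that integral curves of $X'$ lift horizontally to integral curves of $X$; in particular $\tilde C$ is an integral curve of $X$ near $\tilde c$, so $X|_{\tilde C}=\dot{\tilde C}$ there and $\nabla^M_{\dot{\tilde C}}\dot{\tilde C}=(\nabla^M_X X)|_{\tilde C}$. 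I then split this covariant derivative into its horizontal and vertical components. The vertical component is $(\nabla^M_X X)^V=A_X X$ in O'Neill's notation; since $A$ is antisymmetric on horizontal vector fields (the standard identity $A_XY=\tfrac12[X,Y]^V$ for horizontal $X,Y$ gives $A_XX=0$), this vertical component vanishes identically.

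For the horizontal component I would invoke the fundamental identity for a Riemannian submersion: if $X,Y$ are horizontal lifts of vector fields $X',Y'$ on $N$, then $(\nabla^M_X Y)^H$ is the horizontal lift of $\nabla^N_{X'}Y'$. This follows from the Koszul formula applied on $M$ and $N$, together with the facts that $d\pi$ is an isometry on horizontal vectors, that brackets of horizontal lifts project to brackets on $N$, and that $\langle X,V\rangle=0$ for any vertical $V$. Applying this with $Y=X$ and restricting to $\tilde C$ yields $d\pi\bigl((\nabla^M_X X)^H|_{\tilde C}\bigr)=\nabla^N_{\dot C}\dot C$, and combined with the vanishing of the vertical part I obtain $d\pi\bigl(\nabla^M_{\dot{\tilde C}}\dot{\tilde C}\bigr)=\nabla^N_{\dot C}\dot C$, completing the argument.

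The main obstacle is the bookkeeping around the local extension $X'$ of $\dot C$ and the verification that $\tilde C$ is an integral curve of its horizontal lift $X$; everything else reduces to the two pillars of Riemannian submersion geometry (O'Neill's horizontal projection formula and antisymmetry of $A$ on horizontal pairs), both of which follow from direct applications of the Koszul formula.
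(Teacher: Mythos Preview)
Your proof is correct and follows the standard textbook argument via O'Neill's tensors. Note, however, that the paper does not give its own proof of this lemma: it simply cites \cite{cheeger-ebin}, Proposition 3.31, and states the result without argument. Your approach is essentially the one found in that reference (compute the acceleration of the horizontal lift, split into horizontal and vertical parts, use $A_XX=\tfrac12[X,X]^V=0$ for the vertical part and the Koszul-formula identity $(\nabla^M_XY)^H=\widetilde{\nabla^N_{X'}Y'}$ for the horizontal part), so there is nothing to compare beyond saying you have supplied the omitted standard proof.
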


\begin{lemma}[\cite{michor}, Corollary 26.12]\label{L:geodes-orthog}
Let $\pi\colon M\to N$ be a Riemannian submersion. Let $\gamma$ be a geodesic in $M$. Assume $\gamma$ is orthogonal to a fiber of $\pi$ at some point. Then $\gamma$
is orthogonal to each fiber it intersects.
\end{lemma}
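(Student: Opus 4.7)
The plan is to combine Lemma \ref{L:geod-horizontal-lift} with the uniqueness of geodesics from prescribed initial data. The strategy is to produce, at a parameter where $\gamma$ is already known to be horizontal, a horizontal geodesic in $M$ with the same initial position and velocity as $\gamma$; uniqueness then forces the two curves to coincide on a neighborhood, making $\gamma$ horizontal there, and connectedness of the domain propagates the conclusion to all of $\gamma$.

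In detail, I would fix $t_{0}$ at which $\gamma'(t_{0})$ is horizontal, and set $p := \gamma(t_{0})$, $v := \gamma'(t_{0})$, and $w := \pi_{*} v \in T_{\pi(p)} N$. Let $c$ be the unique geodesic in $N$ with $c(0) = \pi(p)$ and $c'(0) = w$, and let $\tilde c$ be its horizontal lift starting at $p$. By Lemma \ref{L:geod-horizontal-lift}, $\tilde c$ is itself a geodesic in $M$. By construction, $\tilde c'(0)$ is horizontal and $\pi_{*} \tilde c'(0) = c'(0) = w = \pi_{*} v$; since $\pi_{*}$ restricts to a linear isomorphism from the horizontal subspace at $p$ onto $T_{\pi(p)}N$ and $v$ is horizontal, this forces $\tilde c'(0) = v$. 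Thus the two geodesics $s \mapsto \gamma(t_{0}+s)$ and $\tilde c$ in $M$ have identical initial data, so standard ODE uniqueness for the geodesic equation gives $\gamma(t_{0}+s) = \tilde c(s)$ on their common interval of definition. Since $\tilde c$ is horizontal everywhere by construction, $\gamma'$ is horizontal on an open interval about $t_{0}$.

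To globalize, let $S$ denote the set of parameters at which $\gamma'$ is horizontal. It is nonempty by hypothesis, open by the local argument above, and closed because $\gamma'$ is continuous and the horizontal subbundle of $TM$ is a closed subset of $TM$. Since the domain of $\gamma$ is a connected interval, $S$ exhausts it, so $\gamma'$ is horizontal throughout, i.e.\ $\gamma$ is orthogonal to every fiber it meets.

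The main conceptual point is the direction in which Lemma \ref{L:geod-horizontal-lift} is invoked: one uses that the horizontal lift of the geodesic $c$ in $N$ is automatically a geodesic in $M$, which is precisely what manufactures the horizontal competitor needed for uniqueness. No substantive obstacle arises beyond this, since everything rests on local uniqueness of geodesics and the fact that a horizontal vector is determined by its image under $\pi_{*}$.
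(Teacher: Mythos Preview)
Your argument is correct and is the standard proof of this fact. Note, however, that the paper does not give its own proof of this lemma: it is quoted as Corollary~26.12 from \cite{michor} with no argument supplied, so there is no ``paper's proof'' to compare against beyond the reference.
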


A geodesic in $M$ orthogonal to all fibers it intersects will be called {\itshape horizontal} geodesics. It follows from Lemmas \ref{L:geod-horizontal-lift}, \ref{L:geodes-orthog} that a geodesic in $M$ is horizontal if and only if it is
a horizontal lift of its image under $\pi$. It is easy to see that the length of any horizontal geodesic and of its image under $\pi$ are equal.

Below $inj(N)$ denotes the injectivity radius of $N$.

\begin{lemma}\label{L:tube-diffeo}
Let $\pi\colon M\to N$ be a Riemannian submersion. Let $x\in N$. Let $0<\delta <inj(N)$.
Let $\cn_\delta$ denote the $\delta$-neighborhood of zero section of the normal bundle of the fiber $\pi^{-1}(x)$.
\newline
(1) For any $ x\in N,p\in M$ one has
$$dist(p,\pi^{-1}(x))= dist(\pi(p),x).$$
\newline
(2) For any point from $\pi^{-1}(B(x,\delta))$ there is exactly one path minimizing the distance from this point to the fiber $\pi^{-1}(x)$.
This path is necessarily a horizontal geodesic.
\newline
(3) For any vector $v\in \cn_\delta$ one has
$$dist(\exp(v),\pi^{-1}(x))=|v|.$$
\newline
(4) If $\pi$ is proper then the exponential map $\exp\colon \cn_\delta\to M$ is a homeomorphism onto $\pi^{-1}(B(x,\delta))$.
\end{lemma}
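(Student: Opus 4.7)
\textbf{Proof plan for Lemma \ref{L:tube-diffeo}.} The four statements are increasingly strong and each builds on the previous ones, so the plan is to prove them in order.

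For part (1), I would prove the two inequalities separately. The inequality $dist(\pi(p),x)\leq dist(p,\pi^{-1}(x))$ follows immediately from the fact that a Riemannian submersion is $1$-Lipschitz: for any $q\in\pi^{-1}(x)$, $dist(\pi(p),x)=dist(\pi(p),\pi(q))\leq dist(p,q)$, and then one takes the infimum over $q\in\pi^{-1}(x)$. For the reverse inequality, I would take a sequence of paths $\gamma_n$ in $N$ from $\pi(p)$ to $x$ whose lengths converge to $dist(\pi(p),x)$ (it suffices to use the minimizing geodesic when it exists, which is the case below the injectivity radius), horizontally lift each $\gamma_n$ starting at $p$ to obtain a curve $\tilde\gamma_n$ of the same length ending in $\pi^{-1}(x)$, and conclude $dist(p,\pi^{-1}(x))\leq length(\tilde\gamma_n)=length(\gamma_n)$.

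For part (2), fix $p\in\pi^{-1}(B(x,\delta))$ and set $y=\pi(p)$; by part (1) we have $dist(p,\pi^{-1}(x))=dist(y,x)<\delta<inj(N)$. Hence there is a unique minimizing geodesic $\gamma\subset N$ from $y$ to $x$. Its horizontal lift $\tilde\gamma$ starting at $p$ is, by Lemma \ref{L:geod-horizontal-lift}, a geodesic in $M$; it is horizontal by construction, has the same length $dist(y,x)$ as $\gamma$, and ends in $\pi^{-1}(x)$, so it realizes the distance from $p$ to $\pi^{-1}(x)$. For uniqueness, any minimizing path $\sigma$ from $p$ to $\pi^{-1}(x)$ projects to a path $\pi\circ\sigma$ from $y$ to $x$ with $length(\pi\circ\sigma)\leq length(\sigma)=dist(y,x)$; equality forces $length(\pi\circ\sigma)=dist(y,x)$, so $\pi\circ\sigma$ is a minimizing curve in $N$, hence must coincide with $\gamma$ (by uniqueness of minimizing geodesics below $inj(N)$). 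Equality of lengths $length(\sigma)=length(\pi\circ\sigma)$ together with the pointwise contraction $|\pi_\ast v|\leq |v|$ (equality iff $v$ is horizontal) forces $\sigma$ to be horizontal, hence $\sigma=\tilde\gamma$.

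Part (3) is an immediate corollary: for $v\in\mathcal{N}_\delta$ based at $q\in\pi^{-1}(x)$, the curve $t\mapsto \exp_q(tv)$, $t\in[0,1]$, is a geodesic whose initial velocity is horizontal, so by Lemma \ref{L:geodes-orthog} it is horizontal throughout; its projection $\pi\circ\exp_q(tv)$ is a geodesic in $N$ of length $|v|<\delta<inj(N)$, so $dist(\pi(\exp(v)),x)=|v|$, and part (1) yields $dist(\exp(v),\pi^{-1}(x))=|v|$. For part (4), surjectivity of $\exp\colon\mathcal{N}_\delta\to\pi^{-1}(B(x,\delta))$ follows by reversing the unique minimizing horizontal geodesic from part (2): for $p\in\pi^{-1}(B(x,\delta))$, write $p=\exp(v)$ where $v$ is the initial velocity of this geodesic. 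Injectivity is likewise immediate: $\exp(v_1)=\exp(v_2)=p$ produces two minimizing horizontal geodesics from $p$ to $\pi^{-1}(x)$, which by part (2) must coincide, forcing $v_1=v_2$. Continuity of $\exp$ is standard. Continuity of the inverse is obtained by observing that if $p_n\to p$ in $\pi^{-1}(B(x,\delta))$ then $\pi(p_n)\to\pi(p)$ in $B(x,\delta)$, the unique minimizing geodesics in $N$ from $\pi(p_n)$ to $x$ vary continuously with the starting point (within the injectivity radius), and their horizontal lifts starting at $p_n$ vary continuously in the $C^1$-sense; thus the initial velocities converge.

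The main obstacle that I expect is the use of properness of $\pi$ in part (4). Properness ensures that the fiber $\pi^{-1}(x)$ is compact, which in turn guarantees that the minimizing paths constructed in part (2) genuinely realize the infimum over the fiber (rather than escaping along a non-compact fiber). It also ensures the well-behaved global identification of $\mathcal{N}_\delta$ with $\pi^{-1}(B(x,\delta))$; without properness one can still prove a local version of the statement near any compact piece of the fiber, but the global homeomorphism requires the fibers to be complete in the appropriate sense. Handling this gap cleanly, and being explicit about where completeness of $M$ (needed for horizontal lifts of arbitrary rectifiable paths) is invoked, is the most delicate part of the write-up.
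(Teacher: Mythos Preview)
Your argument is correct and follows the paper's overall strategy, but differs in a few places worth noting. For part (2) the paper argues uniqueness by first variation: any minimizing geodesic from $p$ to the fiber must hit the fiber orthogonally, hence is horizontal, and then one projects to $N$ and uses $\delta<inj(N)$. You instead compare lengths of $\sigma$ and $\pi\circ\sigma$ to force horizontality directly; this is equally valid and sidesteps the first-variation step. For part (3) the paper compares the horizontal geodesic $t\mapsto\exp(tv)$ with the unique minimizer from (2) to conclude they coincide; your shortcut via part (1) is cleaner. The most substantive difference is the continuity of $\exp^{-1}$ in part (4): the paper uses properness to extract a convergent subsequence of $(z_i,v_i)$ in the (compact) closure of $\cn_\delta$ and then invokes bijectivity, whereas you argue via continuous dependence of horizontal lifts on initial data. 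Your route works and, since $M$ is assumed complete (so horizontal lifts exist globally), actually does not use properness at all.

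This last point also means your diagnosis of where properness enters is off. You worry that without properness the infimum over a non-compact fiber might not be attained in (2), but you have already exhibited an explicit minimizer there (the horizontal lift of the geodesic in $N$), so compactness of the fiber is irrelevant for that step. In the paper's proof properness is invoked \emph{only} in the subsequence argument for continuity of the inverse; your alternative argument bypasses it. So the ``main obstacle'' you anticipate is not an obstacle in your own write-up.
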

{\bf Proof.}
(1) First since $\pi$ is 1-Lipschitz one has
$$dist(p,\pi^{-1}(x))\geq dist(\pi(p),x).$$
To prove the opposite inequality let $\gamma$ be a geodesic in $N$  minimizing the distance between $\pi(p)$ and $x$. Let $\tilde\gamma$ be its horizontal lift started at $p$. $\tilde\gamma$ ends necessarily on $\pi^{-1}(x)$.
$\gamma$ and $\tilde \gamma$ have equal length. Hence the opposite inequality follows.

\hfill

(2) Let now $p\in \pi^{-1}(B(x,\delta))$. Let us assume that there exist two geodesics $\gamma_1,\gamma_2$ connecting $p$ to $\pi^{-1}(x)$ and having length $dist(p,\pi^{-1}(x))$.
By minimality both have to be orthogonal to the fiber $\pi^{-1}(x)$. Hence they are horizontal and by Lemma \ref{L:geod-horizontal-lift} their images $\pi\circ \gamma_1,\pi\circ\gamma_2$ are geodesics in $N$; they have the same length
$dist(p,\pi^{-1}(x))\overset{\mbox{part }(1)}{=}dist(\pi(p),x)<\delta$. Since $\delta<inj(N)$ the two geodesics must coincide $\pi\circ \gamma_1=\pi\circ\gamma_2$. Since $\gamma_i$ is the horizontal lift of $\pi\circ \gamma_i$, $i=1,2$,
with the common end point $p$ it follows that $\gamma_1=\gamma_2$.

\hfill

(3) Let $v\in \cn_\delta$.
Let $\gamma(t):=\exp(tv),\, t\in[0,1]$. Then $\gamma$  is a horizontal geodesic since it is orthogonal to the fiber $\pi^{-1}(x)$. When $t\in [0,1]$ its length is equal to the length of $\pi\circ\gamma$ on the one hand, and equals to $|v|$ on the other hand.
Clearly $length(\gamma)=|v|<\delta$. Hence $\gamma\subset \pi^{-1}(B(x,\delta))$. There is a path $\gamma_0$ minimizing the distance between $\exp(v)$ and $\pi^{-1}(x)$; it is necessarily a horizontal geodesic by part (2).
Then the images $\pi\circ \gamma$ and $\pi\circ \gamma_0$ are geodesics contained in $B(x,\delta)$ with equal endpoint $x$ and $\pi(\exp(v))$. Since $\delta<inj(N)$ it follows $\pi\circ \gamma=\pi\circ \gamma_0$. Hence their
horizontal lifts are equal: $\gamma=\gamma_0$. Part (3) follows.

\hfill

(4) Since $\pi$ is 1-Lipschitz, $\exp(\cn_\delta)\subset \pi^{-1}(B(x,\delta))$.

Let us show the opposite inclusion. Let $p\in \pi^{-1}(B(x,\delta))$. Let $\gamma$ be a normal geodesic starting at $p$ and minimizing distance from $p$ to $\pi^{-1}(x)$. By part (3) $\gamma$ is a horizontal geodesic.
Let us denote its end point by $z\in \pi^{-1}(x)$. By part (1)
$$l:=length(\gamma)=dist(p,z)=dist(p,\pi^{-1}(x))=dist(\pi(p),x)<\delta.$$
Then $v:=\frac{d\gamma}{dt}\big|_z$ belongs to the unit normal bundle to $\pi^{-1}(x)$. Then $\gamma(t)=\exp(tv)$, $t\in[0,l]$. In particular $p=\exp(lv)$.
Since $l<\delta$ the converse inclusion follows and hence
$$\exp(\cn_\delta)= \pi^{-1}(B(x,\delta)).$$

Let us prove the injectivity of $\exp$ on $\cn_\delta$. Otherwise there exist $p\in \pi^{-1}(B(x,\delta))$ and two horizontal geodesics $\gamma_1,\gamma_2$ starting on $\pi^{-1}(x)$, ending at $p$, contained in $\pi^{-1}(B(x,\delta))$.
Then their images $\pi\circ \gamma_1$ and $\pi\circ \gamma_2$ are geodesics contained in $B(x,\delta)$, both start at $x$ and end  at $\pi(p)$. Since $\delta< inj(N)$, $\pi\circ \gamma_1=\pi\circ \gamma_2$.
Hence $\gamma_1=\gamma_2$.

Let us show that the inverse map of $\exp$ is continuous. Let $p_i\to p$ be a sequence in $\pi^{-1}(B(x,\delta))$. Set
\begin{eqnarray*}
(z_i,v_i):=\exp^{-1}(p_i)\in\cn_\delta,\\
(z,v):=\exp^{-1}(p)\in\cn_\delta,
\end{eqnarray*}
where  $z_i,z\in \pi^{-1}(x)$ and $v_i\in\cn_\delta|_{z_i},\, v\in \cn_\delta|_{z}$. Since $\pi$ is proper after a choice of subsequence we may assume that $(z_i,v_i)\to (y,w)\in \cn_\delta$.
Then by continuity of $\exp$ one has
$$\exp_y(w)=\exp_z(v).$$
Since $\exp$ is bijective on $\cn_\delta$ it follows that $(y,w)=(z,v)$. \qed

\begin{corollary}\label{Cor:homot-retract}
Let $\pi\colon M\to N$ be a proper smooth Riemannian submersion. Let $\delta<inj(N)$. For any point $x\in N$ the natural imbedding
$$\pi^{-1}(x)\inj \pi^{-1}(B(x,\delta))$$
is a homotopy retraction.
\end{corollary}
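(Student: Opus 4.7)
The plan is to deduce this immediately from Lemma~\ref{L:tube-diffeo}(4), which already provides a homeomorphism
\[
\exp\colon \cn_\delta \longrightarrow \pi^{-1}(B(x,\delta)),
\]
where $\cn_\delta$ is the open $\delta$-disk subbundle of the normal bundle of the fiber $\pi^{-1}(x)$. Under this homeomorphism, the zero section of $\cn_\delta$ corresponds exactly to the fiber $\pi^{-1}(x)$ (since $\exp$ restricted to the zero section is the identity embedding $\pi^{-1}(x)\hookrightarrow M$), so the inclusion $\pi^{-1}(x)\hookrightarrow \pi^{-1}(B(x,\delta))$ is, up to this homeomorphism, the inclusion of the zero section into $\cn_\delta$.

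The next step is to write down the obvious deformation retraction of a disk bundle onto its zero section. Define $H\colon \cn_\delta \times [0,1] \to \cn_\delta$ by $H(v,s) := (1-s)v$, using the linear structure of each fiber of the normal bundle. Then $H_0 = \mathrm{id}$, $H_1$ is the projection to the zero section, $H_s$ fixes the zero section pointwise for every $s$, and $H_s(\cn_\delta)\subset \cn_\delta$ since $|H_s(v)|=(1-s)|v|<\delta$. Conjugating by the homeomorphism $\exp$ of Lemma~\ref{L:tube-diffeo}(4), i.e.\ setting
\[
\tilde H(p,s) := \exp\bigl(H(\exp^{-1}(p),s)\bigr)\quad\text{for } p\in\pi^{-1}(B(x,\delta)),
\]
yields a strong deformation retraction of $\pi^{-1}(B(x,\delta))$ onto $\pi^{-1}(x)$.

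There is no real obstacle here: all the analytic work has already been absorbed into Lemma~\ref{L:tube-diffeo}. The only thing worth checking is the mild point that the linear scaling $H$ indeed keeps vectors inside the open $\delta$-ball fibrewise, which is immediate. The corollary is therefore essentially a repackaging of the tubular neighborhood statement in Lemma~\ref{L:tube-diffeo}(4).
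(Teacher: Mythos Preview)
Your proof is correct and takes essentially the same approach as the paper: both arguments invoke the homeomorphism $\exp\colon \cn_\delta\to \pi^{-1}(B(x,\delta))$ from Lemma~\ref{L:tube-diffeo}(4) and then use the standard deformation retraction of a disk bundle onto its zero section. The paper phrases this via a commutative square and simply asserts that the zero section inclusion is a homotopy retraction, whereas you write out the fibrewise scaling $H(v,s)=(1-s)v$ explicitly; the content is identical.
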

{\bf Proof.} The following diagram is commutative
$$\square[\pi^{-1}(x)`\pi^{-1}(x)`\cn_\delta`\pi^{-1}(B(x,\delta));id` ` `\exp] $$
where the horizontal arrow are homeomorphisms by Lemma \ref{L:tube-diffeo}(4) and the vertical ones are closed imbeddings. Since the left arrow is a homotopy
retraction, so is the right one. \qed

\begin{proposition}\label{P:submersion-cohomology}
Let $(M,g)$, $(N,h)$ be smooth Riemannian manifolds. Let $\pi\colon M\to N$ be a smooth proper submersion. Let $0<\delta_1<\delta_2<\frac{1}{10}\min\{inj(N),1\}$.
Let $0<\eps<\frac{\min\{\delta_1,\delta_2-\delta_1\}}{100(1+diam(N))}$. Let us assume that $\pi$ is $\eps$-almost Riemannian submersion, i.e.
$$e^{-\eps}<\frac{|d\pi(v)|}{|v|}<e^{\eps}$$
for any non-zero tangent to $M$ vector $v$ which is horizonal, i.e. orthogonal to the corresponding fiber of $\pi$.

Let $d$ be a metric on $M\bigsqcup N$ extending the original metrics on $M$ and $N$ and such that
\begin{eqnarray}\label{E:haus-distance-technical}
d_H(M,N)<\eps.
\end{eqnarray}
Assume finally that
\begin{eqnarray}\label{E:distance-to-the-image}
d(p,\pi(p))<\eps \mbox{ for any } p\in M.
\end{eqnarray}
For $x\in N$ consider the set $\cb_x(\delta):=\{p\in M|\, d(p,x)<\delta\}.$

Then (evidently)
$$\pi^{-1}(x)\subset \cb_x(\delta_1)\subset \cb_x(\delta_2)$$
and the obvious map
$$Im[H^*(\cb_x(\delta_2))\to H^*(\cb_x(\delta_1))]\to H^*(\pi^{-1}(x))$$
is an isomorphism, where the cohomology has coefficients in an arbitrary commutative group.
\end{proposition}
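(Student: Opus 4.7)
The plan is to sandwich each distance-ball $\cb_x(\delta)$ between preimages under $\pi$ of two genuine geodesic balls in $N$, and then reduce the statement to the standard fact that over a geodesic ball of radius less than $inj(N)$ the preimage of $\pi$ deformation retracts onto the central fibre. To set up the sandwich I would fix three radii inside $(0,inj(N))$, for instance
$r_1:=\delta_1-2\eps,\ r_2:=(\delta_1+\delta_2)/2,\ r_3:=\delta_2+2\eps$;
the inequalities $2\eps<\delta_1$, $2\eps<\delta_2-\delta_1$ and $\delta_2+2\eps<inj(N)$ all follow from the quantitative smallness assumption on $\eps$. Combining the triangle inequality for $d$ with hypothesis (\ref{E:distance-to-the-image}) I would then verify
$$\pi^{-1}(x)\subset \pi^{-1}(B(x,r_1))\subset \cb_x(\delta_1)\subset \pi^{-1}(B(x,r_2))\subset \cb_x(\delta_2)\subset \pi^{-1}(B(x,r_3)),$$
which in particular already gives the asserted $\pi^{-1}(x)\subset \cb_x(\delta_1)\subset \cb_x(\delta_2)$.

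The main auxiliary fact I would invoke is that every inclusion $\pi^{-1}(B(x,r))\hookrightarrow \pi^{-1}(B(x,r'))$ with $r<r'<inj(N)$, as well as $\pi^{-1}(x)\hookrightarrow \pi^{-1}(B(x,r))$, is a homotopy equivalence. Since $\pi$ is a proper smooth submersion, Ehresmann's fibration theorem presents it as a locally trivial smooth fibre bundle, and $B(x,r)$ is contractible via $\exp_x$ whenever $r<inj(N)$, so the bundle restricts to a trivial one over $B(x,r)$; in the genuine Riemannian case this is exactly Corollary~\ref{Cor:homot-retract}. The $\eps$-almost Riemannian hypothesis is not needed here. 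Consequently, for $r_i<r_j<inj(N)$, all restriction maps $H^*(\pi^{-1}(B(x,r_j)))\to H^*(\pi^{-1}(B(x,r_i)))$ and $H^*(\pi^{-1}(B(x,r_i)))\to H^*(\pi^{-1}(x))$ are isomorphisms.

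Passing to cohomology in the chain of inclusions above produces
$$H^*(\pi^{-1}(B(x,r_3)))\xrightarrow{f_5}H^*(\cb_x(\delta_2))\xrightarrow{f_4}H^*(\pi^{-1}(B(x,r_2)))\xrightarrow{f_3}H^*(\cb_x(\delta_1))\xrightarrow{f_2}H^*(\pi^{-1}(B(x,r_1)))\xrightarrow{f_1}H^*(\pi^{-1}(x)),$$
and by the previous paragraph $f_1$, $f_2\circ f_3$ and $f_4\circ f_5$ are isomorphisms. Writing $G:=f_3\circ f_4$ for the map whose image is under study, the obvious map $\mathrm{Im}(G)\to H^*(\pi^{-1}(x))$ is the restriction of $f_1\circ f_2$. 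I would finish by a routine diagram chase: surjectivity of $f_4$ (forced by $f_4\circ f_5$ being onto) makes $f_2\circ G=(f_2\circ f_3)\circ f_4$ surjective, so $f_2$ sends $\mathrm{Im}(G)$ onto $H^*(\pi^{-1}(B(x,r_1)))$; for injectivity, any $\alpha=f_3(\beta)\in\mathrm{Im}(G)$ with $\beta\in\mathrm{Im}(f_4)$ and $f_2(\alpha)=0$ satisfies $(f_2\circ f_3)(\beta)=0$, hence $\beta=0$ and $\alpha=0$. Composing with the isomorphism $f_1$ gives the desired isomorphism.

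The only delicate point is the bookkeeping that makes the sandwich fit simultaneously inside $(0,inj(N))$ and respect the $\eps$-slack forced by (\ref{E:distance-to-the-image}); once this is arranged the remainder is a purely formal five-term diagram chase. It is worth noting that the $\eps$-almost Riemannian submersion condition itself plays no explicit role in this argument---only the combined-metric bound (\ref{E:distance-to-the-image}) and the smooth properness of $\pi$ (needed to apply Ehresmann's theorem) are actually used.
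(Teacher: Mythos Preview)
Your proof is correct and follows essentially the same five-term sandwich/diagram-chase strategy as the paper. The one genuine difference is that the paper first modifies the metric on $M$ to a new metric $\tilde g$ making $\pi$ an honest Riemannian submersion, in order to invoke Corollary~\ref{Cor:homot-retract}; you instead appeal directly to Ehresmann's theorem (proper smooth submersion $\Rightarrow$ fibre bundle) and triviality over the contractible ball $B(x,r)$, which bypasses the metric modification entirely. This is a real simplification, and your closing observation is right: with your route the $\eps$-almost Riemannian hypothesis is never used---only properness of $\pi$ and the bound $d(p,\pi(p))<\eps$ matter. The paper's detour through $\tilde g$ buys consistency with the Riemannian-submersion machinery developed earlier in Section~\ref{Ss:riem-submersions}, but is not logically necessary for this proposition.
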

{\bf Proof.} Let us define the new Riemannian metric $\tilde g$ on $M$ which coincides with $g$ on the vertical subspaces, has the same as $g$ horizontal subspaces, and
on these horizontal subspaces it is the pull-back of the metric $h$. Thus
$$\pi\colon (M,\tilde g)\to (N,h)$$
is a Riemannian submersion. It is easy to see that for any tangent vector $v$ to $M$ one has
$$e^{-\eps} <\frac{|v|_{\tilde g}}{|v|_g}<e^{\eps}.$$
Consequently the lengths of  any curve $c$ on $M$ with respect to the metrics $g$ and $\tilde g$ satisfy
$$e^{-\eps} <\frac{length_{\tilde g}(c)}{length_g(c)}<e^{\eps}.$$
Let us denote by $d_g$ and $ d_{\tilde g}$ the intrinsic metrics on $(M,g)$ and $(M,\tilde g)$ respectively. Then it follows that
$$e^{-\eps} <\frac{d_{\tilde g}(x,y)}{d_g(x,y)}<e^{\eps}\mbox{ for any } x,y\in M.$$
By (\ref{E:haus-distance-technical}) one has $diam(M,g)\leq diam(N,h)+2\eps<diam(N,h)+1$.
Hence
\begin{eqnarray*}\label{E:uniform-dist-comparison}
|d_{\tilde g}(x,y)-d_g(x,y)|\leq (e^{\eps}-1)diam(M,g)\leq 2\eps diam(M,g)<2\eps(diam(N,h)+1)<\\
\frac{\min\{\delta_1,\delta_2-\delta_1\}}{50}.
\end{eqnarray*}

By Lemma \ref{L:tube-diffeo}(1) for $0<\delta<inj(N,h)$ one has
$$\pi^{-1}(B(x,\delta))=\{p\in M|\, d_{\tilde g}(p,\pi^{-1}(x))<\delta\}.$$
Then it follows that
\begin{eqnarray}\label{E:long-inclusion}
\pi^{-1}(B(x,\frac{\delta_1}{2}))\subset \cb_x(\delta_1)\subset \pi^{-1}(B(x,\frac{\delta_1+\delta_2}{2}))\subset \cb_x(\delta_2)\subset\pi^{-1}(B(x,2\delta_1)).
\end{eqnarray}
Also all these 5 spaces contain the fiber $\pi^{-1}(x)$.
Then we get the maps on cohomology
$$H^*(\pi^{-1}(B(x,\frac{\delta_1}{2})))\leftarrow H^*(\cb_x(\delta_2))\leftarrow H^*(\pi^{-1}(B(x,\frac{\delta_1+\delta_2}{2})))\leftarrow H^*(\cb_x(\delta_1))\leftarrow H^*(\pi^{-1}(B(x,2\delta_1)))$$
which all naturally map to $H^*(\pi^{-1}(x))$. For the first, third, and the fifth spaces the latter map is an isomorphism by Corollary \ref{Cor:homot-retract}. A simple diagram chase implies the proposition. \qed

\subsection{The Yamaguchi map.}\label{Ss:reminder-yamaguchi}
Yamaguchi \cite{yamaguchi-1991} has proven the following result.
\begin{theorem}[Yamaguchi]\label{T:yamaguchi}
Fix $m\in \NN$ and $\mu>0$. There exist $\eps_m(\mu)>0$ depending on $m,\mu$ only with the following properties.
Let $M,N$ be smooth closed Riemannian manifolds, $\dim M=m$. Let $sec(M)\geq -1$, $|sec(N)|\leq 1$, and the injectivity radius $inj(N)>\mu$. Assume there exists a metric $d$ on $M\bigsqcup N$
extending the original metrics on $M$ and $N$ and such that
$d_H(M,N)<\eps$, where $0<\eps<\eps_m(\mu)$ is any number.
Then there exists a smooth map $f\colon M\to N$ such that
\newline
(1) $f$ is a $\tau(\eps)$-almost Riemannian submersion, i.e.
$$e^{-\tau(\eps)}<\frac{|df(\xi)|}{|\xi|}<e^{\tau(\eps)},$$
where $\xi$ is any non-zero tangent to $M$ vector orthogonal to the fibers of $f$, and $\tau(\eps)$ is a positive number depending on $m,\mu,\eps$ and such that $\lim_{\eps\to +0}\tau(\eps)=0$;
\newline
(2) $d(z,f(z))<\tau(\eps)$ for any $z\in M$;\footnote{This is equation (2.5) in \cite{yamaguchi-1991}; see references therein.}
\newline
(3) the fibers of $f$ are connected.
\end{theorem}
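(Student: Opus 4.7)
The plan is to follow Yamaguchi's original strategy: parametrize $N$ locally by distance functions (strainer coordinates), transfer these coordinates to $M$ using the Hausdorff-close metric $d$ on $M\sqcup N$, and patch the resulting local maps into a single smooth global map via a Riemannian center-of-mass on $N$. All constants and error terms will depend only on $m$ and $\mu$, and the smallness of $\eps$ will control every error term simultaneously.

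First I would exploit the bounds $|sec(N)|\leq 1$ and $inj(N)>\mu$ to choose, at each point $y\in N$, a system of $m$ strainer points $q_1(y),\ldots,q_m(y)\in N$ at controlled distances from $y$ and with pairwise almost-orthogonal directions. The map $\psi_y(z)=(d_N(z,q_i(y)))_{i=1}^m$ is then a bi-Lipschitz embedding of a ball of definite radius $r=r(m,\mu)$ around $y$ into $\RR^m$, with a smooth inverse after a standard regularization. Cover $N$ by charts of this form centered at a $\sqrt{\eps}$-net $\{y_\alpha\}$. Using the hypothesis $d_H(M,N)<\eps$, lift each strainer point $q_i(y_\alpha)$ to some $\tilde q_i^\alpha\in M$ with $d(\tilde q_i^\alpha,q_i(y_\alpha))<\eps$, and on each corresponding neighborhood $U_\alpha\subset M$ define the Lipschitz map $\Phi_\alpha(z)=(d_M(z,\tilde q_i^\alpha))_{i=1}^m$ and then $f_\alpha=\psi_{y_\alpha}^{-1}\circ\Phi_\alpha\colon U_\alpha\to N$.

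The next step is smoothing and patching. Since distance functions on $M$ are only semi-concave, I would replace each $\Phi_\alpha$ by its local average in geodesic coordinates at a scale $\eta(\eps)\downarrow 0$; this preserves the Lipschitz and approximate submersion estimates up to controlled error. Pick a smooth partition of unity $\{\chi_\alpha\}$ on $N$ subordinate to the cover by $r$-balls and define $f(z)$ as the Riemannian center of mass in $N$ of the points $\{f_\alpha(z)\}$ weighted by $\{\chi_\alpha(f_{\alpha_0}(z))\}$ for a fixed reference chart $\alpha_0$. This center of mass is well defined and smooth because all the $f_\alpha(z)$ lie in a common ball of radius much smaller than $inj(N)$. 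Property (2) of the theorem is then immediate: each $f_\alpha(z)$ lies at $d$-distance $O(\eps)$ from $z$ by construction, and the center of mass inherits this bound.

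The main obstacle is property (1), the almost Riemannian submersion estimate. One must show that Toponogov's comparison for $sec(M)\geq -1$ forces the gradients $\nabla d_M(\cdot,\tilde q_i^\alpha)$ to form an $o(1)$-orthonormal system on the horizontal subspaces, and that this near-orthogonality propagates through the smoothing and center-of-mass patching with controlled error. The quantitative matching between the smoothing scale $\eta(\eps)$, the net scale, and the resulting error bound in the almost-submersion inequality is the technical heart of Yamaguchi's argument and cannot be bypassed. Once (1) is established, the connectedness of fibers (property (3)) follows from the stability of strainer charts together with a gradient-flow retraction of each fiber onto a strainer level set, using semi-concavity of the relevant distance functions on $M$.
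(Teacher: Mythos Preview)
The paper does not give its own proof of this theorem; it is quoted from Yamaguchi \cite{yamaguchi-1991}. However, in Section~\ref{Ss:equivariant-Yamaguchi} the paper recalls Yamaguchi's actual construction in order to adapt it equivariantly, and that construction is \emph{not} the one you describe. Yamaguchi does not build local strainer charts on $N$ and patch them via a Riemannian center of mass. Instead he uses a single global map: choose an $\eps$-net $\{\frak n(i)\}_{i\in\ci}$ in $N$ and a matching net $\{\frak m(i)\}_{i\in\ci}$ in $M$; the map $\Phi_N(x)=\big(h(d(x,\frak n(i)))\big)_{i\in\ci}$ (with $h$ a carefully chosen cutoff) is a smooth embedding of $N$ into $\RR^{\ci}$ by Katsuda's theorem; then $\Phi_M(z)=\big(h(\phi_i(z))\big)_{i\in\ci}$, where $\phi_i$ is a ball-averaged distance to $\frak m(i)$, lands in a tubular neighborhood of $\Phi_N(N)$; and $f=\Phi_N^{-1}\circ P\circ\Phi_M$ with $P$ the normal-bundle projection. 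The almost-submersion estimate and the distance bound $d(z,f(z))<\tau(\eps)$ are then read off from the geometry of this single high-dimensional embedding, and no center-of-mass averaging on $N$ is needed.

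Your outline is a recognizable alternative route (closer in spirit to later Alexandrov-space versions of the fibration theorem), and with care it can be made to work, but two points are off. First, your strainers should have $n=\dim N$ points, not $m=\dim M$; when collapse occurs $n<m$, and $m$ almost-orthogonal directions at a point of $N$ simply do not exist. Second, your justification of connectedness of fibers is too vague: in the paper's sketch this comes out of the global structure of $f$ (and as the paper remarks, it actually follows from (1)--(2)), whereas ``gradient-flow retraction onto a strainer level set'' is not an argument without further explanation of why that level set is connected. If you want your write-up to match what the paper invokes, you should replace the local-chart/center-of-mass scheme by the Katsuda embedding $\Phi_N$ and the averaged map $\Phi_M$ described above.
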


\begin{corollary}\label{Cor:fiber-Yamag-cor}
Let $\{M_i^m\}$ be a sequence of smooth closed $m$-dimensional Riemannian manifolds with uniformly bounded below sectional curvature which GH-converges to a smooth closed Riemannian manifold $N$.
Let $d_i$ be a metric on $M_i\bigsqcup N$ extending the original metrics on $M_i$ and $N$ and such that
$$d_{i,H}(M_i,N)\to 0.$$
Let $0<\delta_1<\delta_2<\frac{1}{100}\min\{inj(N),1\}.$

Let $f_i\colon M_i\to N$ be the Yamaguchi maps from Theorem \ref{T:yamaguchi} where for $\eps$ one takes $\eps_i:=d_{GH}(M_i,N)+1/i$.
Then there exists $i_0$ such that for all $i>i_0$ the following is satisfied: for any $x\in N$ the natural map
$$Im[H^*(\cb_{i,x}(\delta_2))\to H^*(\cb_{i,x}(\delta_1))]\to H^*(\pi_i^{-1}(x))$$
is an isomorphism, where by definition $\cb_{i,x}(\delta):=\{p\in M_i|\, d_i(p,x)<\delta\}$, and the cohomology is taken with coefficients in an arbitrary commutative group.
\end{corollary}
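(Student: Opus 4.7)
The plan is to reduce this directly to Proposition~\ref{P:submersion-cohomology} applied to the Yamaguchi maps $\pi = f_i$. Everything in Theorem~\ref{T:yamaguchi} is designed to match the hypotheses of that proposition: item (1) says $f_i$ is a $\tau(\eps_i)$-almost Riemannian submersion, and item (2) says $d_i(z, f_i(z)) < \tau(\eps_i)$ for every $z\in M_i$. First I would observe that $f_i$ is in fact a smooth proper submersion: the restriction of $df_i$ to the horizontal subspace is a near-isometry onto a space of the same dimension $\dim N$, so $df_i$ is surjective everywhere; properness is automatic since $M_i$ is compact.

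Next, I would choose the parameter $\eps$ to feed into Proposition~\ref{P:submersion-cohomology}. Set
\[
\eps_i' := \max\bigl\{\tau(\eps_i),\; d_{i,H}(M_i,N)\bigr\}.
\]
Since $\eps_i \to 0$, we have $\tau(\eps_i)\to 0$ by Theorem~\ref{T:yamaguchi}(1), and $d_{i,H}(M_i,N)\to 0$ by assumption; therefore $\eps_i'\to 0$. Pick $i_0$ so large that for all $i>i_0$
\[
\eps_i' < \frac{\min\{\delta_1,\delta_2-\delta_1\}}{100\,(1+\mathrm{diam}(N))}.
\]
For such $i$, the map $f_i$ is in particular an $\eps_i'$-almost Riemannian submersion, the metric $d_i$ satisfies $d_{i,H}(M_i,N) < \eps_i'$, and $d_i(p,f_i(p))<\eps_i'$ for every $p\in M_i$. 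The bound $\delta_2 < \tfrac{1}{100}\min\{inj(N),1\}$ in the statement is strictly stronger than the bound $\delta_2 < \tfrac{1}{10}\min\{inj(N),1\}$ required by the proposition, so that hypothesis is also met.

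At this point Proposition~\ref{P:submersion-cohomology} applies verbatim to $\pi = f_i$, $M = M_i$, $d = d_i$, $\eps = \eps_i'$ and to any $x\in N$, yielding the asserted isomorphism
\[
\mathrm{Im}\bigl[H^*(\cb_{i,x}(\delta_2)) \to H^*(\cb_{i,x}(\delta_1))\bigr] \;\xrightarrow{\;\simeq\;}\; H^*(f_i^{-1}(x)).
\]
There is no genuine obstacle here beyond the bookkeeping of matching constants; the only point that deserves attention is that the threshold $i_0$ can indeed be chosen independently of $x$, which is clear because the constants $\tau(\eps_i)$, $d_{i,H}(M_i,N)$, $inj(N)$ and $\mathrm{diam}(N)$ are all global.
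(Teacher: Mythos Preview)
Your proof is correct and follows exactly the approach of the paper, which simply states that the corollary follows from Theorem~\ref{T:yamaguchi} and Proposition~\ref{P:submersion-cohomology}; you have merely spelled out the bookkeeping that the paper leaves implicit. One tiny quibble: with your definition $\eps_i' = \max\{\tau(\eps_i), d_{i,H}(M_i,N)\}$ you only get $d_{i,H}(M_i,N) \le \eps_i'$, not the strict inequality you claim, but this is harmless since you can replace $\eps_i'$ by any slightly larger number still below the required threshold.
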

{\bf Proof.} This follows from Theorem \ref{T:yamaguchi} and Proposition \ref{P:submersion-cohomology}. \qed

\section{GH-convergence of spaces acted on by a finite group.}\label{S:convergence-functions-actions}
Most probably the results of this section are folklore. The main result of this section is Theorem \ref{T:equi-sequence} below.
Let us state a consequence of it:
\newline
Let a sequence $\{X_i\}$ of compact metric spaces GH-converge to a compact metric space $X$. Let each $X_i$ be given an action of a finite group $G$ by isometries. Then there is an action of $G$ on $X$ by isometries such that for a subsequence of $X_i$
one has $G\backslash X_i\to G\backslash X$ in the GH-sense.

\subsection{Equivariant GH-distance.}\label{Ss:equivariant}
Let us fix throughout this subsection a finite group $G$.
\begin{definition}\label{D:equivar-distance}
Let $(X,d_X)$ and $(Y,d_Y)$ be semi-metric spaces  such that the induced quotient metric spaces are compact. Let us given actions of $G$ on $X$ and $Y$ by isometries.
Define the $G$-equivariant GH-distance between them as
$$d^G_{GH}(X,Y):=\inf_d d_H(X,Y),$$
where the infimum is taken over all $G$-invariant semi-metrics on the disjoint union $X\bigsqcup Y$ extending the original semi-metrics on $X$ and $Y$ and such that
$d(x,y)>0$ for any $x\in X,y\in Y$.\footnote{The only difference between this definition and the
original GH-distance is that $d$ has to be $G$-invariant.}
\end{definition}
\begin{remark}
If $(X,d)$ is a semi-metric space acted by $G$ by isometries then the canonical quotient metric space $(\bar X,\bar d)$ carries the induced action of $G$ by isometries and $d^G_{GH}(X,\bar X)=0$.
\end{remark}
\begin{lemma}\label{L1}
Let $G$, $(X,d_X)$, and $(Y,d_Y)$ be as in Definition \ref{D:equivar-distance}.
\newline
(1) Then $d_{GH}(X,Y)\leq d^G_{GH}(X,Y)<\infty$.
\newline
(2) $d^G_{GH}$ is a semi-metric on the class of semi-metric spaces with an isometric $G$-action as in Definition \ref{D:equivar-distance}.
\newline
(3) Assume in addition that $(X,d_X)$ and $(Y,d_Y)$ are metric (rather than semi-metric) spaces. Then $d^G_{GH}(X,Y)=0$ if and only if there exists a $G$-equivariant isometry $X\tilde\to Y$.
\end{lemma}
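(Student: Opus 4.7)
For part (1), the inequality $d_{GH}(X,Y) \leq d^G_{GH}(X,Y)$ is immediate, since the infimum defining $d^G_{GH}$ is taken over a subclass (the $G$-invariant semi-metrics on $X \sqcup Y$) of the class used for $d_{GH}$. For finiteness, I would exhibit the constant cross semi-metric $d(x,y) := C$ for $x \in X$, $y \in Y$, combined with the original semi-metrics on $X \times X$ and $Y \times Y$, where $C > 0$ is any constant with $2C \geq \max\{\mathrm{diam}(\bar X), \mathrm{diam}(\bar Y)\}$; this is trivially $G$-invariant and strictly positive across, and the only nontrivial triangle inequalities reduce to $d_X(x_1,x_2) \leq 2C$ and $d_Y(y_1,y_2) \leq 2C$.

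For part (2), symmetry and $d^G_{GH}(X,X) = 0$ are immediate. For the triangle inequality, given $\eps > 0$, I would choose $G$-invariant semi-metrics $d_{XY}$ on $X \sqcup Y$ and $d_{YZ}$ on $Y \sqcup Z$ whose Hausdorff distances are within $\eps$ of $d^G_{GH}(X,Y)$ and $d^G_{GH}(Y,Z)$, and glue them into a semi-metric on $X \sqcup Z$ by the standard formula
$$d_{XZ}(x,z) := \inf_{y \in Y}\bigl(d_{XY}(x,y) + d_{YZ}(y,z)\bigr),$$
extended by $d_X$ and $d_Z$ on the diagonal blocks. The substitution $y \mapsto g^{-1}y$ in the infimum shows $d_{XZ}$ is $G$-invariant, and the triangle inequalities are routine. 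Strict positivity of $d_{XZ}(x,z)$ for $x \in X$, $z \in Z$ follows because $y \mapsto d_{XY}(x,y) + d_{YZ}(y,z)$ is $1$-Lipschitz, descends to the compact quotient $\bar Y$, and is strictly positive there by the hypothesis on cross distances; hence its infimum is attained and positive. The standard Hausdorff subadditivity yields $d^G_{GH}(X,Z) \leq d^G_{GH}(X,Y) + d^G_{GH}(Y,Z) + 2\eps$, and letting $\eps \to 0$ completes the argument.

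For part (3), the ``if'' direction is immediate: given a $G$-equivariant isometry $\phi_0 \colon X \to Y$, the semi-metric on $X \sqcup Y$ defined by $d_\eps(x,y) := d_Y(\phi_0(x), y) + \eps$ is $G$-invariant (by equivariance of $\phi_0$), strictly positive, and realizes Hausdorff distance $\eps$, so $d^G_{GH}(X,Y) \leq \eps$ for every $\eps > 0$. For the ``only if'' direction, I would choose $G$-invariant semi-metrics $d_n$ on $X \sqcup Y$ with $d_{n,H}(X,Y) < 1/n$, fix a countable $G$-invariant dense subset $\{x_k\}_{k \in \NN} \subset X$ (possible because $G$ is finite: start with any countable dense subset and adjoin all $G$-translates), and for each $n,k$ pick $y_{n,k} \in Y$ with $d_n(x_k, y_{n,k}) < 1/n$. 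By compactness of $Y$ and Cantor's diagonal extraction, pass to a subsequence of $n$ along which $y_{n,k} \to y_k \in Y$ for every $k$.

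The inequality
$$|d_n(y_{n,k}, y_{n,l}) - d_X(x_k, x_l)| \leq d_n(x_k, y_{n,k}) + d_n(x_l, y_{n,l}) < 2/n$$
yields $d_Y(y_k, y_l) = d_X(x_k, x_l)$ in the limit, so $x_k \mapsto y_k$ extends by continuity to an isometric embedding $\phi \colon X \to Y$. Running the symmetric construction gives an isometric embedding $\psi \colon Y \to X$; the composition $\phi \circ \psi \colon Y \to Y$ is an isometric self-map of the compact space $Y$, hence surjective by the standard fact that isometric self-maps of compact metric spaces are bijections, so $\phi$ is surjective and thus a bijective isometry. The main obstacle is the $G$-equivariance of $\phi$, which I would establish as follows: when $x_{k'} = g x_k$, the $G$-invariance of $d_n$ gives $d_n(x_{k'}, g y_{n,k}) = d_n(x_k, y_{n,k}) < 1/n$, so $d_n(y_{n,k'}, g y_{n,k}) < 2/n$, which in the limit forces $y_{k'} = g y_k$ by continuity of the isometric $G$-action on $Y$. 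This step is precisely where the $G$-invariance of the chosen dense subset, only possible because $G$ is finite, becomes essential.
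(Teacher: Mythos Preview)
Your proof is correct and follows the same overall strategy as the paper in all three parts. Two technical differences are worth noting. In part~(2), the paper simply adds $+\eps$ to the glued cross-distance, setting $d_{13}(x_1,x_3):=\inf_{x_2}\{d_{12}(x_1,x_2)+d_{23}(x_2,x_3)\}+\eps$; this trivially guarantees strict positivity across $X\sqcup Z$ and sidesteps your (correct but heavier) compactness argument on $\bar Y$. In part~(3), rather than working on a countable $G$-invariant dense set and extracting diagonally, the paper fixes an ultrafilter $\lambda$ on $\NN$ and defines $\sigma(x):=\lim_\lambda y_i$ directly for every $x\in X$; the $G$-equivariance then becomes the one-line computation $\sigma(gx)=\lim_\lambda g(y_i)=g\lim_\lambda y_i=g\sigma(x)$. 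Your route is more elementary (no ultrafilters) at the cost of the extra extension-by-continuity step; both arrive at the same conclusion.
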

{\bf Proof.} (1) The first inequality is obvious. For the second one, fix a constant $L> \max\{diam(X),diam(Y)\}$. Define the semi-metric $d$ on $X\bigsqcup Y$, extending the original metrics on $X$ and $Y$, by
\begin{eqnarray*}
d(x,y):=L \mbox{ for any } x\in X,y\in Y.
\end{eqnarray*}
It is clear that $d$ is $G$-invariant and satisfies all the other conditions from the Definition \ref{D:equivar-distance}.

(2) The symmetry of $d^G_{GH}$ is obvious. Let us prove the triangle inequality:
$$d^G_{GH}(X_1,X_3)\leq d^G_{GH}(X_1,X_2)+d^G_{GH}(X_2,X_3).$$
Given $\eps>0$ and two semi-metrics $d_{12}$ and $d_{23}$ on $X_1\bigsqcup X_2$ and $X_2\bigsqcup X_3$ respectively as in Definition \ref{D:equivar-distance}, let us define a new semi-metric $d_{13}$ on  $X_1\bigsqcup X_3$ using the following well known construction
(see e.g. \cite{burago-burago-ivanov}):
$$d_{13}(x_1,x_3)=\inf_{x_2\in X_2}\{d_{12}(x_1,x_2)+d_{23}(x_2,x_3)\}+\eps$$
for any $x_1\in X_1,x_3\in X_3$, and define $d_{13}$ on $X_1$ and on $X_3$ to coincide with the original metrics on those spaces. It is easy to see that $d_{13}$ is a semi-metric satisfying all the requirements in Definition \ref{D:equivar-distance}.
Moreover it is easy to see that the Hausdorff distance satisfies
$$d_H(X_1,X_3)\leq d_H(X_1,X_2)+d_H(X_2,X_3)+\eps.$$
Taking $\inf$ over $d_{12},d_{23}$, and $\eps>0$ we get the triangle inequality for $d^G_{GH}$.

(3) The 'if' part is trivial. For the 'only if' part, let $(X,d_X)$ and $(Y,d_Y)$ be compact metric spaces with $G$-actions by isometries. Let us assume that $d^G_{GH}(X,Y)=0$. Then there exists a sequence  $\{d_i\}$ of $G$-invariant metrics
on $X\bigsqcup Y$ extending the original metrics on $X$ and $Y$ such that the Hausdorff distances with respect to $d_i$ satisfy $d_{i,H}(X,Y)\to 0$ as $i\to \infty$. We are going to construct a $G$-equivariant isometry $\sigma\colon X\to Y$.

Let us fix an ultra-filter $\lam$ on $\NN$. Let $x\in X$. There exist $y_i\in Y$ such that
$d_i(x,y_i)\to 0.$
Define $$\sigma(x):=\lim_\lam y_i.$$
Let us check that $\sigma(x)$ is well defined, i.e. is independent of $y_i$. Indeed if $y_i'\in Y$ is another sequence such that $ d_i(x,y_i')\to 0$ then clearly
$d_Y(y_i,y_i')\to 0$. Hence $\lim_\lam y_i'=\lim_\lam y_i$ as required.

Let us show that $\sigma$ preserves distances:
\begin{eqnarray}\label{E:sigma-isometr}
d_Y(\sigma(x),\sigma(\tilde x))=d_X(x,\tilde x)
\end{eqnarray}
for any $x,\tilde x\in X$. For let us chose $y_i,\tilde y_i\in Y$ such that
$$d_i(x,y_i)\to 0,\, d_i(\tilde x,\tilde y_i)\to 0.$$
By the triangle inequality
$$|d_Y(y_i,\tilde y_i)-d_X(x,\tilde x)|\leq d_i(x,y_i)+ d_i(\tilde x,\tilde y_i)\to 0.$$
This immediately implies (\ref{E:sigma-isometr}).

Let us show that $\sigma$ is an isometry, i.e. onto. Similarly there exists a distance preserving transformation $\tau\colon Y\to X$. Thus
$\sigma\circ \tau\colon Y\to Y$ preserves distances in $Y$. By Lemma 1.2 in \cite{petrunin-98} $\sigma\circ \tau $ is onto. Hence $\sigma$ is onto.

Let us show that $\sigma $ is $G$-equivariant. Let $g\in G,x\in X$. Let $y_i\in Y$ satisfy $d_i(x,y_i)\to 0$. Since $d_i$ are $G$-invariant $d_i(g(x),g(y_i))\to 0$.
Then we have
\begin{eqnarray*}
\sigma(g(x))=\lim_\lam g(y_i)=g(\lim_\lam y_i)=g(\sigma(x)).
\end{eqnarray*}
\qed

\hfill

Let $(\cx,d_{GH})$ be the space of isometry classes of compact metric spaces equipped with the GH-distance. Let $(\cx^G,d^G_{GH})$ denote the space of equivalence classes of compact metric space equipped with $G$-action by isometries with the metric $d^G_{GH}$
(two such spaces are called equivalent if there exists a $G$-equivariant isometry between them).

One has the canonical map $$\Theta\colon \cx^G\to \cx$$
forgetting the action of $G$. By Lemma \ref{L1}(1)  $\Theta$ is 1-Lipschitz.

\begin{proposition}\label{P:quotient-convrgent}
Let $X_i\to X$ in $\cx^G$. Then $G\backslash X_i\to G\backslash X$ in $\cx$.
\end{proposition}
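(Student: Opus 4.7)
The plan is to push the witnessing semi-metrics through the quotient. Assume $d^G_{GH}(X_i, X) \to 0$, and pick for each $i$ a $G$-invariant semi-metric $d_i$ on $X_i \sqcup X$ extending the original metrics, with $d_i(x,y) > 0$ for $x \in X_i$, $y \in X$, and $d_{i,H}(X_i, X) < \eps_i$ where $\eps_i \to 0$. I will build from $d_i$ a semi-metric $\bar d_i$ on $(G\backslash X_i) \sqcup (G\backslash X)$ that witnesses $d_{GH}(G\backslash X_i, G\backslash X) \leq \eps_i$.

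The natural definition is
$$\bar d_i([x], [y]) := \min_{g \in G} d_i(x, g\cdot y),$$
with the understanding that on $G\backslash X_i$ and on $G\backslash X$ separately this coincides with the canonical quotient metrics (which are actual metrics because $G$ is finite and acts by isometries). Since $G$ is finite, the minimum is attained, and by $G$-invariance of $d_i$ the value depends only on the orbits $[x], [y]$. Symmetry is immediate from symmetry of $d_i$ together with the substitution $g \mapsto g^{-1}$.

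The only nontrivial verification is the triangle inequality between the three possible pairs of orbits. Given $[x], [y], [z]$, pick $h \in G$ that minimizes $d_i(y, h \cdot z)$, then for any $g \in G$ we have
$$d_i(x, g \cdot y) + d_i(y, h \cdot z) \geq d_i(x, g \cdot y) + d_i(g \cdot y, gh \cdot z) \geq d_i(x, gh \cdot z) \geq \bar d_i([x],[z]),$$
using $G$-invariance in the first step. Taking minimum over $g$ yields $\bar d_i([x],[y]) + \bar d_i([y],[z]) \geq \bar d_i([x],[z])$. Next, $\bar d_i$ remains strictly positive across the two pieces: for $[x] \in G\backslash X_i$ and $[y] \in G\backslash X$ the minimum is over the finitely many positive numbers $d_i(x, g\cdot y) > 0$, hence $\bar d_i([x], [y]) > 0$. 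Thus $\bar d_i$ satisfies all conditions in the definition of $d_{GH}$.

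Finally, for any $x \in X_i$ there exists $y \in X$ with $d_i(x,y) < \eps_i$, so $\bar d_i([x],[y]) \leq d_i(x,y) < \eps_i$; the symmetric statement is analogous. Therefore the Hausdorff distance satisfies $\bar d_{i,H}(G\backslash X_i, G\backslash X) \leq \eps_i$, and consequently $d_{GH}(G\backslash X_i, G\backslash X) \leq \eps_i \to 0$, as required. The main (mild) subtlety is checking that the minimum-over-$G$ formula genuinely satisfies the triangle inequality, which is where the $G$-invariance of $d_i$ is used in an essential way; everything else is routine. \qed
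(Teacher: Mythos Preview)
Your proof is correct and follows essentially the same approach as the paper: both push the witnessing $G$-invariant metric $d_i$ on $X_i\sqcup X$ down to the quotient metric $\bar d_i$ on $(G\backslash X_i)\sqcup(G\backslash X)$ and observe that the Hausdorff distance does not increase. The paper phrases the last step abstractly via the $1$-Lipschitz property of the quotient map, while you verify the triangle inequality for $\bar d_i$ and the Hausdorff bound by hand; these are the same argument at different levels of explicitness.
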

{\bf Proof.} There exist $G$-invariant metrics $d_i$ on $X\bigsqcup X_i$ extending the original metrics on $X$ and $X_i$ such that $d_{i,H}(X,X_i)\to 0$.
Let
$$\pi_i\colon X\bigsqcup X_i\to (G\backslash X)\bigsqcup  (G\backslash X_i)$$
be the obvious quotient map. $\pi_i$ is 1-Lipschitz when the source is equipped with $d_i$ and the target with the quotient metric $\bar d_i$. We claim that for the Hausdorff distances one has
$$d_{GH}(G\backslash X,G\backslash X_i)\leq \bar d_{i,H}(G\backslash X, G\backslash X_i)\leq  d_{i,H}(X,X_i)\to 0.$$
The first inequality is obvious by the definition of $d_{GH}$.
The second inequality follows from the following general simple fact: Let $F\colon Z_1\to Z_2$ be a 1-Lipschitz map of metric spaces. Let $A,B\subset Z_1$ be compact subsets. Then
$$d_H(F(A),F(B))\leq d_H(A,B).$$
A proof is left to the reader. \qed

\begin{proposition}\label{R:properness}
The canonical map $\Theta\colon \cx^G\to \cx$ is proper.
\end{proposition}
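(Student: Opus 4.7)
The plan is to verify properness sequentially: given $\{X_i\} \subset \cx^G$ with $\Theta(X_i) \to X$ in $\cx$, I extract a subsequence and equip $X$ with a $G$-action by isometries so that $X_i \to X$ in $\cx^G$. Fix metrics $d_i$ on $X \sqcup X_i$ extending the original metrics, with $d_{i,H}(X,X_i)=\eps_i \to 0$. For each $y \in X$ pick $x_i(y) \in X_i$ with $d_i(y,x_i(y))<\eps_i$; for each $g \in G$ pick $\tau_i^g(y) \in X$ with $d_i(\tau_i^g(y),\, g\cdot x_i(y))<\eps_i$. A chain of four triangle inequalities (using that $g$ acts by isometries on $X_i$) shows $|d_X(\tau_i^g y_1,\tau_i^g y_2) - d_X(y_1,y_2)| < 4\eps_i$, i.e.\ $\tau_i^g$ is a $4\eps_i$-almost isometry of $X$.

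The family $\{\tau_i^g\}_i$ is equicontinuous and lands in the compact set $X$, so Arzel\`a--Ascoli yields a subsequence along which $\tau_i^g$ converges uniformly to a distance-preserving map $\bar g\colon X \to X$; by the argument used in the proof of Lemma~\ref{L1}(3) (Lemma~1.2 of \cite{petrunin-98}) this self-map is onto, hence an isometry. Since $G$ is finite, a diagonal extraction produces one subsequence with $\tau_i^g \to \bar g$ uniformly for every $g \in G$. To see $g \mapsto \bar g$ is a homomorphism, note $d_{X_i}(x_i(\tau_i^h(y)),\, h\cdot x_i(y)) < 2\eps_i$, hence $d_{X_i}(g\cdot x_i(\tau_i^h(y)),\, gh\cdot x_i(y)) < 2\eps_i$; combining this with $d_i(\tau_i^g(\tau_i^h(y)),\, g\cdot x_i(\tau_i^h(y)))<\eps_i$ and $d_i(\tau_i^{gh}(y),\, gh\cdot x_i(y))<\eps_i$ gives $d_X(\tau_i^g\circ\tau_i^h(y),\, \tau_i^{gh}(y)) < 4\eps_i$, so the uniform limits satisfy $\bar g\circ \bar h = \overline{gh}$.

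Now endow $X$ with this $G$-action and introduce the $G$-invariant semi-metric $\tilde d_i$ on $X \sqcup X_i$ by averaging:
\[ \tilde d_i(y,z) := \frac{1}{|G|}\sum_{g \in G} d_i(\bar g y,\, g z), \qquad y \in X, \ z \in X_i, \]
agreeing with the original metrics on $X$ and $X_i$. As in the proof of Lemma~\ref{L1}(2), $\tilde d_i$ is a $G$-invariant semi-metric extending both original metrics (consistency on $X$ and $X_i$ uses that $G$ acts by isometries there); a case-by-case triangle inequality verification is routine. For the Hausdorff bound, given $y \in X$ take $z = x_i(y)$ and estimate termwise
\[ d_i(\bar g y,\, g z) \leq d_X(\bar g y,\, \tau_i^g(y)) + d_i(\tau_i^g(y),\, g\cdot x_i(y)) \leq \sup_{y'\in X} d_X(\bar g y',\, \tau_i^g(y')) + \eps_i, \]
which tends to $0$ uniformly in $y$ by uniform convergence $\tau_i^g \to \bar g$; hence $\sup_{y\in X}\inf_{z\in X_i} \tilde d_i(y,z) \to 0$. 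The symmetric bound (starting from $z \in X_i$ and the nearest $y(z) \in X$) is analogous. Therefore $\tilde d_{i,H}(X,X_i)\to 0$ along the chosen subsequence, proving $X_i \to X$ in $\cx^G$ and yielding precompactness of $\Theta^{-1}(K)$ for any compact $K \subset \cx$.

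The principal technical obstacle is coordinating the extracted $G$-action with the $G$-invariant approximating metric: pointwise compatibility $\tau_i^g(y) \to \bar g(y)$ is not enough, because averaging across the $|G|$ summands of $\tilde d_i$ would amplify pointwise errors into uncontrolled sup-bounds. Genuine \emph{uniform} convergence $\tau_i^g \to \bar g$ is required, and this is precisely what Arzel\`a--Ascoli delivers from equicontinuity of almost-isometries and compactness of $X$. Finiteness of $G$ is essential both for the simultaneous diagonal extraction over $g \in G$ and for the averaging construction.
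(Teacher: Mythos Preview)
Your argument is correct and takes a genuinely different route from the paper's. The paper works combinatorially: it extracts $G$-invariant $1/k$-nets of uniformly bounded cardinality in each $X_i$, passes to a subsequence so that the induced $G$-actions on the finite index sets stabilize, pulls back the metrics to a countable index set $I$, takes a pointwise limit semi-metric, and completes to obtain the limit space with its $G$-action; convergence in $\cx^G$ is then verified via Lemma~\ref{L:uniform-metric}. You instead use the given limit $X$ directly: you transport the $G$-action from $X_i$ to $X$ via almost-isometries $\tau_i^g$, extract a limiting honest $G$-action by a compactness argument, and then manufacture $G$-invariant comparison metrics by averaging the given $d_i$ over $G$. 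Your approach is more analytic and arguably more transparent about where the limiting action comes from; the paper's net-based construction has the mild advantage of not presupposing the limit $X$ and of making the finite-group hypothesis visible only through the step ``finitely many actions on a finite set''. One small caution: your maps $\tau_i^g$ are pointwise choices and hence typically discontinuous, so the classical Arzel\`a--Ascoli theorem does not literally apply; what you actually need (and what your $4\eps_i$-almost-isometry estimate delivers) is a diagonal extraction on a countable dense set followed by the observation that the almost-isometry bound upgrades pointwise convergence on the dense set to uniform convergence on all of $X$. This is routine and your conclusion stands.
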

{\bf Proof.} Let $\{X_i\}\subset \cx^G$ be a sequence such that $\Theta(X_i)$ has a limit in $\cx$. It follows (see e.g. Proposition 7.4.12 in \cite{burago-burago-ivanov})
that for any $k\in \NN$ there exists $N^{(k)}\in \NN$ and an $1/k$-net\footnote{Recall that a subset $S\subset X$ is called $\eps$-net if its $\eps$-neighborhood equals $X$: $X=S_\eps$.} $S^{(k)}_i\subset X_i$ with at most
$N^{(k)}$ elements. We may and will assume that $S^{(k)}_i$ is $G$-invariant; indeed it suffices to replace it with $G\cdot S^{(k)}_i$ and replace $N^{(k)}$ with $|G|\cdot N^{(k)}$.
We may and will assume that for any $i$
$$S^{(1)}_i\subset S^{(2)}_i\subset S^{(3)}_i\subset \dots\subset X_i.$$
Furthermore after a choice of subsequence one may assume that for any $k$ the number
of elements in $S^{(k)}_i$ is independent of $i$. This number will be denoted by $N^{(k)}$ again.
Let $I:=\{x^1,x^2,x^3,\dots\}$ be either finite or countable set whose cardinality equals the (independent of $i$) cardinality of $S_i:=\cup_{k=1}^\infty S_i^{(k)}$. Let $I^{(k)}:=\{x^p\}_{p\leq N^{(k)}}$. For any $i$ there is a bijection
$\iota_i\colon I\tilde\to  S_i$
such that $$\iota_i(I^{(k)})=S_i^{(k)}.$$
Since $S_i^{(k)}$ is acted by $G$, the bijection $\iota_i$ induces an action of $G$ on $I^{(k)}$ which possibly may depend on $i$. Since a finite group may have only finitely many actions on a given finite set, after a choice of a subsequence we may and will
assume that all these actions of $G$ on $I^{(k)}$ are the same for any $k$. Clearly this action of $G$ on $I^{(k+1)}$ restricts to the corresponding action on $I^{(k)}$. Hence there exists a unique action on $I=\cup_{k=1}^\infty I^{(k)}$
which restricts to the corresponding actions on $I^{(k)}$ for any $k$ and such that the bijections $\iota_i\colon I\to S_i$ are $G$-equivariant.

Let us define on $I$ a sequence of metrics $\{D_i\}$ by
$$D_i(x^p,x^q):=d_{X_i}(\iota_i(x^p),\iota_i(x^q)).$$
Since $I$ is at most countable we may choose a subsequence such that for any $p,q\in I$
$$D_i(x^p,x^q)\to D(x^p,x^q),$$
where $D$ is a semi-metric on $I$. Then $G$ preserves $D$ since $G$ preserves $D_i$ for any $i$.

Let us construct a limit of $\{X_i\}$ in $\cx^G$. Let $(\bar I, \bar D)$ be the metric space corresponding to the semi-metric space $(I,D)$. The former is obtained from the latter by quotient modulo the equivalence relation on $I$:
$x\sim y$ if and only if $D(x,y)=0$ (see Proposition 1.1.5 in \cite{burago-burago-ivanov} for details).

Since $G$ preserves $D$,  this action of $G$ preserves the above equivalence classes and hence induces an action of $G$ on $(\bar I,\bar D)$ by isometries.

The metric space $(\bar I,\bar D)$ is pre-compact. Indeed denote by $\bar I^{(k)}$ the image of $I^{(k)}$ in $\bar I$. It is easy to see that $\bar I^{(k)}$ is $1/k$-net in $(\bar I, \bar D)$ for any $k$.

Let $(X,d_X)$ be the completion of $(\bar I,\bar D)$. It is necessarily compact. The action of $G$ on $\bar I$ extends uniquely to an action by isometries on $X$.

It remains to show that $X_i\to X$ in $\cx^G$. By the triangle inequality for $d^G_{GH}$ we have
\begin{eqnarray*}\label{E:0-1}
d^G_{GH}((X_i,d_{X_i}),(X,d_X))\leq d^G_{GH}((X_i,d_{X_i}),(S_i^{(k)},d_{X_i}))+\\
d^G_{GH}((S_i^{(k)},d_{X_i}), (I^{(k)},D_i))+\\
d^G_{GH}((I^{(k)},D_i),(I^{(k)},D))+\\
d^G_{GH}((I^{(k)},D),(\bar I^{(k)},\bar D))+\\
d^G_{GH}((\bar I^{(k)},\bar D),(X,d_X)).
\end{eqnarray*}

The following estimates are clear
\begin{eqnarray}\label{E:001}
d_{GH}^G((X_i,d_{X_i}),(S_i^{(k)},d_{X_i}))\leq d_H(S_i^{(k)},X_i)\leq 1/k,\\\label{E:002}
d^G_{GH}((\bar I^{(k)},\bar D),(X,d_X))\leq d_H( \bar I^{(k)}, X)\leq 1/k,\\\label{E:003}
d^G_{GH}((I^{(k)},D),(\bar I^{(k)},\bar D))=0,\\\label{E:004}
d^G_{GH}((S_i^{(k)},d_{X_i}),(I^{(k)},D_i))=0.
\end{eqnarray}
Substituting these inequalities into the previous one we get for any $k$
\begin{eqnarray}\label{E:005}
d^G_{GH}((X_i,d_{X_i}),(X,d_X))\leq 2/k+d^G_{GH}((I^{(k)},D_i),(I^{(k)},D)).
\end{eqnarray}
We claim that $d^G_{GH}((I^{(k)},D_i),(I^{(k)},D))\to 0$ as $i\to \infty$; that will imply Proposition \ref{R:properness}.
This follows from the following more general lemma (compare Example 7.4.4 in \cite{burago-burago-ivanov}).
\begin{lemma}\label{L:uniform-metric}
Let $X$ be a set acted by a finite group $G$. Let $\{d_i\}$ be a sequence of $G$-invariant semi-metrics on $X$ which uniformly converges to a semi-metric $d$, i.e.
$d_i\to d$ uniformly on $X\times X$. Then $d^G_{GH}((X,d_i), (X,d))\to 0$ as $i\to \infty$.
\end{lemma}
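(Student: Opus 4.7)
The plan is to construct, for each $i$, an explicit $G$-invariant semi-metric $D_i$ on the disjoint union $X_1 \bigsqcup X_2$ (where $X_1 = (X, d_i)$ and $X_2 = (X, d)$) whose Hausdorff distance between the two copies tends to zero. Set $\eta_i := \sup_{x,y \in X}|d_i(x,y) - d(x,y)| + \tfrac{1}{i}$, so $\eta_i > 0$ and $\eta_i \to 0$. Define $D_i$ to restrict to $d_i$ on $X_1$ and to $d$ on $X_2$, and across copies set
\[
D_i(x, y') := \inf_{z \in X}\bigl\{d_i(x, z) + \eta_i + d(z, y')\bigr\},
\]
with the symmetric formula in the other direction. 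This is the standard gluing formula adapted to our setting.

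$G$-invariance of $D_i$ is immediate: substituting $z \mapsto g^{-1} z$ inside the infimum and using that $d_i$ and $d$ are $G$-invariant yields $D_i(gx, gy') = D_i(x, y')$. The strict positivity required in Definition~\ref{D:equivar-distance} holds because $D_i(x, y') \geq \eta_i > 0$.

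The bookkeeping step is the triangle inequality for $D_i$. The cases where all three points lie in a single copy are tautological, and the mixed cases where the triple travels only once between the copies reduce directly to the triangle inequalities for $d_i$ and $d$ together with the definition of $D_i$ as an infimum. The one delicate case, where the middle point lies in the opposite copy from the two endpoints, is where the uniform bound $|d_i(w, w') - d(w, w')| \leq \eta_i - \tfrac{1}{i}$ enters: it yields an estimate of the form $D_i(x, z') + D_i(z', y) \geq d_i(x, y) + \tfrac{1}{i} > d_i(x, y)$, so the extra $\eta_i$ built into $D_i$ is precisely what absorbs the uniform comparison error. This is the main (and only) technical point, though it is essentially routine.

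Finally, setting $z = x$ in the infimum gives $D_i(x, x') \leq \eta_i$, so every point of each copy is within $\eta_i$ of the other copy, whence $d_H(X_1, X_2) \leq \eta_i$ in $(X_1 \bigsqcup X_2, D_i)$ and therefore $d^G_{GH}((X, d_i), (X, d)) \leq \eta_i \to 0$, as desired.
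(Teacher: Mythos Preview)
Your proof is correct and is essentially identical to the paper's: you use the same constant $\eta_i=\sup|d_i-d|+\tfrac{1}{i}$ and the same gluing semi-metric $\inf_{z}\{d_i(x,z)+\eta_i+d(z,y')\}$ on $X\bigsqcup X$. The only difference is that you spell out the triangle inequality (correctly identifying the one nontrivial case), whereas the paper simply declares the verification to be easy.
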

To finish the proof of Proposition \ref{R:properness} it remains to prove this lemma. Let us define a $G$-invariant semi-metrics $\hat d_i$ on the disjoint union of two copies of $X$, $X\bigsqcup X$, such that
$\hat d_i$ extends $d$ on the first copy of $X$ and $d_i$ on the second one, and for $x$ and $x'$ from the first and the second copies of $X$ respectively set
$$\hat d_i(x,x'):=\inf_{z\in X}\{d(x,z)+\eps_i+d_i(z,x')\},$$
where $\eps_i:=\sup_{a,b\in X}\{|d(a,b)-d_i(a,b)|\}+\frac{1}{i}$.
It is easy to see that $\hat d_i$ is a $G$-invariant semi-metric, and the Hausdorff distance between the two copies of $X$ with respect to it is at most $\eps_i\to 0$. \qed

Propositions \ref{R:properness} and \ref{P:quotient-convrgent} imply immediately the main result of this section:
\begin{theorem}\label{T:equi-sequence}
Let $G$ be a finite group acting by isometries on compact metric spaces $\{X_i\}$. Let $X_i\to X$ in the GH-sense, i.e. in $\cx$, disregarding the action.
Then
\newline
(1) there exists an action of $G$ on $X$ by isometries and a subsequence of $\{X_i\}$ which converges to $X$ in the $G$-equivariant sense, i.e. in $\cx^G$;
\newline
(2) For this subsequence $G\backslash X_i\to G\backslash X$ in GH-sense.
\end{theorem}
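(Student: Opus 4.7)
The plan is to deduce Theorem~\ref{T:equi-sequence} directly from Propositions~\ref{R:properness} and~\ref{P:quotient-convrgent}, as the parenthetical remark indicates.

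First, for part (1), I would start from the hypothesis that $X_i \to X$ in $\cx$ and consider the sequence $\{X_i\}$ as elements of $\cx^G$ via their given $G$-actions. Since convergent sequences in $\cx$ are relatively compact, the image $\{\Theta(X_i)\} = \{X_i\} \subset \cx$ sits inside a compact subset. By Proposition~\ref{R:properness}, $\Theta\colon \cx^G \to \cx$ is proper, so its preimage over this compact set is compact in $\cx^G$. Hence $\{X_i\}$ viewed in $\cx^G$ has a convergent subsequence; call its limit $Y \in \cx^G$. In particular $Y$ carries an isometric $G$-action.

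Next, since $\Theta$ is $1$-Lipschitz (by Lemma~\ref{L1}(1)), $\Theta(X_i) \to \Theta(Y)$ in $\cx$ along this subsequence. But $\Theta(X_i) = X_i \to X$ in $\cx$, and GH-limits of compact metric spaces are unique up to isometry. Thus there exists an isometry $\phi\colon \Theta(Y) \tilde\to X$. Transporting the $G$-action from $Y$ to $X$ via $\phi$ endows $X$ with an isometric $G$-action, and makes $\phi$ a $G$-equivariant isometry between $Y$ and $X$ (viewed with this action). Replacing $Y$ by $X$ using $\phi$, the subsequence of $\{X_i\}$ converges to $X$ in $\cx^G$, proving (1).

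For part (2), I would simply apply Proposition~\ref{P:quotient-convrgent} to the convergent subsequence $X_i \to X$ in $\cx^G$ obtained above, which yields $G\backslash X_i \to G\backslash X$ in $\cx$.

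The only nontrivial step is the first one — verifying that the pieces assembled in the lemmas of this subsection genuinely combine as claimed. In particular, the subtle point is the identification of the limit $Y$ with $X$: although $\Theta(Y)$ is isometric to $X$ as a metric space, the $G$-action on $X$ is not given a priori and must be constructed by transport of structure. Once one observes that any bijective isometry $\phi$ can be used to push forward the group action while preserving the equivariant GH-distance to $\{X_i\}$, everything falls out of Propositions~\ref{R:properness} and~\ref{P:quotient-convrgent}. There are no further calculations required.
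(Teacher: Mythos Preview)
Your proof is correct and follows exactly the approach indicated in the paper, which simply states that the theorem follows immediately from Propositions~\ref{R:properness} and~\ref{P:quotient-convrgent}. You have usefully spelled out the one point the paper leaves implicit, namely that the limit $Y\in\cx^G$ obtained from properness must be identified with $X$ via transport of the $G$-action along an isometry $\Theta(Y)\tilde\to X$.
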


\subsection{A simple lemma on finite-group quotients.}\label{Ss:balls-quotients}
Let a finite group $G$ act on a metric space $(Z,d_Z)$ by isometries. Let
$$\pi\colon Z\to G\backslash Z$$
be the canonical quotient map; it is a 1-Lipschitz map when the target is equipped with the quotient metric $\bar d_Z$.

Let $\bar z\in G\backslash Z$. Let $$\pi^{-1}(\bar z)=\{z_1,\dots,z_k\}.$$

Let $\bar A\subset G\backslash Z$ be a subset such that
\begin{eqnarray}\label{E:A-distance}
\alp:=\sup_{\bar a\in \bar A} dist(\bar z, \bar a)<\frac{1}{10}\min_{i\ne j}d_Z(z_i,z_j).
\end{eqnarray}

Let
$$A_p:=\pi^{-1}(\bar A)\cap \{x\in Z|\, d(z_p,x)\leq \alp\}.$$
\begin{lemma}\label{L:disjoint-isometr}
In the above notation we have
\newline
(1) $A_p\cap A_q=\emptyset $ for $p\ne q$.
\newline
(2) For any $1\leq p\leq k$ and any $g\in G$ there exists $1\leq q\leq k$ such that $g(A_p)=A_q$.
\newline
(3) $\pi^{-1}(\bar A)=\bigsqcup_{p=1}^k A_p$.
\newline
(4) $\pi(A_p)=\bar A$ for any $p=1,\dots,k$.
\newline
(5) The natural map
$$Stab(x_p)\backslash A_p\to \bar A$$
is an isometry for any $p=1,\dots,k$.
\end{lemma}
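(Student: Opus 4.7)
The strategy is to prove parts (1)--(4) by elementary manipulations with the quotient metric and the gap assumption (\ref{E:A-distance}), and then use all of them to attack the main content in part (5).

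For part (1), if $x\in A_p\cap A_q$ with $p\ne q$, the triangle inequality gives $d_Z(z_p,z_q)\leq 2\alpha$, contradicting the hypothesis $\alpha<\frac{1}{10}d_Z(z_p,z_q)$. For part (2), for $g\in G$ the element $g(z_p)$ lies in the orbit $\pi^{-1}(\bar z)$, so $g(z_p)=z_q$ for a unique $q$; since $G$ acts by isometries, $d_Z(z_q,g(x))=d_Z(z_p,x)\leq \alpha$ and $\pi(g(x))=\pi(x)\in \bar A$, so $g(A_p)\subset A_q$, and applying $g^{-1}$ gives equality.

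For part (3), the key observation is that since $G$ is finite and acts by isometries, the quotient distance is realized: $\bar d_Z(\bar z,\pi(x))=\min_q d_Z(z_q,x)$. Thus $x\in \pi^{-1}(\bar A)$ forces some $d_Z(z_q,x)\leq \alpha$, i.e.\ $x\in A_q$; disjointness is part (1). For part (4), given $\bar a\in\bar A$ one realizes $\bar d_Z(\bar z,\bar a)$ by a pair $(z_i,x')$; then by part (2) there is $g\in G$ with $g(z_i)=z_p$, so $g(x')\in A_p$ projects to $\bar a$.

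The core difficulty is part (5). Well-definedness of $\mathrm{Stab}(z_p)\backslash A_p\to \bar A$ is tautological and surjectivity is part (4). For injectivity, if $\pi(x)=\pi(y)$ with $x,y\in A_p$ and $y=g(x)$, one estimates
\[
d_Z(z_p,g(z_p))\leq d_Z(z_p,y)+d_Z(g(x),g(z_p))\leq 2\alpha;
\]
since $g(z_p)$ is some $z_q$ and the gap rules out $q\ne p$, necessarily $g\in \mathrm{Stab}(z_p)$. The isometry claim is the main step: one must show that for $x,y\in A_p$ the quotient distance $\bar d_Z(\pi(x),\pi(y))=\min_{g\in G}d_Z(x,g(y))$ is achieved by some $g$ in $\mathrm{Stab}(z_p)$, not merely in $G$. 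Pick $g\in G$ realizing the minimum; then $d_Z(x,g(y))\leq d_Z(x,y)\leq 2\alpha$. Since $g(y)\in \pi^{-1}(\bar A)$, part (3) gives $g(y)\in A_r$ for some $r$, and the triangle inequality yields $d_Z(z_p,z_r)\leq 4\alpha$. The gap assumption then forces $r=p$, so $g(y)\in A_p$, and the injectivity argument above shows $g\in \mathrm{Stab}(z_p)$. This is the step where the constant $\tfrac{1}{10}$ in (\ref{E:A-distance}) is really used (any constant making $4\alpha$ strictly smaller than $\min_{i\ne j} d_Z(z_i,z_j)$ would suffice).
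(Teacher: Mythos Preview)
Your proof is correct and follows essentially the same approach as the paper for parts (1)--(4). For part (5), the paper organizes the isometry step slightly differently: instead of picking a minimizing $g$ and showing it lies in $\mathrm{Stab}(z_p)$, the paper shows directly that $d_Z(x,g(y))>8\alpha$ whenever $g\notin\mathrm{Stab}(z_p)$ (via the reverse triangle inequality $d_Z(x,g(y))\geq d_Z(z_p,z_q)-2\alpha$) and $d_Z(x,g(y))\leq 2\alpha$ whenever $g\in\mathrm{Stab}(z_p)$, so the minimum over $G$ equals the minimum over $\mathrm{Stab}(z_p)$. This is a cosmetic rearrangement of the same estimate; your minimizer-based version is equally valid and arguably more streamlined.
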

{\bf Proof.} (1) Let $z\in A_p\cap A_q$, $p\ne q$. Then $d_Z(z_p,z_q)\leq 2\alp\overset{(\ref{E:A-distance})}{<}d_Z(z_p,z_q)$; this is a contradiction.

\hfill

(2) Let $q$ be such that $g(z_p)=z_q$. Let $x\in A_p$. Then $d_Z(g(x), z_q)=d_Z(x,z_p)\leq \alp$. Hence $g(A_p)\subset A_q$.
By the symmetry the opposite inclusion also holds.

\hfill

(3) Let $x\in \pi^{-1}(\bar A)$. By the definition of $\bar d_Z$ we have
$$\min_{i}d_Z(x,z_i)=\bar d_Z(\pi(x),\bar z)\leq \alp.$$
Hence there exists $p$ such that $d_Z(x,z_p)\leq \alp$, i.e. $x\in A_p$.

\hfill

(4) Let $\bar x\in A$. Let $\pi(x)=\bar x$. By (2) there exists $q$ such that $x\in A_q$. There exists $g\in G$ such that $g(z_q)=z_p$. By part (3)
$g(A_q)=A_p$. Hence $g(x)\in A_p$ and $\pi(g(x))=\bar x$. Hence $\pi(A_p)=\bar A$.

\hfill

(5) Let $x,y\in A_p$ and $\pi(x)=\pi(y)$. Then there exists $g\in G$ such that $y=g(x)$. Due to disjointness of different $A_i$'s, one has $g(A_p)=A_p$.
Hence $g\in Stab (z_p)$. Hence the map $Stab(z_p)\backslash A_p\to \bar A$ is injective and due to part (4) it is bijective. It remains to show that this map preserves distances.

Let $x,y\in A_p$ be arbitrary points. Let $g\in G$. Assume that $g(A_p)=A_q$ with $q\neq p$. Then
\begin{eqnarray*}
d_Z(x,g(y))\geq\\
 d_Z(z_p,z_q)-d_Z(x,z_p)-d_Z(g(y),z_q)\geq
 d_Z(z_p,z_q)-d_Z(x,z_p)-d_Z(y,z_p)>\\
 >8\alp.
\end{eqnarray*}
On the other hand if $g(A_p)=A_p$, or equivalently $g(z_p)=z_p$,
$$d_Z(x,g(y))\leq d_Z(x,z_p)+d_Z(g(y),z_p)\leq 2\alp.$$
Consequently
$$\bar d_Z(\pi(x),\pi(y))=\min_{g\in G}d_Z(x,g(y))=\min_{g\in Stab(z_p)}d_Z(x,g(y)).$$
Part (5) follows. \qed

\subsection{$G$-equivariant Yamaguchi map.}\label{Ss:equivariant-Yamaguchi}
Let $G$ be a finite group. We would like to have a $G$-equivariant version of the Yamaguchi theorem \ref{T:yamaguchi} which we prove here only in a very special case needed below, namely when $G=\ZZ_2$ and the target manifold $N$ is a circle.
Let us formulate the general question.
\begin{question}\label{Q:question}
Let a sequence $\{M_i^m\}$ of closed smooth Riemannian manifolds with uniformly bounded below sectional curvature and equipped with a $G$-action by isometries, converge in $\cx^G$ to a smooth closed Riemannian manifold $N$ acted by $G$ by isometries.
Let $d_i$ be $G$-invariant metrics on $M_i\bigsqcup N$ extending the original metrics on $M_i$ and $N$ and such that $d_{i,H}(M_i,N)\to 0$.

The question is whether there exist positive numbers $\eps_i\to 0$ and smooth maps $f_i\colon M_i\to N$ such that for large $i$
\newline
(1) $f_i$ is $\eps_i$-almost Riemannian submersions;
\newline
(2) $f_i$ is $G$-equivariant;
\newline
(3) $d_i(x,f_i(x))<\eps_i$ for any $x\in M_i$;
\newline
(4) the fibers of $f_i$ are connected.
\end{question}

\begin{remark}
One can show that (4) follows from (1)-(3).
\end{remark}

We are going to show that this question has  positive answer in a very special situation sufficient for applications in this paper:
\begin{proposition}\label{P:circle-reflection-equivariant}
In the notation of Question \ref{Q:question} let us assume that $N$ is a circle (imbedded standardly to a the plane),
the group $G=\ZZ_2$ acts on $N$ by reflection with respect to a line passing through the center of the circle $N$.
Then Question \ref{Q:question} has positive answer, i.e. there exist $\ZZ_2$-equivariant maps $f_i\colon M_i\to N$ satisfying (1)-(4) there.
\end{proposition}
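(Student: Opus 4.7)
The plan is to obtain $f_i$ by symmetrizing the output of Yamaguchi's non-equivariant theorem. Since by hypothesis the convergence is in $\cx^{\ZZ_2}$, the metrics $d_i$ are $\ZZ_2$-invariant, which is what will make the symmetrization $C^0$-close to the original map.

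First, I would invoke Theorem~\ref{T:yamaguchi} to obtain smooth maps $f_i^0: M_i \to N$ satisfying (1), (3), (4) with constants $\tau_i \to 0$. The $\ZZ_2$-invariance of $d_i$, combined with (3) applied at both $x$ and $\sigma_{M_i}(x)$, gives
$$d_N\bigl(f_i^0(x),\; \sigma_N(f_i^0(\sigma_{M_i} x))\bigr) \leq d_i(x, f_i^0(x)) + d_i(\sigma_{M_i} x, f_i^0(\sigma_{M_i} x)) < 2\tau_i,$$
so $f_i^0$ is approximately equivariant at the $C^0$ level.

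Second, I would embed $N = S^1 \hookrightarrow \CC$ by an isometry under which $\sigma_N$ becomes complex conjugation (possible because $\sigma_N$ is the restriction of a reflection through a line through the centre of the circle), and define
$$h_i(x) := \tfrac{1}{2}\bigl(f_i^0(x) + \overline{f_i^0(\sigma_{M_i} x)}\bigr) \in \CC, \qquad f_i(x) := h_i(x)/|h_i(x)|.$$
For large $i$ one has $|h_i(x)| \geq 1 - O(\tau_i) > 0$, so $f_i$ is a smooth map into $N$. Property (2) is immediate from $h_i(\sigma_{M_i} x) = \overline{h_i(x)}$. Property (3) follows with $\eps_i := 2\tau_i$ from $d_N(f_i(x), f_i^0(x)) < \tau_i$. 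For (4), I would connect $f_i^0$ to $f_i$ through the one-parameter family of maps $h_{i,t}/|h_{i,t}|$, $h_{i,t} := (1-t)f_i^0 + t h_i$, each of which is a smooth almost-submersion (since $|h_{i,t}| \geq 1 - O(\tau_i)$ along the homotopy), so by Ehresmann's theorem the number of connected components of each fibre is preserved.

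The main obstacle is verifying the almost Riemannian submersion property (1). By the chain rule, $dh_i(v) = \tfrac{1}{2}\bigl(df_i^0|_x(v) + \overline{df_i^0|_{\sigma_{M_i} x}(d\sigma_{M_i}(v))}\bigr)$, and $df_i(v)$ is essentially the tangential component of $dh_i(v)$ at $f_i(x) \in S^1 \subset \CC$. On a vector $v \in T_xM_i$ horizontal for $f_i^0$, both summands are tangent to $S^1$ and of norm approximately $|v|$; the average has norm $\approx |v|$ exactly when the two summands are approximately parallel, i.e., when $f_i^0$ is approximately equivariant at the \emph{$C^1$} level, not merely at $C^0$. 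This $C^1$-equivariance does not follow formally from $C^0$-equivariance and is the delicate point; it can be established by inspecting the construction in \cite{yamaguchi-1991} and verifying that the differential of the Yamaguchi map at $x$ is determined, up to $O(\tau_i)$, by $\sigma$-invariant local data (smoothed distance functions on $N$ and their gradients near $x$), so the $\sigma$-pullback of $df_i^0|_{\sigma_{M_i} x}$ agrees with $df_i^0|_x$ on horizontal vectors up to $O(\tau_i)$. Given this, the average yields $|df_i(v)| = (1 + O(\tau_i))|v|$ on horizontal $v$, which is (1) with a suitable $\eps_i \to 0$.
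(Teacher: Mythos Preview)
Your approach is genuinely different from the paper's, and the gap you yourself flag is real and not closed by what you wrote.

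The paper does \emph{not} symmetrize a non-equivariant Yamaguchi map after the fact. Instead it goes back into Yamaguchi's construction and observes (Lemma~\ref{L:eqivariant-yamag-existence}) that the output $f=\Phi_N^{-1}\circ P\circ\Phi_M$ is automatically $G$-equivariant as soon as the finite index set $\ci$ carries a $G$-action and the point maps $\mathfrak{m}\colon\ci\to M$, $\mathfrak{n}\colon\ci\to N$ are $G$-equivariant. For $N$ a circle with the reflection action, the paper writes down such an equivariant $\mathfrak{n}$ explicitly (evenly spaced points on $C$, omitting the two fixed points of the reflection) and lifts it equivariantly to $\mathfrak{m}$. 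This yields an equivariant $C^1$ map with all of Yamaguchi's estimates built in; a separate lemma then upgrades $C^1$ to $C^\infty$ equivariantly. No averaging and no $C^1$-closeness of two different Yamaguchi maps is needed.

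Your averaging argument, by contrast, hinges on the assertion that $f_i^0$ and $\sigma_N\circ f_i^0\circ\sigma_{M_i}$ agree on horizontal vectors up to $O(\tau_i)$. The justification you offer---that the differential of the Yamaguchi map is determined by ``$\sigma$-invariant local data''---is not correct as stated: the construction depends on the chosen nets $\mathfrak{m}(\ci),\,\mathfrak{n}(\ci)$, which are \emph{not} $\sigma$-invariant in the non-equivariant Theorem~\ref{T:yamaguchi}. What you would actually need is that any two $\tau$-almost Riemannian submersions $M_i\to N$ that are $2\tau$-close in $C^0$ have horizontal distributions that are $O(\tau)$-close; this is a nontrivial stability statement that does not follow from Yamaguchi's theorem and that you have not proved. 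Without it, the average $h_i$ can fail to be an almost-submersion (locally, think of averaging the two Riemannian submersions $(s,t)\mapsto s$ and $(s,t)\mapsto t$, which are $C^0$-close on a small square but whose average has horizontal derivative of norm $1/\sqrt{2}$). So as written, property~(1) is unproved, and the route you sketch to recover it is both vague and based on a false premise.
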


The proof of this proposition will be reduced to the proof of Yamaguchi's Theorem \ref{T:yamaguchi}. Let us remind the main construction used in the proof of the latter theorem.

Let $M,N$ be, as in Theorem \ref{T:yamaguchi}, smooth closed Riemannian manifolds such that $\dim M=m\geq \dim N$. Let $sec(M)\geq -1$, $|sec(N)|\leq 1$, and the injectivity radius $inj(N)>\mu$, where $\mu>0$ is a fixed number.
Let $0<\eps\ll \sigma  \ll \frac{1}{2}\min\{\mu,\pi/2\}$.
 Let $d$ be a metric on $M\bigsqcup N$
extending the original metrics on $M$ and $N$ and such that
$d_H(M,N)<\eps$.

One can show (see \cite{yamaguchi-1991} and references therein) that there exists a finite set (of indices) $\ci$ and two maps   $\frak{m}\colon \ci\to M,\, \frak{n}\colon \ci\to N$ such that
\begin{eqnarray}\label{E:jjj1}
 M\subset (Im(\frak{m}))_{5\eps}, \,\, N\subset (Im(\frak{n}))_{5\eps}\\\label{E:jjj2}
d(\frak{m}(i),\frak{m}(j))>\eps, \, d(\frak{n}(i),\frak{n}(j))>\eps \mbox{ for all } i\ne j\in \ci \\\label{E:jjj3}
d(\frak{m}(i),\frak{n}(i))<\eps.
\end{eqnarray}

In addition to these subsets let us fix a $C^\infty$-smooth function $h\colon \RR\to [0,1]$ such that
\begin{eqnarray*}
\left\{\begin{array}{ccc}
h(0)=1& &\\
h(t)=0& \mbox{if}& t\geq \sigma\\
-\frac{4}{\sigma}<h'(t)<-\frac{3}{\sigma}&\mbox{if}&\frac{3\sigma}{8}<t<\frac{5\sigma}{8}\\
-\frac{4}{\sigma}h'(t)<0&\mbox{if}&\frac{2\sigma}{8}<t<\frac{3\sigma}{8}\mbox{ or } \frac{5\sigma}{8}<t<\frac{6\sigma}{8}\\
-\sigma<h'(t)<0&\mbox{if}& 0<t<\frac{2\sigma}{8}\mbox{ or } \frac{6\sigma}{8}<t<\sigma\\
|h''(t)|<\frac{50}{\sigma}&\mbox{for all }t.
\end{array}\right.
\end{eqnarray*}
Define the map $ \Phi_N\colon N\to \RR^{\ci}$ by
$$\Phi_N(x)=[i\mapsto h(d(x,\frak{n}(i)))].$$
In was shown in \cite{katsuda} that if $\sigma$ and $\eps/\sigma$ are smaller than a constant depending on $m=\dim M$ and $\mu$ then $ \Phi_N$ satisfies the following properties:
\newline
(i) $\Phi_N$ is a smooth imbedding;
\newline
(ii) Let us denote by $\cn$ the normal bundle of $ \Phi_N(N)$ in $\RR^\ci$, and put
$$K:=\sup_{x\in N}\{\sharp\{i\in \ci|\, d(x,\frak{n}(i))\}<\sigma\}\},$$
i.e. $K$ is the maximal number of points of $Im(\frak{n})$ in balls of radius $\sigma$ in $N$. Denote by $\cn_C(N)\subset \cn$ the subset of normal vectors of Euclidean norm at most $C$. The claim is that
the restriction to $\cn_{C_1K^{1/2}}$ of the normal exponential map is a diffeomorphism onto its image which we denote by $\cu$, where $C_1>0$ is a number depending on $m,\mu,\sigma$. We denote by
$$P\colon \cu\to \Phi_N(N)$$
be the projection along the fibers of the normal bundle.

Let us define a $C^1$-smooth map $\Phi_M\colon M\to \RR^\ci$. For each $i\in\ci$ and each $x\in M$ put
$$\phi_i(x)=\frac{1}{vol(B_M(\frak{m}(i),\eps))}\int_{y\in B_M(\frak{m}(i),\eps)}d(y,x)dy.$$
Then $\phi_i$ is $C^1$-smooth. Define $\Phi(x):=[i\mapsto h(\phi_i(x))]$.

It is not hard to check that $\Phi_M(M)$ is contained in $\cu$. Finally define $f\colon M\to N$ by
$$f=\Phi_N^{-1}\circ P\circ \Phi_M.$$
The claim is that this $f$ satisfies all the conclusions of the Yamaguchi Theorem \ref{T:yamaguchi} except $f$ is only $C^1$-smooth rather than $C^\infty$.
It can be smoothened further to get a $C^\infty$-map as it is shown below in Lemma \ref{L-approxim-equivar}.

\hfill

Let us now discuss how the construction of the map $f$ could be generalized into $G$-equivariant situation where $G$ is a finite group. The following Lemma is obvious.
\begin{lemma}\label{L:eqivariant-yamag-existence}
 Let us assume that there exist a finite set $\ci$ with a (left) action of $G$ and maps
$\frak{m}\colon \ci\to M,\, \frak{n}\colon \ci\to N$ as above which are in addition $G$-equivariant. Then the maps $\Phi_N,P,\Phi_M$ and hence $f=\Phi_N^{-1}\circ P\circ \Phi_M$ are $G$-equivariant.
\end{lemma}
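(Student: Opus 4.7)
The plan is to verify the equivariance of each of the three constituent maps $\Phi_N$, $\Phi_M$, and $P$ separately, and then conclude that $f$ is $G$-equivariant as a composition of $G$-equivariant maps. The natural $G$-action on $\RR^\ci$ to put on the target is the one coming from the $G$-action on $\ci$ by permutation of coordinates: $(g\cdot v)_i = v_{g^{-1}i}$. This action is by orthogonal transformations of $\RR^\ci$ in the standard Euclidean inner product.

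First I would check $\Phi_N$. By definition $\Phi_N(x)_i = h(d(x,\frak{n}(i)))$. Using that $d$ is $G$-invariant on $N$ (the action is by isometries) and that $\frak{n}$ is $G$-equivariant, one computes
\[
\Phi_N(gx)_i = h(d(gx,\frak{n}(i))) = h(d(x,g^{-1}\frak{n}(i))) = h(d(x,\frak{n}(g^{-1}i))) = (g\cdot \Phi_N(x))_i,
\]
so $\Phi_N$ is $G$-equivariant. The exact same computation with $\phi_i$ in place of $h(d(\cdot,\frak{n}(i)))$ shows that $\Phi_M$ is $G$-equivariant: the averaging over the ball $B_M(\frak{m}(i),\eps)$ commutes with the $G$-action because the $G$-action on $M$ is by isometries, hence sends $B_M(\frak{m}(i),\eps)$ isometrically onto $B_M(\frak{m}(g^{-1}i),\eps)$.

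Next I would handle the projection $P\colon \cu\to \Phi_N(N)$. Since $\Phi_N$ is a $G$-equivariant embedding and the action on $\RR^\ci$ is by orthogonal (hence Euclidean-isometric) transformations, the submanifold $\Phi_N(N)$ is $G$-invariant, its normal bundle $\cn$ is $G$-equivariant, and the normal exponential map is $G$-equivariant on $\cn_{C_1 K^{1/2}}$. Therefore its open image $\cu$ is $G$-invariant, and the fiber-wise projection $P$ (which is the nearest-point projection onto $\Phi_N(N)$ within $\cu$) commutes with the $G$-action. Finally, $f = \Phi_N^{-1}\circ P\circ \Phi_M$ is a composition of $G$-equivariant maps, hence $G$-equivariant.

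There is essentially no obstacle here: the only point requiring even a moment's care is that the $G$-action on $\RR^\ci$ must be chosen compatibly with the permutation action on the index set $\ci$, and that this action be by Euclidean isometries so as to preserve the normal bundle and exponential map; both are immediate.
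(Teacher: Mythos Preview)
Your proof is correct and is exactly the verification the paper has in mind; the paper itself simply declares the lemma ``obvious'' and gives no further argument. One small remark: your parenthetical description of $P$ as the nearest-point projection is not quite how $P$ is defined (it is the projection along the fibers of the normal bundle via the normal exponential map, which need not coincide with nearest-point projection on all of $\cu$), but your actual argument---that the orthogonal $G$-action preserves $\Phi_N(N)$, its normal bundle, and the Euclidean exponential map, hence commutes with the fiberwise projection---is the right one and does not rely on that description.
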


{\bf Proof of Proposition \ref{P:circle-reflection-equivariant}.} Let $N=C$ be a circle of length $L$. We consider it as the standard circle of length $L$ on the plane $(x,y)$ centered at the origin.
Let the group $G=\ZZ_2$ act by reflection
with respect to the axis $x$. The injectivity radius of $C$ is $L/2$. By our choice of $\eps\ll inj(C)$ there exists a natural number $N$ in the interval
$$N\in (0.4\frac{L}{\eps},0.45\frac{L}{\eps}).$$

Let us define $\ci$ to be the set of all points on the circle $C$ of the form
$$\ci:=\{\frac{L}{2\pi} \left(\cos(\frac{\pi k}{N}),\sin (\frac{\pi k}{N})\right)|\, 0\leq k\leq 2N-1, \, k\ne 0, N\}.$$
Observe that we have omitted the two points on the axis $x$. $\ci$ is invariant under the reflection with respect to the axis $x$; that defines an action of $\ZZ_2$ on $\ci$.
The identical imbedding $\frak{n}\colon \ci\to N=C$ is the desired one, it is obviously $\ZZ_2$-equivariant.

It follows that $\frak{n}(\ci)=\ci$ is $1.1\eps$-separated and $3\eps$-net, i.e. $(\ci)_{3\eps}=C$.

Let us assume that $d_H(M,C)<0.01 \eps$. We are going to define $\ZZ_2$-equivariant map $\frak{m}\colon \ci \to M$  satisfying the properties (\ref{E:jjj1})-(\ref{E:jjj3}).
Let us denote by $\ci^+$  the subset of $\ci$ contained in the upper half plane. Then $\ci=\ci^+\bigsqcup a(\ci^+)$ where $a\in \ZZ_2$ is the non-zero element corresponding to the reflection. For any $i\in \ci^+$ there exists
$x_i\in M$ such that $d(x_i,\frak{n}(i))<0.01 \eps$. Let us define the map $\frak{m}\colon \ci \to M$ as follows: for any $i\in \ci^+$ set
\begin{eqnarray*}
\frak{m}(i):=x_i,\\
\frak{m}(a(i)):=a(x_i),
\end{eqnarray*}
where $a(i)$ means the reflection of $i\in C$ with respect to the $x$-axis, and $a(x_i)$ means the given action of $a\in \ZZ_2$ on $M$.

Clearly $\frak{m}$ is $\ZZ_2$-equivariant. It is easy to see that $d(\frak{m}(i),\frak{m}(j))>\eps $ for $i\ne j\in \ci $ and $(Im(\frak{m}))_{4\eps}=M$.
Then the assumptions of Lemma \ref{L:eqivariant-yamag-existence} are satisfied, and we obtain a $C^1$-smooth $\ZZ_2$-equivariant $\tau(\eps)$-almost Riemannian submersion
$f\colon M\to C$ such that $d(x,f(x))<\tau(\eps)$. In order to make $f$ to be $C^\infty$-smooth we will use the following well known lemma which is also proven for the sake of completeness.

\begin{lemma}\label{L-approxim-equivar}
Let $G$ be a compact Lie group (in particular a finite group) acting smoothly on smooth closed Riemannian manifolds $M,N$. Let $f\colon M\to N$ be a $G$-equivariant $C^1$-smooth map.
The for any $\delta>0$ there exists a $C^\infty$-smooth $G$-equivariant map $f_\delta\colon M\to N$ such that
$$||f-f_{\delta}||_{C^1}<\delta.$$
\end{lemma}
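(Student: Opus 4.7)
The plan is to reduce the problem to approximation of maps into Euclidean space, perform the smoothing there, and then average over $G$ and project back to $N$.

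First, I would choose a $G$-equivariant smooth embedding $\iota\colon N\hookrightarrow V$ into a finite-dimensional Euclidean space $V$ on which $G$ acts orthogonally. For a general compact Lie group this is the Mostow--Palais equivariant embedding theorem; for a finite group it can be produced very explicitly by taking any smooth embedding $\iota_0\colon N\to\RR^L$ and setting $\iota(x):=(\iota_0(h\cdot x))_{h\in G}\in (\RR^L)^G$ with $G$ acting on $(\RR^L)^G$ by permutation of coordinates. Equip $V$ with a $G$-invariant inner product. Since $\iota(N)\subset V$ is a smoothly embedded closed submanifold, the tubular neighborhood theorem gives an open neighborhood $\cu\supset \iota(N)$ together with a $C^\infty$ nearest-point projection $P\colon \cu\to \iota(N)$; because the metric on $V$ is $G$-invariant and $\iota(N)$ is $G$-stable, both $\cu$ and $P$ are $G$-equivariant.

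Next, view $f$ as a $C^1$ map $\tilde f:=\iota\circ f\colon M\to V$, which is $G$-equivariant. By a standard mollification (cover $M$ by finitely many coordinate charts, take a partition of unity, and convolve each component of $\tilde f$ in charts with a standard mollifier), for any $\eta>0$ one obtains a $C^\infty$ map $g\colon M\to V$ with $\|g-\tilde f\|_{C^1}<\eta$. This $g$ need not be $G$-equivariant, so I would average it: set
$$\bar g(x):=\frac{1}{|G|}\sum_{h\in G} h^{-1}\cdot g(h\cdot x),$$
(replaced by an integral with respect to normalized Haar measure for a general compact Lie group). Then $\bar g$ is $C^\infty$ and $G$-equivariant. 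Since $\tilde f$ is already $G$-equivariant, $\tilde f=\frac{1}{|G|}\sum_{h\in G} h^{-1}\cdot\tilde f(h\cdot -)$, so the averaging estimate gives
$$\|\bar g-\tilde f\|_{C^1}\leq \max_{h\in G}\|h^{-1}\cdot g(h\cdot -)-h^{-1}\cdot \tilde f(h\cdot -)\|_{C^1}\leq C\,\|g-\tilde f\|_{C^1},$$
where $C$ depends only on the operator norms of the smooth $G$-action and its derivative; in particular $C$ is a uniform constant.

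Finally, since $\bar g$ is uniformly close to $\tilde f=\iota\circ f$ whose image lies in $\iota(N)\subset \cu$, for $\eta$ small enough $\bar g(M)\subset\cu$, and then the composition
$$f_\delta:=\iota^{-1}\circ P\circ \bar g\colon M\to N$$
is well defined, $C^\infty$, and $G$-equivariant (as all three factors are). Since $P$ is a smooth retraction that is the identity on $\iota(N)$, a standard computation shows $\|f_\delta-f\|_{C^1}$ is controlled by $\|\bar g-\tilde f\|_{C^1}$ up to a constant depending on $\iota$ and on the $C^2$-size of $P$ on a neighborhood of $\iota(N)$. Choosing $\eta$ sufficiently small makes this less than the prescribed $\delta$, which proves the lemma. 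The only non-routine ingredient is the existence of the $G$-equivariant embedding with orthogonal $G$-action on $V$; everything else is standard mollification and the tubular neighborhood theorem.
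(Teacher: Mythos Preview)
Your proof is correct and follows essentially the same route as the paper: embed $N$ $G$-equivariantly into a Euclidean space with orthogonal $G$-action (the paper cites Palais, as you do), approximate $\iota\circ f$ in $C^1$ by a $C^\infty$ map, average over $G$ to restore equivariance, and then compose with the $G$-equivariant nearest-point retraction onto $\iota(N)$, controlling the $C^1$-error at the end via the Lipschitz bound on the retraction. The only cosmetic differences are your explicit finite-group construction of the embedding and your more careful bookkeeping of the constant in the averaging step.
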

{\bf Proof.} By \cite{palais} there exists a smooth $G$-equivariant imbedding of $N$ into a finite dimensional Euclidean space equipped with a linear orthogonal action of the group $G$:
$$\iota \colon N\inj V.$$
Let $\cn_N$ be the normal bundle of $\iota(N)$. There exists an open neighborhood $\co\subset \cn_N$ of the zero section such that
$$\exp\colon \co\to V$$
has an open image $\co'$ and is a diffeomorphism onto it. The composition $q\colon \co'\to N$ of $\exp^{-1}$ with the natural projection $\cn_N\to N$ is the nearest point map from $\co'$ to $N$.
Clearly $q$ is  $C^\infty$-smooth and $G$-equivariant. By reducing $\co$ we may assume that there exists a constant $C$ such that
\begin{eqnarray}\label{R:lip-q}
||q||_{C^1}<C.
\end{eqnarray}

Fix $\eps>0$ such that the $\eps$-neighborhood of $N$ in contained in $\co'$. There exists a $C^\infty$-smooth map $h_1\colon M\to V$ such that
$$||\iota\circ f-h_1||_{C^1}<\eps.$$
By averaging it over the group $G$ define $h_2\colon M\to V$ as
$$h_2(x)=\int_G g(h_1(g^{-1}(x)))dg,$$
where $dg$ in the probability Haar measure on $G$. It is easy to see that $h_2$ is $C^\infty$-smooth, $G$-equivariant and
\begin{eqnarray}\label{E:pppp}
||h_2-\iota\circ f||_{C^1}<\eps.
\end{eqnarray}

Define $h:=q\circ h_2\colon M\to N$. Clearly $h$ is $C^\infty$-smooth and $G$-equivariant. Let us show that $h$ is $C^1$-close to $f$. First we have (below $||\cdot ||$ denotes the Euclidean norm)
\begin{eqnarray*}
||(\iota\circ h)(x)-(\iota\circ f)(x)||=||\iota(q(h_2(x)))-\iota(f(x))||\leq\\
||h_2(x)-(\iota(f))(x)||+||\iota(q(h_2(x)))-h_2(x)||\leq\\ 2||h_2(x)-\iota(f(x))||<2\eps,
\end{eqnarray*}
where the second inequality is due to the fact that $q$ is the nearest point map to $N$.

For the first derivatives we have
\begin{eqnarray*}
||\pt_i[h(x)-f(x)]||=||\pt_i\left[q(h_2(x))-q((\iota\circ f)(x))\right]||=||\sum_j \pt_j q (\pt_i (h_2)_j(x)-\pt_i(\iota\circ f)_j(x))||\leq \\
C'\sum_j|\pt_i (h_2)_j(x)-\pt_i(\iota\circ f)_j(x)|\overset{(\ref{E:pppp})}{\leq } C''\eps.
\end{eqnarray*}
The result follows.
\qed

\section{Convergence of closed surfaces.}\label{S:collapse-surfaces}

\subsection{Reduction between sequences of metrics.}\label{Ss:reduction-smooth}
The goal of this subsection is to prove a technical lemma, which allows to reduce the study of the nearby cycle construction from a sequence of metrics to a sequence of metrics with more feasible behavior.

Let us introduce notation. Let $\{X_i\}$ be a sequence of sets equipped with two sequences of metrics $\{d_{X_i}\}$ and $\{d'_{X_i}\}$ each of them making $X_i$ a compact space. Let us assume that
\begin{eqnarray}\label{E:metric-uniformly}
\sup_{x,y\in X_i} |d_{X_i}(x,y)-d'_{X_i}(x,y)|\to 0.
\end{eqnarray}
Further assume that $\{(X_i,d_{X_i})\}$ GH-converges to a compact metric space $X$. This and the condition (\ref{E:metric-uniformly}) imply that $\{(X_i,d'_{X_i})\}$ GH-converges to $X$.
Let $d_i$ (resp. $d_i'$) be a semi-metric on $X_i\coprod X$ extending the original metric on $X$ and the metric $d_{X_i}$ (resp. $d'_{X_i}$) on $X_i$.
Assume that
\begin{eqnarray}\label{E:uu1}
d_{i,H}(X_i,X)\to 0 \mbox{ and } d'_{i,H}(X_i,X)\to 0.
\end{eqnarray}
Fix a subset $Z\subset X$, and $x\in Z$.

Denote
 \begin{eqnarray}\label{star-star}
 \cb_{i,x}(\eps):=\{y\in X_i|\, \, d_i(x,y)<\eps\},\\
 \cb'_{i,x}(\eps):=\{y\in X_i|\, \, d'_i(x,y)<\eps\}.
 \end{eqnarray}

 In the next lemma the cohomology is taken with coefficients in a field.
\begin{lemma}\label{L:reduction-smooth}
Assume the properties (\ref{E:uu1}).
Let $\kappa>0$. Let $a$ be a non-negative integer. Assume that for any $0<\eps_1<\eps_2<\kappa$ and any $x\in Z$ there exists $i_0=i_0(\eps_1, \eps_2,x)$ such that for any $i>i_0$
the dimension
$$\dim Im[ H^a(\cb'_{i,x}(\eps_2))\to H^a(\cb'_{i,x}(\eps_1))]$$
is independent of $i,\eps_1,\eps_2$ (but may depend on $x\in Z$).

Then any $0<\eps_1<\eps_2<\kappa$ and any $x\in Z$ there exists $i_1=i_1(\eps_1, \eps_2,x)\geq i_0$ such that for all $i>i_1$ we have
$$\dim Im[ H^a(\cb_{i,x}(\eps_2))\to H^a(\cb_{i,x}(\eps_1))]=\dim Im[ H^a(\cb'_{i,x}(\eps_2))\to H^a(\cb'_{i,x}(\eps_1))].$$
In particular, the former dimension is independent of $i,\eps_1,\eps_2$.
\end{lemma}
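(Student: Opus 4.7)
The plan is to compare $d_i$ and $d'_i$ through an intermediate semi-metric $\tilde d_i$ on $X_i\coprod X$ extending the primed metric $d'_{X_i}$ on $X_i$ but with cross-distances to $X$ differing from those of $d_i$ by at most $\alpha_i := \sup_{X_i\times X_i}|d_{X_i}-d'_{X_i}|$, which tends to zero by (\ref{E:metric-uniformly}). Once $\tilde d_i$ is constructed, an interleaving of the filtrations by $d_i$-balls and $\tilde d_i$-balls will give the desired equality.

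First, I would define $\tilde d_i$ as the path semi-metric on $X_i\coprod X$ generated by the function equal to $d_X$ on $X\times X$, to $d'_{X_i}$ on $X_i\times X_i$, and to $d_i+\alpha_i$ on each cross pair. The key estimate is a block-counting lower bound along chains $p=z_0,\ldots,z_n=q$ with $p\in X$ and $q\in X_i$: each maximal run of within-$X_i$ edges loses at most $\alpha_i$ when estimated from below by $d_i$ (via triangle inequality for $d'_{X_i}$ together with $|d_{X_i}-d'_{X_i}|\le\alpha_i$), while each such run is preceded by at least one cross edge carrying the extra $+\alpha_i$ penalty, so the penalties dominate the losses. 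This yields
\[
d_i(p,q)\le\tilde d_i(p,q)\le d_i(p,q)+\alpha_i \qquad (p\in X,\ q\in X_i),
\]
with $\tilde d_i$ restricting to $d_X$ on $X$ and to $d'_{X_i}$ on $X_i$, and with $\tilde d_{i,H}(X_i,X)\to 0$.

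Second, the two-sided cross estimate at once gives the sandwich $\tilde\cb_{i,x}(\eps)\subset\cb_{i,x}(\eps)\subset\tilde\cb_{i,x}(\eps+\alpha_i)$, where $\tilde\cb_{i,x}(\eps)$ denotes the $\tilde d_i$-ball. For $i$ so large that $\alpha_i<\tfrac{1}{10}(\eps_2-\eps_1)$, this interleaves the two filtrations, and the standard diagram chase in the resulting commutative ladder of cohomology maps produces
\[
\dim \mathrm{Im}\bigl[H^a(\cb_{i,x}(\eps_2))\to H^a(\cb_{i,x}(\eps_1))\bigr] = \dim\mathrm{Im}\bigl[H^a(\tilde\cb_{i,x}(\eps_2-\alpha_i))\to H^a(\tilde\cb_{i,x}(\eps_1+\alpha_i))\bigr].
\]
Since $\tilde d_i$ is itself a semi-metric on $X_i\coprod X$ extending $d_X$ and $d'_{X_i}$ with Hausdorff distance going to zero, applying the standing primed hypothesis to $\tilde d_i$ (in place of $d'_i$) identifies the right-hand side with the common primed value, finishing the proof.

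The main difficulty I foresee lies in the last identification: a priori, changing the semi-metric on $X_i\coprod X$ (while keeping the restrictions to $X$ and $X_i$) can alter the associated function $h^a$ by an isometry of $X$, in line with the uniqueness discussion of the introduction. One must therefore argue that $\tilde d_i$, being built directly from $d_i$ and the uniform closeness of $d_{X_i}$ and $d'_{X_i}$, induces the same limit approximation $X_i\to X$ as $d'_i$, so that the relevant isometry is the identity and the image dimensions agree at the same $x\in Z$.
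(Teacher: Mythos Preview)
Your last paragraph puts its finger on the real issue, and it is a genuine gap. The primed hypothesis is stated for the specific semi-metrics $d'_i$, and there is no reason a freshly constructed $\tilde d_i$---even one extending $d'_{X_i}$ with Hausdorff distance going to zero---should satisfy it at the \emph{same} point $x$. Indeed, two such approximations can differ by a nontrivial isometry of $X$: take $d_{X_i}=d'_{X_i}$ and set $d'_i(x,y)=d_i(\sigma(x),y)$ for an isometry $\sigma\ne\mathrm{id}$ of $X$, so that $\cb'_{i,x}(\eps)=\cb_{i,\sigma(x)}(\eps)$; these are not nested with $\cb_{i,x}(\eps)$ and the primed image dimension at $x$ equals the unprimed one at $\sigma(x)$, not at $x$. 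So your detour through $\tilde d_i$ does not avoid the difficulty, it only relocates it from ``compare $\cb$ with $\cb'$'' to ``compare $\tilde\cb$ with $\cb'$'', which is the same problem.

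The paper's proof is shorter and more direct: it writes down the interleaving
\[
\cb'_{i,x}(\tfrac{\eps_1}{2})\subset \cb_{i,x}(\eps_1)\subset \cb'_{i,x}(\delta_1)\subset \cb'_{i,x}(\delta_2)\subset \cb_{i,x}(\eps_2)\subset \cb'_{i,x}(2\eps_2)
\]
for large $i$ and immediately runs the sandwich argument you also use; no intermediate metric is introduced. What actually makes this interleaving valid is that the \emph{cross-distances} $d_i(x,y)$ and $d'_i(x,y)$ (for $x\in X$, $y\in X_i$) differ by $o(1)$, not merely the metrics on $X_i$; in each application in the paper $d'_i$ is produced from $d_i$ so that this holds (cf.\ the construction $d'_i(x,y):=d_i(x,y)+1/i$), and then the inclusions are a one-line consequence of $|d_i(x,y)-d'_i(x,y)|\to 0$. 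Under that reading your path-metric construction is unnecessary: once cross-distances are close, compare $\cb$ and $\cb'$ directly and sandwich.
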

{\bf Proof.}  Clearly (\ref{E:metric-uniformly}) implies that there exists $i_0 \geq i'_0$ such that for all $i>i_0$, all $x\in Z$, and all $0<\eps_1<\delta_1<\delta_2<\eps_2<\frac{\kappa}{2}$ one has
$$\cb'_{i,x}(\eps_1/2)\subset \cb_{i,x}(\eps_1)\subset \cb'_{i,x}(\delta_1)\subset \cb'_{i,x}(\delta_2)\subset \cb_{i,x}(\eps_2)\subset \cb'_{i,x}(2\eps_2). $$
Then in cohomology we have the linear maps
$$H^a(\cb'_{i,x}(2\eps_2))\to H^a(\cb_{i,x}(\eps_2))\to H^a(\cb'_{i,x}(\delta_2))\to H^a(\cb'_{i,x}(\delta_1))\to H^a(\cb_{i,x}(\eps_1))\to H^a(\cb'_{i,x}(\frac{\eps_1}{2})).$$
It is easy to see that for a sequence of linear maps
$$A\to X\to Y\to B$$
one has $\dim Im[X\to Y]\geq \dim Im[A\to B]$. Hence
\begin{eqnarray*}
\dim Im[ H^a(\cb'_{i,x}(2\eps_2))\to H^a(\cb'_{i,x}(\frac{\eps_1}{2}))]\leq \dim Im[H^a(\cb_{i,x}(\eps_2))\to H^a(\cb_{i,x}(\eps_1))]\leq\\ \dim Im[H^a(\cb'_{i,x}(\delta_2))\to H^a(\cb'_{i,x}(\delta_1))].
\end{eqnarray*}
But the first and the third dimensions are equal to each other by the initial assumption. Hence the result follows. \qed

\subsection{Which surfaces may collapse.}\label{Ss:surfaces may collapse}
The  main result of this subsection is Proposition \ref{P:surfaces-may-collapse} which seems to be well known.
For the lack of reference and for the sake of completeness we present a proof.
 We will need a few preparations which will also play a role later.
The following result is a combination of Lemmas 1.13 and 1.14 in \cite{slutskiy-compact-domains}.
\begin{lemma}[Slutskiy, \cite{slutskiy-compact-domains}]\label{slutskiy-lemma}
Let $S$ be a closed 2-dimensional surface with a metric with curvature at least -1 in the sense of Alexandrov.
Then there exists a sequence of $C^\infty$-smooth metrics on $S$ with the Gaussian curvature ar least -1 which converges uniformly to the given metric.
\end{lemma}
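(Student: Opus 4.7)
The plan is to first approximate $d$ by simpler metrics with only finitely many singularities---namely convex hyperbolic cone-metrics---and then smooth out each cone point while preserving the lower curvature bound. For the first reduction I would reuse the machinery already invoked in the proof of Theorem~\ref{realiz}: for each $n\in\NN$ fix a triangulation $\mathcal T_n$ of $S$ by convex geodesic triangles of diameter less than $1/n$ (Theorem~\ref{triang}), and replace every triangle by its hyperbolic comparison triangle. Because $d$ is CBB($-1$), at each vertex the comparison angles do not exceed the Alexandrov angles, so their sum is at most $2\pi$ and the resulting piecewise-hyperbolic metric $d_n$ is a \emph{convex} hyperbolic cone-metric. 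By the argument in~\cite[Annex~A.2]{Ric}, adapting~\cite[Ch.~VII.6]{Ale2}, one has $d_n\to d$ uniformly on $S\times S$.

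Next I would smooth each $d_n$ near its finitely many cone points. Away from the vertices the metric is already locally isometric to $\HH^2$, so nothing needs to be done and $K=-1$. Near a vertex $v$ with cone angle $\lambda_v<2\pi$, in hyperbolic polar coordinates $(r,\theta)$ with $\theta\in\RR/2\pi\ZZ$ the metric takes the warped-product form $dr^2+f_v(r)^2\,d\theta^2$ with $f_v(r)=\tfrac{\lambda_v}{2\pi}\sinh r$. I would replace $f_v$ on a disk of radius $\rho_n$ by a $C^\infty$ function $\tilde f_v$ satisfying
\begin{gather*}
\tilde f_v(0)=0,\quad \tilde f_v'(0)=1,\quad \tilde f_v^{(2k)}(0)=0 \text{ for all }k\geq 1,\\
\tilde f_v\equiv f_v \text{ on }[\rho_n/2,\rho_n],\quad \tilde f_v''\leq\tilde f_v \text{ on }[0,\rho_n].
\end{gather*}
The first three conditions make the warped-product metric extend $C^\infty$ across $v$; the fourth makes the smoothing glue to $d_n$ outside the disk; the fifth yields $K=-\tilde f_v''/\tilde f_v\geq -1$. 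Since $f_v'(0^+)=\lambda_v/(2\pi)<1$, there is ample room to interpolate: take, for instance, $\tilde f_v(r)=\alpha^{-1}\sin(\alpha r)$ near $0$ (constant positive curvature $\alpha^2$) and join $C^\infty$-smoothly to $f_v$ by a concave profile, which automatically satisfies $\tilde f_v''\leq 0\leq\tilde f_v$.

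Finally I would choose $\rho_n$ smaller than both $1/n$ and half the minimal pairwise $d_n$-distance between cone points of $d_n$, so the modifications take place in disjoint disks. The smoothed metric $\tilde d_n$ then coincides with $d_n$ outside a disjoint union of small disks, and inside each disk both metrics have diameter $O(\rho_n)$. A standard re-routing argument then gives $\sup_{x,y}|\tilde d_n(x,y)-d_n(x,y)|=O(\rho_n)\to 0$. Combined with step one, $\tilde d_n\to d$ uniformly, and by construction each $\tilde d_n$ is a $C^\infty$ Riemannian metric with Gaussian curvature $\geq -1$.

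The main obstacle is step two: producing the interpolating warping function $\tilde f_v$ that simultaneously satisfies the boundary conditions, the odd Taylor expansion at the origin required for $C^\infty$-extension, and the curvature bound $\tilde f_v''\leq\tilde f_v$, with estimates uniform in $\lambda_v\in(0,2\pi)$ and in $n$. This is a concrete one-variable interpolation problem and the only spot where genuine analysis, rather than bookkeeping, is required.
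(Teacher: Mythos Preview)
The paper does not prove this lemma at all; it is simply quoted as a combination of Lemmas~1.13 and~1.14 in Slutskiy~\cite{slutskiy-compact-domains}, so there is no argument in the paper to compare against.

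Your two-step outline---approximate by convex hyperbolic cone-metrics via comparison triangles, then smooth each cone point radially---is the standard route and is essentially what Slutskiy does. The first step you correctly lift verbatim from the proof of Theorem~\ref{realiz}. For the second step your formulation is right: the metric near a convex cone point is $dr^2+f_v(r)^2\,d\theta^2$ with $f_v'(0^+)=\lambda_v/2\pi<1$, and one needs an odd $C^\infty$ warping function $\tilde f_v$ with $\tilde f_v'(0)=1$, agreeing with $f_v$ near $\rho_n/2$, and satisfying $\tilde f_v''\le\tilde f_v$. One clean way to dispose of the interpolation you flag as the obstacle is to prescribe a smooth curvature profile $K(r)\ge -1$ equal to a large positive constant near $0$ and to $-1$ beyond some $r_1<\rho_n/2$, solve $\tilde f_v''=-K\tilde f_v$ with $\tilde f_v(0)=0$, $\tilde f_v'(0)=1$, and adjust the profile so that the Wronskian-type data $(\tilde f_v,\tilde f_v')$ at $r_1$ matches $\bigl((\lambda_v/2\pi)\sinh r_1,(\lambda_v/2\pi)\cosh r_1\bigr)$; this is a two-parameter shooting problem with ample freedom in $K$. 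Your sketch is sound.
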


\begin{lemma}\label{L:smooth-not-double}
Given a $C^\infty$-smooth metric on the sphere $S^2$ with Gaussian curvature at least -1. Then its isometric imbedding into $\HH^3$ cannot be a double cover of a convex set.
\end{lemma}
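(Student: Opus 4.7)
I would argue by contradiction. Suppose the smooth metric $g$ on $S^2$ (with $K_g \geq -1$) is isometric to $DK$ for some 2-dimensional convex compact set $K \subset \HH^3$. Since $K$ is 2-dimensional in $\HH^3$, it lies in a totally geodesic copy of $\HH^2$, and the intrinsic metric on $K$ is simply the restriction of the hyperbolic plane metric. Consequently, on $DK\setminus \partial K$ (which is the disjoint union of the two open copies $int(K)\sqcup int(K)$), the intrinsic metric is locally isometric to the hyperbolic plane and in particular has constant Gaussian curvature $-1$.

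Let $\phi\colon (S^2,g)\to DK$ denote the metric isometry, and let $U:=\phi^{-1}(DK\setminus \partial K)$. Then $U$ is open and dense in $S^2$. The key step is to upgrade the metric isometry to a Riemannian isometry on $U$: pick any $p\in U$, choose a small ball around $\phi(p)$ on which the metric is the smooth hyperbolic one, and observe that $\phi$ restricts to a metric-space isometry from a neighborhood of $p$ (equipped with the smooth Riemannian metric $g$) to a neighborhood of $\phi(p)$ (equipped with the smooth hyperbolic metric). By the Myers–Steenrod theorem this restriction is a smooth Riemannian isometry, so the Gaussian curvature of $g$ at $p$ equals that of $\HH^2$, namely $-1$.

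Since $g$ is $C^\infty$ on $S^2$, its Gaussian curvature $K_g$ is continuous. Being equal to $-1$ on the dense open set $U$, it must equal $-1$ identically on $S^2$. Now Gauss–Bonnet yields
\begin{equation*}
4\pi \;=\; 2\pi\chi(S^2) \;=\; \int_{S^2} K_g\, dA \;=\; -\operatorname{area}(S^2,g) \;<\; 0,
\end{equation*}
which is the desired contradiction.

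The only nontrivial point is the promotion of the metric isometry to a smooth one on the dense open set $U$; this is exactly the content of Myers–Steenrod and is the natural ``main obstacle,'' though it is entirely standard. Everything else is a direct application of Gauss–Bonnet once one knows the pulled-back metric on $U$ is hyperbolic.
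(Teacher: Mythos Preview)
Your proof is correct and follows essentially the same line as the paper's: assume the metric is a double cover, deduce $K_g\equiv -1$ (first on the dense open complement of the fold locus, then everywhere by continuity), and derive a contradiction via Gauss--Bonnet. The only difference is that you make explicit the Myers--Steenrod step to justify that the metric isometry preserves curvature on the interior, whereas the paper simply asserts this as clear.
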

{\bf Proof.} Assume the opposite. Then the Gaussian curvature $K=-1$ everywhere: indeed, this is clearly the case outside of the boundary, and since the boundary has measure 0 the same is true
on the boundary by the $C^\infty$-smoothness. By the Gauss-Bonnet formula we have
$$4\pi=2\pi\chi(S^2)=\int_{S^2}KdA=-area(S^2)<0.$$
This is a contradiction. \qed

\hfill
\begin{proposition}\label{P:surfaces-may-collapse}
Let $\{X_i\}$ be a sequence of closed 2-surfaces of a given homeomorphism type with CBB($-1$) metrics which GH-converges to a compact metric space of dimension less than 2.
Then $X_i$ are homeomorphic to one of the following surfaces: sphere, real projective plane, torus, Klein bottle.
\end{proposition}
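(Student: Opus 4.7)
My plan is to show, by contradiction, that a surface of negative Euler characteristic cannot collapse. The four surfaces listed (sphere, projective plane, torus, Klein bottle) are exactly the closed 2-surfaces with $\chi \geq 0$; every other closed 2-surface has $\chi < 0$. So it suffices to derive a contradiction from $\chi(X_i) < 0$ together with collapse.

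The main idea is the combination of Gauss--Bonnet (which gives an $area$ \emph{lower} bound from below when $\chi < 0$ and $K \geq -1$) with the fact that collapse forces the 2-dimensional Hausdorff measure to tend to zero (an \emph{upper} bound). First, I would approximate each CBB($-1$) metric $d_{X_i}$ uniformly by a sequence of $C^\infty$-smooth metrics with Gaussian curvature $\geq -1$, using Slutskiy's Lemma~\ref{slutskiy-lemma}; by a diagonal choice I obtain $C^\infty$ metrics $\tilde d_{X_i}$ with $K_{\tilde d_{X_i}} \geq -1$, still GH-converging to $X$ (uniform convergence of metrics implies GH-convergence of the corresponding compact spaces, and the areas of $(X_i, \tilde d_{X_i})$ are arbitrarily close to those of $(X_i, d_{X_i})$ in the Alexandrov sense). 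Applying the classical smooth Gauss--Bonnet to $(X_i,\tilde d_{X_i})$,
\begin{equation*}
2\pi\chi(X_i) \;=\; \int_{X_i} K \, dA \;\geq\; -\, area(X_i,\tilde d_{X_i}),
\end{equation*}
so $area(X_i,\tilde d_{X_i}) \geq -2\pi\chi(X_i) = 2\pi|\chi(X_i)|$, which by the approximation also yields $area(X_i,d_{X_i}) \geq 2\pi|\chi(X_i)|$.

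Next, I use the continuity properties of Hausdorff measure under Gromov--Hausdorff convergence of Alexandrov spaces. By Burago--Gromov--Perelman (the result cited in the paper as \cite{burago-gromov-perelman}, Theorem 10.8), the 2-dimensional Hausdorff measure is upper semicontinuous along a GH-converging sequence of CBB($-1$) spaces of dimension $\leq 2$, and is continuous in the non-collapsing case. Since the limit $X$ has $\dim X < 2$, one has $\mathcal{H}^2(X) = 0$, so upper semicontinuity forces $area(X_i,d_{X_i}) = \mathcal{H}^2(X_i,d_{X_i}) \to 0$. This contradicts the uniform lower bound $2\pi|\chi(X_i)| > 0$ from the previous paragraph, completing the proof.

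The main obstacle I anticipate is justifying rigorously the two inputs: (i) the smooth approximation preserves areas up to error tending to zero, and (ii) the 2-dimensional Hausdorff measure tends to zero under collapse for CBB($-1$) surfaces. Both are essentially folklore in the Alexandrov 2-dimensional setting---(i) follows from the explicit nature of the Slutskiy mollification together with uniform metric convergence, while (ii) is either a direct consequence of the Burago--Gromov--Perelman volume convergence theorem or can be extracted from the fact that a CBB($-1$) space of Hausdorff dimension $<2$ has $\mathcal{H}^2 = 0$ combined with upper semicontinuity. If a cleaner reference for (ii) is desired, one can bypass it by a direct covering argument: the collapse means that for any $\eps > 0$ the space $X_i$ admits an $\eps$-net of cardinality growing only like $\eps^{-\dim X}$, and combining this with the Bishop--Gromov type volume estimates available in CBB($-1$) geometry bounds $area(X_i)$ by a quantity tending to zero with $\eps$, again contradicting the Gauss--Bonnet lower bound.
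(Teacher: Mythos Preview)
Your proposal is correct and follows essentially the same approach as the paper: reduce to smooth metrics via Slutskiy's lemma, apply Gauss--Bonnet to get an area lower bound from $\chi<0$, and contradict this with $area(X_i)\to 0$ from \cite{burago-gromov-perelman}, Theorem~10.8. One simplification: once you have the smooth metrics $\tilde d_{X_i}$ still GH-converging to $X$, you can apply both Gauss--Bonnet and the area convergence theorem directly to $(X_i,\tilde d_{X_i})$, so there is no need to transfer the area bound back to the original metrics $d_{X_i}$---your concern~(i) is unnecessary.
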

{\bf Proof.} By Lemma \ref{slutskiy-lemma} we may assume that the metrics on $X_i$ are $C^\infty$-smooth. By the Gauss-Bonnet formula
$$\chi(X_i)=\frac{1}{2\pi}\int_{X_i}KdA\geq -area(X_i).$$
Since $\{X_i\}$ collapses, $area(X_i)\to 0$ by \cite{burago-gromov-perelman}, Theorem 10.8. Hence $\chi(X_i)\geq 0$. In the orientable case that means that $X_i$ are homeomorphic either to
the 2-sphere or 2-torus. In the non-orientable case $X_i$ is homeomorphic to a connected sum of $k$ copies of $\RR\PP^2$. The Euler characteristic of it is $2-k\geq 0$.
Thus either $k=1$, i.e. $X_i$ is  $\RR\PP^2$, or $k=2$, i.e. $X_i$ is the Klein bottle. \qed

\subsection{Spheres collapsing to a segment.}\label{Ss:spheres-segment}
The main result of this subsection is the following theorem. Below the cohomology is taken with coefficients in a field.
\begin{theorem}\label{T:spheres-to-segment}
Let $\{X_i\}$ be a sequence of 2-spheres with metrics having the curvature at least $-1$ in the sense of Alexandrov.
Assume it GH-converges to a (non-degenerate) segment $I$. Let $\{d_i\}$ be a sequence of metrics on $I\bigsqcup X_i$ extending the original metrics on $I$ and $X_i$ and such that
$$d_{i,H}(I,X_i)\to 0\mbox{ as } i\to \infty.$$
\newline
(1) Let $\kappa>0$. Let $0<\eps_1<\eps_2<\frac{\kappa}{100} $. Then there exists $i_0$ such that for any $i>i_0$ and any $x\in I$ wit $dist(x,\pt I)>\kappa$ one has
(below $\cb_{i,x}(\eps)$ is defined (\ref{star-star}))
\begin{eqnarray*}
 \dim Im\left[H^a(\cb_{i,x}(\eps_2))\to H^a(\cb_{i,x}(\eps_1))\right] \simeq\left\{\begin{array}{ccc}
                                                  1&\mbox{if}& a=0,1\\
                                                  0&\mbox{otherwise}&\\
                                                      \end{array}\right.
\end{eqnarray*}
(2) Let $x\in \pt I$. Let $0<\eps_1<\eps_2<\frac{1}{100}length(I)$. Then for large $i$ one has
\begin{eqnarray*}
\dim Im\left[H^a(\cb_{i,x}(\eps_2))\to H^a(\cb_{i,x}(\eps_1))\right]\simeq\left\{\begin{array}{ccc}
                                                  1&\mbox{ if }& a=0\\
                                                  0&\mbox{ otherwise }&\\
                                                      \end{array}\right.
\end{eqnarray*}
\end{theorem}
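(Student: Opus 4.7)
The plan is to reduce the statement to the setting of Section~\ref{Ss:convergence-n-n} by isometrically embedding smooth approximations of the $X_i$ into $\HH^3$ via Alexandrov's realization theorem and then invoking Propositions~\ref{P:isomorphisms-sphere} and~\ref{P:isomorph-sphere-boundary}. First I would apply Slutskiy's Lemma~\ref{slutskiy-lemma} to replace each CBB$(-1)$ metric on $X_i$ by a $C^\infty$-smooth metric with Gaussian curvature $\geq -1$, uniformly close to the original. Lemma~\ref{L:reduction-smooth} then reduces the theorem to the smooth case, since the desired conclusion concerns the stable dimension of the image of a restriction map in cohomology.

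Next, Alexandrov's realization theorem (Theorem~\ref{T:alexandrov-isometric}) embeds each smoothed $X_i$ isometrically as $\pt K_i$ or $DK_i$ for a convex compact $K_i \subset \HH^3$; Lemma~\ref{L:smooth-not-double} rules out the second alternative, so each $K_i$ is $3$-dimensional and $X_i \cong \pt K_i$ isometrically. Composing with isometries of $\HH^3$ so that all $K_i$ meet a fixed point, and noting that the extrinsic diameter is dominated by the intrinsic one, which converges to $\mathrm{length}(I)<\infty$, the Blaschke selection theorem (Theorem~\ref{blas}) yields a subsequence with a Hausdorff limit $K_\infty \subset \HH^3$. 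Proposition~\ref{P:convex-hypersurf-convegence} identifies the GH-limit of $\pt K_i$ with $\pt K_\infty$ if $\dim K_\infty=3$, with $DK_\infty$ if $\dim K_\infty = 2$, and with $K_\infty$ (in the extrinsic $\HH^3$-metric, which agrees with the intrinsic one on a geodesic segment) if $\dim K_\infty \leq 1$. Since the GH-limit is the $1$-dimensional segment $I$, the first two cases are ruled out on dimensional grounds, and $K_\infty$ must be a non-degenerate geodesic segment isometric to $I$.

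With this picture in hand I would construct a bridging metric $\hat d_i$ on $X_i \coprod I$ by restricting the ambient $\HH^3$-distance through the identifications $X_i=\pt K_i$, $I=K_\infty$; Hausdorff convergence $K_i \to K_\infty$ together with Lemma~\ref{L:hausdorff-boundary-small-dim} gives $\hat d_{i,H}(X_i,I)\to 0$, and Proposition~\ref{P:convex-hypersurf-convegence}(4) bounds the discrepancy between intrinsic and extrinsic metrics on $\pt K_i$ uniformly by $o(1)$. A second application of Lemma~\ref{L:reduction-smooth} then lets me replace the given metrics $d_i$ by $\hat d_i$. The balls $\cb_{i,x}(\eps)$ become (up to $o(1)$ errors) the sets studied in Section~\ref{Ss:convergence-n-n}, so Proposition~\ref{P:isomorphisms-sphere} with $n=3$, $k=1$ (so the fiber of the nearest-point projection is a circle) yields the conclusion in part (1), while Proposition~\ref{P:isomorph-sphere-boundary} yields part (2).

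The main obstacle will be the passage from the Blaschke subsequence back to the original sequence: the realization $K_i$ depends on $i$ and may only converge up to subsequences, yet the theorem asserts that the dimension equality holds for all large $i$ in the original sequence. I would handle this by contradiction: if the equality fails for infinitely many $i$ (and, in part (1), some admissible $x_i$), extract such a failing subsequence, run the Blaschke step on it to produce $K_\infty \cong I$, and then derive the claimed dimension formula along that subsequence from Propositions~\ref{P:isomorphisms-sphere}--\ref{P:isomorph-sphere-boundary}, contradicting the assumption. A minor technical point is that the identification $I=K_\infty$ is determined only up to a possible reflection, but this does not affect the statement as both parts are symmetric in the endpoints of $I$.
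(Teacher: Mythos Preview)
Your proposal is correct and follows essentially the same route as the paper: Slutskiy's smoothing plus Lemma~\ref{L:reduction-smooth} to reduce to the $C^\infty$ case, Alexandrov realization plus Lemma~\ref{L:smooth-not-double} to get $3$-dimensional $K_i\subset\HH^3$, Blaschke selection and Proposition~\ref{P:convex-hypersurf-convegence} to identify the Hausdorff limit with the segment $I$, then Propositions~\ref{P:isomorphisms-sphere} and~\ref{P:isomorph-sphere-boundary} combined with Proposition~\ref{P:convex-hypersurf-convegence}(4) and a second use of Lemma~\ref{L:reduction-smooth}. Your explicit contradiction argument for passing from the Blaschke subsequence back to the full sequence is a point the paper leaves implicit (it simply relabels the subsequence), so your treatment is in fact slightly more careful there.
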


{\bf Proof of Theorem \ref{T:spheres-to-segment}.} Assume first that the metrics $d_{X_i}$ are $C^\infty$-smooth. By the Alexandrov imbedding theorem \ref{T:alexandrov-isometric} $(X_i,d_{X_i})$ admits an isometric realization in $\HH^3$. By Lemma \ref{L:smooth-not-double}
the isometric realizations are boundaries of 3-dimensional convex bodies.
We may assume that $\{K_i\}$ are contained in a bounded region. By the Blaschke selection theorem \ref{blas} we may choose a subsequence denoted as the original sequence converging to a convex compact set $K$ in the Hausdorff metric: $K_i\to K$.
Since $X_i\overset{GH}{\to} I$, Proposition \ref{P:convex-hypersurf-convegence} implies that $K$ is a segment isometric to $I$. Let us consider semi-metrics $d'_i$ on $I \bigsqcup X_i$ induced by the restriction of the metric $dist$ of $\HH^3$ to $\partial K_i$ and $K$.
Define
$$\cb'_{i,x}(\eps):=\{y\in X_i|\, \, d'_i(x,y)<\eps\}.$$
Then Propositions
\ref{P:isomorphisms-sphere} and \ref{P:isomorph-sphere-boundary} imply that the dimensions
$$\dim Im[ H^a(\cb'_{i,x}(\eps_2))\to H^a(\cb'_{i,x}(\eps_1))]$$
stabilize to the claimed ones under two conditions: (1) for $0<\eps_1<\eps_2<\frac{\kappa}{2}$ and $d'_i(x, \pt I)>\kappa$; (2) for $0<\eps_1<\eps_2<\frac{length(I)}{2}$ and $x\in \pt I$. On the other hand, by Proposition~\ref{P:convex-hypersurf-convegence}(4) we have
$$\sup_{x,y\in X_i} |d_{X_i}(x,y)-d'_{X_i}(x,y)|\to 0.$$
Hence, we can apply Lemma~\ref{L:reduction-smooth} and get the desired result.

Let us consider the general case. By Slutskiy's lemma \ref{slutskiy-lemma} there exists a $C^\infty$-smooth metric $d'_{X_i}$ on $X_i$ with the Gaussian curvature at least -1 such that
$$\sup_{x,y\in X_i}|d_{X_i}(x,y)-d'_{X_i}(x,y)|<1/i.$$
Now the result again follows from Lemma \ref{L:reduction-smooth}. \qed

\hfill

We will also need the following result.

\begin{proposition}\label{P:involution-spheres}
Let $\{\tilde X_i\}$ be a sequence of 2-spheres with CBB($-1$) metrics invariant with respect to the antipodal involution (i.e. $\ZZ_2$-invariant).
Assume it converges to a non-degenerate segment $\tilde I$ in $\cx^{\ZZ_2}$ when
$\ZZ_2$ acts on $\tilde I$ by reflection with respect to the middle point. Let $\{\tilde d_i\}$ be a sequence of metrics on $\tilde I\bigsqcup \tilde X_i$ extending the original metrics on $\tilde I$ and $\tilde X_i$ and such that
$$\tilde d_{i,H}(\tilde I,\tilde X_i)\to 0\mbox{ as } i\to \infty.$$ Let $0<\delta_1<\delta_2<\frac{1}{100}length(\tilde I)$. Let $\tilde x_0\in\tilde I$ be the middle point. Set
$$\tilde\cb_{i,\tilde x_0}(\delta)=\{y\in \tilde X_i|\tilde d_i (\tilde x_0,y)<\delta\}.$$ Then for large $i$ one has
\begin{eqnarray*}
 \dim Im\left[H^a(\ZZ_2\backslash\tilde\cb_{i,\tilde x_0}(\delta_2))\to H^a(\ZZ_2\backslash\tilde\cb_{i,\tilde x_0}(\delta_1))\right]\simeq \left\{\begin{array}{ccc}
                                                  1&\mbox{if}& a=0,1\\
                                                  0&\mbox{otherwise}&\\
                                                      \end{array},\right.
\end{eqnarray*}
where the cohomology is taken with coefficients in an arbitrary field.
\end{proposition}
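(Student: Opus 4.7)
The plan is a reduction to the smooth case, an equivariant Alexandrov realization in $\HH^3$, and a topological identification of the quotient ball as a M\"obius band.

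Since $\{\tilde X_i\}\to\tilde I$ in $\cx^{\ZZ_2}$, we may take the mixed metrics $\tilde d_i$ to be $\ZZ_2$-invariant; as $\tilde x_0$ is fixed, $\tilde\cb_{i,\tilde x_0}(\delta)$ is $\ZZ_2$-invariant and its quotient equals the ball around $\bar x_0$ in the quotient mixed metric on $\ZZ_2\backslash(\tilde I\bigsqcup\tilde X_i)$. The quotient $\bar X_i:=\ZZ_2\backslash\tilde X_i\simeq\RR\PP^2$ carries the quotient CBB($-1$) metric; Slutskiy's Lemma~\ref{slutskiy-lemma} applied on $\bar X_i$ and lifted via the double cover supplies smooth $\ZZ_2$-invariant approximating metrics on $\tilde X_i$ with Gaussian curvature $\geq-1$, and Lemma~\ref{L:reduction-smooth} applied on $\bar X_i$ (with $Z=\{\bar x_0\}$) reduces the claim to the smooth case. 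There, Theorem~\ref{realiz} embeds each $\tilde X_i$ isometrically as $\partial K_i$ for a convex body $K_i\subset\HH^3$ centrally symmetric about a point $o_i$, with induced symmetry $I_i$ realizing the antipodal involution; Lemma~\ref{L:smooth-not-double} rules out the doubled-disc case, so $K_i$ is 3-dimensional. By Theorem~\ref{blas} and Proposition~\ref{P:convex-hypersurf-convegence}, after a subsequence $K_i$ Hausdorff-converges to $K=\tilde I$, a non-degenerate segment along a line $L$, with $o_i\to o$ its midpoint.

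Since the extrinsic distance from $\HH^3$ is $I_i$-invariant on $\partial K_i$, Proposition~\ref{P:convex-hypersurf-convegence}(4) and Lemma~\ref{L:reduction-smooth} on the quotient reduce the problem to the extrinsic balls $\cb_{i,o}(\delta):=\{z\in\partial K_i:dist_{\HH^3}(z,o)<\delta\}$; here one uses the symmetrized $\ZZ_2$-invariant mixed metric $\hat d_i(x,y):=\tfrac12[dist_{\HH^3}(x,y)+dist_{\HH^3}(g x, g y)]$ (where $g$ is the generator of $\ZZ_2$ acting by $I$ on $\tilde I$ and by $I_i$ on $\partial K_i$) as the extrinsic counterpart of $\tilde d_i$. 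For $\delta<\tfrac{1}{100}\mathrm{length}(\tilde I)$ and large $i$, Lemma~\ref{Cor:hyperb-fibration} together with the fibration structure of $q|_{\partial K_i}$ (where $q:\HH^3\to L$ is the nearest-point map) identifies $\cb_{i,o}(\delta)$ with an annulus $\SS^1\times(-1,1)$ whose core is the fiber circle $q^{-1}(o)\cap\partial K_i$, and its complement in $\partial K_i\simeq\SS^2$ is a pair of topological disks. These disks are swapped by $I_i$: the limit $I$ (central symmetry of $\HH^3$ about $o\in L$) interchanges $\{q>o\}$ and $\{q<o\}$, and the convergence $o_i\to o$ propagates this swap to $I_i$ for large $i$. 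Hence $\ZZ_2\backslash\cb_{i,o}(\delta)$ equals $\RR\PP^2$ with an open disk removed, i.e., a M\"obius band homotopy equivalent to $\SS^1$, whose cohomology over any field is $\FF$ in degrees $0,1$ and vanishes otherwise. For $\delta_1<\delta_2$ both quotients deformation-retract onto the common core $\ZZ_2\backslash(q^{-1}(o)\cap\partial K_i)$, so the inclusion is a homotopy equivalence and the image on cohomology is the whole target, yielding the claimed dimensions.

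The main obstacle is the $\ZZ_2$-equivariance bookkeeping across the two metric reductions: constructing an honestly $\ZZ_2$-invariant extrinsic mixed metric $\hat d_i$ that is uniformly close to $\tilde d_i$ (so that Lemma~\ref{L:reduction-smooth} applies on the quotient), and verifying that the two polar disks at the ends of the collapsing neck are indeed exchanged by $I_i$ rather than individually preserved, which ultimately rests on the convergence $I_i\to I$ forced by $o_i\to o$ together with the global fact that $\ZZ_2\backslash\tilde X_i\simeq\RR\PP^2$ is non-orientable.
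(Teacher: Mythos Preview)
Your overall strategy coincides with the paper's: reduce to the smooth case, use the equivariant Alexandrov realization (Theorem~\ref{realiz}) to get centrally symmetric convex bodies $K_i\subset\HH^3$, pass to the extrinsic metric via Proposition~\ref{P:convex-hypersurf-convegence}(4) and Lemma~\ref{L:reduction-smooth}, and identify the $\ZZ_2$-quotient of the relevant neighborhood as a M\"obius band. However, there is a genuine gap in the topological identification step.

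You assert that $\cb_{i,o}(\delta)=\partial K_i\cap B_{\HH^3}(o,\delta)$ is an annulus whose complement in $\partial K_i$ consists of two disks. Lemma~\ref{Cor:hyperb-fibration} does not give this: it only says that $q|_{\partial K_i}$ is an $\SS^1$-bundle over the open interval $int(q(K_i))$, and the Euclidean-type ball $B_{\HH^3}(o,\delta)$ is \emph{not} a $q$-preimage, so its trace on $\partial K_i$ need not be a union of fibers. The paper sidesteps this by introducing the auxiliary sets $\ca_{i,\tilde x_0}(\delta):=\{z\in\partial K_i:\,dist(q(z),\tilde x_0)<\delta\}$, which \emph{are} fibration preimages of intervals and hence are $\ZZ_2$-equivariantly homeomorphic to $(-\delta,\delta)\times S^1$ with the involution $(t,x)\mapsto(-t,-x)$; their quotients are then visibly M\"obius bands. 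Lemma~\ref{L:lemma-9} interleaves the $\ca$'s with the extrinsic balls $\cb'$, and the standard five-term diagram chase (exactly as in the proof of Proposition~\ref{P:isomorphisms-sphere}) transfers the cohomology computation to the $\cb'$'s without ever determining their homeomorphism type. Your argument can be repaired by inserting this interleaving; the disk-swapping discussion then becomes unnecessary, since the explicit form of the involution on $(-\delta,\delta)\times S^1$ already gives the M\"obius quotient. A minor simplification: the paper normalizes all $K_i$ to be symmetric about the \emph{same} point $o$, so the ambient hyperbolic metric is already $\ZZ_2$-invariant and your symmetrized $\hat d_i$ is not needed.
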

{\bf Proof.} Let us assume first that the metrics on $\tilde X_i$ are $C^\infty$-smooth. By Theorem \ref{realiz} and Lemma \ref{L:smooth-not-double} $\tilde X_i$ can be realized isometrically as the boundary of a 3-dimensional convex set $K_i\subset \HH^3$
such that the antipodal involution is induced by a reflection with respect to a point $o\in \HH^3$ which might be assumed independent of $i$. After a choice of a subsequence one may assume
that $\{ K_i\}$ converges in the Hausdorff metric to a convex compact set $J$ which is necessarily invariant with respect to the reflection $s$. By Proposition \ref{P:convex-hypersurf-convegence} one has $\dim J=1$, i.e. $J$ is a segment. By Lemma \ref{L:hausdorff-boundary-small-dim}
$\{\pt K_i\}$ converges to $J$ in the Hausdorff metric. Hence $(\pt K_i,dist)\overset{d_{GH}^{\ZZ_2}}\to (J,dist)$. Then Proposition \ref{P:convex-hypersurf-convegence}(4) implies that
$(\pt K_i,D_i^{in})\overset{d_{GH}^{\ZZ_2}}\to (J,dist)$. Hence $\tilde I=J$ in $\cx^{\ZZ_2}$.

Let $q$ be the nearest point map from $\HH^3$ to the line passing through $J$. Denote
$$\ca_{i,\tilde x_0}(\delta)=\{z\in \pt K_i|dist (q(z),\tilde x_0)<\delta\},$$
$$\cb'_{i,\tilde x_0}(\delta)=\{z\in \pt K_i|dist (z,\tilde x_0)<\delta\}.$$
For $0<\delta<\frac{length(I)}{100}$ Lemma \ref{Cor:hyperb-fibration} implies that $\ca_{i,\tilde x_0}(\delta)$ is $\ZZ_2$-equivariantly homeomorphic to $(-\delta,\delta)\times S^1$ when the involution acts
on the latter space as
$$(t,x)\mapsto (-t,-x).$$
Then the quotient space $\ZZ_2\backslash \ca_{i,\tilde x_0}(\delta)$ is homeomorphic to the M\"obius band. Moreover for $0<\delta<\delta'$ the inclusion
$$\ZZ_2\backslash \ca_{i,\tilde x_0}(\delta)\subset \ZZ_2\backslash \ca_{i,\tilde x_0}(\delta')$$
is a homotopy equivalence.

By Lemma \ref{L:lemma-9} for large $i$ one has inclusions
$$\ca_{i,\tilde x_0}(\frac{\delta_1}{2})\subset \cb'_{i,\tilde x_0}(\delta_1)\subset \ca_{i,\tilde x_0}(\frac{\delta_1+\delta_2}{2})\subset \cb'_{i,\tilde x_0}(\delta_2)\subset \ca_{i,\tilde x_0}(2\delta_2).$$

Then we have the induced maps in cohomology
\begin{eqnarray*}
H^*(\ZZ_2\backslash \ca_{i,\tilde x_0}(\frac{\delta_1}{2}))\overset{a_1}{\leftarrow} H^*(\ZZ_2\backslash \cb'_{i,\tilde x_0}(\delta_1))\overset{a_2}{\leftarrow} H^*(\ZZ_2\backslash\ca_{i,\tilde x_0}(\frac{\delta_1+\delta_2}{2}))\overset{a_3}{\leftarrow} \\
\overset{a_3}{\leftarrow} H^*(\ZZ_2\backslash \cb'_{i,\tilde x_0}(\delta_2))\overset{a_4}{\leftarrow} H^*(\ZZ_2\backslash\ca_{i,\tilde x_0}(2\delta_2)).
\end{eqnarray*}
The maps $a_1\circ a_2$ and $a_3\circ a_4$ are isomorphisms. Since the M\"obius map is homotopy equivalent to the circle, the dimensions
$$\dim Im[ H^a(\ZZ_2\backslash\cb'_{i,x}(\eps_2))\to H^a(\ZZ_2\backslash\cb'_{i,x}(\eps_1))]$$
stabilize to the claimed ones. Then the combination of Proposition~\ref{P:convex-hypersurf-convegence}(4) and Lemma~\ref{L:reduction-smooth} imply the result.

\hfill

Let us consider the case of a general CBB($-1$) metric on $\tilde X_i$. For any $i$ the convex body $K_i$ can be approximated in the Hausdorff metric arbitrarily close by $C^\infty$-smooth $o$-symmetric convex body $K_i'$.
Claim \ref{Cl:uniform-radial} implies that
for the given metric $d_{\tilde X_i}$ on $\tilde X_i$ there exists a $C^\infty$-smooth CBB($-1$) metric $d_{\tilde X_i}'$ which is invariant under the antipodal involution and
$$\sup_{x,y\in \tilde X_i} |d_{\tilde X_i}(x,y)-d_{\tilde X_i}'(x,y)|<1/i.$$

Let us denote by $\bar d_{\tilde X_i}$ and $\bar d_{\tilde X_i}'$ the corresponding quotient metrics on $\ZZ_2\backslash \tilde X_i$. Then clearly
$$\sup_{x,y\in \tilde X_i} |\bar d_{\tilde X_i}(x,y)-\bar d_{\tilde X_i}'(x,y)|<1/i.$$

Lemma \ref{L:reduction-smooth} implies the proposition. \qed

\subsection{Collapse to the circle.}\label{Ss:collapse-circle}
\begin{theorem}\label{T:surface-t-circle}
Let $\{X_i\}$ be sequence of closed 2-surfaces with CBB($-1$) metrics.
Assume that it GH-converges to a circle $C$. Let $\{d_i\}$ be a sequence of metrics on $C\bigsqcup X_i$ extending the original metrics on $C$ and $X_i$ and such that
$$d_{i,H}(C,X_i)\to 0\mbox{ as } i\to \infty.$$
Let $0<\eps_1<\eps_2<\frac{1}{100} length(C)$.
\newline
(1) Then there exists $i_0$ such that for any $i>i_0$ one has for any $x\in C$
\begin{eqnarray*}
 \dim Im\left[H^a(\cb_{i,x}(\eps_2))\to H^a(\cb_{i,x}(\eps_1))\right]=\left\{\begin{array}{ccc}
                                                  1&\mbox{if}& a=0,1\\
                                                  0&\mbox{otherwise}&\\
                                                      \end{array},\right.
\end{eqnarray*}
where the cohomology is taken with coefficients in an arbitrary field, and $\cb_{i,x}(\eps)$ is defined by (\ref{star-star}).

(2) Let $X_i$ have a fixed homeomorphism type. Then $\chi(X_i)=0$. In particular sphere and real projective planes cannot collapse to circle.
\end{theorem}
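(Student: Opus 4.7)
The plan is to reduce to the smooth Riemannian case and invoke Yamaguchi's fibration theorem, then read off the cohomology of metric balls from the cohomology of the fiber. Using Slutskiy's Lemma~\ref{slutskiy-lemma}, I approximate each CBB($-1$) metric $d_{X_i}$ on $X_i$ by a $C^\infty$-smooth metric $d'_{X_i}$ with Gaussian curvature (hence sectional curvature) $\geq -1$, uniformly within $1/i$ of $d_{X_i}$. I adjust the bridging semi-metrics $d_i$ on $X_i \sqcup C$ accordingly to obtain $d'_i$ still realizing $d'_{i,H}(X_i,C)\to 0$. By Lemma~\ref{L:reduction-smooth}, it suffices to prove the cohomological stabilization for the smoothened sequence $\{(X_i, d'_{X_i})\}$ with these semi-metrics.

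The circle $C$ is flat, so $|\sec(C)|\leq 1$, and $inj(C)=length(C)/2>0$ is independent of $i$. Hence Yamaguchi's Theorem~\ref{T:yamaguchi} applies with uniform parameters: for all sufficiently large $i$ there is a $C^\infty$-smooth map $f_i\colon X_i \to C$ which is a $\tau_i$-almost Riemannian submersion with connected fibers and with $d'_i(z,f_i(z))<\tau_i$ for all $z\in X_i$, where $\tau_i\to 0$. Since $f_i$ is a submersion between smooth compact manifolds with $\dim X_i=2$ and $\dim C=1$, each fiber $f_i^{-1}(x)$ is a connected, closed, smooth $1$-submanifold of $X_i$, hence diffeomorphic to $S^1$.

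For part~(1), Corollary~\ref{Cor:fiber-Yamag-cor} then yields, for every sufficiently large $i$, every $x\in C$, and every $0<\eps_1<\eps_2<\tfrac{1}{100}length(C)$, an isomorphism
\[
Im\bigl[H^a(\cb_{i,x}(\eps_2))\to H^a(\cb_{i,x}(\eps_1))\bigr]\;\cong\; H^a(f_i^{-1}(x))\;\cong\; H^a(S^1),
\]
which is one-dimensional for $a\in\{0,1\}$ and zero otherwise, in coefficients over any field. Lemma~\ref{L:reduction-smooth} transfers this conclusion back to the original metrics $d_{X_i}$. For part~(2), a proper smooth submersion onto a manifold is a locally trivial smooth fiber bundle (Ehresmann), so $f_i\colon X_i\to C$ is an $S^1$-bundle over $S^1$; consequently $\chi(X_i)=\chi(S^1)\cdot\chi(C)=0$. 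Since the smoothened and original surfaces share the same underlying manifold, this rules out both the $2$-sphere and $\mathbb{RP}^2$ as possible collapsing homeomorphism types.

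The main technical point is the uniform applicability of Yamaguchi's theorem across the sequence: I need a single lower sectional curvature bound for all sufficiently large $i$ (supplied by Slutskiy's approximation), a single positive lower bound on $inj(C)$ (automatic), and control of the bridging metrics after smoothening (handled by Lemma~\ref{L:reduction-smooth}). Once these are in place, the identification of fibers with circles is forced by dimension and the connectedness clause of Theorem~\ref{T:yamaguchi}, and the computation collapses to the standard cohomology of $S^1$.
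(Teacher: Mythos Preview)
Your proof is correct and follows essentially the same approach as the paper: reduce to smooth metrics via Slutskiy's lemma and Lemma~\ref{L:reduction-smooth}, then apply Corollary~\ref{Cor:fiber-Yamag-cor} (Yamaguchi's fibration) and observe that the fibers are connected $1$-manifolds, hence circles, so that part~(1) reads off $H^*(S^1)$ and part~(2) follows from multiplicativity of the Euler characteristic for fibrations. Your write-up simply spells out in more detail what the paper compresses into a few lines.
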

{\bf Proof.} By Slutskiy's lemma \ref{slutskiy-lemma} and Lemma \ref{L:reduction-smooth} we may assume that the metrics $d_i$ on $X_i$ are $C^\infty$-smooth.
In the latter case the result is a special case of Corollary \ref{Cor:fiber-Yamag-cor}. Indeed the fibers of the Yamaguchi map $X_i\to C$ are 1-dimensional and connected, hence circles.
By multiplicativity of the Euler characteristic for fibrations we get $\chi(X_i)=0$.
\qed

\subsection{Collapse of $\RR\PP^2$.}\label{Ss:rp2-1dim}
In the next theorem the cohomology is taken with coefficients in an arbitrary field.
\begin{theorem}\label{T:rp2-segment}
Let $\{X_i\}$ be a sequence of $\RR\PP^2$ with CBB($-1$) metrics. Let us assume that is collapses to a 1-dimensional space $I$.
\newline
(1) Then $I$ is a segment.
\newline
(2) Furthermore let $d_i$ be metrics on $I\bigsqcup X_i$ extending the original metrics on $I$ and $X_i$ and such that
$$d_{i,H}(I,X_i)\to 0 \mbox{ as } i\to 0.$$
\newline
(a)  Let $0<\kappa<\frac{length(I)}{100}$.  Let $0<\eps_1<\eps_2<\kappa$. Then there exists $i_0$ such that for $i>i_0$ and for all $x\in I$ with $dist(x,\pt I)>\kappa$ one has
\begin{eqnarray*}
\dim Im\left[H^a(\cb_{i,x}(\eps_2))\to H^a(\cb_{i,x}(\eps_1))\right]=\left\{\begin{array}{ccc}
                                                  1&\mbox{if}& a=0,1\\
                                                  0&\mbox{otherwise}&\\
                                                      \end{array},\right.
\end{eqnarray*}
where $\cb_{i,x}(\eps)$ is defined by (\ref{star-star}).
\newline
(b)  Let $x\in \pt I$. Then for large $i$  one has
\begin{eqnarray*}
\dim Im\left[H^a(\cb_{i,x}(\eps_2))\to H^a(\cb_{i,x}(\eps_1))\right]=\left\{\begin{array}{ccc}
                                                  1&\mbox{if}& a=0\\
                                                  0&a\ne 0,1&\\
                                                      \end{array}\right.
\end{eqnarray*}
while for $a=1$ for one of the end points of $I$ one has for large $i$ after a choice of a subsequence
$$\dim Im\left[H^1(\cb_{i,x}(\eps_2))\to H^1(\cb_{i,x}(\eps_1))\right]=0,$$
and for the other end point one has
$$\dim Im\left[H^1(\cb_{i,x}(\eps_2))\to H^1(\cb_{i,x}(\eps_1))\right]=1.$$
\end{theorem}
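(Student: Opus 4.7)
Part (1) is an immediate consequence of Theorem \ref{T:surface-t-circle}(2): since $\chi(\RR\PP^2) = 1 \neq 0$, the limit cannot be a circle, so $I$ must be a segment.

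For part (2), the plan is to pass to the orientation double cover $\pi_i\colon \tilde X_i = \SS^2 \to X_i = \RR\PP^2$ with the lifted CBB($-1$) metric, invariant under the free $\ZZ_2$ deck transformation, and to reduce the computation to results about $\SS^2$ already established (Theorem \ref{T:spheres-to-segment} and Proposition \ref{P:involution-spheres}). I would first treat the $C^\infty$-smooth case: by the Alexandrov realization Theorem \ref{realiz} and Lemma \ref{L:smooth-not-double}, each $\tilde X_i$ embeds isometrically as the boundary of a $3$-dimensional convex body $K_i \subset \HH^3$ centrally symmetric about a common point $o$. By the Blaschke selection Theorem \ref{blas}, a subsequence $K_i \to K$ in the Hausdorff metric with $K$ centrally symmetric about $o$; dimension considerations based on Proposition \ref{P:convex-hypersurf-convegence} (together with the fact that $I$ is $1$-dimensional) force $\dim K = 1$, so $K$ is a segment containing $o$ in its interior (the inclusion $o \in K$ follows from $s$-invariance of $K$ and convexity applied to $p$ and $s(p)$). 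Identifying $\tilde I$ with $K = [-L, L]$ and $o$ with $0$, the action of $\ZZ_2$ on $\tilde I$ becomes $t \mapsto -t$, so $I = [0, L]$ with boundary $\{0, L\}$: the point $0$ corresponds to the $s$-fixed midpoint of $\tilde I$, whereas $L$ corresponds to the pair of distinct endpoints $\{-L, L\}$.

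I would then compute the cohomological images at each type of point, first working with the extrinsic metric $d'_i$ on $X_i \coprod I$ induced from the hyperbolic distance restricted to $\partial K_i \coprod K$ and then projected to the $\ZZ_2$-quotient. By Proposition \ref{P:convex-hypersurf-convegence}(4) the metric $d'_i$ is uniformly close to the intrinsic one, so Lemma \ref{L:reduction-smooth} transfers the final result. For an interior $x \in (\kappa, L - \kappa) \subset I$, the two preimages $\{x, -x\} \subset \tilde I$ are interior points separated by $\geq 2\kappa$, and for $\eps < \kappa$ Lemma \ref{L:disjoint-isometr} splits $\pi_i^{-1}(\cb_{i,x}(\eps))$ into two $\pi_i$-isomorphic balls in $\tilde X_i$; Proposition \ref{P:isomorphisms-sphere} then yields $h^0 = h^1 = 1$ with $h^a = 0$ otherwise. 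For $x = L$, both preimages $\pm L$ are endpoints of $\tilde I$ at distance $2L$, and the same splitting reduces the computation via Proposition \ref{P:isomorph-sphere-boundary} to $h^0 = 1$ and $h^a = 0$ for all $a > 0$. For $x = 0$, the unique preimage is the midpoint fixed by $\ZZ_2$, so $\cb_{i, 0}(\eps) = \ZZ_2 \backslash \tilde \cb_{i, 0}(\eps)$, and Proposition \ref{P:involution-spheres} gives $h^0 = h^1 = 1$ with $h^a = 0$ otherwise.

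For a general CBB($-1$) metric I would apply a $\ZZ_2$-equivariant version of Slutskiy's Lemma \ref{slutskiy-lemma} to the lifted metric on $\tilde X_i = \SS^2$ (equivariant approximation can be obtained by averaging over $\ZZ_2$), so that Lemma \ref{L:reduction-smooth} transfers the conclusion to the non-smooth case. The main subtlety I anticipate is ruling out the alternative scenario in which $\ZZ_2$ acts trivially on $\tilde I$, which would wrongly give $h^1 = 0$ at both boundary points; this is what makes the use of the Alexandrov realization Theorem \ref{realiz} essential, as the center $o$ of the reflection is automatically forced to lie in $K$, thereby making $s|_K$ the non-trivial reflection rather than the identity. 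A last technicality is that the theorem is stated for an arbitrary choice of $d_i$ on $X_i \coprod I$ rather than the specific $d'_i$ coming from the convex embedding; this is handled via the uniqueness of the nearby cycle construction up to isometries of $I$, and matches the inherent ambiguity in statement (b) concerning which boundary of $I$ carries $h^1 = 1$.
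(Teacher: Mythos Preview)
Your overall strategy is correct and in fact somewhat more economical than the paper's. The paper does not invoke Theorem~\ref{realiz} directly in the proof of Theorem~\ref{T:rp2-segment}; instead it first applies Slutskiy's lemma to $X_i$ (so the lifted metric on $\tilde X_i$ is automatically $\ZZ_2$-invariant), then uses the abstract $\ZZ_2$-equivariant Gromov--Hausdorff machinery of Section~\ref{S:convergence-functions-actions} to obtain $\tilde X_i\to\tilde X$ in $\cx^{\ZZ_2}$, and finally appeals to Lemma~\ref{L:Petrunin} (existence of a point far from its antipode) to rule out the trivial $\ZZ_2$-action on the limiting segment. Your route bypasses both Lemma~\ref{L:Petrunin} and most of Section~\ref{S:convergence-functions-actions}: realizing $\tilde X_i$ as $\partial K_i$ with $K_i$ symmetric about a fixed $o$, the Hausdorff limit $K$ automatically contains $o$ by convexity, so the reflection is visibly nontrivial on $K$. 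The subsequent case analysis via Lemma~\ref{L:disjoint-isometr} and the appeals to Propositions~\ref{P:isomorphisms-sphere}, \ref{P:isomorph-sphere-boundary}, \ref{P:involution-spheres} are the same in both approaches.

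There is one genuine soft spot. Your reduction to the smooth case proposes a ``$\ZZ_2$-equivariant Slutskiy'' obtained by averaging the approximating metric over $\ZZ_2$; but averaging Riemannian metrics does not in general preserve a lower curvature bound, so this step as written does not work. The fix is simpler than what you wrote: apply Slutskiy's Lemma~\ref{slutskiy-lemma} to the CBB($-1$) metric on $X_i=\RR\PP^2$ itself and then lift to $\tilde X_i$; the pullback is automatically $\ZZ_2$-invariant and has Gaussian curvature $\ge -1$ since the covering map is a local isometry. This is exactly what the paper does. You should also make explicit the easy step that $\dim K=1$: since $X_i\to I$ collapses, $\mathrm{area}(X_i)\to 0$, hence $\mathrm{area}(\tilde X_i)=2\,\mathrm{area}(X_i)\to 0$, which via Proposition~\ref{P:convex-hypersurf-convegence} excludes $\dim K\ge 2$; and $\dim K=0$ would force $\mathrm{diam}(X_i)\to 0$. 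With these two adjustments your argument is complete.
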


First we will need two lemmas.
\begin{lemma}\label{L:ivanov}
Let $M$ be a closed Riemannian manifold. Let $\tilde M$ be $m$-fold cover with the lifted metric. Then
$$diam(\tilde M)\leq m\cdot diam(M).$$
\end{lemma}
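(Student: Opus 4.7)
Fix arbitrary $\tilde x, \tilde y \in \tilde M$, set $L := d_{\tilde M}(\tilde x, \tilde y)$ and $x_0 := p(\tilde x)$, where $p\colon \tilde M \to M$ denotes the covering projection. The plan is to bound $L$ by $m \cdot diam(M)$ via a pigeonhole argument on a shortest geodesic in $\tilde M$ combined with path-lifting from $M$.

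First I would take a minimizing $\tilde M$-geodesic $\tilde\gamma\colon [0, L] \to \tilde M$ from $\tilde x$ to $\tilde y$, parametrized by arc length, and subdivide by the $m+1$ points $\tilde p_k := \tilde \gamma(kL/m)$, $k = 0, 1, \dots, m$; by additivity of distance along a minimizing geodesic, $d_{\tilde M}(\tilde p_i, \tilde p_j) = |j-i|\, L/m$ for any two indices. For each $k$ I would lift a minimizing $M$-path from $p(\tilde p_k)$ to $x_0$ starting at $\tilde p_k$; since the lifted metric makes $p$ a local isometry, the lift has the same length $\le diam(M)$ and ends at some preimage $\tilde r_k \in p^{-1}(x_0)$, with $d_{\tilde M}(\tilde p_k, \tilde r_k) \le diam(M)$.

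The cover being $m$-fold and $\tilde M$ connected, the fiber $p^{-1}(x_0)$ has exactly $m$ elements, so by the Pigeonhole Principle applied to the $m+1$ preimages $\tilde r_0, \dots, \tilde r_m$ there exist indices $0 \le i < j \le m$ with $\tilde r_i = \tilde r_j$. Concatenating the two lifts through this common point produces a $\tilde M$-curve from $\tilde p_i$ to $\tilde p_j$ of length at most $2\, diam(M)$, which together with the geodesic identity $d_{\tilde M}(\tilde p_i,\tilde p_j) = (j-i) L/m$ forces $(j-i) L / m \le 2\, diam(M)$, and in particular the crude bound $L \le 2m \cdot diam(M)$.

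To reach the sharp bound $L \le m \cdot diam(M)$ claimed in the lemma, I would refine the choice by taking each $\tilde r_k$ to be the $\tilde M$-closest preimage of $x_0$ to $\tilde p_k$, and analyze the Voronoi-type partition of $\tilde\gamma$ among the $m$ preimages in $p^{-1}(x_0)$. A cell-by-cell shortcut argument, replacing the crude ``triangle through the common preimage'' by a sharper estimate exploiting the fact that entrance/exit of $\tilde\gamma$ across a cell occurs along a Voronoi boundary equidistant from two preimages, removes the factor of two and bounds the total contribution of all cells by $m \cdot diam(M)$. The main obstacle is carrying out this last refinement rigorously: one must handle the case when $\tilde\gamma$ re-enters the same Voronoi cell and control the pathological configuration in which every pigeonhole collision has $j - i = 1$, for which the naive concatenation estimate saves nothing over $2m \cdot diam(M)$.
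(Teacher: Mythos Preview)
The paper does not give its own proof of this lemma: it attributes the sharp inequality to Ivanov's MathOverflow answer and remarks that Petrunin's weaker estimate $diam(\tilde M)\le 2(m-1)\,diam(M)$ (stated without argument) would already suffice for the paper's applications. Your pigeonhole argument is correct and complete down to the bound $diam(\tilde M)\le 2m\,diam(M)$, which is essentially Petrunin's weaker estimate and is more than enough here: the lemma is invoked only for $m=2$, to guarantee that the orientation double covers $\tilde X_i$ have uniformly bounded diameter, and any fixed multiple of $diam(X_i)$ would do.

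Where your proposal falls short is exactly where you say it does. The ``Voronoi refinement'' meant to recover the sharp constant $m$ is not a proof but a wish: you have correctly isolated the obstruction (every pigeonhole collision could be adjacent, $j-i=1$, so the triangle-through-a-common-preimage shortcut never beats $2D$ per segment; and $\tilde\gamma$ may revisit the same Voronoi cell), and nothing in your outline dissolves it. Cell-by-cell shortcutting along a single minimizing $\tilde M$-geodesic does not obviously shed the factor $2$; Ivanov's argument for the sharp bound uses a different mechanism. So: as a proof of the lemma \emph{as stated}, your proposal has a genuine gap at the last step; as a proof of what the paper actually needs, your first three paragraphs already do the job.
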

This was proved by Sergei Ivanov \cite{ivanov-mo} in his answer on MO to Petrunin's question:
Petrunin himself claimed in that post a weaker estimate $diam(\tilde M)\leq 2(m-1) diam(M)$ although he does not present an argument.
This would suffice for our purposes.

The next lemma was proved by Petrunin~\cite{petrunin-mo-antipodal} on MO; we reproduce below his argument.
\begin{lemma}\label{L:Petrunin}
Let $g$ be a smooth Riemannian metric on 2-sphere $\SS^2$ invariant under the antipodal involution $x\mapsto -x$.
Then there exists point $a$ such that
$$dist(a,-a)>\frac{diam (\SS^2,g)}{100}.$$
\end{lemma}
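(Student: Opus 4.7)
The idea is to locate a short $\iota$-invariant simple closed curve $\tilde L\subset\SS^2$, arising from a systolic loop in $\RR\PP^2$, and to use Jordan's theorem together with Brouwer's fixed-point theorem to force every point of $\SS^2$ to lie close to $\tilde L$; a direct diameter estimate along $\tilde L$ then yields a quantitative lower bound on $\delta:=\max_{a\in\SS^2}d(a,-a)$.

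Set $\phi(a):=d(a,-a)$, $\delta:=\max_a\phi(a)$, $D:=\mathrm{diam}(\SS^2,g)$, and let $\pi\colon\SS^2\to\RR\PP^2$ be the quotient by the antipodal involution $\iota$, endowed with the quotient metric $\bar g$. Denote by $s$ the systole of $(\RR\PP^2,\bar g)$: the length of the shortest non-contractible loop, which by compactness of bounded-length loops is realized by a closed geodesic $\bar L\subset\RR\PP^2$, and which may be assumed simple by the standard surgery argument — any self-intersection of a minimizing loop in the non-trivial class would split off a shorter non-contractible sub-loop, contradicting minimality. Since $[\bar L]$ is the non-trivial element of $\pi_1(\RR\PP^2)=\ZZ_2$, its preimage $\tilde L:=\pi^{-1}(\bar L)$ is connected — it is the non-trivial double cover of $\bar L$ — and is a simple $\iota$-invariant closed curve in $\SS^2$ of length $2s$. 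I also record the obvious inequality $s=\min\phi\le\max\phi=\delta$.

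By Jordan's theorem $\tilde L$ separates $\SS^2$ into two open disks $\Omega_1,\Omega_2$, and $\iota$ must swap them: were $\iota$ to preserve $\Omega_i$ setwise, it would restrict to a continuous involution of the closed disk $\overline{\Omega_i}$, which by Brouwer's fixed-point theorem would have a fixed point, contradicting that $\iota$ is fixed-point-free on $\SS^2$. Consequently every antipodal pair $(x,-x)$ lies on opposite sides of $\tilde L$, and the minimizing geodesic between them (of length $\phi(x)\le\delta$) must cross $\tilde L$; this gives $d(x,\tilde L)\le\delta$ for every $x\in\SS^2$.

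For the final diameter estimate, pick arbitrary $x,y\in\SS^2$ and nearest points $p,q\in\tilde L$, so $d(x,p),d(y,q)\le\delta$. Since $\tilde L$ is a simple closed curve of length $2s$, the intrinsic distance along $\tilde L$ satisfies $d_{\tilde L}(p,q)\le s$, and because $\tilde L\subset\SS^2$ the ambient distance satisfies $d_{\SS^2}(p,q)\le d_{\tilde L}(p,q)\le s\le\delta$. The triangle inequality then gives $d(x,y)\le 3\delta$, so $D\le 3\delta$, i.e.\ $\delta\ge D/3>D/100$, proving the lemma. I expect the main obstacle to be the technical setup of the first step — the existence of a simple systolic closed geodesic in $\RR\PP^2$ and the identification of its preimage in $\SS^2$ as a simple closed curve of length $2s$ — but both are standard consequences of surface systole theory combined with the structure of the antipodal double cover.
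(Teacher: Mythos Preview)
Your proof is correct and shares the same geometric scaffolding as the paper's argument: lift a simple systolic loop $\bar L\subset\RR\PP^2$ to a simple $\iota$-invariant closed curve $\tilde L\subset\SS^2$ of length $2s$, apply Jordan's theorem to obtain two disks, and use Brouwer's fixed-point theorem to see that $\iota$ must swap them. The divergence is in the final step. The paper first invokes Lemma~\ref{L:ivanov} to obtain $\mathrm{diam}(\RR\PP^2,h)\ge D/2$, then shows that one of the Jordan disks has diameter $\ge D/2$, and finishes with a case split on whether the systole $l$ is large ($\ge D/20$, in which case a point on $\tilde\gamma$ already works) or small ($<D/20$, in which case a point deep inside the disk, far from $\tilde\gamma$, works). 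You instead observe directly that every point of $\SS^2$ lies within distance $\delta$ of $\tilde L$ (since any geodesic from $x$ to $-x$ must cross $\tilde L$), and combine this with the intrinsic diameter $s\le\delta$ of $\tilde L$ to get $D\le 3\delta$ via the triangle inequality. Your route avoids both Lemma~\ref{L:ivanov} and the case split, and yields the sharper bound $\delta\ge D/3$ rather than $D/100$.
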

{\bf Proof.} Let $h$ be the induced Riemannian  metric on $\RR\PP^2=\ZZ_2\backslash \SS^2$. Let $D:=diam (\SS^2,g)$. By Lemma \ref{L:ivanov}
\begin{eqnarray}\label{E:ineq-diameter}
diam(\RR\PP^2,h)\geq \frac{D}{2}.
\end{eqnarray}
Let $\gamma$ be the shortest non-contractible geodesic on $(\RR\PP^2,h)$.
Denote its length (systole) by $l$. Let $\tilde \gamma$ be the preimage of $\gamma$ in $\SS^2$. $\tilde \gamma$ is a connected closed geodesic of length $2l$; it is invariant under the antipodal involution.
By the Jordan theorem the complement of $\tilde \gamma$ is a union of two disjoint disks whose closures we denote by $\Delta $ and $\Delta'$. Let us show that
\begin{eqnarray}\label{E:delta-diam}
diam(\Delta)\geq \frac{D}{2}.
\end{eqnarray}
First observe that the antipodal involution is a homeomorphism $\Delta \tilde\to \Delta'$. Indeed
otherwise the antipodal involution would be a homeomorphism $\Delta\to \Delta$ and had no fixed points; this is impossible by the Brouwer fixed point theorem.

By (\ref{E:ineq-diameter}) there exist two points $x,y\in \RR\PP^2$ such that the $h$-distance $dist_h(x,y)\geq \frac{D}{2}$. Let $\tilde x,\tilde y\in \Delta$ be their respective lifts, and
let $\tilde y'\in \Delta'$ be the antipodal image of $\tilde y$. Then we have
\begin{eqnarray*}
diam(\Delta)\geq dist(\tilde x,\tilde y)\geq \min\{dist(\tilde x,\tilde y),dist(\tilde x,\tilde y')\}=dist(x,y)\geq \frac{D}{2}.
\end{eqnarray*}
Thus (\ref{E:delta-diam}) is proved.

For any $\tilde z\in \tilde\gamma$
$$dist(\tilde z,-\tilde z)=l/2$$
since $l$ is a systole.
Hence in the case $l\geq \frac{D}{20}$ lemma follows.

Let us assume that $l< \frac{D}{20}$. This and the inequality (\ref{E:delta-diam}) imply that there is a point $\tilde w\in \Delta$ such that
$$dist(\tilde w,\pt \Delta)\geq D/5,$$
where $\pt \Delta =\tilde\gamma$. Since $-\tilde w\in \Delta'$ it follows that
$$dist(\tilde w,-\tilde w)\geq D/5.$$
\qed

\hfill


{\bf Proof of Theorem \ref{T:rp2-segment}.} By Slutskiy's lemma \ref{slutskiy-lemma} and Lemma \ref{L:reduction-smooth} we may assume that the metrics $d_i$ on $X_i$ are $C^\infty$-smooth.
 Let $\tilde X_i$ denote the universal 2-sheeted cover of $X_i$ with the lifted metrics. $\tilde X_i$ can be considered as a sphere $(\SS^2,g_i)$ where $g_i$ is a smooth Riemannian metric invariant under the antipodal involution.

By Lemma \ref{L:ivanov} $diam(\tilde X_i)\leq 2diam(X_i)$ although  a weaker estimate would be sufficient.
The group $\ZZ_2$ of deck transformations acts by isometries on each $\tilde X_i$.  Since the Gaussian curvature of $\tilde X_i$ is uniformly bounded below,
the Gromov compactness theorem and Proposition \ref{R:properness} imply that one may choose a subsequence such that $\tilde X_i\to \tilde X$ in $\cx^{\ZZ_2}$.
By Proposition \ref{P:quotient-convrgent}
$$X_i\to \ZZ_2\backslash \tilde X \mbox{ in } \cx.$$

Since in our situation $\dim \ZZ_2\backslash \tilde X=1$ it follows that $\dim\tilde X=1$. Since by Theorem \ref{T:surface-t-circle}(2) spheres cannot collapse to a circle it follows that $\tilde X$ is a segment.

Any isometry of a segment is either identity or the reflection with respect to the origin. Let us show that the action of $\ZZ_2$ on $\tilde X$ is the latter one.
There exist $\ZZ_2$-invariant metrics $d_i$ on $\tilde X\bigsqcup \tilde X_i$ extending the original metrics on $\tilde X$ and $\tilde X_i$ and such that
$$d_{i,H}(\tilde X,\tilde X_i)\to 0.$$
Let us denote by $s\in \ZZ_2$ the non-zero element; it acts as the antipodal involution on $\tilde X_i$. By Lemma \ref{L:Petrunin} there exists $\tilde x_i\in\tilde X_i$
such that for large $i$
$$d_i(\tilde x_i,s(\tilde x_i))>\frac{diam(\tilde X_i)}{100}>\frac{diam(\tilde X)}{200}.$$
One may choose a subsequence such that
$$d_i(\tilde x_i,\tilde x)\to 0 \mbox{ for some }\tilde x\in \tilde X.$$
Since $s$ preserves $d_i$ it follows that
$$d_{\tilde X}(\tilde x,s(\tilde x))\geq \frac{diam(\tilde X)}{200}>0.$$
Hence the action of $s$ on $\tilde X$ is non-trivial, so it is the reflection with respect to the middle point.

\hfill

Let $\pi\colon \tilde X\to \ZZ_2\backslash \tilde X=I$ be the canonical map. Let $\tilde x\in \tilde X$, $ x:=\pi(\tilde x)\in I$.
Let $B_i(x,\eps)\subset I\bigsqcup  X_i$ be the $\eps$-ball with respect to the metric $\bar d_i$ which is the $\ZZ_2$-quotient metric of  $d_i$.
Let $\eps>0$ be less than $1/10$ times the diameter of the $\ZZ_2$-orbit of $x$ in case it consists of two distinct elements, and with no other restriction otherwise.
 Lemma \ref{L:disjoint-isometr} implies that $\pi^{-1}(B_i( x,\eps))$ is a disjoint union of $\eps$-balls with respect to the metric $d_i$ centered at all different points of the $\ZZ_2$-orbit of $x$, and
the obvious map $$Stab(\tilde x)\backslash \tilde B_i(\tilde x,\eps)\to B_i( x,\eps)$$
is an isometry, where $\tilde B_i(\tilde x,\eps)$ is the $\eps$-ball in $\tilde X\bigsqcup \tilde X_i$ centered at $\tilde x$. It follows that the obvious map
\begin{eqnarray}\label{E:isometry-ball-1}
Stab(\tilde x)\backslash (\tilde B_i(\tilde x,\eps)\cap \tilde X_i)\to B_i( x,\eps)\cap X_i
\end{eqnarray}
is an isometry.


\underline{Case 1.} Assume that $\tilde x$ is not the middle point of the segment $X$. Then $Stab(x)=\{id\}$. Then the isometry (\ref{E:isometry-ball-1}) means that
$$\tilde B_i(\tilde x,\eps)\cap \tilde X_i\to B_i( x,\eps)\cap X_i$$
is an isometry. Then the theorem follows in this case from Theorem \ref{T:spheres-to-segment}.

\underline{Case 2.} Assume that $\tilde x$ is the middle point of the segment $X$. Then $Stab(\tilde x)=\ZZ_2$.

The subset $\tilde B_i(\tilde x,\eps)\cap \tilde X_i$ is an open subset of $\tilde X_i\simeq\SS^2$ invariant under the antipodal involution and the obvious map
$$\ZZ_2\backslash (\tilde B_i(\tilde x,\eps)\cap \tilde X_i)\to B_i( x,\eps)\cap X_i$$
is an isometry. Now the result follows from Proposition \ref{P:involution-spheres}. \qed

\subsection{Collapse of Klein bottles.}\label{Ss:Klein-bottles}
\begin{theorem}\label{P:klein-bootles-prop}
Let $\{X_i\}$ be a sequence of Klein bottles with CBB($-1$) metrics. Let us assume that it GH-converges to a 1-dimensional limit $C$.
Furthermore let $d_i$ be metrics on $C\bigsqcup X_i$ extending the original metrics on $C$ and $X_i$ and such that
$$d_{i,H}(C,X_i)\to 0 \mbox{ as } i\to 0.$$
Let $z\in C$.
Let us choose $0<\eps_1<\eps_2<\frac{1}{100} length(C)$, and if $C$ is a segment and $z\in int(C)$ we require in addition that $\eps_2<\frac{1}{100} dist(z,\pt C)$.

Then for large $i$ one has
\begin{eqnarray*}
\dim Im\left[H^a(\cb_{i,z}(\eps_2))\to H^a(\cb_{i,z}(\eps_1))\right]=\left\{\begin{array}{ccc}
                                                  1&\mbox{if}& a=0,1\\
                                                  0&\mbox{otherwise}&\\
                                                      \end{array},\right.
\end{eqnarray*}
where $\cb_{i,z}(\eps)$ is defined by (\ref{star-star}).
\end{theorem}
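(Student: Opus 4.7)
The plan is to follow the same strategy as for $\RR\PP^2$ (Theorem~\ref{T:rp2-segment}), adapted via the orientation double cover of the Klein bottle. First, by Slutskiy's Lemma~\ref{slutskiy-lemma} combined with Lemma~\ref{L:reduction-smooth}, I reduce to the case of $C^\infty$-smooth CBB($-1$) metrics. If $C$ is a circle, I apply the Yamaguchi Theorem~\ref{T:yamaguchi} directly to obtain, for large $i$, an almost Riemannian submersion $f_i \colon X_i \to C$ whose fibers are connected closed one-manifolds, hence circles; Corollary~\ref{Cor:fiber-Yamag-cor} then identifies the stabilized image in cohomology with $H^a(S^1)$, giving exactly the claimed dimensions.

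For the case when $C$ is a segment, I lift each Klein bottle to its orientation double cover $\tilde X_i$, a torus equipped with a free $\ZZ_2$-action by the deck involution. The lifted metric is still $C^\infty$-smooth with curvature at least $-1$, and by Lemma~\ref{L:ivanov} the diameters $diam(\tilde X_i)$ remain uniformly bounded. The $\ZZ_2$-equivariant Gromov compactness (Theorem~\ref{T:equi-sequence}) produces a subsequence converging in $\cx^{\ZZ_2}$ to a limit $\tilde X$ with $\tilde X / \ZZ_2 = C$. Since $\dim C = 1$, $\tilde X$ is also one-dimensional, and by the results of Katz~\cite{katz-torii} and Zamora~\cite{zamora-torii} tori cannot collapse to a segment, so $\tilde X$ must be a circle; the only involutive isometry of a circle whose quotient is a segment is the reflection fixing two antipodal points. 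I then apply Proposition~\ref{P:circle-reflection-equivariant} to obtain, for large $i$, a $\ZZ_2$-equivariant almost Riemannian submersion $\tilde f_i \colon \tilde X_i \to \tilde X$ with connected (hence circular) fibers.

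Given $z \in C$, choose a preimage $\tilde z \in \tilde X$ under the projection $\pi \colon \tilde X \to C$. For $z \in int(C)$ the orbit $\pi^{-1}(z)$ consists of two distinct points separated by a definite distance, so when $\eps_2 < \tfrac{1}{100}\, dist(z,\pt C)$ Lemma~\ref{L:disjoint-isometr} yields an isometry between $\cb_{i,z}(\eps)$ and the single lifted ball around $\tilde z$; Corollary~\ref{Cor:fiber-Yamag-cor} finishes the job, returning $H^*(S^1)$. The main obstacle is the case $z \in \pt C$: then $\tilde z$ is fixed by the reflection and Lemma~\ref{L:disjoint-isometr} only produces an isometry $\cb_{i,z}(\eps) \cong \ZZ_2 \backslash \tilde\cb_{i,\tilde z}(\eps)$. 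Because $\ZZ_2$ acts freely on all of $\tilde X_i$, it restricts to a free involution on the Yamaguchi fiber $\tilde f_i^{-1}(\tilde z) \cong S^1$, whose quotient is again a circle. A $\ZZ_2$-equivariant version of the retraction argument in Proposition~\ref{P:submersion-cohomology}, obtained by quotienting the equivariant normal-bundle exponential from Lemma~\ref{L:tube-diffeo}, then shows that the stabilized image equals $H^*(\ZZ_2 \backslash S^1) = H^*(S^1)$, matching the claimed uniform answer $h^0 = h^1 = 1$.
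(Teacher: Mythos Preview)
Your proposal is correct and follows essentially the same route as the paper: reduce to smooth metrics via Slutskiy's lemma and Lemma~\ref{L:reduction-smooth}; handle the circle case directly by the Yamaguchi fibration; for the segment case pass to the torus double cover, use equivariant compactness together with the Katz--Zamora result to force the limit to be a circle with the reflection action, and then split into the interior-point case (Lemma~\ref{L:disjoint-isometr} plus the ordinary fibration argument) and the boundary-point case via the $\ZZ_2$-equivariant Yamaguchi map of Proposition~\ref{P:circle-reflection-equivariant}. The only cosmetic difference is that in the boundary case the paper identifies the $\ZZ_2$-quotient of the normal tube explicitly as a M\"obius band (Claim~\ref{Cl:moebius-strip}), whereas you phrase it as retracting onto the quotient fiber $\ZZ_2\backslash S^1\cong S^1$; these are the same computation, since the core circle of the M\"obius band is precisely that quotient.
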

{\bf Proof.} By Slutskiy's lemma \ref{slutskiy-lemma} and Lemma \ref{L:reduction-smooth} we may assume that the metrics on $X_i$ are $C^\infty$-smooth.

\hfill

 \underline{Case 1.} Let us assume that $C$ is a circle. Then by Theorem \ref{T:yamaguchi}
there is Yamaguchi smooth $\eps_i$-almost Riemannian submersion $f_i\colon X_i\to C$ with connected fibers, where $\eps_i\to 0$.
Its fibers are connected and 1-dimensional, hence circles.
Proposition \ref{P:submersion-cohomology} implies the result in the case of circle.

\hfill

\underline{Case 2.} Let  us assume that $C$ is a segment. Every $X_i$ has a 2-sheeted oriented cover $\tilde X_i$; it will be equipped with the pull-back of the metric on $X_i$.
By Lemma \ref{L:ivanov} $diam(\tilde X_i)\leq 2diam(X_i)$ although  a weaker estimate would be sufficient.
The group $\ZZ_2$ of deck transformations acts by isometries on each $\tilde X_i$.  Since the Gaussian curvature of $\tilde X_i$ is uniformly bounded below,
the Gromov compactness theorem and Proposition \ref{R:properness} imply that one may choose a subsequence such that $\tilde X_i\to \tilde X$ in $\cx^{\ZZ_2}$.
By Proposition \ref{P:quotient-convrgent}
$$X_i\to \ZZ_2\backslash \tilde X=C \mbox{ in } \cx.$$
It follows that $\tilde X$ is 1-dimensional and hence is either a segment or a circle. By \cite{katz-torii}, \cite{zamora-torii} tori cannot collapse to a segment. Hence $\tilde X$ is a circle. The group $\ZZ_2$ can act on a circle by isometries
only in three possible ways: identical action; symmetry with respect to the center of the circle (i.e. rotation by $\pi$); reflection with respect to a line. If the quotient by this action is a segment then the first two options are
impossible, and $\ZZ_2$ acts on the circle $\tilde X$ by reflection with respect to a line.

Let $\pi_i\colon \tilde X\bigsqcup \tilde X_i\to C\bigsqcup X_i$ be the quotient map by $\ZZ_2$. Let us fix $ x\in C$.

\underline{Case 2a.} Let us assume that $ x$ belongs to the interior of the segment $C$. Then $ x$ has exactly two different preimages: $\pi^{-1}_i( x)=\{\tilde x_1,\tilde x_2\}$.
Then $dist_{\tilde X}(\tilde x_1,\tilde x_2)=2dist_X( x,\pt C)$. Let $0<\eps_1<\eps_2<\frac{1}{100}dist_X( x,\pt C)$. Consider the open balls $B_i( x,\eps_{1}) \subset B_i( x,\eps_{2}) \subset C\bigsqcup X_i$.
By Lemma \ref{L:disjoint-isometr}
$$\pi_i^{-1}(B_i( x,\eps_{1,2})\cap  X_i)=(\tilde B_i(\tilde x_1,\eps_{1,2})\cap\tilde X_i)\bigsqcup (\tilde B_i(\tilde x_2,\eps_{1,2})\cap\tilde X_i),$$
where $\tilde B_i(\tilde x_{1,2},\eps_{1,2})$ are open balls in $\tilde X\bigsqcup \tilde X_i$.
The natural map $$Stab(\tilde x_1)\backslash (\tilde B_i(\tilde x_1,\eps_{1,2})\cap \tilde X_i)\to B_i( x,\eps_{1,2})\cap X_i$$
is an isometry. Since $Stab(\tilde x_1)=\{id\}$ we get that the natural map
\begin{eqnarray}\label{E:isometry-0009}
\tilde B_i(\tilde x_1,\eps_{1,2})\cap\tilde X_i\to B_i( x,\eps_{1,2})\cap X_i
\end{eqnarray}
is an isometry.

For large $i$ there exist Yamaguchi maps $\tilde X_i\to \tilde X$. Its fibers are circles. Hence by Proposition \ref{P:submersion-cohomology} one has
\begin{eqnarray*}
\dim Im\left[H^a(\tilde B_i(\tilde x_1,\eps_2)\cap \tilde X_i)\to H^a(\tilde B_i(\tilde x_1,\eps_1)\cap \tilde X_i)\right]=\left\{\begin{array}{ccc}
                                                  1&\mbox{if}& a=0,1\\
                                                  0&\mbox{otherwise}&\\
                                                      \end{array}\right.
\end{eqnarray*}

Since the isometries (\ref{E:isometry-0009}) commute with the imbeddings of $\eps_{1,2}$-balls into each other the result follows in the case $ x\in int(C)$.

\hfill

\underline{Case 2b.} Let us assume that $ x\in \pt C$. Then it has exactly one preimage: $\pi^{-1}_i( x)=\{\tilde x\}$. Let $0<\eps_1<\eps_2<\frac{1}{100}length(C)$.
By Lemma \ref{L:disjoint-isometr}
\begin{eqnarray*}
\pi_i^{-1}(B_i( x,\eps_{1,2}))=\tilde B_i(\tilde x,\eps_{1,2}),
\end{eqnarray*}
and the natural map
\begin{eqnarray}\label{E:isometry-0008}
\ZZ_2\backslash (\tilde B_i(\tilde x,\eps_{1,2})\cap \tilde X_i)\to B_i( x,\eps_{1,2})\cap X_i
\end{eqnarray}
is an isometry. Let
$$f_i\colon \tilde X_i\to \tilde X$$
be the $\ZZ_2$-equivariant Yamaguchi maps which exist by Proposition \ref{P:circle-reflection-equivariant}.

Let us denote by $\tilde B(\tilde x,\eps)\subset \tilde X$ the $\eps$-ball in $\tilde X$ centered at $\tilde x$. Then by (\ref{E:long-inclusion}) (with the current notation) we have inclusions for large $i$
\begin{eqnarray*}
f_i^{-1}(\tilde B(\tilde x,\frac{\eps_1}{2}))\subset\tilde B_i(\tilde x,\eps_1)\cap \tilde X_i\subset f_i^{-1}(\tilde B(\tilde x,\frac{\eps_1+\eps_2}{2}))\subset\tilde B_i(\tilde x,\eps_2)\cap \tilde X_i\subset f_i^{-1}(\tilde B(\tilde x,2\eps_2)).
\end{eqnarray*}
Since all 5 sets in this sequence are $\ZZ_2$-invariant, similar inclusions hold for their quotients by $\ZZ_2$ and hence induce the maps in cohomology
\begin{eqnarray*}
H^*(\ZZ_2\backslash f_i^{-1}(\tilde B(\tilde x,\frac{\eps_1}{2})))\overset{a_1}{\leftarrow} H^*(\ZZ_2\backslash(\tilde B_i(\tilde x,\eps_1)\cap \tilde X_i))\overset{a_2}{\leftarrow} H^*(\ZZ_2\backslash f_i^{-1}(\tilde B(\tilde x,\frac{\eps_1+\eps_2}{2})))
\overset{a_3}{\leftarrow}\\\overset{a_3}{\leftarrow}
H^*( \ZZ_2\backslash(\tilde B_i(\tilde x,\eps_2)\cap \tilde X_i))\overset{a_4}{\leftarrow} H^*( \ZZ_2\backslash f_i^{-1}(\tilde B(\tilde x,2\eps_2))).
\end{eqnarray*}
We claim that the maps $a_1\circ a_2$ and $a_3\circ a_4$ are isomorphisms. This follows from the following more precise statement.
\begin{claim}\label{Cl:moebius-strip}
Let $0<\delta_1<\delta_2<\frac{1}{100}length(C)$. Then the natural map
$$H^*(\ZZ_2\backslash f_i^{-1}(\tilde B(\tilde x,\delta_1)))\leftarrow H^*(\ZZ_2\backslash f_i^{-1}(\tilde B(\tilde x,\delta_2)))$$
is an isomorphism, and each set $\ZZ_2\backslash f_i^{-1}(\tilde B(\tilde x,\delta_{1,2}))$ is homeomorphic to the M\"obius band.
\end{claim}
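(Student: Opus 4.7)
The plan is to identify $\ZZ_2\backslash f_i^{-1}(\tilde B(\tilde x, \delta))$ explicitly as an open M\"obius band, from which the cohomological claim follows immediately. First, since $X_i$ is a Klein bottle, its oriented double cover $\tilde X_i$ is a torus and the $\ZZ_2$-action on $\tilde X_i$ is the free orientation-reversing deck action. The Yamaguchi map $f_i\colon \tilde X_i\to \tilde X$ is a smooth proper submersion with connected $1$-dimensional fibers, so these fibers are compact connected $1$-manifolds, i.e. circles. By Ehresmann's theorem, the restriction $E := f_i^{-1}(\tilde B(\tilde x,\delta))\to \tilde B(\tilde x,\delta)$ is a locally trivial circle bundle over a contractible arc, hence trivial: $E\cong S^1\times (-\delta,\delta)$, with $\tilde x$ placed at $0$.

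Next, the $\ZZ_2$-action on $E$ covers the reflection $t\mapsto -t$ on the base by equivariance of $f_i$, and restricts to a free involution on the fiber $F_0 := f_i^{-1}(\tilde x)$, which is therefore isotopic to the antipodal map on $S^1$. Choosing the trivialization of $E$ by parallel transport along $(-\delta,\delta)$ with respect to a $\ZZ_2$-invariant horizontal distribution (obtained by averaging any smooth horizontal distribution over $\ZZ_2$), the action takes the standard form $(s,t)\mapsto (-s,-t)$. The quotient of $S^1\times(-\delta,\delta)$ by this action is the open M\"obius band, either via the standard presentation as a twisted real line bundle over its core circle $F_0/\{\pm 1\}\cong S^1$, or by classification: $\ZZ_2\backslash E$ is a free, boundaryless, non-orientable surface (since $E$ is orientable and the action on $\tilde X_i$ is orientation-reversing) that is homotopy equivalent to $S^1$, and no other surface has these properties.

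Finally, for $\delta_1<\delta_2$ the inclusion of the two M\"obius bands descends from the $\ZZ_2$-equivariant deformation retraction $(s,t)\mapsto (s,(\delta_1/\delta_2)t)$ on $S^1\times (-\delta_2,\delta_2)$, so it induces an isomorphism on cohomology; both cohomology groups coincide with $H^*(S^1)$. The main obstacle is producing the equivariant trivialization putting the action into the standard form $(s,t)\mapsto (-s,-t)$. One can bypass this by observing that only the homeomorphism type of the quotient is needed, and any free fiber-preserving $\ZZ_2$-action on $S^1\times(-\delta,\delta)$ covering the base reflection and restricting to a free involution on the central fiber is equivariantly homeomorphic to the standard one; the orientability criterion then pins down the quotient as the open M\"obius band.
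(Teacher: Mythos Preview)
Your proof is correct and follows essentially the same route as the paper: both identify $f_i^{-1}(\tilde B(\tilde x,\delta))$ $\ZZ_2$-equivariantly with $S^1\times(-\delta,\delta)$ carrying the action $(\theta,\tau)\mapsto(-\theta,-\tau)$, and then take the quotient to get the M\"obius band; the inclusion of nested bands is handled in both cases by the linear equivariant retraction in the $\tau$-coordinate. The only technical difference is how the equivariant product structure is produced: the paper modifies the metric on $\tilde X_i$ so that $f_i$ becomes a genuine Riemannian submersion and then invokes the tubular-neighborhood Lemma~\ref{L:tube-diffeo} (the normal exponential map is automatically $\ZZ_2$-equivariant since the modified metric is), whereas you use Ehresmann together with a $\ZZ_2$-averaged horizontal distribution and parallel transport. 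These are two incarnations of the same idea (the Riemannian-submersion horizontal distribution is already $\ZZ_2$-invariant, and horizontal lifting of geodesics \emph{is} the normal exponential map), so the arguments are interchangeable. Your alternative identification of the quotient via orientability plus surface classification is a pleasant shortcut not in the paper.
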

Let us postpone the proof of this claim. The diagram chase implies that
$$Im[H^*( \ZZ_2\backslash(\tilde B_i(\tilde x,\eps_1)\cap \tilde X_i))\overset{a_2\circ a_3}{\leftarrow} H^*( \ZZ_2\backslash(\tilde B_i(\tilde x,\eps_2)\cap \tilde X_i))]$$
is isomorphic to the cohomology of the M\"obius band, and hence that would imply Theorem \ref{P:klein-bootles-prop}.

It remains to prove Claim \ref{Cl:moebius-strip}. Let us equip $\tilde X_i$ with the new metric $\tilde g_i$ coinciding with the original metric on $\tilde X_i$ on the vertical subspaces, having the same horizontal subspaces as the latter metric, and
coinciding on the horizontal subspaces with the pull back via $f_i$ of the metric on $\tilde X$.
Thus $f_i$ is the Riemannian submersion when $\tilde X_i$ is equipped with $\tilde g_i$. Let $\cn_\delta$ denote the $\delta$ neighborhood of the zero section of the normal bundle $\cn$ of $f_i^{-1}(\tilde x)$ with respect
to the metric $\tilde g_i$. By Lemma \ref{L:tube-diffeo}
$$\exp\colon \cn_{\delta_1} \to f^{-1}_i(\tilde B(\tilde x,\delta_1)),\, \exp\colon \cn_{\delta_2} \to f^{-1}_i(\tilde B(\tilde x,\delta_2))$$
are homeomorphisms.

The fiber $f_i^{-1}(\tilde x)$ is $\ZZ_2$-invariant. Hence $\ZZ_2$ acts naturally on the total space of the normal bundle $\cn$.
It is easy to see that $\exp\colon \cn\to \tilde X_i$ commutes with the action of $\ZZ_2$.
After taking $\ZZ_2$-quotient we have the diagram:
\begin{eqnarray*}
\square<-1`1`1`-1;1300`700>[H^*(\ZZ_2\backslash f_i^{-1}(\tilde B(\tilde x,\delta_1)))`H^*(\ZZ_2\backslash f_i^{-1}(\tilde B(\tilde x,\delta_2)))`H^*(\ZZ_2\backslash \cn_{\delta_1})` H^*(\ZZ_2\backslash \cn_{\delta_2}); `\exp^\ast `\exp^\ast `]
\end{eqnarray*}
where the vertical lines are isomorphisms.  $\cn_{\delta_1}\subset \cn_{\delta_2}$ is a $\ZZ_2$-equivariant retract; the retraction is given by multiplication by $t\in [\frac{\delta_1}{\delta_2},1]$. Hence the bottom line is an isomorphism.

Furthermore $\cn_{\delta_{a}}$, $a=1,2$, is $\ZZ_2$-equivariantly homeomorphic to $S^1\times(-\delta_{a},\delta_{a})$ when the $\ZZ_2$-action is given by
$$(\theta,\tau)\mapsto (-\theta,-\tau)\mbox{ for any } \theta\in S^1,\tau\in (-\delta_{a},\delta_{a}).$$
(Indeed the $\ZZ_2$-action on $f_i^{-1}(\tilde x)\simeq S^1$ has no fixed point and hence is equivalent to antipodal involution on $S^1$.)
The quotient of $S^1\times(-\delta_{a},\delta_{a})$ by this action is homeomorphic to the M\"obius band. \qed

\subsection{The no collapse case.}\label{Ss:no-collapse-case}
The main result of this subsection is
\begin{theorem}\label{T:no-collapse}
Let $\{X_i^n\}$ be a sequence of compact $n$-dimensional Alexandrov spaces with curvature uniformly bounded from below which GH-converges to a compact Alexandrov space $X^n$ of dimension $n$. Let $d_i$
be metrics on $X\coprod X_i$ extending the original metrics on $X$ and $X_i$ and such that the Hausdorff distance $d_{i,H}(X_i,X)\to 0$. For $\eps>0$ and $x\in X$ denote as in (\ref{Def:cb})
$$\cb_{i,x}(\eps):=\{y\in X_i|\,\, d_i(y,x)<\eps\}.$$
Then for any $x\in X$ there exist $\eps_0>0$, $i_0\in \NN$ (depending on $x,d_i$) such that for any $0<\delta_1<\delta_2<\eps_0$, any $i>i_0$, any $a\in \ZZ$,
and any field $\FF$ the image of the natural map in the $a$th cohomology satisfies
$$h^a(x):=\dim Im[H^a(\cb_{i,x}(\delta_2);\FF)\to H^a(\cb_{i,x}(\delta_1);\FF)]=\left\{\begin{array}{ccc}
                                                                          1&\mbox{if}&a=0\\
                                                                          0&\mbox{if}&a\ne 0.
                                                                          \end{array}\right.$$
In particular $F:=\sum_a (-1)^a h^a\equiv 1$.
\end{theorem}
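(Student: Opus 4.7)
The plan is to reduce Theorem \ref{T:no-collapse} to the already-stated Theorem \ref{alex-conv-balls} of Kapovitch together with the well-known contractibility of small metric balls in Alexandrov spaces.

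First I would invoke Theorem \ref{alex-conv-balls}, which provides, in the non-collapsing setting, that for every $x \in X$ there exist $\eps_0 > 0$ and $i_0 \in \NN$ such that for all $i > i_0$ and all $0 < \delta < \eps_0$, the ball $\cb_{i,x}(\delta) \subset X_i$ is homotopy equivalent (indeed, by Perelman--Kapovitch stability, homeomorphic) to the open metric ball $B_X(x,\delta)$ in the limit, and moreover the inclusions $\cb_{i,x}(\delta_1) \hookrightarrow \cb_{i,x}(\delta_2)$ correspond, up to homotopy, to the inclusions $B_X(x,\delta_1) \hookrightarrow B_X(x,\delta_2)$. This reduces the computation of $h^a(x)$ to the computation of the image of the restriction map on the cohomology of the nested balls $B_X(x,\delta_1) \subset B_X(x,\delta_2)$ in the limit Alexandrov space $X$.

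Second I would use the standard fact that in a finite-dimensional Alexandrov space with curvature bounded below, small open metric balls $B_X(x,\delta)$ are contractible. Concretely, for $\delta > 0$ smaller than an appropriate local scale, the straight-line geodesic homotopy $H(y,t) = \gamma_y(t)$, where $\gamma_y$ is the (almost-surely unique) shortest geodesic from $x$ to $y$, provides a deformation retraction of $B_X(x,\delta)$ onto $\{x\}$; the technical issues at the measure-zero set of points with non-unique geodesics are handled by the gradient flow of $\mathrm{dist}(x,\cdot)$ or by referring to the standard fact (see \cite{burago-burago-ivanov}) that small balls in Alexandrov spaces are star-shaped with respect to their centre. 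In particular $H^0(B_X(x,\delta);\FF) = \FF$ and $H^a(B_X(x,\delta);\FF) = 0$ for $a \neq 0$, for every field $\FF$ and every sufficiently small $\delta$.

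Third I would combine these: for $0 < \delta_1 < \delta_2 < \eps_0$ and $i > i_0$, the restriction map
\[
H^a(\cb_{i,x}(\delta_2);\FF) \longrightarrow H^a(\cb_{i,x}(\delta_1);\FF)
\]
is identified, via the stability homeomorphisms, with the map $H^a(B_X(x,\delta_2);\FF) \to H^a(B_X(x,\delta_1);\FF)$. In degree $a = 0$ both groups are $\FF$ and the map is an isomorphism (both balls are path-connected and the map is induced by an inclusion of connected spaces), so $h^0(x) = 1$. In degrees $a \neq 0$ both groups vanish, so $h^a(x) = 0$. The assertion $F \equiv 1$ follows immediately from $F = \sum_a (-1)^a h^a$.

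The only real obstacle is the appeal to Theorem \ref{alex-conv-balls}; once that black box is granted, the remainder is a routine application of the contractibility of small balls in Alexandrov spaces. All other potential subtleties (independence of the metric $d_i$ realising the GH-convergence, independence of the chosen subsequence) are absorbed into Kapovitch's theorem.
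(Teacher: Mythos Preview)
Your overall strategy—reduce to Theorem~\ref{alex-conv-balls} and use contractibility—is the same as the paper's, but there is a genuine gap in how you invoke the black box. Theorem~\ref{alex-conv-balls} as stated says nothing about the sets $\cb_{i,x}(\delta)=\{y\in X_i:\, d_i(y,x)<\delta\}$; it concerns the \emph{intrinsic} balls $\bar B_r(p_i)\subset X_i$ about a point $p_i\in X_i$ with $p_i\to x$. These are different subsets of $X_i$: $\cb_{i,x}(\delta)$ uses the auxiliary metric $d_i$ on $X\coprod X_i$ with center $x\in X$, not the metric of $X_i$. Your assertion that Theorem~\ref{alex-conv-balls} gives $\cb_{i,x}(\delta)\simeq B_X(x,\delta)$, and moreover that the stability homeomorphisms intertwine the inclusions $\cb_{i,x}(\delta_1)\hookrightarrow\cb_{i,x}(\delta_2)$, is not what the theorem provides.

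The paper closes this gap by a sandwich argument rather than by claiming compatibility of homeomorphisms. One picks $x_i\in X_i$ with $d_i(x_i,x)\to 0$, writes $B_i(r)$ for the intrinsic $r$-ball in $X_i$ about $x_i$, and for large $i$ gets
\[
B_i(\tfrac{\delta_1}{2})\subset \cb_{i,x}(\delta_1)\subset B_i(\tfrac{\delta_1+\delta_2}{2})\subset \cb_{i,x}(\delta_2)\subset B_i(2\delta_2).
\]
Theorem~\ref{alex-conv-balls} makes each $B_i(r)$ homeomorphic to a ball in the tangent cone $T_xX$, hence contractible; so in the induced five-term cohomology diagram the two compositions between the ``$B_i$'' terms are isomorphisms, and an elementary diagram chase gives $Im[H^a(\cb_{i,x}(\delta_2))\to H^a(\cb_{i,x}(\delta_1))]\cong H^a(\mathrm{pt})$. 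This is exactly the missing step in your argument; your second point about geodesic contractions of balls in $X$ is then unnecessary, since contractibility already comes from the tangent-cone description.
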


The proof will be an easy consequence of the following more general result due to V. Kapovitch \cite{kapovitch-private-commun} who also supplied us with a proof below.

\begin{theorem}\label{alex-conv-balls}
Let $(X_i^n,p_i)\to (X,p)$ be a sequence of $n$-dimensional Alexandrov spaces with curvature uniformly bounded from below which GH-converges (as pointed spaces) to an $n$-dimensional Alexandrov space $X$.

Then there exists $r_0=r_0(p)>0$ such that the following holds

\begin{enumerate}
\item \label{conv-tangent} For any  $0<r\le r_0$ we have that sphere $S_r(p)\subset X$ is homeomorphic to the space of directions $\Sigma_pX$ and the closed ball $\bar B_r(p)\subset X$
is homeomorphic to the unit closed ball $\bar B_1(0)$ in $T_pX$ with the spheres $S_r(p)$ mapping to the unit sphere $S_1(0)\subset T_pX$ which is isometric to $\Sigma_pX$.
\item\label{conv-sequence}  For any  $0<r\le r_0$  for all large $i$ the closed ball $\bar B_r(p_i)\subset X_i$ is homeomorphic to the closed ball $\bar B_r(p)\subset X$ by a homeomorphism sending $S_r(p_i)$ to $S_r(p)$
\end{enumerate}

\end{theorem}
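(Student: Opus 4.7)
The plan is to prove the theorem in two parts, each exploiting Perelman's structural results for Alexandrov spaces in the non-collapsing situation $\dim X_i = \dim X = n$. The two parts are logically independent of each other in the sense that part \ref{conv-tangent} is a statement about the limit space alone, while part \ref{conv-sequence} is a stability statement; however, the proof of \ref{conv-sequence} will feed on the canonical conic model established in \ref{conv-tangent}.

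For part \ref{conv-tangent}, I would invoke Perelman's theorem on conical neighborhoods. By the work of Perelman and Kapovitch on extremal subsets and strictly concave functions, there exist $r_0 = r_0(p) > 0$ and a strictly concave function $f$ on $B_{r_0}(p)$ with unique maximum at $p$. Combining the gradient flow of $f$ with the fact that the distance function $d_p$ is a regular admissible function on the punctured ball $B_{r_0}(p)\setminus\{p\}$, one straightens the foliation by metric spheres into a product and extends through $p$ by attaching $\Sigma_p X$ as the vanishing fiber. This produces a homeomorphism $\Phi\colon \bar B_{r_0}(p)\to \bar B_1(0)\subset T_p X$ which sends $p$ to the origin and each sphere $S_r(p)$ onto $S_{r/r_0}(0)\cong \Sigma_p X$.

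For part \ref{conv-sequence}, I would apply the parameterized stability theorem of Perelman, as refined by Kapovitch, using the distance functions $d_{p_i}$ as admissible maps. Since $\dim X_i=\dim X$, the convergence $(X_i,p_i)\to (X,p)$ is non-collapsing, and $d_{p_i}$ is admissible near $p_i$ on the punctured ball for large $i$, with $d_{p_i}\to d_p$ uniformly on any common compact set. The parameterized stability theorem then supplies homeomorphisms $H_i\colon \bar B_{r_0}(p_i)\to \bar B_{r_0}(p)$ such that $d_p\circ H_i$ is uniformly $\varepsilon_i$-close to $d_{p_i}$ with $\varepsilon_i\to 0$. To upgrade $H_i$ so that it sends $S_r(p_i)$ onto $S_r(p)$ on the nose, I would transfer everything to the cone coordinates $\bar B_1(0)\subset T_pX$ via the homeomorphism $\Phi$ of part \ref{conv-tangent}. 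In these coordinates $\Phi\circ H_i\circ \Phi^{-1}$ deviates from a radius-preserving map by at most $O(\varepsilon_i)$ on each radial ray, so a continuous ray-by-ray radial reparameterization produces a homeomorphism of $\bar B_1(0)$ preserving radii; conjugating back by $\Phi$ yields the required sphere-to-sphere homeomorphism of the metric balls.

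The main obstacle will be the sphere-to-sphere refinement in part \ref{conv-sequence}: Perelman's stability theorem in its raw form only gives that the balls are homeomorphic with distance functions $C^0$-close, but does not itself send metric spheres to metric spheres. The role of part \ref{conv-tangent} is precisely to provide a canonical ``round'' conical structure on $\bar B_{r_0}(p)$ against which the radial correction for $H_i$ can be performed; continuity of the corrected map follows from uniform $\varepsilon_i$-closeness together with compactness of $\Sigma_p X$.
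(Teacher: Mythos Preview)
Your outline is broadly correct and uses the same core tool as the paper (Perelman's stability theorem), but you underuse that tool and thereby create an obstacle that is not actually there.

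For part~\ref{conv-sequence}, the paper invokes the \emph{parameterized} (relative) form of Perelman's stability theorem (Kapovitch's survey, Theorem~7.8, or Perelman, Theorem~4.3): since $d_p$ is regular on $B_{2r_0}(p)\setminus\{p\}$ and $d_{p_i}$ is regular on the annulus $A(r/2,2r,p_i)$ for large $i$, the theorem produces a homeomorphism $\phi_i\colon \bar B_r(p_i)\to \bar B_r(p)$ with $d_p\circ\phi_i=d_{p_i}$ \emph{exactly} on $A(r/2,r,p_i)$. In particular $\phi_i(S_r(p_i))=S_r(p)$ immediately. Your statement that the stability theorem only gives $d_p\circ H_i$ uniformly $\varepsilon_i$-close to $d_{p_i}$ is too weak a version; with the correct version there is no ``main obstacle'' and no post-hoc correction is needed. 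Your proposed fix --- a ``ray-by-ray radial reparameterization'' in cone coordinates --- is also under-specified: $H_i$ need not send radial rays to radial rays, so a literal ray-by-ray adjustment is not defined; one would have to argue via an isotopy along the radial flow in $T_pX$, which can be done but is extra work the paper avoids. (Also note that your composition $\Phi\circ H_i\circ\Phi^{-1}$ is not well-formed as written, since $\Phi^{-1}$ lands in $X$, not $X_i$.)

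For part~\ref{conv-tangent}, the paper does not invoke strictly concave functions or gradient flows separately; it simply reapplies the same parameterized-stability argument to the blow-up convergence $(\lambda X,p)\to (T_pX,0)$ as $\lambda\to\infty$. This makes the two parts parallel and avoids citing additional machinery. Your approach via Perelman's conical-neighborhood theorem is correct and standard, just less economical here.
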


Let us postpone a proof of this theorem and prove Theorem \ref{T:no-collapse}.

{\bf Proof of Theorem \ref{T:no-collapse}.} 
Fix $x\in X$. Let us choose sequence of points $x_i\in X_i$ such that $d_i(x_i,x)\to 0$. Denote
$$B_i(\eps):=\{y\in X_i|\, d_i(y,x_i)<\eps\}$$
to be the open $\eps$-ball in $X_i$ centered at $x_i$. Let $r_0>0$ be as in Theorem \ref{alex-conv-balls}. For $0<\delta_1<\delta_2<r_0/2$ and large $i$ one has inclusions
$$B_i(\frac{\delta_1}{2})\subset \cb_{i,x}(\delta_1)\subset B_i(\frac{\delta_1+\delta_2}{2})\subset \cb_{i,x}(\delta_2)\subset B_i(2\delta_2),$$
and the first, the third, and the fifth spaces are homeomorphic to an open ball $B_1(0)\subset T_xX$ by Theorem \ref{alex-conv-balls}. This ball in contractible and hence has cohomology of a point.
In cohomology with coefficients in an arbitrary field we have maps
$$H^*(B_i(\frac{\delta_1}{2}))\overset{a_1}{\leftarrow} H^*(\cb_{i,x}(\delta_1))\overset{a_2}{\leftarrow} H^*(B_i(\frac{\delta_1+\delta_2}{2}))\overset{a_3}{\leftarrow} H^*(\cb_{i,x}(\delta_2))\overset{a_4}{\leftarrow} H^*(B_i(2\delta_2)).$$
The maps $a_1\circ a_2$ and $a_3\circ a_4$ are isomorphisms since they are maps of cohomology of contractible spaces. Hence by linear algebra it follows that $Im[H^a(\cb_{i,x}(\delta_2))\to H^a(\cb_{i,x}(\delta_1))]$ is isomorphic to the $a$th cohomology of a point. \qed

\hfill

{\bf Proof of Theorem ~\ref{alex-conv-balls}.}

Since $(\lambda X, p)\to (T_px,0)=(C\Sigma_p,0)$ as $\lambda\to \infty$ we have that  for small $r$ any point $x$ with $d(p,x)=r$ is $o(r)$ close to the midpoint of a geodesic $[p,q]$ of length $2r$. Hence the comparison angle $\tilde \angle qxp\ge \pi-o(r)$ and by Toponogov comparison the same holds for the actual angle $ \angle qxp$ for any shortest geodesics $[xp], [xq]$ . Therefore if $r_0>0$ is small enough it holds that $f=d(\cdot,  p)$ has no critical points in
$B_{2r_0}(p)\setminus\{p\}$.

Moreover, this also shows that for any fixed $0<r<r_0$ for all large $i$ the function $f_i=d_{p_i}(\cdot)$ has no critical points in the annulus $A(r/2,2r,p_i)=\{y\in X_i\mid r/2\le d(x,p_i)\le 2r\}$.

By the Perelman's Parameterized Stability Theorem \cite{kapovitch-stablility}, Theorem 7.8, or \cite{perelman-stability}, Theorem 4.3,  for all large $i$ there exists a homeomorphism $\phi_i: \bar B_{r}(p_i)\to \bar B_r(p)$ which commutes with the distance to the center  outside of the $r/2$ balls.

That is $f\circ \phi_i=f_i$ on  $A(r/2,r,p_i)$.  In particular it sends metric spheres around $p_i$ of radii between $r/2$ and $r$ to the corresponding metric spheres around $p$. This proves item \eqref{conv-sequence}.

Applying the same argument to the convergence $(\lambda X, p)\underset{\lambda\to\infty}{\longrightarrow} (T_px,0)=(C\Sigma_p,0)$  gives part \eqref{conv-tangent}.
This finished the proof of Theorem ~\ref{alex-conv-balls}. \qed

\subsection{An application to integration with respect to the Euler characteristic.}\label{Ss:euler-charact}
As an application of our main results of closed surfaces let us prove the property (\ref{limit-intrinsic-volumes}) for intrinsic volumes. As we have mentioned in the introduction,
the only relevant cases of intrinsic volumes are $V_0=\chi$ and $V_2=area$. For the latter the result follows from \cite{burago-gromov-perelman}, Theorem 10.8.
Let us consider the former case of the Euler characteristic $\chi$.

First we have to provide more explanations on the right hand side of (\ref{limit-intrinsic-volumes}). We need to briefly remind the notion of integration of a 'constructible'
function with respect to the Euler characteristic. The technical definition of constructibility depends on the context and will not be specified here. However in the application
in this subsection this will not lead to misunderstanding due to simplicity of the situation. Let $X$ be a 'nice' topological space, e.g. it might be a real analytic manifold. For the general conjectures in \cite{alesker-conjectures}
$X$ should be a compact finite dimensional Alexandrov space. In this subsection it suffices to consider the case of closed topological surfaces, segments, and points.

Let $F\colon X\to \CC$ be a compactly supported function with finitely many values and 'nice' level sets. Under broad assumptions satisfied in relevant for us case $F$ can be written as
a finite linear combination
\begin{eqnarray}\label{E:represent-function}
F=\sum_i c_i\One_{Z_i}
\end{eqnarray} where $c_i\in \CC$ and $Z_i$ are compact sets which are 'nice' enough to have a well defined Euler characteristic (thus e.g. the Cantor set is excluded).
One defines the integral of $F$ with respect to the Euler characteristic by
$$\int F d\chi:=\sum_i c_i\chi (Z_i). $$
In a number of technical setups one can show that this integral is well defined, i.e. is independent of the presentation (\ref{E:represent-function}).
\begin{remark}\label{Rem-integral-euler}
(1) The fact that the integral is well defined was proved by Groemer \cite{groemer} for the class of functions on $X=\RR^n$ such that the level sets are finite unions of convex compact sets.
When $X$ is a complex analytic manifold and the level sets of functions are complex analytic subvarieties this fact is due to Viro \cite{viro}.
For a real analytic manifold $X$ and functions with level sets being subanalytic this fact was proven by Schapira \cite{schapira}, while a more special situation was previously considered
by Khovanskii and Pukhlikov \cite{khovanskii-pukhlikov}.

(2) For the purposes of general conjectures \cite{alesker-conjectures} the relevant class of constructible functions consists of functions on a compact Alexandrov space $X$ which are constant on
the strata of the Perelman-Petrunin stratification. In the present paper a more elementary situation will be sufficient: constant functions on topological surfaces and functions on a closed segment which
are constant in its interior. In the former case the integral with respect to the Euler characteristic is trivially well defined once we consider only constant functions.
The latter case is a special case either of the Khovanskii-Pukhlikov \cite{khovanskii-pukhlikov} or the Schapira \cite{schapira} approaches, although it can be treated directly by elementary methods.
\end{remark}
\begin{proposition}\label{R:euler-characteristic-integral}
Let $\{X_i\}$ be a sequence  of closed topological 2-surfaces of a fixed homeomorphism type with CBB($-1$) metrics. Let us denote their Euler characteristic by $\alp$. Assume that the sequence GH-converges to a compact Alexandrov space $X$.
Let $F(x):=\sum_a(-1)^a h^a(x)$ be the corresponding function on $X$ which is necessarily well defined. Then
$$\int_X Fd\chi=\alp.$$
\end{proposition}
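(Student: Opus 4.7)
The plan is to proceed by a case analysis according to $\dim X \in \{0,1,2\}$, and within the 1-dimensional case, according to the homeomorphism type of $X_i$. In each case the pointwise value of $F = \sum_a (-1)^a h^a$ is determined by the main theorems of the previous subsections, and the integral $\int_X F\, d\chi$ is then evaluated directly.

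First I would dispose of the two extreme cases. If $\dim X = 2$ (no collapse), Theorem \ref{T:no-collapse} gives $F\equiv 1$, so $\int_X F\, d\chi = \chi(X)$; and since $X$ is a non-collapsed Alexandrov limit, Perelman's stability theorem (invoked here exactly as in the no-collapse part of the paper) yields $X_i\cong X$ for large $i$, so $\chi(X)=\alp$. If $\dim X = 0$, then $X$ is a point and the very definition of $h^a$ gives $h^a = \dim H^a(X_i;\FF)$, whence $F = \chi(X_i) = \alp$ and $\int_X F\, d\chi = \alp$.

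The substantive step is $\dim X = 1$, so that $X$ is either a circle or a segment, and by Proposition \ref{P:surfaces-may-collapse} the common homeomorphism type of $X_i$ is one of $\SS^2$, $\RR\PP^2$, $\TT^2$, or the Klein bottle. I would handle each in turn. For tori ($\alp = 0$), the results of \cite{katz-torii} and \cite{zamora-torii} force $X$ to be a circle, and Theorem \ref{T:surface-t-circle} gives $F\equiv 0$. For Klein bottles ($\alp = 0$), Theorem \ref{P:klein-bootles-prop} again gives $F\equiv 0$ regardless of whether $X$ is a segment or a circle. In both cases $\int_X F\, d\chi = 0 = \alp$. For 2-spheres ($\alp = 2$), Theorem \ref{T:spheres-to-segment} forces $X$ to be a segment and shows that $F$ vanishes on $\operatorname{int}(X)$ while $F=1$ at each of the two endpoints, so
\[
\int_X F\, d\chi \;=\; 0\cdot\chi\bigl(\operatorname{int}(X)\bigr) \,+\, 1 \,+\, 1 \;=\; 2 \;=\; \alp.
\]
For $\RR\PP^2$ ($\alp = 1$), Theorem \ref{T:rp2-segment} forces $X$ to be a segment and gives $F=0$ on $\operatorname{int}(X)$ and at one endpoint, while $F=1$ at the other endpoint, whence $\int_X F\, d\chi = 1 = \alp$.

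The one conceptual point worth making explicit is that the integration with respect to the Euler characteristic is unambiguously defined in every situation we meet: $F$ is constant on $\operatorname{int}(X)$ and on each boundary point, so it falls under the elementary setting mentioned in Remark \ref{Rem-integral-euler}(2) and no appeal to \cite{schapira} or \cite{khovanskii-pukhlikov} is needed. Thus there is no real obstacle to the proof beyond the case check: once the pointwise description of $F$ is read off from Theorems \ref{T:no-collapse}, \ref{T:spheres-to-segment}, \ref{T:surface-t-circle}, \ref{T:rp2-segment}, and \ref{P:klein-bootles-prop}, the proposition reduces to verifying four small arithmetic identities.
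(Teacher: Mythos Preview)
Your proof is correct and follows essentially the same case analysis as the paper's own proof. One small attribution slip: it is Theorem \ref{T:surface-t-circle}(2), not Theorem \ref{T:spheres-to-segment}, that forces the limit of a sequence of spheres to be a segment rather than a circle (Theorem \ref{T:spheres-to-segment} already assumes the limit is a segment).
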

{\bf Proof.} \underline{Case 1.} Let $X$ be a point. Then by construction $h^a(X)=\dim H^a(X_i;\FF)$. Hence $F=\alp$, and $\int_X F d\chi=\alp$.

\hfill

\underline{Case 2.} Let $\dim X=2$. By the Perelman stability theorem \cite{perelman-stability} (see also \cite{kapovitch-stablility}) all $X_i$ are homeomorphic to $X$. By Theorem \ref{T:no-collapse} one has
$F\equiv 1$. Hence $\int_X Fd\chi=\chi(X)=\alp$.

\hfill

\underline{Case 3.} Let $\dim X=1$.

\underline{Subcase 3a.} Assume that $\{X_i\}$ are homeomorphic to the 2-sphere $\SS^2$. By Theorem \ref{T:surface-t-circle}(2) $X$ is a segment.
By Theorem \ref{T:spheres-to-segment} one has $F=\One_{\pt X}$. Clearly
$$\int_X Fd\chi=2=\chi(\SS^2)$$
as necessary.

\underline{Subcase 3b.} Assume that $\{X_i\}$ are homeomorphic to the real projective plane $\RR\PP^2$. By Theorem \ref{T:surface-t-circle}(2) $X$ is a segment.
By Theorem \ref{T:rp2-segment} the function $F$ vanishes everywhere on $X$ but one point. Hence
$$\int_X Fd\chi =1=\chi(\RR\PP^2)$$
as required.

\underline{Subcase 3c.} Assume that $\{X_i\}$ are homeomorphic to the 2-torus $\TT^2$. Since by \cite{katz-torii} and \cite{zamora-torii} torus cannot collapse to a segment, $X$ is a circle.
By Theorem \ref{T:surface-t-circle}(1) one has $F\equiv 0$. Then
$$\int_X Fd\chi =0=\chi(\TT^2).$$

\underline{Subcase 3d.} Assume that $\{X_i\}$ are homeomorphic to the Klein bottle $\KK$. Then $X$ could be both circle and segment.
If $X$ is a circle then by Theorem \ref{T:surface-t-circle}(1) $F\equiv 0$, and if $X$ is a segment  then by Theorem \ref{P:klein-bootles-prop} one has $F=0$ again. Hence in either case
$$\int_X Fd\chi =0=\chi(\KK).$$
\qed


\begin{thebibliography}{99}

\bibitem{alesker-conjectures}
Alesker, Semyon; Some conjectures on intrinsic volumes of Riemannian manifolds and Alexandrov spaces. Arnold Math. J. 4 (2018), no. 1, 1–17.

\bibitem{alexander-kapovitch-petrunin}
Alexander, Stephanie;  Kapovitch, Vitali; Petrunin, Anton; {\itshape Alexandrov geometry: foundations.} \url{https://arxiv.org/abs/1903.08539}

\bibitem{Ale1}
Alexandrov, Alexander D.; {\itshape Convex polyhedra}. Springer Monographs in Mathematics, 2005.

\bibitem{Ale2}
Alexandrov, Alexander D.; {\itshape Intrinsic geometry of convex surfaces},
 A.D. Alexandrov selected works. Part II,
 Chapman \& Hall/CRC, Boca Raton, FL, 2006.

\bibitem{Ale3}
Alexandrov, Alexander D.; Complete convex surfaces in Lobachevsky space,
 Izvestiya Akademii Nauk SSSR. Seriya Matematicheskaya,
 Izv. Akad. Nauk SSSR, Ser. Mat., vol. 9,
113--120, (1945). Izdatel'stvo Nauka, Moskva.

\bibitem{burago-burago-ivanov}
Burago, Dmitri; Burago, Yuri; Ivanov, Sergei;
{\itshape A course in metric geometry.}
Graduate Studies in Mathematics, 33. American Mathematical Society, Providence, RI, 2001.

\bibitem{burago-gromov-perelman}
Burago, Yuri; Gromov, Mikhail; Perelman, Grigory; A. D. Aleksandrov spaces with curvatures bounded below. (Russian) Uspekhi Mat. Nauk 47 (1992), no. 2(284), 3–51, 222; translation in Russian Math. Surveys 47 (1992), no. 2, 1–58

\bibitem{cheeger-ebin}
 Cheeger, Jeff; Ebin, David G.; {\itshape Comparison theorems in Riemannian geometry.} North-Holland Mathematical Library, Vol. 9. North-Holland Publishing Co., Amsterdam-Oxford; American Elsevier Publishing Co., Inc., New York, 1975.


\bibitem{sga7}
Deligne, Pierre (ed.); Katz Nicholas M. (ed.); Groupes de monodromie en g\'eom\'etrie alg\'ebrique. SGA 7.II , Lect. notes in math. , 340 , Springer (1973)


\bibitem{fu-mccrory}
Fu, Joseph H. G.; McCrory, Clint; Stiefel-Whitney classes and the conormal cycle of a singular variety.  Trans. Amer. Math. Soc. 349 (1997), 809-835.

\bibitem{fu-wannerer}
Fu, Joseph H. G.; Wannerer, Thomas; Riemannian curvature measures. Geom. Funct. Anal. 29 (2019), no. 2, 343–381.

\bibitem{fujioka}
Fujioka, Tadashi; Euler characteristics of collapsing Alexandrov spaces. arXiv:2206.15104

\bibitem{groemer}
 Groemer, Helmut; On the extension of additive functionals on classes of convex sets. Pacific J. Math. 75 (1978), no. 2, 397–410.

\bibitem{gromov-betti}
Gromov, Michael; Curvature, diameter and Betti numbers. Comment. Math. Helv. 56 (1981), no. 2, 179–195.


\bibitem{ivanov-mo} Ivanov, Sergei; Answer on MO to Petrunin's question "Diameter of $m$-fold cover".

\bibitem{katsuda}
Katsuda, Atsushi; Gromov's convergence theorem and its application. Nagoya Math. J. 100 (1985), 11–48. See in addition:
Correction to: "Gromov's convergence theorem and its application", Nagoya Math. J. 114 (1989), 173–174.

\bibitem{katz-torii}
 Katz, Mikhail G.; Torus cannot collapse to a segment. J. Geom. 111 (2020), no. 1, Paper No. 13, 8 pp.




\bibitem{kapovitch-stablility}
 Kapovitch, Vitali; Perelman's stability theorem. Surveys in differential geometry. Vol. XI, 103–136, Surv. Differ. Geom., 11, Int. Press, Somerville, MA, 2007.

\bibitem{kapovitch-private-commun}
Kapovitch, Vitali; Personal communication. April, 2022.

\bibitem{khovanskii-pukhlikov}
 Khovanskii, Askold G.; Pukhlikov, Alexander V.; Finitely additive measures of virtual polyhedra. (Russian) Algebra i Analiz 4 (1992), no. 2, 161–185; translation in St. Petersburg Math. J. 4 (1993), no. 2, 337–356.

\bibitem{michor}
 Michor, Peter W.; {\itshape Topics in differential geometry.} Graduate Studies in Mathematics, 93. American Mathematical Society, Providence, RI, 2008.

\bibitem{milka}
Milka, Anatoliy D.; The lemma of Busemann and Feller in spherical and hyperbolic spaces. Ukrain. Geometr. Sb., (10):40–49, 1971.

\bibitem{Mil2}
Milka, Anatoliy D.; Unique determinacy of general closed convex surfaces in Lobacevskii space. Ukrain. Geometr. Sb., (23):99–107, 1980.

\bibitem{palais}
Palais, Richard S.; Imbedding of compact, differentiable transformation groups in orthogonal representations. J. Math. Mech. 6 (1957), 673–678.

\bibitem{perelman-stability}
Perelman, Grigory; Alexandrov’s spaces with curvatures bounded from below, II. Preprint 1992. Available at \url{https://www.math.psu.edu/petrunin/papers/alexandrov/perelmanASWCBFB2+.pdf}

\bibitem{perelman-petrunin}
Perelman, Grigory; Petrunin, Anton;
Extremal subsets in Aleksandrov spaces and the generalized Liberman theorem. (Russian. Russian summary)
Algebra i Analiz 5 (1993), no. 1, 242–256; translation in
St. Petersburg Math. J. 5 (1994), no. 1, 215–227

\bibitem{petrunin-98}
Petrunin, Anton;  Parallel transportation for Alexandrov space with curvature bounded below. Geom. Funct. Anal. 8 (1998), no. 1, 123–148.

\bibitem{petrunin-personal}
Petrunin, Anton; Personal communication.

\bibitem{petrunin-mo-antipodal} Petrunin, Anton; Answer on MO to
  question "Riemannian metrics on 2-sphere invariant under antipodal
  involution".  \url{https://mathoverflow.net/questions/337768}

\bibitem{Pog}
Pogorelov, Alexei;
{\itshape Extrinsic geometry of convex surfaces.}
Translations of Mathematical Monographs, Vol. 35.
American Mathematical Society, Providence, RI, 1973.

\bibitem{Ric}
Richard, Thomas; {\itshape Flot de Ricci sans borne sup\'erieure sur la courbure et g\'eom\'etrie de certains espaces m\'etriques}, Ph.D. thesis.
Universit\'e de Grenoble, 2012.

\bibitem{schapira}
 Schapira, Pierre; Tomography of constructible functions. Applied algebra, algebraic algorithms and error-correcting codes (Paris, 1995), 427–435, Lecture Notes in Comput. Sci., 948, Springer, Berlin, 1995.


\bibitem{shioya-yamaguchi}
Shioya, Takashi; Yamaguchi, Takao; Collapsing three-manifolds under a lower curvature bound. J. Differential Geom. 56 (2000), no. 1, 1–66.

\bibitem{slutskiy-compact-domains}
Slutskiy, Dmitriy; Compact domains with prescribed convex boundary metrics in quasi-Fuchsian manifolds. Bull. Soc. Math. France 146 (2018), no. 2, 309–353.

\bibitem{viro}
Viro, Oleg; Some integral calculus based on Euler characteristic. Topology and geometry—Rohlin Seminar, 127–138, Lecture Notes in Math., 1346, Springer, Berlin, 1988.

\bibitem{weyl-39}
Weyl, Hermann; On the Volume of Tubes. Amer. J. Math. 61 (1939), no. 2, 461–472.

\bibitem{yamaguchi-1991}
Yamaguchi, Takao; Collapsing and pinching under a lower curvature bound. Ann. of Math. (2) 133 (1991), no. 2, 317–357.

\bibitem{zamora-torii}
Zamora Barrera, Sergio; Tori Can’t Collapse to an Interval.  Electron. Res. Arch. 29 (2021), no. 4, 2637–2644.; see also arXiv:2004.01505

\end{thebibliography}
\end{document}